\tikzset{anchorbase/.style={baseline={([yshift=-0.5ex]current bounding box.center)}},tinynodes/.style={font=\tiny,text height=0.75ex,text depth=0.15ex},
  cross line/.style={preaction={draw=white,line width=4.75pt,-}},
  cross line thick/.style={preaction={draw=white,line width=6pt,-}},
  cross line thin/.style={preaction={draw=white,line width=3.5pt,-}},
  colored/.style={line width=2.25},
  uncolored/.style={thin},
  ccolored/.style={line width=1.4},
  dotbullet/.style={fill,circle,inner sep=0.5pt},
}
\tikzstyle directed=[postaction={decorate,decoration={markings,
    mark=at position #1 with {\arrow{>}}}}]
\tikzstyle rdirected=[postaction={decorate,decoration={markings,
    mark=at position #1 with {\arrow{<}}}}]
\newcommand{\overunder}[3]{\overset{#1}{\underset{#2}{#3}}}
\renewcommand{\boxed}[1]{\text{\fboxsep=.1em\Ovalbox{\m@th$\displaystyle#1$}}}
\newcommand{\abs}[1]{\left|#1\right|}
\DeclareRobustCommand{\qpar}{q}
\newcommand{\zpar}{{\mathsf z}}
\newcommand{\C}{{\mathbb C}}
\newcommand{\CQ}{{\mathbb C}(\qpar)}
\newcommand{\CQZ}{{\mathbb C}(\qpar)[\zpar^{\pm 1}]}
\newcommand{\N}{{\mathbb{Z}_{\geq 0}}}
\newcommand{\Np}{{\mathbb{Z}_{> 0}}}
\newcommand{\Z}{{\mathbb Z}}
\newcommand{\ring}{{\mathbb K}}
\newcommand{\qRepco}[1]{\boldsymbol{\mathcal{R}\mathrm{ep}}_{\qpar}^{\prime}(#1)}
\newcommand{\fraksl}{\mathfrak{sl}}
\newcommand{\frakosp}{\mathfrak{osp}}
\DeclareMathOperator{\Hom}{Hom}
\DeclareMathOperator{\End}{End}
\newcommand{\id}{\mathrm{id}}
\newcommand{\calC}{{\mathcal{C}}}
\newcommand{\calM}{{\mathcal{M}}}
\newcommand{\one}{\mathbbm{1}}
\DeclareRobustCommand{\dia}{\texorpdfstring{\Gamma}{Gamma}}
\DeclareRobustCommand{\howe}{\texorpdfstring{\Phi}{Phi}}
\DeclareRobustCommand{\lad}{\texorpdfstring{\Upsilon}{Upsilon}}
\newcommand{\ladA}{\Upsilon_{\!\frakgl}}
\newcommand{\ladC}{\Upsilon_{\!\fraksp}}
\newcommand{\ladD}{\Upsilon_{\!\frakso}}
\newcommand{\ext}{{\color{myred}\mathrm{ext}}\!\!\!\phantom{y}}
\newcommand{\sym}{{\color{mygreen}\mathrm{sym}}\!\!\!\phantom{t}}
\newcommand{\diaA}{\Gamma_{\typeA}^{\ext}}
\newcommand{\diaC}{\Gamma_{\typeC}^{\ext}}
\newcommand{\diaD}{\Gamma_{\typeB\typeD}^{\ext}}
\newcommand{\howeA}{\Phi_{\typeA}^{\ext}}
\newcommand{\howeC}{\Phi_{\typeC}^{\ext}}
\newcommand{\howeD}{\Phi_{\typeB\typeD}^{\ext}}
\newcommand{\symdiaA}{\Gamma_{\typeA}^{\sym}}
\newcommand{\symdiaC}{\Gamma_{\typeC}^{\sym}}
\newcommand{\symdiaD}{\Gamma_{\typeB\typeD}^{\sym}}
\newcommand{\symhoweA}{\Phi_{\typeA}^{\sym}}
\newcommand{\symhoweC}{\Phi_{\typeC}^{\sym}}
\newcommand{\symhoweD}{\Phi_{\typeB\typeD}^{\sym}}
\DeclareRobustCommand{\typeA}{\texorpdfstring{\mathbf{A}}{A}}
\DeclareRobustCommand{\typeB}{\texorpdfstring{\mathbf{B}}{B}}
\DeclareRobustCommand{\typeC}{\texorpdfstring{\mathbf{C}}{C}}
\DeclareRobustCommand{\typeD}{\texorpdfstring{\mathbf{D}}{D}}
\newcommand{\frakg}{\mathfrak{g}}
\newcommand{\frakh}{\mathfrak{h}}
\newcommand{\frakso}{\mathfrak{so}}
\newcommand{\fraksp}{\mathfrak{sp}}
\newcommand{\frakgl}{\mathfrak{gl}}
\newcommand{\U}{\mathbf{U}}
\newcommand{\quantumg}{\mathbf{U}_{\qpar}}
\newcommand{\coideal}{\mathbf{U}_{\qpar}^{\prime}}
\newcommand{\Udot}{\dot{\mathbf{U}}_{\qpar}}
\let\epsilon\varepsilon
\newcommand{\quantumq}{\mathbf{q}}
\newcommand{\counit}{\epsilon}
\newcommand{\Aalg}{{\mathscr{A}}}
\newcommand{\coidealA}{\prescript{\Aalg}{}{\coideal}}
\newcommand{\quantumgA}{\prescript{\Aalg}{}{\quantumg}}
\newcommand{\repA}[1]{\prescript{\Aalg}{}{#1}}
\newcommand{\trivmod}{\C_{\qpar}^{\phantom{1}}}
\newcommand{\vecrep}{\mathrm{V}_{\qpar}^{\phantom{1}}}
\newcommand{\vecrepnoq}{\mathrm{V}}
\newcommand{\littleexterior}{\bigwedge}
\newcommand{\exterior}[1]{{\textstyle\littleexterior_{\qpar}^{#1}}}
\newcommand{\exteriors}[1]{\bigwedge_{\qpar}^{#1}}
\newcommand{\exteriornoq}[1]{{\textstyle\littleexterior^{#1}}}
\newcommand{\symmetric}[1]{{\textstyle\mathrm{Sym}_{\qpar}^{#1}}}
\newcommand{\symmetricnoq}[1]{{\textstyle\mathrm{Sym}^{#1}}}
\newcommand{\acts}{\mathop{\,\;\raisebox{1.7ex}{\rotatebox{-90}{$\circlearrowright$}}\;\,}}
\newcommand{\actsreverse}{\mathop{\,\;\raisebox{0ex}{\rotatebox{90}{$\circlearrowleft$}}\;\,}}
\newcommand{\Repq}[1]{\boldsymbol{\mathcal{R}\mathrm{ep}}_{\qpar}(\frakgl_{#1})}
\newcommand{\RepcoD}[1]{\boldsymbol{\mathcal{R}\mathrm{ep}}^{\prime}_{\qpar}(\frakso_{#1})}
\newcommand{\RepcoC}[1]{\boldsymbol{\mathcal{R}\mathrm{ep}}^{\prime}_{\qpar}(\fraksp_{#1})}
\newcommand{\qbracket}[1]{\left[#1\right]}
\newcommand{\qbracketC}[1]{\left[#1\right]_2}
\newcommand{\qbin}[2]{{\textstyle\genfrac{[}{]}{0pt}{}{#1}{#2}}}
\newcommand{\qbinn}[2]{\genfrac{[}{]}{0pt}{}{#1}{#2}}
\newcommand{\simpleroots}{\Pi}
\newcommand{\roots}{\Phi}
\newcommand{\weightlattice}{X}
\newcommand{\I}{{\mathrm I}}
\newcommand{\simple}[2]{{\mathrm L}_{\qpar}(#1,#2)}
\newcommand{\simplenoq}[2]{{\mathrm L}(#1,#2)}
\newcommand{\simpleco}[2]{{\mathrm L}^{\prime}_{\qpar}(#1,#2)}
\newcommand{\partition}{\mathfrak{P}}
\newcommand{\heckecat}{\smash{\boldsymbol{\mathcal{H}}_{\qpar}}}
\newcommand{\qbrauer}[3]{\smash{\mathrm{Br}_{#2,#3}^{#1}}}
\newcommand{\qbrauercat}[2]{\boldsymbol{\mathcal{B}\mathrm{r}}_{#1,#2}}
\newcommand{\brauerA}{\beta_{\typeA}}
\newcommand{\sfE}{{\mathrm{E}}}
\newcommand{\sfF}{{\mathrm{F}}}
\newcommand{\sfK}{{\mathrm{K}}}
\newcommand{\sfB}{{\mathrm{B}}}
\newcommand{\sfX}{{\mathrm{X}}}
\newcommand{\sfY}{{\mathrm{Y}}}
\newcommand{\dotE}{{\mathsf{E}}}
\newcommand{\dotF}{{\mathsf{F}}}
\newcommand{\dotX}{{\mathsf{X}}}
\newcommand{\smallone}{\mathsf{1}}
\renewcommand{\cup}{\mathord{\oldcup}}
\renewcommand{\cap}{\mathord{\oldcap}}
\renewcommand{\cup}{\webcup}
\renewcommand{\cap}{\webcap}
\newcommand{\overcrossing}{\tikz[baseline=0,scale=0.25]{\draw (1,0) -- ++(-1,1); \draw[preaction={draw=white,line width=3pt,-}] (0,0) -- ++(1,1);}}
\newcommand{\undercrossing}{\tikz[baseline=0,scale=0.25]{\draw (0,0) -- ++(1,1); \draw[preaction={draw=white,line width=3pt,-}] (1,0) -- ++(-1,1);}}
\newcommand{\webcap}{\,\tikz[baseline=0.5,scale=0.1]{	\draw[] (0,0)  to [out=90, in=180] (1,1.5) to [out=0, in=90] (2,0);}\,}
\newcommand{\webcup}{\,\tikz[baseline=-7,scale=0.1]{ \draw[] (0,0)  to [out=270, in=180] (1,-1.5) to [out=0, in=270] (2,0);}\,}
\newcommand{\cupcap}{\,\tikz[baseline=.8,scale=0.07]{ \draw[] (0,4)  to [out=270, in=180] (1,2.5) to [out=0, in=270] (2,4); \draw[] (0,0)  to [out=90, in=180] (1,1.5) to [out=0, in=90] (2,0);}\,}
\newcommand{\extmerge}{\,\tikz[baseline=2,scale=0.1]{	\draw[very thick, myred] (0,0) to [out=90,in=225] (1,1.5); \draw[very thick, myred] (2,0) to [out=90,in=315] (1,1.5);	\draw[very thick, myred] (1,1.5) to (1,3);}\,}
\newcommand{\extsplit}{\,\tikz[baseline=2,scale=0.1]{	\draw[very thick, myred] (0,3) to [out=270,in=135] (1,1.5); \draw[very thick, myred] (2,3) to [out=270,in=45] (1,1.5);	\draw[very thick, myred] (1,1.5) to (1,0);}\,}
\newcommand{\symmerge}{\,\tikz[baseline=2,scale=0.1]{	\draw[very thick, mygreen] (0,0) to [out=90,in=225] (1,1.5); \draw[very thick, mygreen] (2,0) to [out=90,in=315] (1,1.5);	\draw[very thick, mygreen] (1,1.5) to (1,3);}\,}
\newcommand{\symsplit}{\,\tikz[baseline=2,scale=0.1]{	\draw[very thick, mygreen] (0,3) to [out=270,in=135] (1,1.5); \draw[very thick, mygreen] (2,3) to [out=270,in=45] (1,1.5);	\draw[very thick, mygreen] (1,1.5) to (1,0);}\,}
\newcommand{\extmergesplit}{\,\tikz[baseline=3.5,scale=0.1]{	\draw[] (0,0)  .. controls ++(0,1) and ++(-0.5,-0.5) .. ++(1,1.5) .. controls ++(0.5,-0.5) and ++(0,1) .. ++(1,-1.5) ; 	\draw[very thick, myred] (1,1.5) --  ++(0,1);	\draw[] (0,4)  .. controls ++(0,-1) and ++(-0.5,0.5) .. ++(1,-1.5) .. controls ++(0.5,0.5) and ++(0,-1) .. ++(1,1.5) ;}\,}
\newcommand{\symmergesplit}{\,\tikz[baseline=3.5,scale=0.1]{	\draw[] (0,0)  .. controls ++(0,1) and ++(-0.5,-0.5) .. ++(1,1.5) .. controls ++(0.5,-0.5) and ++(0,1) .. ++(1,-1.5) ; 	\draw[very thick, mygreen] (1,1.5) --  ++(0,1);	\draw[] (0,4)  .. controls ++(0,-1) and ++(-0.5,0.5) .. ++(1,-1.5) .. controls ++(0.5,0.5) and ++(0,-1) .. ++(1,1.5) ;}\,}
\newcommand{\extmerget}{\,\tikz[baseline=2,scale=0.1]{	\draw[] (0,0) to [out=90,in=225] (1,1.5); \draw[] (2,0) to [out=90,in=315] (1,1.5);	\draw[very thick, myred] (1,1.5) to (1,3);}\,}
\newcommand{\extsplitt}{\,\tikz[baseline=2,scale=0.1]{	\draw[] (0,3) to [out=270,in=135] (1,1.5); \draw[] (2,3) to [out=270,in=45] (1,1.5);	\draw[very thick, myred] (1,1.5) to (1,0);}\,}
\newcommand{\symmerget}{\,\tikz[baseline=2,scale=0.1]{	\draw[] (0,0) to [out=90,in=225] (1,1.5); \draw[] (2,0) to [out=90,in=315] (1,1.5);	\draw[very thick, mygreen] (1,1.5) to (1,3);}\,}
\newcommand{\symsplitt}{\,\tikz[baseline=2,scale=0.1]{	\draw[] (0,3) to [out=270,in=135] (1,1.5); \draw[] (2,3) to [out=270,in=45] (1,1.5);	\draw[very thick, mygreen] (1,1.5) to (1,0);}\,}
\newcommand{\extdotdown}{\,\tikz[baseline=0.5,scale=0.1]{	\draw[very thick,myred] (0,0.7) -- ++(0,2); \node[circle,fill,inner sep=1pt, myred] at (0,0.7) {};}\,}
\newcommand{\extdotup}{\,\tikz[baseline=0.5,scale=0.1]{	\draw[very thick,myred] (0,1.7) -- ++(0,-2); \node[circle,fill,inner sep=1pt, myred] at (0,1.7) {};}\,}
\newcommand{\symdotdown}{\,\tikz[baseline=0.5,scale=0.1]{	\draw[very thick, mygreen] (0,0.7) -- ++(0,2); \node[circle,fill,inner sep=1pt, mygreen] at (0,0.7) {};}\,}
\newcommand{\symdotup}{\,\tikz[baseline=0.5,scale=0.1]{	\draw[very thick,mygreen] (0,1.7) -- ++(0,-2); \node[circle,fill,inner sep=1pt, mygreen] at (0,1.7) {};}\,}
\newcommand{\extdotdowndotup}{\,\tikz[baseline=2,scale=0.09]{ \draw[very thick,myred] (0,4)  to (0,2.75); \draw[very thick,myred] (0,0) to (0,1.25); \node[circle,fill,inner sep=.75pt, myred] at (0,1.25) {}; \node[circle,fill,inner sep=.75pt, myred] at (0,2.75) {};}\,}
\newcommand{\smallmerge}{\,\tikz[baseline=2,scale=0.075]{\draw[very thick] (0,0) to [out=90,in=225] (1,1.5); \draw[very thick] (2,0) to [out=90,in=315] (1,1.5);	\draw[very thick] (1,1.5) to (1,3);}\,}
\newcommand{\smalldotdown}{\,\tikz[baseline=2,scale=0.075]{\draw[white,very thick] (0,0) to [out=90,in=225] (1,1.5); \draw[white,very thick] (2,0) to [out=90,in=315] (1,1.5);	\draw[white,very thick] (1,1.5) to (1,3); \draw[] (0,1) to (0,3); \draw[fill=black] (0,1) circle (.5cm);}\,}
\newcommand{\smallcup}{\,\tikz[baseline=2,scale=0.075]{ \draw[white,very thick] (0,0) to [out=90,in=225] (1,1.5); \draw[white,very thick] (2,0) to [out=90,in=315] (1,1.5);	\draw[white,very thick] (1,1.5) to (1,3); \draw[] (0,2.5)  to [out=270, in=180] (1,1) to [out=0, in=270] (2,2.5);}\,}
\newcommand{\equiso}{\,\tikz[baseline=-1,scale=0.075]{\draw[] (-6,-1) to (-6,3); \draw[] (0,3)  to [out=270, in=180] (1,1.5) to [out=0, in=270] (2,3); \node at (-3,1) {$\otimes$};}\,}
\newcommand{\notequiso}{\,\tikz[baseline=-1,scale=0.075]{\draw[] (8,-1) to (8,3); \draw[] (0,3)  to [out=270, in=180] (1,1.5) to [out=0, in=270] (2,3); \node at (5,1) {$\otimes$};}\,}
\newcommand{\equisp}{\,\tikz[baseline=-1,scale=0.075]{\draw[] (-6,-1) to (-6,3); \draw[very thick, myred] (0,1) to (0,3); \draw[myred,fill=myred] (0,1) circle (.5cm); \node at (-3,1) {$\otimes$}}\,}
\newcommand{\notequisp}{\,\tikz[baseline=-1,scale=0.075]{\draw[] (6,-1) to (6,3); \draw[very thick, myred] (0,1) to (0,3); \draw[myred,fill=myred] (0,1) circle (.5cm); \node at (3,1) {$\otimes$}}\,}
\newcommand{\WebAz}{\smash{\boldsymbol{\mathcal{W}\mathrm{eb}}_{\qpar,\zpar}^{\smallmerge}}}
\newcommand{\WebCz}{\smash{\boldsymbol{\mathcal{W}\mathrm{eb}}_{\qpar,\zpar}^{\smalldotdown}}}
\newcommand{\WebDz}{\smash{\boldsymbol{\mathcal{W}\mathrm{eb}}_{\qpar,\zpar}^{\smallcup}}}
\newcommand{\WebA}{\smash{\boldsymbol{\mathcal{W}\mathrm{eb}}_{\qpar}^{\smallmerge}}}
\newcommand{\WebC}{\smash{\boldsymbol{\mathcal{W}\mathrm{eb}}_{\qpar,\qpar^n}^{\smalldotdown}}}
\newcommand{\WebD}{\smash{\boldsymbol{\mathcal{W}\mathrm{eb}}_{\qpar,\qpar^n}^{\smallcup}}}
\newcommand{\WebCpara}{\smash{\boldsymbol{\mathcal{W}\mathrm{eb}}_{\qpar,-\qpar^{-n}}^{\smalldotdown}}}
\newcommand{\WebDpara}{\smash{\boldsymbol{\mathcal{W}\mathrm{eb}}_{\qpar,-\qpar^{-n}}^{\smallcup}}}
\newcommand{\WebCwithpara}[1]{\smash{\boldsymbol{\mathcal{W}\mathrm{eb}}_{\qpar,#1}^{\smalldotdown}}}
\newcommand{\WebDwithpara}[1]{\smash{\boldsymbol{\mathcal{W}\mathrm{eb}}_{\qpar,#1}^{\smallcup}}}
\newcommand{\macrodotdown}{\tikz[baseline=2,scale=0.1]{\draw[white,very thick] (0,1.5) to (.75,1.5); \draw[black] (0,1) to (0,3); \draw[ultra thin, black, fill=black] (0,1) circle (.5cm);}}
\newcommand{\macrocup}{\tikz[baseline=2,scale=0.1]{\draw[white,very thick] (0,0) to [out=90,in=225] (1,1.5); \draw[white,very thick] (2,0) to [out=90,in=315] (1,1.5);	\draw[white,very thick] (1,1.5) to (1,3); \draw[black] (0,2.5)  to [out=270, in=180] (1,1) to [out=0, in=270] (2,2.5);}}
\DeclareRobustCommand{\typeBDC}{\texorpdfstring{\macrocup}{cup}}
\DeclareRobustCommand{\typeCBD}{\texorpdfstring{\macrodotdown}{dot}}
\newcommand{\brauerD}{\beta_{\typeBDC}}
\newcommand{\brauerC}{\beta_{\typeCBD}}
\newtheoremstyle{myplain} {6pt plus 6pt minus 2pt}
{6pt plus 6pt minus 2pt}
{\itshape}
{}
{\bfseries}
{.}
{.5em}
{}
\theoremstyle{myplain}
\newtheorem{theoremm}{Theorem}[section]
\newtheorem{theoremmain}{Theorem}
\newtheoremstyle{mydefinition} {6pt plus 6pt minus 2pt}
{6pt plus 6pt minus 2pt}
{}
{}
{\bfseries}
{.}
{.5em}
{}
\theoremstyle{mydefinition}
\newtheoremstyle{myexample} {6pt plus 6pt minus 2pt}
{6pt plus 6pt minus 2pt}
{}
{}
{\scshape}
{.}
{.5em}
{}
\newtheoremstyle{myremark} {6pt plus 6pt minus 2pt}
{6pt plus 6pt minus 2pt}
{}
{}
{\scshape}
{.}
{.5em}
{}
\declaretheorem[style=myplain,name=Theorem,numberlike=theoremm]{theorem}
\declaretheorem[style=myplain,name=Lemma,numberlike=theoremm]{lemma}
\declaretheorem[style=myplain,name=Proposition,numberlike=theoremm]{proposition}
\declaretheorem[style=myexample,name=Example,numberlike=theoremm]{example}
\declaretheorem[style=mydefinition,name=Definition,numberlike=theoremm]{definition}
\declaretheorem[style=myremark,name=Remark,numberlike=theoremm]{remark}
\declaretheorem[style=mydefinition,name=Convention,numberlike=theoremm]{convention}
\def\notation#1#2#3{\rlap{\hyperref[#1]{{\color{orchid}#2}}}\hspace*{8.2mm} \hbox to 47mm{#3\hfill}}
\newcommand{\qedmake}{\hfill\ensuremath{\blacksquare}}
\renewcommand{\qedsymbol}{$\blacksquare$}
\definecolor{myred}{RGB}{221,32,37}
\definecolor{mygreen}{RGB}{12,145,12}
\definecolor{myblue}{HTML}{0000cc}
\definecolor{orchid}{RGB}{143,40,194}
\definecolor{lava}{RGB}{207,16,32}
\definecolor{mydarkblue}{RGB}{10,10,170}
\newcommand{\comm}[1]{}
\numberwithin{equation}{section}
\let\fullref\autoref
\def\makeautorefname#1#2{\expandafter\def\csname#1autorefname\endcsname{#2}}
\begin{document}
\vbadness=10001
\hbadness=10001
\title[Webs and $\qpar$-Howe dualities in types $\typeB\typeC\typeD$]{Webs and $\qpar$-Howe dualities in types $\typeB\typeC\typeD$}
\author{Antonio Sartori}
\address{A.S.: \textit{E-mail address:} {\tt anton.sartori@gmail.com}}

\author{Daniel Tubbenhauer}
\address{D.T.: Institut f\"ur Mathematik, Universit\"at Z\"urich, Winterthurerstrasse 190, Campus Irchel, Office Y27J32, CH-8057 Z\"urich, Switzerland,
\textit{E-mail address:} {\tt daniel.tubbenhauer@math.uzh.ch}}

\begin{abstract}
We define 
web categories describing intertwiners for the 
orthogonal and symplectic 
Lie algebras, and, in the quantized setup, 
for certain
orthogonal and symplectic coideal 
subalgebras. 
They generalize the 
Brauer category, and allow us to 
prove quantum versions of some
classical type $\typeB\typeC\typeD$
Howe dualities.
\end{abstract}

\maketitle

\tableofcontents
\renewcommand{\theequation}{\thesection-\arabic{equation}}
\section{Introduction}\label{sec-intro}
Throughout the whole paper we 
fix $k,n\in\Z_{\geq 0}$, and we assume that 
$n$ is even whenever we write $\fraksp_n$.

\subsection{The framework}

Consider the following question: Given some 
Lie algebra $\frakg$, can one 
give a generator-relation 
presentation
for the 
category of its finite-dimensional representations,
or for some well-behaved 
subcategory?

Maybe the best-known instance of this 
is the case
of the monoidal category generated by the vector 
representation $\vecrepnoq$ of $\fraksl_2$, 
or by the corresponding representation $\vecrep$ of 
its quantized enveloping algebra $\quantumg(\fraksl_2)$. Its 
generator-relation 
presentation is known as the 
\emph{Temperley--Lieb category} and goes back to 
work of Rumer--Teller--Weyl \cite{RTW} and Temperley--Lieb \cite{TL} 
(the latter in the quantum setting).

In pioneering work, Kuperberg \cite{Kup1} extended 
this to all rank $2$ simple Lie algebras and their quantum 
enveloping algebras.
However, it was not clear for quite some time how to extend 
Kuperberg's constructions further (although some partial 
results were obtained). Then, in seminal 
work \cite{CKM}, Cautis--Kamnitzer--Mor\-rison 
gave a generator-relation presentation of the monoidal category
generated by (quantum) exterior powers of the vector representation 
$\vecrep$ of $\quantumg(\frakgl_n)$.

Their crucial observation was that a classical tool 
from representation and invariant theory, known as 
\emph{skew Howe duality} \cite{Ho1,Ho}, can be quantized 
and used as a device to describe intertwiners of $\quantumg(\frakgl_n)$.
This \emph{skew $\qpar$-Howe duality}
is based on the $\quantumg(\frakgl_n)$-module decomposition
\begin{equation}\label{eq:ident-q-skew-howe}
\exterior{\bullet}(\vecrep \otimes \C_{\qpar}^k) \cong {\textstyle\bigoplus_{a_i\in\Z_{\geq 0}}}\exterior{a_1}\vecrep\otimes\cdots\otimes
\exterior{a_k}\vecrep.
\end{equation}
Here $\C_{\qpar}= \C(q)$ is the function 
field in one variable $\qpar$ over the complex numbers, 
and $\exterior{\bullet}$ denotes 
the quantum exterior algebra in the sense of \cite{BZ1}.
Having \eqref{eq:ident-q-skew-howe}, one obtains commuting actions
\begin{gather}\label{eq:q-skew-howe}
\quantumg(\frakgl_n)\acts
{\textstyle\bigoplus_{a_i\in\Z_{\geq 0}}}\exterior{a_1}\vecrep\otimes\cdots\otimes\exterior{a_k}\vecrep
\actsreverse\quantumg(\frakgl_k).
\end{gather}
These two actions generate each other's centralizer, and the 
bimodule decomposition can be explicitly given.
Moreover, by studying the kernel of the 
$\quantumg(\frakgl_k)$-action, one can then completely describe the intertwiners 
of $\quantumg(\frakgl_n)$.
In fact, as explained in \cite{CKM}, 
they allow a nice diagrammatic interpretation 
via so-called \emph{$\typeA$-webs}, 
which are basically defined by using the 
$\quantumg(\frakgl_k)$-action.

The results from \cite{CKM} were then 
extended to various other instances.
But, to the best of our knowledge, 
all generalizations so far stay 
in type $\typeA$.

The idea 
which started this paper was to 
extend Cautis--Kamnitzer--Morrison's approach to 
types $\typeB\typeC\typeD$.
However, the main obstacle
immediately arises:
while the 
quantization of skew Howe duality is fairly straightforward in 
type $\typeA$, 
it is not even clear in other types how one can define commuting 
actions as in \eqref{eq:q-skew-howe}.
The underlying problem hereby is that
$\exterior{\bullet}(\vecrep \otimes \C_{\qpar}^k)$ is not flat if 
$\vecrep$ is the vector representation in types 
$\typeB\typeC\typeD$ (while this holds in type $\typeA$, 
cf.\ \cite{BZ1} and \cite[Corollary 4.26]{Zw1}). This means that 
$\exterior{\bullet}(\vecrep \otimes \C_{\qpar}^k)$ 
does not have 
the same dimension as its 
classical counterpart $\exteriornoq{\bullet}(\vecrepnoq \otimes \C^k)$. 
Hence, there is no hope for an 
isomorphism as in \eqref{eq:ident-q-skew-howe} 
outside type $\typeA$, and we cannot 
follow the approach of \cite{CKM}.

To overcome this problem, we consider 
alternative quantizations of $\frakso_n$ and $\fraksp_n$, 
namely as so-called \emph{coideal subalgebras} 
$\coideal(\frakso_n) \subset \quantumg(\frakgl_n)$ 
and $\coideal(\fraksp_n) \subset \quantumg(\frakgl_n)$, see \cite{Le} or \cite{KP}. 
For their vector representations, 
the decomposition \eqref{eq:ident-q-skew-howe} 
does hold,
since they are subalgebras of $\quantumg(\frakgl_n)$.
Hence, we get commuting actions of $\quantumg(\frakgl_k)$ and
of the $\typeA$-webs.
However, since these coideals are proper subalgebras of 
$\quantumg(\frakgl_n)$, such commuting actions 
do not
generate each other's centralizer, cf.\ \eqref{eq:restrict-action}. 
Consequently, the $\typeA$-web category
does not give rise to full functors to the representation 
categories of the coideal subalgebras $\coideal(\frakso_n)$ 
and $\coideal(\fraksp_n)$.

In order to get full functors, we define  
extended web categories, 
which we call \emph{$\typeBDC$- and 
$\typeCBD$-web categories}, and prove that 
they act on  
the representation categories of the coideal subalgebras.
We 
will then show that these extended web categories are closely 
connected to $\quantumg(\frakso_{2k})$ and $\quantumg(\fraksp_{2k})$ 
(these are the usual quantized enveloping algebras!), 
recovering some versions of $\qpar$-Howe duality in types $\typeB\typeC\typeD$.

Note that our approach goes somehow 
\textbf{the opposite way} with respect to \cite{CKM}: 
instead of 
using $\qpar$-Howe duality to obtain a
web calculus, we use our web categories to prove quantized Howe dualities.
The idea of reversing Cautis--Kamnitzer--Morrison's path comes 
from the paper \cite{QS}, where it was first deployed to quantize 
a different kind of Howe duality in type 
$\typeA$ (in which the vector representation appears together with its dual).
This idea was of considerable importance for this work, 
and indeed many diagrammatic proofs in our paper are inspired by \cite{QS}. 

\subsection{Main results and proof strategy}\label{subsec-main-results}

As before, we denote by $\vecrep$ the vector 
representation of $\quantumg(\frakgl_n)$, as well 
as of its coideal subalgebras $\coideal(\frakso_n)$ 
and $\coideal(\fraksp_n)$.
We denote by $\exterior{\bullet}\vecrep$ the 
exterior algebra 
and by $\symmetric{\bullet}\vecrep$ the 
symmetric algebra 
of $\vecrep$.

\subsubsection{Quantizing Howe dualities in types \texorpdfstring{$\typeB\typeC\typeD$}{BCD}}
As recalled above, the quantum version of skew Howe duality \cite[Theorem 6.16]{LZZ}
states
that there are commuting actions
generating each other's centralizer:
\begin{equation}\label{eq:main-no-q}
  \quantumg(\frakgl_n) \acts
  \underbrace{\exterior{\bullet} \vecrep \otimes 
    \dots \otimes \exterior{\bullet} \vecrep}_{k \text{ times}}
  \actsreverse
  \quantumg(\frakgl_k).
\end{equation}
The corresponding bimodule decomposition is
multi\-pli\-city-free and can be explicitly given.
An analog statement holds if we replace $\exterior{\bullet} \vecrep$
with $\symmetric{\bullet}\vecrep$
(although one has to be slightly more careful since the 
representation becomes infinite-dimensional).

As observed by Howe \cite{Ho1,Ho}, in the classical 
setting there are four versions 
of \eqref{eq:main-no-q} in types $\typeB\typeC\typeD$.
Our main result is a quantization of 
Howe's $\typeB\typeC\typeD$-dualities.
In this quantization, notably, on the 
right-hand side the enveloping algebras $\U(\fraksp_{2k})$ 
and $\U(\frakso_{2k})$ become their quantum enveloping algebras,
but on the left-hand side they get replaced by the coideal subalgebras 
$\coideal(\frakso_n)$ and $\coideal(\fraksp_n)$.

\begin{theoremmain}\label{theorem:main-intro}
There are commuting actions:
\begin{gather}
\coideal(\frakso_n)
\acts  
\mathmakebox[\widthof{$\underbrace{\symmetric{\bullet} \vecrep \otimes 
  \dots \otimes \symmetric{\bullet} \vecrep}_{k \text{ times}}$}][c]{\underbrace{\exterior{\bullet} \vecrep \otimes 
\dots \otimes \exterior{\bullet} \vecrep}_{k \text{ times}}}
\actsreverse 
\quantumg(\frakso_{2k}),
\label{eq:main-q-1}\\
\coideal(\frakso_n)
\acts  
\underbrace{\symmetric{\bullet} \vecrep \otimes 
\dots \otimes \symmetric{\bullet} \vecrep}_{k \text{ times}}
\actsreverse 
\Udot(\fraksp_{2k}),
\label{eq:main-q-2}\\
\coideal(\fraksp_n)
\acts  
\mathmakebox[\widthof{$\underbrace{\symmetric{\bullet} \vecrep \otimes 
  \dots \otimes \symmetric{\bullet} \vecrep}_{k \text{ times}}$}][c]{\underbrace{\exterior{\bullet} \vecrep \otimes 
\dots \otimes \exterior{\bullet} \vecrep}_{k \text{ times}}}
\actsreverse 
\quantumg(\fraksp_{2k}),
\label{eq:main-q-3}\\
\coideal(\fraksp_n)
\acts  
\underbrace{\symmetric{\bullet} \vecrep \otimes 
\dots \otimes \symmetric{\bullet} \vecrep}_{k \text{ times}}
\actsreverse 
\Udot(\frakso_{2k}).
\label{eq:main-q-4}
\end{gather}
In \eqref{eq:main-q-1} and \eqref{eq:main-q-2} for $n$ odd,
and in \eqref{eq:main-q-3} and \eqref{eq:main-q-4},
the two actions 
generate each other's centralizer. Hence, the corresponding 
bimodule decompositions are 
multiplicity-free.
Moreover, all the above 
de-quantize to the associated 
classical dualities of Howe.
\end{theoremmain}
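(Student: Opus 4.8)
The plan is to obtain all four dualities from type-$\typeA$ (skew, resp.\ symmetric) $\qpar$-Howe duality by \emph{restricting} one of the two commuting actions to a coideal subalgebra and \emph{enlarging} the other to compensate. Write $M=\exterior{\bullet}\vecrep\otimes\dots\otimes\exterior{\bullet}\vecrep$ ($k$ factors) for the exterior cases and $M=\symmetric{\bullet}\vecrep\otimes\dots\otimes\symmetric{\bullet}\vecrep$ for the symmetric ones. By \eqref{eq:main-no-q} (i.e.\ \cite[Theorem 6.16]{LZZ}) the commuting actions $\quantumg(\frakgl_n)\acts M\actsreverse\quantumg(\frakgl_k)$ generate each other's centralizer. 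Restricting the left-hand action along $\coideal(\frakso_n)\subset\quantumg(\frakgl_n)$ (resp.\ $\coideal(\fraksp_n)\subset\quantumg(\frakgl_n)$) produces the left actions in the theorem. Because the coideal is a \emph{proper} subalgebra, its centralizer on $M$ is strictly larger than the image of $\quantumg(\frakgl_k)$, and the content of the theorem is that this larger centralizer is exactly the image of $\quantumg(\frakso_{2k})$, $\Udot(\fraksp_{2k})$, $\quantumg(\fraksp_{2k})$, or $\Udot(\frakso_{2k})$, respectively. Thus the theorem splits into: (i) constructing the right actions and checking they commute with the coideal; (ii) the double-centralizer (fullness) statement; and (iii) the classical limit.

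For (i), I would realize the extra centralizing endomorphisms as the generating morphisms of the extended web categories: the $\typeBDC$- and $\typeCBD$-webs adjoin to the $\typeA$-webs a single new morphism (a cup--cap, resp.\ a dot). The first step is to verify directly, on $\vecrep\otimes\vecrep$ (resp.\ on $\vecrep$), that this new elementary morphism is a homomorphism for the coideal; this is precisely where the defining relations of $\coideal(\frakso_n)$ and $\coideal(\fraksp_n)$ enter, and it is what forces the invariant bilinear or symplectic form, and hence an orthogonal/symplectic dual, to appear on the right. Granting the diagrammatic presentation of the extended web category, I would then assemble an algebra map from $\quantumg(\frakso_{2k})$ (resp.\ $\Udot(\fraksp_{2k})$, etc.) into $\End_{\coideal(\frakso_n)}(M)$ by sending the $\mathbf{A}_{k-1}$ sub-Dynkin-diagram to the image of $\quantumg(\frakgl_k)$ and the remaining Chevalley generator to the new web morphism; checking the quantum Serre relations involving that node against the $\frakgl_k$-generators is a finite calculation inside the web category. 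This gives the right actions and their commutation with the left.

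The hard part is (ii): showing that the operators just produced span \emph{all} of $\End_{\coideal}(M)$, so that the two actions genuinely centralize each other. Ordinary quantum Schur--Weyl does not apply, since the coideal is not itself the full centralizer of any quantum-group action on $M$. Instead I would prove that the extended web category surjects onto the relevant $\Hom$-spaces, using the diagrammatic presentation to rewrite an arbitrary morphism in terms of a spanning set of diagrams, in the spirit of \cite{CKM,QS}; the new cups/caps or dots are exactly the morphisms needed to reach the part of the centralizer invisible to $\quantumg(\frakgl_k)$ alone. Combined with the identification of the web category with $\quantumg(\frakso_{2k})$ (resp.\ $\quantumg(\fraksp_{2k})$, etc.) from step (i), this yields the double-centralizer property, and hence the multiplicity-free bimodule decomposition. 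The restriction to $n$ odd in \eqref{eq:main-q-1}--\eqref{eq:main-q-2} reflects the classical $O_n$-versus-$SO_n$ distinction in orthogonal Howe duality: for $n$ even the coideal sees only $SO_n$, a determinant-type morphism is missing from the web category, and the naive centralizer is too large; this case must be excluded (as in the statement) or treated by hand. The symplectic cases \eqref{eq:main-q-3}--\eqref{eq:main-q-4} have no such subtlety, matching the unconditional centralizer claim there.

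Finally, for (iii) I would specialize $\qpar\to 1$. The coideal degenerates to $\U(\frakso_n)$ (resp.\ $\U(\fraksp_n)$) and the right-hand (idempotented) quantum group degenerates to $\U(\frakso_{2k})$ (resp.\ $\U(\fraksp_{2k})$), so the commuting actions recover Howe's classical $\typeB\typeC\typeD$ dualities; conversely, since $M$ is flat (as it is in type $\typeA$), no collapse occurs at generic $\qpar$ and the multiplicity-free decompositions lift from the classical ones. The symmetric cases \eqref{eq:main-q-2} and \eqref{eq:main-q-4} are the reason for the idempotented form $\Udot$: there $\symmetric{\bullet}\vecrep$ is infinite-dimensional with unbounded weights, so the right action is only defined weight-space by weight-space and one must work throughout with $\Udot$ rather than $\quantumg$.
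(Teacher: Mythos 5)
Your step (i) — realizing the extra centralizing operators as the new web generators (cup/cap, start/end dots), checking on $\vecrep\otimes\vecrep$ resp.\ $\exterior{2}\vecrep$, $\symmetric{2}\vecrep$ that they intertwine the coideal, and then verifying the quantum Serre relations for the extra Dynkin node diagrammatically against the $\frakgl_k$-ladders — is exactly the paper's route (the presentation functors $\diaD,\diaC,\symdiaD,\symdiaC$, the ladder functors $\ladD,\ladC$, and the diagrammatic lemmas of Sections 3--4). Your diagnosis of the $n$ even/odd issue and of why $\Udot$ is forced in the symmetric cases also matches the paper.

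The gap is in your step (ii). You propose to establish the double-centralizer property by showing that the extended web category surjects onto the relevant $\Hom$-spaces, i.e.\ by a diagrammatic spanning argument "in the spirit of [CKM, QS]". Neither you nor the paper has such a spanning-set or basis theorem for $\WebD$ or $\WebC$ on arbitrary objects (the paper only proves the much weaker statement that Brauer-type diagrams span $\End(1^{\otimes k})$ in its quantum Brauer section), and producing one would be a substantial piece of work that your proposal does not supply; moreover it is circular as stated, since identifying "the relevant $\Hom$-spaces" already presupposes knowing $\End_{\coideal}(M)$. The paper avoids this entirely: it proves the opposite surjection, namely that $\coideal(\fraksp_n)$ (resp.\ $\coideal(\frakso_n)$) surjects onto $\End_{\quantumg(\fraksp_{2k})}(M_{\qpar})$, by a specialization argument. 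One passes to the free $\Aalg$-form $\repA{M_{\qpar}}$ with $\Aalg=\C[\qpar,\qpar^{-1},\tfrac{1}{[n]}]$, sets $\qpar=1$ to land in the classical Howe dualities of Howe and Cheng--Wang where the double centralizer and the explicit multiplicity-free decomposition are known, matches weights and multiplicities to pin down the $\quantumg(\fraksp_{2k})$-module decomposition of $M_{\qpar}$, and then uses freeness plus lower semicontinuity of the rank of the image to conclude that surjectivity cannot fail at generic $\qpar$. So the flatness/integral-form observation that you relegate to step (iii) is in fact the engine of step (ii) in the actual proof; your step (iii) is then essentially built in. As written, your argument for the centralizer statement does not close.
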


In \eqref{eq:main-q-1} and \eqref{eq:main-q-2} for 
$n$ even one has to add an additional intertwiner 
on the right-hand side in order to get a full action 
(see \fullref{remark:sovso}).

Our $\qpar$-Howe dualities 
are related to \eqref{eq:main-no-q} as follows:
\begin{equation}\label{eq:restrict-action}
   \begin{tikzpicture}[anchorbase]
   \matrix (m) [matrix of math nodes, row sep=1em, column
   sep=1em, text height=1.8ex, text depth=0.25ex] {
   \quantumg(\frakgl_n)
   & {\textstyle\bigoplus_{a_i\in\Z_{\geq 0}}}\exterior{a_1}\vecrep\otimes\cdots\otimes\exterior{a_k}\vecrep &  
   \phantom{.}\quantumg(\frakgl_{k}) \\
   \coideal(\frakso_n) & \underbrace{\exterior{\bullet} \vecrep \otimes 
   \dots \otimes \exterior{\bullet} \vecrep}_{k \text{ times}} & \quantumg(\frakso_{2k}), \\};
   \node at ($(m-1-1.east) + (.2,0)$) {$\acts$};
   \node at ($(m-2-1.east) + (.17,0)$) {$\acts$};
   \node at ($(m-1-3.west) + (-.2,0)$) {$\actsreverse$};
   \node at ($(m-2-3.west) + (-.1,0)$) {$\actsreverse$};
   \node at ($(m-1-1.south) + (0,-.2)$) {\rotatebox{90}{$\subset$}};
   \node at ($(m-1-3.south) + (0,-.2)$) {\rotatebox{90}{$\supset$}};
   \node at ($(m-1-2.south) + (0,-.2)$) {\rotatebox{90}{$=$}};
   \node at ($(m-2-2.south) + (0,-.25)$) {$\phantom{=}$};
   \end{tikzpicture}
\end{equation}
and similarly in the other three cases \eqref{eq:main-q-2}, \eqref{eq:main-q-3} 
and \eqref{eq:main-q-4}.

\subsubsection{Explaining the strategy}

Our main tools are certain diagrams made out of
trivalent graphs with 
edge labels from $\Np$,
which we call \emph{$\typeA$-}, \emph{$\typeBDC$-} and \emph{$\typeCBD$-webs}.
 
The $\typeA$-webs where introduced in \cite{CKM} and assemble into 
a monoidal category $\WebA$. 
The $\typeBDC$- and $\typeCBD$-webs are introduced in this paper 
in order to define  
categories $\WebDz$ and $\WebCz$.
These categories are not monoidal, but they come with a 
left action of the monoidal category 
$\WebA$, cf.\ \fullref{remark:coideal}.

\makeautorefname{section}{Sections} 

We will define these web categories in \fullref{sec-Awebs}, \ref{sec-Dwebs} 
and \ref{sec-Cwebs}. 
All the reader needs to know about them 
at the moment is summarized in \fullref{figure:webs}.

\makeautorefname{section}{Section} 

\begin{figure}[ht]
\[
	\begin{tikzpicture}[anchorbase,scale=.25,tinynodes]
	\draw[colored] (0,0) node[below] {$5$} to [out=90, in=225] (1,1.5) to (1,2.25) node[left] {$8$} to (1,3.05);
	\draw[colored] (2,0) node[below] {$3$} to [out=90, in=315] (1,1.5);
	\draw[colored] (1,3) to [out=135, in=270] (0,4.5) to [out=135, in=270] (-1,6);
	\draw[colored] (0,4.5) to [out=45, in=270] (1,6) to (1,9) node[above] {$4$};
	\draw[colored] (-1,6) node[right] {$3$} to [out=90, in=315] (-2,7.5) to (-2,9) node[above] {$9$};
	\draw[colored] (-2,0) node[below] {$3$} to [out=90, in=315] (-3,1.5) to (-3,3.75) node[left] {$6$} to (-3,6) to [out=90, in=225] (-2,7.5);
	\draw[colored] (-4,0) node[below] {$3$} to [out=90, in=225] (-3,1.5);
	\draw[uncolored] (1,3) to [out=45, in=270] (2,4.5) to [out=90, in=180] (3,5.5) to [out=0, in=90](4,4.5) to (4,0) node[below] {\raisebox{-.0275cm}{$1$}};
	\draw[uncolored] (3,9) node[above] {\raisebox{.0375cm}{$1$}} to [out=270, in=180] (4,8) to [out=0, in=270] (5,9) node[above] {\raisebox{.0375cm}{$1$}};
	\draw[uncolored] (7,9) node[above] {\raisebox{.0375cm}{$1$}} to [out=270, in=180] (8,8) to [out=0, in=270] (9,9) node[above] {\raisebox{.0375cm}{$1$}};
	\node at (-.5,3.5) {$7$};
	\draw[uncolored, densely dotted] (3,-2.5) to [out=90, in=270] (2,4.5) to [out=90, in=270] (2,11.5);
	\node at (2.5,12) {$\typeBDC\text{-web}$};
	\node at (-.5,-2) {$\typeA\text{-web}$};
	\node at (6,-2) {$\text{new part}$};
    \end{tikzpicture}
    \qquad \qquad
	\begin{tikzpicture}[anchorbase,scale=.25, tinynodes]
	\draw[uncolored] (-1,0) node[below] {\raisebox{-.0375cm}{$1$}} to [out=90, in=225] (0,1.5) to [out=315, in=90] (1,0) node[below] {\raisebox{-.0275cm}{$1$}};
    \draw[ccolored] (0,1.5) to (0,2.25) node[left] {$2$} to (0,3);
    \draw[colored] (0,3) to (0,3.75) node[left] {$3$} to (0,4.5);
    \draw[ccolored] (0,4.5) to (0,5.25) node[left] {$2$} to [out=90, in=315] (-1,7.5);
    \draw[ccolored] (0,3) to (1.5,2.5) to (1.5,2.25) node[above] {$2$} to (1.5,1.75);
    \draw[ccolored] (0,4.5) to (1.5,5) to (1.5,5.2) node[below] {$2$} to (1.5,5.75);
    \draw[colored] (-3,0) node[below] {$7$} to (-3,6) to [out=90, in=225] (-1,7.5) to (-1,9) node[above] {$9$};
    \node[dotbullet] at (1.5,1.75) {};
	\node[dotbullet] at (1.5,5.75) {};
	\draw[ccolored] (3,0) node[below] {$\raisebox{-.0175cm}{$2$}$} to (3,1.5);
	\node[dotbullet] at (3,1.5) {};
	\draw[ccolored] (5,0) node[below] {$\raisebox{-.0175cm}{$2$}$} to (5,1.5);
	\node[dotbullet] at (5,1.5) {};
	\draw[colored] (-6,0) node[below] {$6$} to (-6,1.5) to [out=135, in=270] (-7,2.99) to (-7,3) node[left] {$5$} to (-7,3.01) to [out=90, in=225] (-6,4.5) to (-6,5.25) node[left] {$6$} to (-6,6) to [out=135, in=270] (-7,7.5) to (-7,9) node[above] {$4$};
	\draw[uncolored] (-6,1.5) to [out=45, in=270] (-5,2.99) to (-5,3) node[right] {$1$} to (-5,3.01) to [out=90, in=315] (-6,4.5);
	\draw[ccolored] (-6,6) to [out=45, in=270] (-5,7.5) to (-5,9) node[above] {$2$};
	\draw[uncolored, densely dotted] (2,-2.5) to [out=90, in=270] (.75,2.25) to (.75,4.75) to [out=90, in=270] (.75,11.5);
	\node at (1,12) {$\typeCBD\text{-web}$};
	\node at (-2.5,-2) {$\typeA\text{-web}$};
	\node at (4.5,-2) {$\text{new part}$};
    \end{tikzpicture}
\]
\caption{Examples of our webs.  
Both, $\typeBDC$- and $\typeCBD$-webs, always consist 
of an $\typeA$-web to the left and a part with new generators 
(cup and cap respectively start and end dots) 
on the right.
}\label{figure:webs}
\end{figure}

Let $\Repq{n}$, $\RepcoD{n}$ and $\RepcoC{n}$ denote
the categories of finite-di\-men\-sio\-nal representations of
$\quantumg(\frakgl_n)$, $\coideal(\frakso_n)$ and 
$\coideal(\fraksp_n)$, respectively.

Following \cite{CKM}, skew $\qpar$-Howe 
duality gives rise to a 
$\quantumg(\frakgl_n)$-equivariant 
action of $\quantumg(\frakgl_{k})$
on the $k$-fold tensor product of 
$\exterior{\bullet} \vecrep$'s
as in \eqref{eq:main-no-q}.

This induces a functor $\howeA\colon\Udot(\frakgl_{k})\to\Repq{n}$.
By the definition of $\WebA$, this
can also be used to define a functor 
$\diaA \colon \WebA \to \Repq{n}$. 
In fact, 
there is 
a third functor $\ladA\colon\Udot(\frakgl_{k})\to\WebA$
such that $\howeA=\diaA\circ\ladA$.
It follows by skew $\qpar$-Howe duality that all functors $\howeA$, $\diaA$ and $\ladA$ are full.
The same works in the symmetric case (cf.\ \cite{RT} and \cite{TVW}) 
where $\exterior{\bullet} \vecrep$ is replaced by  
$\symmetric{\bullet} \vecrep$: again one constructs full functors
$\symhoweA$ and $\symdiaA$ such that
$\symhoweA=\symdiaA\circ\ladA$. 

Our goal is to have an analogous picture 
in types $\typeB\typeC\typeD$: 
we want to have functors 
$\diaD$, $\diaC$, $\symdiaD$, $\symdiaC$, 
$\ladD$ and $\ladC$
and
commuting 
diagrams as in \fullref{figure:commute}.

\begin{figure}[ht]
\begin{equation*}
\begin{gathered}
  \begin{tikzpicture}[anchorbase]
  \matrix (m) [matrix of math nodes, row sep=3em, column
  sep=4em, text height=1.8ex, text depth=0.25ex] {
\Udot(\frakso_{2k}) & \RepcoD{n} \\
& \WebD \\};
  \path[->, myred] (m-1-1) edge node[above] {$\howeD$} (m-1-2);
  \path[->] (m-1-1) edge node[below left] {$\ladD$} (m-2-2);
  \path[->, myred] (m-2-2) edge node[right] {$\diaD$} (m-1-2);
  \draw[thin, densely dashed] ($(m-1-1.south) + (1.7,-.5)$) to [out=20,in=160] ($(m-1-2.south) + (-.1,-.5)$);
  \draw[thin, densely dashed, ->] ($(m-1-1.south) + (2.4,-.35)$) to [out=90, in=270] ($(m-1-1.east) + (.7,-.075)$);
  \node at ($(m-1-1.south) + (2.4,-.55)$) {\scriptsize define};
  \end{tikzpicture}
\quad \text{and} \quad
  \begin{tikzpicture}[anchorbase]
  \matrix (m) [matrix of math nodes, row sep=3em, column
  sep=4em, text height=1.8ex, text depth=0.25ex] {
\Udot(\frakso_{2k}) & \RepcoC{n} \\
& \WebDpara \\};
  \path[->, mygreen] (m-1-1) edge node[above] {$\symhoweC$} (m-1-2);
  \path[->] (m-1-1) edge node[below left] {$\ladD$} (m-2-2);
  \path[->, mygreen] (m-2-2) edge node[right] {$\symdiaC$} (m-1-2);
  \draw[thin, densely dashed] ($(m-1-1.south) + (1.7,-.5)$) to [out=20,in=160] ($(m-1-2.south) + (-.1,-.5)$);
  \draw[thin, densely dashed, ->] ($(m-1-1.south) + (2.4,-.35)$) to [out=90, in=270] ($(m-1-1.east) + (.7,-.075)$);
  \node at ($(m-1-1.south) + (2.4,-.55)$) {\scriptsize define};
  \end{tikzpicture}
\end{gathered}
\end{equation*}
\begin{equation*}
\begin{gathered}
\begin{tikzpicture}[anchorbase]
  \matrix (m) [matrix of math nodes, row sep=3em, column
  sep=4em, text height=1.8ex, text depth=0.25ex] {
\Udot(\fraksp_{2k}) & \RepcoD{n} \\
& \WebCpara \\};
  \path[->, mygreen] (m-1-1) edge node[above] {$\symhoweD$} (m-1-2);
  \path[->] (m-1-1) edge node[below left] {$\ladC$} (m-2-2);
  \path[->, mygreen] (m-2-2) edge node[right] {$\symdiaD$} (m-1-2);
  \draw[thin, densely dashed] ($(m-1-1.south) + (1.7,-.5)$) to [out=20,in=160] ($(m-1-2.south) + (-.1,-.5)$);
  \draw[thin, densely dashed, ->] ($(m-1-1.south) + (2.4,-.35)$) to [out=90, in=270] ($(m-1-1.east) + (.7,-.075)$);
  \node at ($(m-1-1.south) + (2.4,-.55)$) {\scriptsize define};
  \end{tikzpicture}
\quad \text{and} \quad
  \begin{tikzpicture}[anchorbase]
  \matrix (m) [matrix of math nodes, row sep=3em, column
  sep=4em, text height=1.8ex, text depth=0.25ex] {
\Udot(\fraksp_{2k}) & \RepcoC{n} \\
& \WebC \\};
  \path[->, myred] (m-1-1) edge node[above] {$\howeC$} (m-1-2);
  \path[->] (m-1-1) edge node[below left] {$\ladC$} (m-2-2);
  \path[->, myred] (m-2-2) edge node[right] {$\diaC$} (m-1-2);
  \draw[thin, densely dashed] ($(m-1-1.south) + (1.7,-.5)$) to [out=20,in=160] ($(m-1-2.south) + (-.1,-.5)$);
  \draw[thin, densely dashed, ->] ($(m-1-1.south) + (2.4,-.35)$) to [out=90, in=270] ($(m-1-1.east) + (.7,-.075)$);
  \node at ($(m-1-1.south) + (2.4,-.55)$) {\scriptsize define};
  \end{tikzpicture}
\end{gathered}
\end{equation*}
\caption{Our main commuting diagrams. We call the various $\howe$'s 
the \emph{Howe functors}, $\dia$'s the \emph{(diagrammatic) presentation functors} 
and $\lad$'s the \emph{ladder functors}.
}\label{figure:commute}
\end{figure}

To summarize (after appropriate parameter substitution in the 
symmetric case):

\begin{theoremmain}\label{theorem:main-intro-2}
There are ladder and presentation functors as 
in \fullref{figure:commute}. These 
define the various Howe
functors therein and hence, 
the actions in \fullref{theorem:main-intro}. 
All of these functors are full in types $\typeB\typeC$.
\end{theoremmain}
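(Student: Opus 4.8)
The plan is to construct each functor in \fullref{figure:commute} on generators, verify that it respects the defining relations of its source, obtain the Howe functors as the composites $\dia\circ\lad$, and then prove fullness by reducing to a spanning statement for coideal intertwiner spaces.

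First I would define the four presentation functors $\diaD$, $\symdiaC$, $\symdiaD$, $\diaC$. By construction (see \fullref{figure:webs}) each of $\WebD$, $\WebDpara$, $\WebCpara$, $\WebC$ consists of an $\typeA$-web part together with new generators --- cups and caps in the $\typeBDC$ case, start and end dots in the $\typeCBD$ case. On the $\typeA$-web part I would use the restriction of the type-$\typeA$ functor $\diaA$ (respectively $\symdiaA$): its generators are $\quantumg(\frakgl_n)$-intertwiners and hence also intertwine the coideal subalgebras $\coideal(\frakso_n),\coideal(\fraksp_n)\subset\quantumg(\frakgl_n)$. To the new generators I would assign the genuinely new, coideal-specific intertwiners: cups and caps to the (co)evaluation maps of the coideal-invariant form on $\vecrep$, and the dot to that invariant form regarded as a morphism on the $2$-labeled object. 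Requiring these form maps to be intertwiners pins down the loop parameter $\zpar$ as the quantum dimension of $\vecrep$ --- namely $\qpar^n$ in the exterior setting and $-\qpar^{-n}$ in the symmetric setting, which explains the specializations in $\WebD$, $\WebC$, $\WebDpara$ and $\WebCpara$. The technical heart of this step is checking that the assignment respects all the web relations; this I would verify diagrammatically, relation by relation, following the computations in \cite{QS}.

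Next I would define the two ladder functors $\ladD$ and $\ladC$, emanating from $\Udot(\frakso_{2k})$ and $\Udot(\fraksp_{2k})$ respectively, by sending the Chevalley generators $\dotE_i$, $\dotF_i$ to ladder webs and checking that the defining relations of the quantum group hold in the target web category. Composing then produces the Howe functors $\howeD=\diaD\circ\ladD$, $\symhoweC=\symdiaC\circ\ladD$, $\symhoweD=\symdiaD\circ\ladC$ and $\howeC=\diaC\circ\ladC$, so the triangles in \fullref{figure:commute} commute by definition. What still needs checking is that each composite lands in coideal intertwiners and satisfies the quantum group relations: the former follows from the new generators being coideal intertwiners together with the $\typeA$-web generators commuting with $\quantumg(\frakgl_n)$, and the latter is inherited from the web relations.

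The main obstacle --- and the genuine content --- is fullness in types $\typeB\typeC$. Following the reverse strategy of \cite{QS}, I would prove fullness directly and only afterwards read off the double-centralizer property of \fullref{theorem:main-intro}, rather than the other way round. The $\typeA$-web part is already full onto the $\quantumg(\frakgl_n)$-intertwiners by the type-$\typeA$ $\qpar$-Howe dualities recalled in the introduction; the additional coideal intertwiners that exist because $\coideal(\frakso_n)$, $\coideal(\fraksp_n)$ are proper subalgebras (cf.\ \eqref{eq:restrict-action}) are precisely the ones produced by the new cup/cap and dot generators. To see that these suffice, I would reduce to a spanning statement for $\Hom$-spaces between tensor products of exterior (or symmetric) powers of $\vecrep$, with the webs generalizing the Brauer category. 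In the classical limit $\qpar=1$ this spanning is the first fundamental theorem of invariant theory for the orthogonal and symplectic groups; I would then lift it to the quantum coideal setting by a flatness/deformation argument, using that these modules have the same dimensions as their classical counterparts. The delicate point is the even orthogonal case (type $\typeD$), where the extra intertwiner noted after \fullref{theorem:main-intro} is needed and fullness as stated fails --- which is exactly why the theorem is confined to types $\typeB\typeC$, i.e.\ to $\frakso_n$ with $n$ odd and to $\fraksp_n$.
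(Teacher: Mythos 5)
Your construction of the presentation and ladder functors matches the paper's: the $\typeA$-part is the restriction of $\diaA$ respectively $\symdiaA$, the cups/caps and dots are sent to the form maps of \eqref{eq:int-typeD-1}--\eqref{eq:int-typeD-2} and \eqref{eq:int-typeC-1}--\eqref{eq:int-typeC-2}, the specializations $\zpar=\qpar^{n}$ and $\zpar=-\qpar^{-n}$ are forced by the circle/barbell evaluations, and the Howe functors are defined as the composites \eqref{eq:Howe-functors}. Do note that your one-line ``check that the quantum group relations hold in the target web category'' is where most of the paper's labour actually sits: \fullref{sec-Dwebs} and \fullref{sec-Cwebs} exist precisely to verify the commutator, mixed-commutation and Serre relations among the ladder images. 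For fullness, your deformation argument is the mirror image of the paper's: the paper runs the integral-form/specialize-at-$\qpar=1$ semicontinuity argument on the coideal side, showing that $\coideal(\fraksp_n)$ surjects onto the centralizer of the $\quantumg(\fraksp_{2k})$-action and then invoking the double centralizer theorem to get \fullref{theorem:main-intro} and, as a consequence, fullness of the Howe and presentation functors; you run the same sandwich on the web side against the classical first fundamental theorem. Both routes rest on the same classical input from \cite{CW} and both are sound in principle.

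There is, however, a gap in your chain of deductions. As written, your specialization argument establishes fullness of the presentation functors $\dia$ only: it shows that \emph{webs} span the coideal intertwiner spaces. To ``read off'' the double-centralizer statement of \fullref{theorem:main-intro} you need fullness of the Howe functors $\howe=\dia\circ\lad$, i.e., that the image of $\Udot(\fraksp_{2k})$ (not merely the image of all webs) exhausts the endomorphisms commuting with the coideal; fullness of $\dia$ does not give this unless you also know that every web is a linear combination of ladder images, i.e., fullness of $\lad$ --- which is itself part of the claim of the theorem and which your proposal never addresses. The repair is cheap: run your $\qpar=1$ sandwich with the ladder images in place of arbitrary webs, replacing the first fundamental theorem by classical Howe duality itself (the statement that $\U(\fraksp_{2k})$, respectively $\U(\frakso_{2k})$, generates the centralizer), which is in effect what the paper does from the opposite side. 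But as stated the step from ``webs span the intertwiners'' to ``the quantum group spans the intertwiners'' is missing, and with it the deduction of \fullref{theorem:main-intro}.
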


As before, fullness in type $\typeD$ can be 
achieved by a slight modification, 
cf.\ \fullref{remark:sovso}.
The connection of the various 
webs and Howe dualities is summarized in \fullref{figure:howe-webs}.

\begin{figure}[ht]
\[
\renewcommand\arraystretch{1.5}
  \arraycolsep=1.5pt
\begin{array}{lcll}
\toprule[1pt]
{\color{myblue}\coideal(\frakso_n)} \acts 
& 
\underbrace{\textstyle {\color{myred}\exterior{\bullet}\vecrep} \otimes \dots \otimes {\color{myred}\exterior{\bullet}\vecrep}}_{k \text{ times}}
& 
\actsreverse \quantumg(\frakso_{2k})
&
\quad\leftrightsquigarrow\quad
\text{``}{\color{myred}\text{exterior }}{\color{myblue}\typeB\typeD\text{-webs}}\text{''}
\\
\midrule
{\color{myblue}\coideal(\frakso_{n})}
\acts
& 
\underbrace{\textstyle {\color{mygreen}\symmetric{\bullet}\vecrep} \otimes \dots \otimes {\color{mygreen}\symmetric{\bullet}\vecrep}}_{k \text{ times}}
&
\actsreverse \quantumg(\fraksp_{2k})
&
\quad\leftrightsquigarrow\quad
\text{``}{\color{mygreen}\text{symmetric }}{\color{myblue}\typeB\typeD\text{-webs}}\text{''}
\\
\midrule
\midrule
{\color{myblue}\coideal(\fraksp_{n})} 
\acts
&
\underbrace{\textstyle {\color{myred}\exterior{\bullet}\vecrep} \otimes \dots \otimes {\color{myred}\exterior{\bullet}\vecrep}}_{k \text{ times}}
&
\actsreverse \quantumg(\fraksp_{2k})
&
\quad\leftrightsquigarrow\quad
\text{``}{\color{myred}\text{exterior }}{\color{myblue}\typeC\text{-webs}}\text{''}
\\
\midrule
{\color{myblue}\coideal(\fraksp_{n})} 
\acts
&
\underbrace{\textstyle {\color{mygreen}\symmetric{\bullet}\vecrep} \otimes \dots \otimes {\color{mygreen}\symmetric{\bullet}\vecrep}}_{k \text{ times}} 
&
\actsreverse \quantumg(\frakso_{2k})
&
\quad\leftrightsquigarrow\quad
\text{``}{\color{mygreen}\text{symmetric }}{\color{myblue}\typeC\text{-webs}}\text{''}
\\\bottomrule[1pt]
\end{array}
\]
\caption{Webs and $\qpar$-Howe dualities.}
\label{figure:howe-webs}
\end{figure}

\makeautorefname{theoremmain}{Theorems} 

Moreover,  
we will explain 
in \fullref{sec:howe-duality}
how \fullref{theorem:main-intro} 
and \ref{theorem:main-intro-2} (in particular, the commuting diagrams 
from \fullref{figure:howe-webs}) 
generalize the (quantum) Brauer category.

\makeautorefname{theoremmain}{Theorem} 

\subsection{Some further remarks}

\begin{remark}\label{remark:coideal}
The coideals $\coideal(\frakso_n)$ and $\coideal(\fraksp_n)$ 
are not Hopf subalgebras of $\quantumg(\frakgl_n)$, 
because they are not closed under comultiplication.
Hence, 
$\qRepco{\frakso_n}$ and $\qRepco{\fraksp_n}$
do not inherit a monoidal structure.
But since $\coideal(\frakso_n)$ and 
$\coideal(\fraksp_n)$ are  left coideal subalgebras 
of $\coideal(\frakgl_n)$, there is a
left action of $\Repq{n}$ on them.
In the web language this translates to the left-right 
partitioning as in \fullref{figure:webs}.

We stress that all these phenomena 
disappear if one de-quantizes.
\end{remark}

\begin{remark}\label{remark:sovso}
Let $\mathrm{O}_n$ be the orthogonal 
group, and $\vecrepnoq$ its vector representation.
Brauer \cite{Br} defined 
the \emph{Brauer algebra}, which 
surjects onto $\End_{\mathrm{O}_n}(\vecrepnoq^{\otimes k})$, 
for all $k$.
But, as Brauer observed (see also \cite[\S 5.1.3]{LZ1}), 
if one wants to replace $\mathrm{O}_n$ by the 
special orthogonal group $\mathrm{SO}_n$, 
then this is not true anymore since:
\smallskip
\begin{enumerate}[label=$\blacktriangleright$]

\setlength\itemsep{.15cm}

\item If $n$ is odd, then $\End_{\mathrm{O}_n}(\vecrepnoq^{\otimes k})
=\End_{\mathrm{SO}_n}(\vecrepnoq^{\otimes k})
$ for all $k$.

\item If $n$ is even, then $\End_{\mathrm{O}_n}(\vecrepnoq^{\otimes k})
=\End_{\mathrm{SO}_n}(\vecrepnoq^{\otimes k})
$ if and only if $n\geq 2k+1$.

\end{enumerate}
(Morally, one ``Brauer diagram generator'' 
is missing for $\mathrm{SO}_n$ if $n$ is even, 
see also \cite{Gr} and \cite{LZ2}.)
As a consequence, surjectivity fails 
in general for $\mathrm{SO}_n$ in type $\typeD$.

We will see in \fullref{sec:howe-duality} that the Brauer algebra 
is closely related to our web categories. Hence, to have surjectivity or 
fullness in general, we would have to add this extra 
Brauer diagram generator to our web categories. 
However, since this is not the main point of our 
construction, we prefer to avoid technicalities. Hence, we
obtain  
slightly weaker statements in type $\typeD$ 
than in types $\typeB\typeC$.
\end{remark}

\begin{remark}\label{remark:web-names}
The algebras on the right-hand side of
our $\qpar$-Howe dualities basically define the web categories, which
on the other hand correspond to the representation categories
of the algebras on the left-hand side.

Indeed, our webs have a representation theoretical 
incarnation via the functors $\dia$ from \fullref{figure:commute}. 
For example, the start and end dots as in \fullref{figure:webs}
correspond (in the de-quantized setting) to the fact that 
$\exteriornoq{2} \vecrepnoq$ (in type $\typeC$) 
respectively $\symmetricnoq{2} \vecrepnoq$ 
(in types $\typeB\typeD$) 
are not indecomposable, but contain a 
copy of the trivial module.  
\end{remark}

\subsection{Conventions} \label{sec-conventions-used}
We work over the ring $\CQZ$ of Laurent polynomial over the complex function field. 
We call $\qpar$ and $\zpar$ \emph{generic parameters}.
We also consider specializations of $\CQZ$ 
obtained by setting $\zpar$ equal to some non-zero value in 
the field $\CQ$. 
(The cases of overriding importance for us are 
the specializations of the form $\zpar=\pm\qpar^{\pm n}$ 
and there is no harm to think of $\zpar=\pm\qpar^{\pm n}$ 
throughout.)

In this setup, let $d_i\in\N$ 
and set $\qpar_i=\qpar^{d_i}$. 
The \emph{($\zpar$-)quantum number}, 
the \emph{quantum factorial},  
and the \emph{quantum binomial} 
are given by (here $s\in\Z$ and $t\in\N$)
\begin{gather}\label{eq:qnumbers-typeAD} 
\begin{aligned}
  &[s]_{i}  = \frac{\qpar_i^s - \qpar_i^{-s}}{\qpar^{\phantom{1}}_i - \qpar_i^{-1}}\in\CQ,\quad
  [\zpar;s]_{i}  = \frac{\zpar^{\phantom{1}}\qpar_i^s - \zpar^{-1}\qpar_i^{-s}}{\qpar^{\phantom{1}}_i - \qpar_i^{-1}}\in\CQZ,\\
  [t]_{i}! &= [t]_{i} [t-1]_{i} \dots [1]_{i}\in\CQ,\quad
  \qbinn{s}{t}_{i} = \frac{[s]_{i} [s-1]_{i} \dots [s-t+1]_{i}}{[t]_{i} [t-1]_{i} \dots [1]_{i}}\in\CQ.
\end{aligned}
\end{gather}
By convention, $[0]_{i}!=1=\qbin{s}{0}_{i}$. 
Note that $[0]_{i}=0=\qbin{0\leq s<t}{t}_{i}$ 
and $[-s]_{i}=-[s]_{i}$.
In case $d_i=1$ we write $[s]=[s]_{1}$ etc. for simplicity of notation.

\makeautorefname{definition}{Definitions}

Let $\ring$ be a ring.
All our categories are assumed to be additive and $\ring$-linear (except in 
\fullref{definition:gen-rel-conventions} and \ref{def:action-of-mon-category}),
and all our functors are assumed to be $\ring$-linear (and hence, additive). 
Which specific choice of $\ring$ we mean will be clear from the context.

\makeautorefname{definition}{Definition}

\subsection{Acknowledgements} 
We
like to thank Pedro Vaz for freely 
sharing his ideas and observations, some of 
which started this project.
We also thank Jonathan Comes, Michael Ehrig,
Hoel Queffelec,
Catharina Stroppel and Arik Wilbert for
some useful discussions. Special thanks to the referee
for helpful comments and suggestions.

The 
Hausdorff Center for Mathematics (HCM) in Bonn
and the GK 1821 in Freiburg 
sponsored some research visits of the authors during 
this project.
Both this support and
the hospitality during our visits are gratefully acknowledged.

D.T.\ likes to thank the 
wastebasket in his office for 
supporting a summer 
of calculations involving crazy quantum scalars -- most of which 
ended in utter chaos. Luckily, the symbol ${}^{\prime}$ 
came around at one point.

\section{A reminder on the \texorpdfstring{$\typeA$}{A}-web category}\label{sec-Awebs}
In this section we recall the construction of 
\emph{$\typeA$-webs} 
in the spirit of \cite{CKM}. (Note that, in contrast to 
\cite{CKM}, we use unoriented diagrams. This is due to the fact that 
the representations which we consider later in \fullref{sec-reps} are self-dual.)

\subsection{The \texorpdfstring{$\typeA$}{A}-web category}\label{subsec-typeA-webs}

We start by fixing conventions:

\begin{convention}\label{eq:dia-conventions}
For us the composition $\circ$ 
in diagram categories will be given
by vertical stacking, while the
monoidal product $\otimes$
will be given by horizontal juxtaposition, and 
identities are given by parallel 
vertical strands. We read our diagrams from bottom to top and left to right, 
e.g.:
\begin{gather}\label{eq:inter-law}
\overunder{(\mathrm{id}\otimes g)}{(f\otimes\mathrm{id})}{\text{\tiny$\circ$}}=
	\begin{tikzpicture}[anchorbase,scale=.25,tinynodes]
	\draw[uncolored, densely dotted] (-6,2.5) node[left] {$\circ$} to (4,2.5) node[right] {$\circ$};
	\draw[uncolored, densely dotted] (-1.25,6) node[above] {$\otimes$} to (-1.25,-1) node[below] {$\otimes$};
	\draw[colored] (-5,2) to (-5,3.75) node[right] {$\cdots$} to (-5,5) node[above] {$a$};
	\draw[colored] (-2.5,2) to (-2.5,5) node[above] {$b$};
	\draw[colored] (0,0) node[below] {$c$} to (0,1.25) node[right] {$\cdots$} to (0,3);
	\draw[colored] (2.5,0) node[below] {$d$} to (2.5,3);
	\draw (-5.5,2) rectangle (-2,.5);
	\draw (-.5,3) rectangle (3,4.5);
	\draw[colored] (-5,0) node[below] {$a$} to (-5,.25) node[right] {$\cdots$} to (-5,.5);
	\draw[colored] (-2.5,0) node[below] {$b$} to (-2.5,.5);
	\draw[colored] (2.5,4.5) to (2.5,5) node[above] {$d$};
	\draw[colored] (0,4.5) to (0,4.75) node[right] {$\cdots$} to (0,5) node[above] {$c$};
	\node at (-3.75,1.25) {$f$};
	\node at (1.25,3.75) {$g$};
	\end{tikzpicture}
=
    \begin{tikzpicture}[anchorbase,scale=.25,tinynodes]
	\draw[colored] (-5,3.25) to (-5,4.125) node[right] {$\cdots$} to (-5,5) node[above] {$a$};
	\draw[colored] (-2.5,3.25) to (-2.5,5) node[above] {$b$};
	\draw[colored] (0,0) node[below] {$c$} to (0,.625) node[right] {$\cdots$} to (0,1.75);
	\draw[colored] (2.5,0) node[below] {$d$} to (2.5,1.75);
	\draw (-5.5,1.75) rectangle (-2,3.25);
	\draw (-.5,1.75) rectangle (3,3.25);
	\draw[colored] (-5,0) node[below] {$a$} to (-5,.625) node[right] {$\cdots$} to (-5,1.75);
	\draw[colored] (-2.5,0) node[below] {$b$} to (-2.5,1.75);
	\draw[colored] (2.5,3.25) to (2.5,5) node[above] {$d$};
	\draw[colored] (0,3.25) to (0,4.125) node[right] {$\cdots$} to (0,5) node[above] {$c$};
	\node at (-3.75,2.5) {$f$};
	\node at (1.25,2.5) {$g$};
	\end{tikzpicture}
=
	\begin{tikzpicture}[anchorbase,scale=.25,tinynodes]
	\draw[uncolored, densely dotted] (6,2.5) node[right] {$\circ$} to (-4,2.5) node[left] {$\circ$};
	\draw[uncolored, densely dotted] (1.25,6) node[above] {$\otimes$} to (1.25,-1) node[below] {$\otimes$};
	\draw[colored] (2.5,2) to (2.5,3.75) node[right] {$\cdots$} to (2.5,5) node[above] {$c$};
	\draw[colored] (5,2) to (5,5) node[above] {$d$};
	\draw[colored] (0,0) node[below] {$b$} to (0,3);
	\draw[colored] (-2.5,0) node[below] {$a$} to (-2.5,1.25) node[right] {$\cdots$} to (-2.5,3);
	\draw (5.5,2) rectangle (2,.5);
	\draw (.5,3) rectangle (-3,4.5);
	\draw[colored] (2.5,0) node[below] {$c$} to (2.5,.25) node[right] {$\cdots$} to (2.5,.5);
	\draw[colored] (5,0) node[below] {$d$} to (5,.5);
	\draw[colored] (-2.5,4.5) to (-2.5,4.75) node[right] {$\cdots$} to (-2.5,5) node[above] {$a$};
	\draw[colored] (0,4.5) to (0,5) node[above] {$b$};
	\node at (3.75,1.25) {$g$};
	\node at (-1.25,3.75) {$f$};
	\end{tikzpicture}
=
\overunder{(f\otimes\mathrm{id})}{(\mathrm{id}\otimes g)}{\text{\tiny$\circ$}}
\end{gather}
Here $f,g$ are some morphisms in the categories in question. 
Moreover, as in the illustration above, we 
tend to omit the symbol $\otimes$ 
between objects.
\end{convention}

\begin{definition}\label{definition:gen-rel-conventions}
We say a (strict) monoidal category $\boldsymbol{\calM}=(\boldsymbol{\calM},\otimes,\one)$ 
is \emph{generated by two finite sets of objects $\mathtt{O}_{\boldsymbol{\calM}}$ 
and morphisms $\mathtt{M}_{\boldsymbol{\calM}}$} 
if any object, respectively morphism, is a $\otimes$, respectively a $\circ$-$\otimes$, composite 
of objects, respectively morphisms, from the two fixed sets 
(we allow the empty composition). 
If we further 
fix a set of relations $\mathtt{R}_{\boldsymbol{\calM}}$ among the morphisms of $\boldsymbol{\calM}$, then 
$\boldsymbol{\calM}$ is meant to be the quotient of the monoidal category freely 
generated by the fixed generators modulo these relations. See e.g. 
\cite[Section XII.1]{Ka1} for details.

Let $\ring$ be some ring. For a monoidal, $\ring$-linear 
category these notions are to be understood verbatim by 
enriching the morphism spaces formally in free $\ring$-modules.

The \emph{additive closure} of $\boldsymbol{\calM}$ means that 
we allow formal direct sums of objects from $\boldsymbol{\calM}$, and 
formal matrices of morphisms from $\boldsymbol{\calM}$. See e.g. \cite[Definition 3.2]{BN1} 
for details. (Beware that Bar-Natan uses a different nomenclature than we do.)
\end{definition}

\subsubsection{The monoidal category of \texorpdfstring{$\typeA$}{A}-webs}

\begin{definition}\label{definition:typeA-webs}
The \emph{$\typeA$-web category $\WebA$} is the 
additive closure of the (strict)
monoidal, $\CQ$-linear category generated 
by objects $a$ for $a \in \Z_{> 0}$ 
(note that the monoidal unity is given by the empty sequence $\varnothing$),
and morphisms
\begin{subequations}
\begingroup
\renewcommand{\theequation}{{\color{black}\textbf{A}}gen}
\begin{gather}\label{eq:Aweb-gens}
	\begin{tikzpicture}[anchorbase,scale=.25,tinynodes]
	\draw[colored] (0,0) node[below] {$a$} to (0,.5) node[left] {$a$} to [out=90, in=225] (1,2.25) to (1,3) node[right] {$a{+}b$} to (1,4) node[above] {$a{+}b$};
	\draw[colored] (2,0) node[below] {$b$} to (2,.5) node[right] {$b$} to [out=90, in=315] (1,2.25);
    \end{tikzpicture}
\colon
a\otimes b \to a+b
\quad\text{and}\quad
    \begin{tikzpicture}[anchorbase,scale=.25,tinynodes]
	\draw[colored] (0,4) node[above] {$a$} to (0,3.5) node[left] {$a$} to [out=270, in=135] (1,1.75) to (1,1) node[right] {$a{+}b$} to (1,0) node[below] {$a{+}b$};
	\draw[colored] (2,4) node[above] {$b$} to (2,3.5) node[right] {$b$} to [out=270, in=45] (1,1.75);
     \end{tikzpicture}
\colon
a+b \to a\otimes b,
\end{gather}
\endgroup
\end{subequations}
\addtocounter{equation}{-1}
(which we call 
\emph{merge} and \emph{split}),
modulo the relations:

\begin{subequations}
\begingroup
\renewcommand{\theequation}{{\color{black}\textbf{A}}\arabic{equation}}
\begin{enumerate}[label=$\vartriangleright$]

\setlength\itemsep{.15cm}

\item \emph{Associativity} and \emph{coassociativity}
\begin{gather}\label{eq:asso}
    \begin{tikzpicture}[anchorbase,scale=.25,tinynodes]
	\draw[colored] (0,0) node[below] {$a$} to [out=90, in=225] (1,2);
	\draw[colored] (2,0) node[below] {$b$} to [out=90, in=-45] (1,2);
	\draw[colored] (4,0) node[below] {$c$} to (4,2);
	\draw[colored] (1,2) to [out=90, in=225] (2.5,4);
	\draw[colored] (4,2) to [out=90, in=-45] (2.5,4);
	\draw[colored] (2.5,4) to (2.5,6) node[above] {$a{+}b{+}c$};
	\end{tikzpicture}
=
	\begin{tikzpicture}[anchorbase,scale=.25,tinynodes]
	\draw[colored] (0,0) node[below] {$c$} to [out=90, in=-45] (-1,2);
	\draw[colored] (-2,0) node[below] {$b$} to [out=90, in=225] (-1,2);
	\draw[colored] (-4,0) node[below] {$a$} to (-4,2);
	\draw[colored] (-1,2) to [out=90, in=-45] (-2.5,4);
	\draw[colored] (-4,2) to [out=90, in=225] (-2.5,4);
	\draw[colored] (-2.5,4) to (-2.5,6) node[above] {$a{+}b{+}c$};
	\end{tikzpicture}
\quad\text{and}\quad
	\begin{tikzpicture}[anchorbase,scale=.25,tinynodes]
	\draw[colored] (0,0) node[above] {$a$} to [out=270, in=135] (1,-2);
	\draw[colored] (2,0) node[above] {$b$} to [out=270, in=45] (1,-2);
	\draw[colored] (4,0) node[above] {$c$} to (4,-2);
	\draw[colored] (1,-2) to [out=270, in=135] (2.5,-4);
	\draw[colored] (4,-2) to [out=270, in=45] (2.5,-4);
	\draw[colored] (2.5,-4) to (2.5,-6) node[below] {$a{+}b{+}c$};
	\end{tikzpicture}
=
	\begin{tikzpicture}[anchorbase,scale=.25,tinynodes]
	\draw[colored] (0,0) node[above] {$c$} to [out=270, in=45] (-1,-2);
	\draw[colored] (-2,0) node[above] {$b$} to [out=270, in=135] (-1,-2);
	\draw[colored] (-4,0) node[above] {$a$} to (-4,-2);
	\draw[colored] (-1,-2) to [out=270, in=45] (-2.5,-4);
	\draw[colored] (-4,-2) to [out=270, in=135] (-2.5,-4);
	\draw[colored] (-2.5,-4) to (-2.5,-6) node[below] {$a{+}b{+}c$};
	\end{tikzpicture}
\end{gather}
	
\item The \emph{(thin) square switch}
	
\begin{gather}\label{eq:square}
	\begin{tikzpicture}[anchorbase,scale=.25,tinynodes]
	\draw[colored] (0,0) node[below] {$a$} to (0,4) node[above] {$a$};
	\draw[colored] (2.5,0) node[below] {$b$} to (2.5,4) node[above] {$b$};
	\draw[uncolored] (0,1) to (1.25,1.25) to (2.5,1.5);
	\draw[uncolored] (0,3) to (1.25,2.75) to (2.5,2.5);
	\end{tikzpicture}
=
	\begin{tikzpicture}[anchorbase,scale=.25,tinynodes]
	\draw[colored] (0,0) node[below] {$a$} to (0,4) node[above] {$a$};
	\draw[colored] (2.5,0) node[below] {$b$} to (2.5,4) node[above] {$b$};
	\draw[uncolored] (2.5,1) to (1.25,1.25) to (0,1.5);
	\draw[uncolored] (2.5,3) to (1.25,2.75) to (0,2.5);
	\end{tikzpicture}
+\qbracket{a-b}
	\begin{tikzpicture}[anchorbase,scale=.25,tinynodes]
	\draw[colored] (0,0) node[below] {$a$} to (0,4) node[above] {$a$};
	\draw[colored] (2.5,0) node[below] {$b$} to (2.5,4) node[above] {$b$};
	\end{tikzpicture}
\end{gather}
\end{enumerate}
\endgroup
\end{subequations}
\end{definition}

\addtocounter{equation}{-1}

Every diagram representing a morphism in $\WebA$
will be called an \emph{$\typeA$-web}. Note that 
the interchange law \eqref{eq:inter-law} allows us to 
use topological height moves among $\typeA$-webs, as well as 
other topological manipulations which keep an upward-directedness 
of $\typeA$-webs (i.e. no critical points, when 
one sees $\typeA$-webs as embedded graphs), and we do so 
in the following. In fact, we simplified our illustrations 
by sometimes drawing them in a topological fashion, a 
shorthand which we will use throughout. However, we stress 
that all our web calculi are rigidly built from generating sets.

\begin{convention}\label{eq:dia-conventions-thickness}
We call the label of an edge the \emph{thickness} of the edge in question. 
Although we do not allow edges labeled $0$ or negative labeled edges, it is convenient 
in illustrations to allow edges which are potentially zero -- these 
are to be erased to obtain the corresponding $\typeA$-web -- or negative -- which 
set the $\typeA$-web to be zero.
Edges labeled $1$, called \emph{thin}, will play an important role 
and we illustrate them thinner than 
arbitrary labeled edges, cf.\ \eqref{eq:square}. 
Moreover, edges of thickness $2$ also play 
a special role and are displayed slightly thicker 
than thin edges.
We sometimes omit the 
edge labels: if they are omitted, then they 
can be recovered from the illustrated ones, 
or are $1$ or $2$ whenever they correspond to thinner edges. 
\end{convention}

Later it will be convenient to consider $\WebA$ 
as a $\CQZ$-linear category, denoted by $\WebAz$, which can be easily 
achieved via scalar extension.

\begin{remark}\label{remark:no-thick-things}
Note that the \emph{thick square switches}, i.e.
\begin{gather}\label{eq:thick-square}
	\begin{tikzpicture}[anchorbase,scale=.25,tinynodes]
	\draw[colored] (0,0) node[below] {$a$} to (0,4) node[above] {$\begin{matrix}a{-}c\\{+}d\end{matrix}$};
	\draw[colored] (2.5,0) node[below] {$b$} to (2.5,4) node[above] {$\begin{matrix}b{+}c\\{-}d\end{matrix}$};
	\draw[colored] (0,1) to (1.25,1.25) node[below] {$c$} to (2.5,1.5);
	\draw[colored] (0,3) to (1.25,2.75) node[above] {$d$} to (2.5,2.5);
	\end{tikzpicture}
=
{\textstyle\sum_{e}}
\qbin{a-b+c-d}{e}
	\begin{tikzpicture}[anchorbase,scale=.25,tinynodes]
	\draw[colored] (2.5,0) node[below] {$b$} to (2.5,4) node[above] {$\begin{matrix}b{+}c\\{-}d\end{matrix}$};
	\draw[colored] (0,0) node[below] {$a$} to (0,4) node[above] {$\begin{matrix}a{-}c\\{+}d\end{matrix}$};
	\draw[colored] (2.5,1) to (1.25,1.25) node[below] {$d{-}e$} to (0,1.5);
	\draw[colored] (2.5,3) to (1.25,2.75) node[above] {$c{-}e$} to (0,2.5);
	\end{tikzpicture}
\quad\text{and}\quad
	\begin{tikzpicture}[anchorbase,scale=.25,tinynodes]
	\draw[colored] (2.5,0) node[below] {$b$} to (2.5,4) node[above] {$\begin{matrix}b{-}c\\{+}d\end{matrix}$};
	\draw[colored] (0,0) node[below] {$a$} to (0,4) node[above] {$\begin{matrix}a{+}c\\{-}d\end{matrix}$};
	\draw[colored] (2.5,1) to (1.25,1.25) node[below] {$c$} to (0,1.5);
	\draw[colored] (2.5,3) to (1.25,2.75) node[above] {$d$} to (0,2.5);
	\end{tikzpicture}
=
{\textstyle\sum_{e}}
\qbin{-a+b-c+d}{e}
	\begin{tikzpicture}[anchorbase,scale=.25,tinynodes]
	\draw[colored] (0,0) node[below] {$a$} to (0,4) node[above] {$\begin{matrix}a{+}c\\{-}d\end{matrix}$};
	\draw[colored] (2.5,0) node[below] {$b$} to (2.5,4) node[above] {$\begin{matrix}b{-}c\\{+}d\end{matrix}$};
	\draw[colored] (0,1) to (1.25,1.25) node[below] {$d{-}e$} to (2.5,1.5);
	\draw[colored] (0,3) to (1.25,2.75) node[above] {$c{-}e$} to (2.5,2.5);
	\end{tikzpicture}
\end{gather}
where $e\in\N$,
as well as the \emph{divided power collapsing}, i.e.
\begin{gather}\label{eq:EE-square}
	\begin{tikzpicture}[anchorbase,scale=.25,tinynodes]
	\draw[colored] (0,0) node[below] {$a$} to (0,4) node[above] {$\begin{matrix}a{+}c\\{+}d\end{matrix}$};
	\draw[colored] (2.5,0) node[below] {$b$} to (2.5,4) node[above] {$\begin{matrix}b{-}c\\{-}d\end{matrix}$};
	\draw[colored] (0,1.5) to (1.25,1.25) node[below] {$c$} to (2.5,1);
	\draw[colored] (0,3) to (1.25,2.75) node[above] {$d$} to (2.5,2.5);
	\end{tikzpicture}
=
\qbin{c+d}{d}
	\begin{tikzpicture}[anchorbase,scale=.25,tinynodes]
	\draw[colored] (0,0) node[below] {$a$} to (0,4) node[above] {$\begin{matrix}a{+}c\\{+}d\end{matrix}$};
	\draw[colored] (2.5,0) node[below] {$b$} to (2.5,4) node[above] {$\begin{matrix}b{-}c\\{-}d\end{matrix}$};
	\draw[colored] (0,2.25) to (1.25,2) node[above] {$c{+}d$} to (2.5,1.75);
	\end{tikzpicture}
\quad\text{and}\quad
	\begin{tikzpicture}[anchorbase,scale=.25,tinynodes]
	\draw[colored] (0,0) node[below] {$a$} to (0,4) node[above] {$\begin{matrix}a{-}c\\{-}d\end{matrix}$};
	\draw[colored] (2.5,0) node[below] {$b$} to (2.5,4) node[above] {$\begin{matrix}b{+}c\\{+}d\end{matrix}$};
	\draw[colored] (0,1) to (1.25,1.25) node[below] {$c$} to (2.5,1.5);
	\draw[colored] (0,2.5) to (1.25,2.75) node[above] {$d$} to (2.5,3);
	\end{tikzpicture}
=
\qbin{c+d}{d}
	\begin{tikzpicture}[anchorbase,scale=.25,tinynodes]
	\draw[colored] (0,0) node[below] {$a$} to (0,4) node[above] {$\begin{matrix}a{-}c\\{-}d\end{matrix}$};
	\draw[colored] (2.5,0) node[below] {$b$} to (2.5,4) node[above] {$\begin{matrix}b{+}c\\{+}d\end{matrix}$};
	\draw[colored] (0,1.75) to (1.25,2) node[above] {$c{+}d$} to (2.5,2.25);
	\end{tikzpicture}
\end{gather}
can be deduced from the above relations
since we work over $\CQ$. An example is:
\begin{gather*}
\begin{aligned}
	\begin{tikzpicture}[anchorbase,scale=.25,tinynodes]
	\draw[colored] (0,0) node[below] {$a$} to (0,4) node[above] {$a{-}1$};
	\draw[colored] (2.5,0) node[below] {$b$} to (2.5,4) node[above] {$b{+}1$};
	\draw[ccolored] (0,1) to (2.5,1.5);
	\draw[uncolored] (0,3) to (1.25,2.75) to (2.5,2.5);
	\end{tikzpicture}
&\overset{\eqref{eq:square}}{=}
\tfrac{1}{[2]}
	\begin{tikzpicture}[anchorbase,scale=.25,tinynodes]
	\draw[colored] (0,0) node[below] {$a$} to (0,4) node[above] {$a{-}1$};
	\draw[colored] (2.5,0) node[below] {$b$} to (2.5,4) node[above] {$b{+}1$};
	\draw[ccolored] (0,.5) to (1.25,.75);
	\draw[ccolored] (1.25,1.75) to (2.5,2);
	\draw[uncolored] (1.25,.75) to [out=135, in=225] (1.25,1.75);
	\draw[uncolored] (1.25,.75) to [out=45, in=315] (1.25,1.75);
	\draw[uncolored] (0,3) to (1.25,2.75) to (2.5,2.5);
	\end{tikzpicture}
\overset{\eqref{eq:asso}}{=}
\tfrac{1}{[2]}
	\begin{tikzpicture}[anchorbase,scale=.25,tinynodes]
	\draw[colored] (0,0) node[below] {$a$} to (0,4) node[above] {$a{-}1$};
	\draw[colored] (2.5,0) node[below] {$b$} to (2.5,4) node[above] {$b{+}1$};
	\draw[uncolored] (0,.5) to (2.5,1);
	\draw[uncolored] (0,1.25) to (2.5,1.75);
	\draw[uncolored] (0,3) to (1.25,2.75) to (2.5,2.5);
	\end{tikzpicture}
\overset{\eqref{eq:square}}{=}
\tfrac{1}{[2]}
	\begin{tikzpicture}[anchorbase,scale=.25,tinynodes]
	\draw[colored] (0,0) node[below] {$a$} to (0,4) node[above] {$a{-}1$};
	\draw[colored] (2.5,0) node[below] {$b$} to (2.5,4) node[above] {$b{+}1$};
	\draw[uncolored] (0,.75) to (2.5,1.25);
	\draw[uncolored] (0,2.25) to (2.5,1.75);
	\draw[uncolored] (0,2.75) to (2.5,3.25);
	\end{tikzpicture}
+\tfrac{[a-b-2]}{[2]}
	\begin{tikzpicture}[anchorbase,scale=.25,tinynodes]
	\draw[colored] (0,0) node[below] {$a$} to (0,4) node[above] {$a{-}1$};
	\draw[colored] (2.5,0) node[below] {$b$} to (2.5,4) node[above] {$b{+}1$};
	\draw[uncolored] (0,1.75) to (2.5,2.25);
	\end{tikzpicture}
\\
\overset{\eqref{eq:square}}{=}&
\tfrac{1}{[2]}
	\begin{tikzpicture}[anchorbase,scale=.25,tinynodes]
	\draw[colored] (0,0) node[below] {$a$} to (0,4) node[above] {$a{-}1$};
	\draw[colored] (2.5,0) node[below] {$b$} to (2.5,4) node[above] {$b{+}1$};
	\draw[uncolored] (0,1) to (2.5,.5);
	\draw[uncolored] (0,1.75) to (2.5,2.25);
	\draw[uncolored] (0,2.5) to (1.25,2.75) to (2.5,3);
	\end{tikzpicture}
+\qbracket{a-b+1}
	\begin{tikzpicture}[anchorbase,scale=.25,tinynodes]
	\draw[colored] (0,0) node[below] {$a$} to (0,4) node[above] {$a{-}1$};
	\draw[colored] (2.5,0) node[below] {$b$} to (2.5,4) node[above] {$b{+}1$};
	\draw[uncolored] (0,1.75) to (2.5,2.25);
	\end{tikzpicture}
\overset{\eqref{eq:EE-square}}{=}
	\begin{tikzpicture}[anchorbase,scale=.25,tinynodes]
	\draw[colored] (0,0) node[below] {$a$} to (0,4) node[above] {$a{-}1$};
	\draw[colored] (2.5,0) node[below] {$b$} to (2.5,4) node[above] {$b{+}1$};
	\draw[uncolored] (0,1) to (1.25,1.25) to (2.5,1.5);
	\draw[ccolored] (0,3) to (2.5,2.5);
	\end{tikzpicture}
+\qbracket{a-b+1}
	\begin{tikzpicture}[anchorbase,scale=.25,tinynodes]
	\draw[colored] (0,0) node[below] {$a$} to (0,4) node[above] {$a{-}1$};
	\draw[colored] (2.5,0) node[below] {$b$} to (2.5,4) node[above] {$b{+}1$};
	\draw[uncolored] (0,1.75) to (2.5,2.25);
	\end{tikzpicture}
\end{aligned}
\end{gather*}
The first step here is called \emph{explosion}.
This is a general feature for (many) web categories: 
the web calculus is basically determined by what happens 
in the case of thin labels,
as the thick ones can be reduced to the thin ones via 
explosion. We will see this phenomenon 
turning up later on as well.

Note also that the so-called 
\emph{digon removals}, i.e.\ 
\begin{gather}\label{eq:digon}
\begin{tikzpicture}[anchorbase,scale=.25,tinynodes]
	\draw[colored] (0,0) node[below] {$a{+}b$} to (0,1);
	\draw[colored] (0,3) to (0,4) node[above] {$a{+}b$};
	\draw[colored] (0,1) to [out=135, in=270] (-.75,2) node[left] {$a$} to [out=90, in=225] (0,3);
	\draw[colored] (0,1) to [out=45, in=270] (.75,2) node[right] {$b$} to [out=90, in=315] (0,3);
	\end{tikzpicture}
=
\qbin{a+b}{b}
	\begin{tikzpicture}[anchorbase,scale=.25,tinynodes]
	\draw[colored] (0,0) node[below] {$a{+}b$} to (0,4) node[above] {$a{+}b$};
	\end{tikzpicture}
\end{gather}
are special cases of the 
square switches.
\end{remark}

\begin{remark}\label{remark:web-Serre}
By one of the main results of \cite{CKM}, 
we have a list of additional relations 
which we call the \emph{$\typeA$-web Serre relations}. 
We just give a blueprint example (cf.\ \cite[Lemma 2.2.1]{CKM}):
\[
\qbracket{2}
      \begin{tikzpicture}[anchorbase,xscale=.3,yscale=.25,tinynodes]
      \draw[colored] (0,0) node[below] {$a$} to (0,6) node [above] {$a{+}2$};
      \draw[colored] (2,0) node[below] {$b$} to (2,6) node [above] {$b{-}1$};
      \draw[colored] (4,0) node[below] {$c$} to (4,6) node [above] {$c{-}1$};
      \draw[uncolored] (2,1.5) to (0,2.5);
      \draw[uncolored] (2,3) to (4,2);
      \draw[uncolored] (2,4.5) to (0,5.5);
      \end{tikzpicture}
=
	  \begin{tikzpicture}[anchorbase,xscale=.3,yscale=.25,tinynodes]
      \draw[colored] (0,0) node[below] {$a$} to (0,6) node [above] {$a{+}2$};
      \draw[colored] (2,0) node[below] {$b$} to (2,6) node [above] {$b{-}1$};
      \draw[colored] (4,0) node[below] {$c$} to (4,6) node [above] {$c{-}1$};
      \draw[uncolored] (2,3) to (0,4);
      \draw[uncolored] (2,1.5) to (4,.5);
      \draw[uncolored] (2,4.5) to (0,5.5);
      \end{tikzpicture}
+
      \begin{tikzpicture}[anchorbase,xscale=.3,yscale=.25,tinynodes]
      \draw[colored] (0,0) node[below] {$a$} to (0,6) node [above] {$a{+}2$};
      \draw[colored] (2,0) node[below] {$b$} to (2,6) node [above] {$b{-}1$};
      \draw[colored] (4,0) node[below] {$c$} to (4,6) node [above] {$c{-}1$};
      \draw[uncolored] (2,1.5) to (0,2.5);
      \draw[uncolored] (2,4.5) to (4,3.5);
      \draw[uncolored] (2,3) to (0,4);
      \end{tikzpicture}
\]
Since we work over $\CQ$, thick versions of these hold as well. 
We leave it to the reader to write them down, 
keeping in mind that they are ``web versions'' of the 
higher order Serre relations \eqref{eq:higherSerre} of type 
$\typeA$. 
(We refer to these specifying $s,t$ as therein.)
\end{remark}

\subsubsection{The braiding}

Recall that $\WebA$ is a braided  
category. There is some freedom in the choice of 
scaling of the braiding. For us the most convenient choice 
for \emph{thin overcrossings} (left crossing in \eqref{eq:braiding}) 
and \emph{thin undercrossing} (right crossing  in \eqref{eq:braiding}) is:
\begin{equation}\label{eq:braiding}
    \begin{tikzpicture}[anchorbase,scale=.25,tinynodes]
	\draw[uncolored] (2,0) node[below] {$1$} .. controls ++(0,2) and  ++(0,-2) .. ++(-2,4) node[above] {$1$}; 
	\draw[uncolored,cross line] (0,0) node[below] {$1$} .. controls ++(0,2) and  ++(0,-2) .. ++(2,4) node[above] {$1$}; 
    \end{tikzpicture} 
=-\qpar^{-1}
    \begin{tikzpicture}[anchorbase,scale=.25,tinynodes]
	\draw[uncolored] (2,0) node[below] {$1$} -- ++(0,4) node[above] {$1$}; 
	\draw[uncolored] (0,0) node[below] {$1$} --  ++(0,4) node[above] {$1$}; 
    \end{tikzpicture}
+
    \begin{tikzpicture}[anchorbase,scale=.25,tinynodes]
	\draw[uncolored] (0,0) node[below] {$1$} .. controls ++(0,1) and ++(-0.5,-0.5) .. ++(1,1.5) .. controls ++(0.5,-0.5) and ++(0,1) .. ++(1,-1.5) node[below] {$1$}; 
	\draw[ccolored] (1,1.5) to (1,2.5);
	\draw[uncolored] (0,4) node[above] {$1$} .. controls ++(0,-1) and ++(-0.5,0.5) .. ++(1,-1.5) .. controls ++(0.5,0.5) and ++(0,-1) .. ++(1,1.5) node[above] {$1$}; 
    \end{tikzpicture}
\quad\text{and}\quad
    \begin{tikzpicture}[anchorbase,scale=.25,tinynodes]
	\draw[uncolored] (0,0) node[below] {$1$} .. controls ++(0,2) and  ++(0,-2) .. ++(2,4) node[above] {$1$}; 
	\draw[uncolored,cross line] (2,0) node[below] {$1$} .. controls ++(0,2) and  ++(0,-2) .. ++(-2,4) node[above] {$1$}; 
    \end{tikzpicture} 
=-\qpar
    \begin{tikzpicture}[anchorbase,scale=.25,tinynodes]
	\draw[uncolored] (2,0) node[below] {$1$} -- ++(0,4) node[above] {$1$}; 
	\draw[uncolored] (0,0) node[below] {$1$} --  ++(0,4) node[above] {$1$}; 
    \end{tikzpicture}
+
    \begin{tikzpicture}[anchorbase,scale=.25,tinynodes]
	\draw[uncolored] (0,0) node[below] {$1$} .. controls ++(0,1) and ++(-0.5,-0.5) .. ++(1,1.5) .. controls ++(0.5,-0.5) and ++(0,1) .. ++(1,-1.5) node[below] {$1$}; 
	\draw[ccolored] (1,1.5) to (1,2.5);
	\draw[uncolored] (0,4) node[above] {$1$} .. controls ++(0,-1) and ++(-0.5,0.5) .. ++(1,-1.5) .. controls ++(0.5,0.5) and ++(0,-1) .. ++(1,1.5) node[above] {$1$}; 
    \end{tikzpicture}
\end{equation}
Recall that a braiding on $\WebA$ is, via explosion, uniquely 
determined by specifying \eqref{eq:braiding} 
(see e.g.\ \cite[Lemma 5.12]{QS}). That is, we also get thick over- and undercrossings 
and one can inductively compute how these 
are expressed in terms of the $\typeA$-web generators
from \eqref{eq:Aweb-gens}.

\begin{remark}\label{remark:web-symmetries}
The category $\WebA$ 
has a $\qpar$-anti-linear (that is, flipping $\qpar\leftrightarrow\qpar^{-1}$) 
involution $\Psi$ given by switching the crossings and an 
anti-involution $\omega$ given by 
taking the vertical mirror of a diagram.
In particular, it suffices to give 
relations involving one type of crossing, 
and we will do so below.
\end{remark}

We remark that the naturality of the braiding is 
equivalent to the following \emph{pitchfork} relations, 
which hold for all values of $a$, $b$ and $c$:
\begin{gather}\label{eq:pitchfork}
    \begin{tikzpicture}[anchorbase,scale=.25,tinynodes]
	\draw[colored] (0,-4) node[below] {$b$} to (0,-3.5) to [out=90, in=225] (1,-1.75) to (1,-1) to (1,0) node[above] {$b{+}c$};
	\draw[colored] (2,-4) node[below] {$c$} to (2,-3.5) to [out=90, in=315] (1,-1.75);
	\draw[colored, cross line thick] (3,0) node[above] {$a$} to (3,-.75) to (-1,-1.25) to (-1,-4) node[below] {$a$};
    \end{tikzpicture}
=
	\begin{tikzpicture}[anchorbase,scale=.25,tinynodes]
	\draw[colored] (0,-4) node[below] {$b$} to (0,-3.5) to [out=90, in=225] (1,-1.75) to (1,-1) to (1,0) node[above] {$b{+}c$};
	\draw[colored] (2,-4) node[below] {$c$} to (2,-3.5) to [out=90, in=315] (1,-1.75);
	\draw[colored, cross line thick] (3,0) node[above] {$a$} to (3,-2.75) to (-1,-3.25) to (-1,-4) node[below] {$a$};
    \end{tikzpicture}
\quad\text{and}\quad
	\begin{tikzpicture}[anchorbase,scale=.25,tinynodes]
	\draw[colored] (0,4) node[above] {$b$} to (0,3.5) to [out=270, in=135] (1,1.75) to (1,1) to (1,0) node[below] {$b{+}c$};
	\draw[colored] (2,4) node[above] {$c$} to (2,3.5) to [out=270, in=45] (1,1.75);
	\draw[colored, cross line thick] (-1,0) node[below] {$a$} to (-1,.75) to (3,1.25) to (3,4) node[above] {$a$};
    \end{tikzpicture}
=
	\begin{tikzpicture}[anchorbase,scale=.25,tinynodes]
	\draw[colored] (0,4) node[above] {$b$} to (0,3.5) to [out=270, in=135] (1,1.75) to (1,1) to (1,0) node[below] {$b{+}c$};
	\draw[colored] (2,4) node[above] {$c$} to (2,3.5) to [out=270, in=45] (1,1.75);
	\draw[colored, cross line thick] (-1,0) node[below] {$a$} to (-1,2.75) to (3,3.25) to (3,4) node[above] {$a$};
    \end{tikzpicture}
\end{gather}

We additionally need the following relations:

\begin{lemma}\label{lemma:ttwist}
For all $a,b,c$
the 
\emph{trivalent twists} hold 
in $\WebA$:
\begin{gather}\label{eq:ttwist}
	\begin{tikzpicture}[anchorbase,scale=.25,tinynodes]
	\draw[colored] (0,-2.5) to [out=90, in=225] (1,-1.25) to (1,-1) to (1,0) node[above] {$a{+}b$};
	\draw[colored] (2,-2.5) to [out=90, in=315] (1,-1.25);
	\draw[colored] (0,-2.5) to [out=270, in=90] (2,-4) node[below] {$b$};
	\draw[colored, cross line thick] (2,-2.5) to [out=270, in=90] (0,-4) node[below] {$a$};
	\draw[colored] (0,-2.48) to (0,-2.52);
	\draw[colored] (2,-2.48) to (2,-2.52);
    \end{tikzpicture}
=\qpar^{ab}
    \begin{tikzpicture}[anchorbase,scale=.25,tinynodes]
	\draw[colored] (0,-4) node[below] {$a$} to (0,-3.5) to [out=90, in=225] (1,-1.75) to (1,-1) to (1,0) node[above] {$a{+}b$};
	\draw[colored] (2,-4) node[below] {$b$} to (2,-3.5) to [out=90, in=315] (1,-1.75);
    \end{tikzpicture}
\quad\text{and}\quad
	\begin{tikzpicture}[anchorbase,scale=.25,tinynodes]
	\draw[colored] (0,2.5) to [out=270, in=135] (1,1.25) to (1,1) to (1,0) node[below] {$a{+}b$};
	\draw[colored] (2,2.5) to [out=270, in=45] (1,1.25);
	\draw[colored] (2,2.5) to [out=90, in=270] (0,4) node[above] {$a$};
	\draw[colored, cross line thick] (0,2.5) to [out=90, in=270] (2,4) node[above] {$b$};
	\draw[colored] (0,2.48) to (0,2.52);
	\draw[colored] (2,2.48) to (2,2.52);
    \end{tikzpicture}
=\qpar^{ab}
    \begin{tikzpicture}[anchorbase,scale=.25,tinynodes]
	\draw[colored] (0,4) node[above] {$a$} to (0,3.5) to [out=270, in=135] (1,1.75) to (1,1) to (1,0) node[below] {$a{+}b$};
	\draw[colored] (2,4) node[above] {$b$} to (2,3.5) to [out=270, in=45] (1,1.75);
     \end{tikzpicture}
\end{gather}
\end{lemma}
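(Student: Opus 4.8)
The plan is to reduce both identities to the case of thin strands via explosion, compute that case by hand, and then reassemble. I would first establish the relation for $a=b=1$. Post-composing (vertically stacking) the merge $1\otimes 1\to 2$ on top of the thin overcrossing from \eqref{eq:braiding}, the term $-\qpar^{-1}\,\id$ simply returns $-\qpar^{-1}$ times the merge, while the term factoring through the object $2$ becomes a digon $2\to 1\otimes 1\to 2$ once the merge is placed above it. By digon removal \eqref{eq:digon} this digon equals $\qbin{2}{1}=[2]=\qpar+\qpar^{-1}$ times the identity on $2$, so that term contributes $[2]$ times the merge. Adding the two contributions gives $(-\qpar^{-1}+\qpar+\qpar^{-1})=\qpar=\qpar^{1\cdot 1}$ times the merge, which is exactly \eqref{eq:ttwist} in the case $a=b=1$.

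Next I would treat general $a,b$ by exploding the thickness-$a$ and thickness-$b$ edges into $a$, respectively $b$, thin strands. By the definition of the braiding via explosion (the discussion following \eqref{eq:braiding}), the thick crossing on the left-hand side of \eqref{eq:ttwist} becomes the braiding of the two bunches of thin strands, i.e.\ a composite of $ab$ thin crossings, one for each pair of a left-thin and a right-thin strand. Since the output edge $a+b$ arises by merging all $a+b$ thin strands, I can use associativity and coassociativity \eqref{eq:asso} to reorganise this total merge so that each individual thin crossing is immediately followed by the merge of precisely the two thin strands it involves; the pitchfork relations \eqref{eq:pitchfork} (naturality of the braiding) are what let me slide each crossing into this position. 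Each such local configuration is the base case and is therefore replaced by a factor of $\qpar$; doing this for all $ab$ crossings yields the scalar $\qpar^{ab}$ together with the plain merge. Equivalently one may phrase this as a (double) induction on $a+b$: writing the thickness-$a$ edge as a merge of a thin edge and a thickness-$(a-1)$ edge and passing the thin strand across via \eqref{eq:pitchfork}, the scalar multiplies as $\qpar^{b}\cdot\qpar^{(a-1)b}=\qpar^{ab}$, with base cases supplied by the $a=b=1$ computation.

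Finally, the second relation in \eqref{eq:ttwist} is the top–bottom mirror of the first, so I would deduce it from the first using the anti-involution $\omega$ of \fullref{remark:web-symmetries} (combined, if needed, with the involution $\Psi$ in order to match the displayed crossing), exploiting that these symmetries fix the scalar $\qpar^{ab}$ and interchange merges with splits. Alternatively, the split version follows verbatim from the same explosion computation read from bottom to top.

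The delicate point is the bookkeeping in the general case: one must verify that, after explosion, each of the $ab$ thin crossings contributes \emph{exactly} one factor of $\qpar$ with no surviving correction terms. This works precisely because all thin strands ultimately merge into a single edge, which forces the ``factor through $2$'' part of each crossing into a digon governed by \eqref{eq:digon}; ensuring that every crossing can be kept adjacent to its local merge, via the pitchfork relations \eqref{eq:pitchfork}, is what makes this rigorous rather than merely plausible.
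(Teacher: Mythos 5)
Your proof is correct and is exactly the paper's argument made explicit: the paper's entire proof is the single sentence that the relations are ``easily verified inductively by using explosion'', and your base case $-\qpar^{-1}+[2]=\qpar$ obtained from \eqref{eq:braiding} and \eqref{eq:digon}, followed by the explosion/pitchfork/associativity induction on the thicknesses, is precisely that verification. One small caution: $\Psi$ is $\qpar$-antilinear and would turn the scalar into $\qpar^{-ab}$, so the second identity should be deduced from the first by $\omega$ alone (which works, since $\omega$ fixes the morphism $-\qpar^{-1}\,\mathrm{id}+{}$merge-split and hence the displayed crossing) or by your verbatim bottom-to-top recomputation.
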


\begin{proof}
These relations are easily verified inductively by using explosion.
\end{proof}

\section{The \texorpdfstring{$\typeBDC$}{cup}-web category}\label{sec-Dwebs}
Next, we define 
a web category which, as we will see, will describe exterior $\typeB\typeD$-webs 
as well as symmetric $\typeC$-webs. 
We call its morphisms \emph{$\typeBDC$-webs}.

\subsection{Categories with a monoidal action}\label{subsec-monoidal-action}

We will 
define webs of types $\typeB\typeC\typeD$ as morphisms of 
categories with a left monoidal action of the monoidal 
category $\WebA$, as formalized by the following definition, 
following \cite[Section 2]{HO1} or \cite[Sections 7.1 and 7.3]{EGNO}.

\begin{definition}\label{def:action-of-mon-category}
Let $\boldsymbol{\calM}=(\boldsymbol{\calM},\otimes,\one)$ be a 
(strict) monoidal category, and $\boldsymbol{\calC}$ be a category. 
A \emph{(left) action of} $\boldsymbol{\calM}$ \emph{on} $\boldsymbol{\calC}$ is a 
bifunctor 
$\otimes \colon \boldsymbol{\calM} \times \boldsymbol{\calC} \rightarrow \boldsymbol{\calC}$ 
with 
natural isomorphisms $(X \otimes Y) \otimes C \cong X \otimes (Y \otimes C)$ 
and $\one \otimes C \cong \one\cong C\otimes\one$ for $X,Y \in \boldsymbol{\calM}$, $C \in \boldsymbol{\calC}$ satisfying 
the usual coherence conditions 
(see e.g.\ \cite[Section 2]{HO1}, \cite[Definition 7.11]{EGNO} 
or \cite[Definition IV.4.7]{Wei}). We will 
then say that $\boldsymbol{\calC}$ is an $\boldsymbol{\calM}$\emph{-category}.
 
In case $\boldsymbol{\calM}$ and $\boldsymbol{\calC}$ are 
both $\ring$-linear over a ring $\ring$, we 
additionally assume that $\otimes$ is $\ring$-bilinear on morphisms.

The \emph{additive closure} of an $\boldsymbol{\calM}$-category is to be understood 
verbatim as in \fullref{definition:gen-rel-conventions}, where we additionally extend
the action of $\boldsymbol{\calM}$ to direct sums component-wise.

Without assuming that $\calM$ has generators/relations: We say $\boldsymbol{\calC}$ \emph{is generated by 
two finite sets $\mathtt{O}_{\boldsymbol{\calC}}$ of objects and $\mathtt{M}_{\boldsymbol{\calC}}$ of morphisms}
if every object is of the form $X \otimes C$, where $X \in \boldsymbol{\calM}$ and $C$ 
is a $\otimes$ composite of objects from $\mathtt{O}_{\boldsymbol{\calC}}$, and similarly for morphisms.
If we further 
fix a set of relations $\mathtt{R}_{\boldsymbol{\calC}}$ among the morphisms of $\boldsymbol{\calC}$, then 
$\boldsymbol{\calC}$ is meant to be the quotient of the $\boldsymbol{\calM}$-category freely 
generated by the fixed generators modulo the left $\boldsymbol{\calM}$-ideal spanned by these 
relations. (This definition can be spelled out in details analogously to 
e.g. \cite[Section XII.1]{Ka1}.)
\end{definition}

\newpage

\subsection{The diagrammatic \texorpdfstring{$\typeBDC$}{cup}-web category}\label{subsec-diacat-D}

\subsubsection{\texorpdfstring{$\typeBDC$}{cup}-webs}

In this section we work over $\CQZ$, if not stated otherwise. 
For the definition of the quantum numbers see \eqref{eq:qnumbers-typeAD}.

\begin{definition}\label{definition:typeBDwebs}
The \emph{$\typeBDC$-web category} $\WebDz$ is the additive closure 
of the $\CQZ$-li\-near $\WebAz$-category 
generated by the object $\varnothing$
and by the 
\emph{cup} and \emph{cap morphisms}
\begin{subequations}
\begingroup
\renewcommand{\theequation}{$\typeBDC$gen}
\begin{equation}\label{eq:cup-cap}
	\begin{tikzpicture}[anchorbase,scale=.25, tinynodes]
	\draw[uncolored] (-1,3) node[above] {$1$} to [out=270, in=180] (0,2) to [out=0, in=270] (1,3) node[above] {$1$};
	\node at (0,0) {$\phantom{.}$};
    \end{tikzpicture}
\colon \varnothing\to 1\otimes 1
\quad\text{and}\quad
    \begin{tikzpicture}[anchorbase,scale=.25, tinynodes]
	\draw[uncolored] (-1,-3) node[below] {$1$} to [out=90, in=180] (0,-2) to [out=0, in=90] (1,-3) node[below] {$1$};
	\node at (0,0) {$\phantom{.}$};
    \end{tikzpicture}
\colon 1\otimes 1\to \varnothing,
\end{equation}
\endgroup
\end{subequations}
\addtocounter{equation}{-1}
modulo the following relations:
\begin{subequations}
\begingroup
\renewcommand{\theequation}{$\typeBDC$\arabic{equation}}
\begin{enumerate}[label=$\vartriangleright$]

\setlength\itemsep{.15cm}

\item The \emph{circle removal}
\begin{equation}\label{eq:circle}
      \begin{tikzpicture}[anchorbase,scale=.25, tinynodes]
      \draw[uncolored] (-1,2.5) arc (-180:180:1cm);
      \end{tikzpicture}
\,=\qbracket{\zpar;0}.
\end{equation}

\item The \emph{bubble removal}
\begin{equation}\label{eq:bubble}
      \begin{tikzpicture}[anchorbase,scale=.25, tinynodes]
      \draw[uncolored] (0,0) node[below] {$1$} -- ++(0,2.4)  arc (180:0:1cm) -- node[right] {$\phantom{1}$} ++(0,-0.4) arc (0:-180:1cm) -- ++(0,2.4) node[above] {$1$};
      \draw[ccolored] (0,1.6) -- ++(0,1.1);
      \end{tikzpicture} 
=\qbracket{\zpar;-1}
      \begin{tikzpicture}[anchorbase,scale=.25, tinynodes]
      \draw[uncolored] (0,0) node[below] {$1$} -- ++(0,4.4) node[above] {$1$};
      \end{tikzpicture} 
\end{equation}
   
\item The \emph{lasso move}
\begin{equation}\label{eq:lasso}
      \begin{tikzpicture}[anchorbase,scale=.25,tinynodes]
      \draw[uncolored] (0,0) node[below] {$1$} -- ++(0,7) node [above] {$1$};
      \draw[uncolored] (2,0) node[below] {$1$} -- ++(0,7) node [above] {$1$};
      \draw[ccolored] (0,3) -- ++(0,1);
      \draw[ccolored] (2,3) -- ++(0,1);
      \draw[uncolored] (0,3) -- ++(1,-1);
      \draw[uncolored, cross line] (1,2) .. controls ++(1,-1) and ++ (0,-1.5) ..  ++(3,-1) node[below right=-0.7ex] {$\phantom{1}$};
      \draw[uncolored] (4,1) .. controls ++(0,0.8) and ++(1,-1) .. ++(-2,2);
      \draw[uncolored] (0,4) -- ++(1,1);
      \draw[uncolored, cross line] (1,5) .. controls ++(1,1) and ++ (0,1.5) ..  ++(3,1) node[above right=-0.7ex] {$\phantom{1}$};
      \draw[uncolored] (4,6) .. controls ++(0,-0.8) and ++(1,1) .. ++(-2,-2);
      \end{tikzpicture}
=
      \begin{tikzpicture}[anchorbase,scale=.25,tinynodes]
      \draw[uncolored] (0,0) node[below] {$1$} -- ++(0,.4)  arc (180:0:1cm)  -- ++(0,-.4) node[below] {$1$};
      \draw[uncolored] (0,4) node[above] {$1$} -- ++(0,-.4)   arc (180:360:1cm)  -- ++(0,.4) node[above] {$1$};
      \end{tikzpicture}
+\qbracket{\zpar;-2}
      \begin{tikzpicture}[anchorbase,scale=.25,tinynodes]
      \draw[uncolored] (0,0) node[below] {$1$} -- ++(0,4) node[above] {$1$};
      \draw[uncolored] (2,0) node[below] {$1$} -- ++(0,4) node[above] {$1$};
      \end{tikzpicture} 
\end{equation}
  
\item The \emph{lollipop relations}
\begin{equation}\label{eq:lollipop}
      \begin{tikzpicture}[anchorbase,scale=.25,tinynodes]
      \draw[uncolored] (0,0) node[above] {$2$} -- ++(0,-1) .. controls ++(0.5,-0.5) and ++(0,0.5) .. ++(1,-1) arc (0:-180:1cm) node[left] {$\phantom{1}$} .. controls ++(0,0.5) and ++(-0.5,-0.5) .. ++(1,1);
      \draw[ccolored] (0,0) -- ++(0,-1);
      \end{tikzpicture} 
=0
\quad\text{and}\quad
      \begin{tikzpicture}[anchorbase,scale=.25,tinynodes]
      \draw[uncolored] (0,0) node[below] {$2$} -- ++(0,1) .. controls ++(0.5,0.5) and ++(0,-0.5) .. ++(1,1) arc (0:180:1cm) node[left] {$\phantom{1}$} .. controls ++(0,-0.5) and ++(-0.5,0.5) .. ++(1,-1);
      \draw[ccolored] (0,0) -- ++(0,1);
      \end{tikzpicture}
=0.
\end{equation}

\item The \emph{merge-split sliding relations}
\begin{equation}\label{eq:sliding}
\begin{tikzpicture}[anchorbase,xscale=-.25,tinynodes,yscale=-0.125]
      \draw[uncolored] (4,0) node[above] {$1$} .. controls ++(0,1) and ++(-0.5,-0.5) .. ++(1,1.5) .. controls ++(0.5,-0.5) and ++(0,1) .. ++(1,-1.5) node[above] {$1$}; 
      \draw[ccolored] (5,1.5) -- ++(0,1);
      \draw[uncolored] (4,4)  .. controls ++(0,-1) and ++(-0.5,0.5) .. ++(1,-1.5) .. controls ++(0.5,0.5) and ++(0,-1) .. ++(1,1.5);
      \draw[uncolored] (2,0) node[above] {$1$} -- ++(0,4);
      \draw[uncolored] (4,8) arc (180:0:1cm and 2cm) ;
      \draw (0,0) node[above] {$1$} -- ++(0,8) arc (180:0:1cm and 2cm);
      \draw (6,8) -- ++(0,-4);
      \node at (1,11.25) {$\phantom{1}$};
      \draw[uncolored] (2,4)  .. controls ++(0,1) and ++(-0.5,-0.5) .. ++(1,1.5) .. controls ++(0.5,-0.5) and ++(0,1) .. ++(1,-1.5) ;
      \draw[ccolored] (3,5.5) -- ++(0,1);
      \draw[uncolored] (2,8)  .. controls ++(0,-1) and ++(-0.5,0.5) .. ++(1,-1.5) .. controls ++(0.5,0.5) and ++(0,-1) .. ++(1,1.5);
      \end{tikzpicture}
=
      \begin{tikzpicture}[anchorbase,xscale=-.25,tinynodes,yscale=-0.125]
      \draw[uncolored] (0,0) node[above] {$1$} .. controls ++(0,1) and ++(-0.5,-0.5) .. ++(1,1.5) .. controls ++(0.5,-0.5) and ++(0,1) .. ++(1,-1.5) node[above] {$1$}; 
      \draw[ccolored] (1,1.5) -- ++(0,1);
      \draw[uncolored] (0,4)  .. controls ++(0,-1) and ++(-0.5,0.5) .. ++(1,-1.5) .. controls ++(0.5,0.5) and ++(0,-1) .. ++(1,1.5);
	  \draw[uncolored] (4,0) node[above] {$1$}  -- ++(0,4);
	  \draw[uncolored] (4,8)  arc (180:0:1cm and 2cm) -- ++(0,-8) node[above] {$1$} ;
      \draw (0,4) -- ++(0,4) arc (180:0:1cm and 2cm);
      \draw[uncolored] (2,4)  .. controls ++(0,1) and ++(-0.5,-0.5) .. ++(1,1.5) .. controls ++(0.5,-0.5) and ++(0,1) .. ++(1,-1.5) ;
      \draw[ccolored] (3,5.5) -- ++(0,1);
      \draw[uncolored] (2,8)  .. controls ++(0,-1) and ++(-0.5,0.5) .. ++(1,-1.5) .. controls ++(0.5,0.5) and ++(0,-1) .. ++(1,1.5);
      \node at (1,11.25) {$\phantom{1}$}; 
      \end{tikzpicture}
\quad\text{and}\quad
      \begin{tikzpicture}[anchorbase,xscale=.25,tinynodes,yscale=0.125]
      \draw[uncolored] (4,0) node[below] {$1$} .. controls ++(0,1) and ++(-0.5,-0.5) .. ++(1,1.5) .. controls ++(0.5,-0.5) and ++(0,1) .. ++(1,-1.5) node[below] {$1$}; 
      \draw[ccolored] (5,1.5) -- ++(0,1);
      \draw[uncolored] (4,4)  .. controls ++(0,-1) and ++(-0.5,0.5) .. ++(1,-1.5) .. controls ++(0.5,0.5) and ++(0,-1) .. ++(1,1.5);
      \draw[uncolored] (2,0) node[below] {$1$} -- ++(0,4);
      \draw[uncolored] (4,8) arc (180:0:1cm and 2cm) ;
      \draw (0,0) node[below] {$1$} -- ++(0,8) arc (180:0:1cm and 2cm);
      \draw (6,8) -- ++(0,-4);
      \node at (1,11.25) {$\phantom{1}$};
      \draw[uncolored] (2,4)  .. controls ++(0,1) and ++(-0.5,-0.5) .. ++(1,1.5) .. controls ++(0.5,-0.5) and ++(0,1) .. ++(1,-1.5) ;
      \draw[ccolored] (3,5.5) -- ++(0,1);
      \draw[uncolored] (2,8)  .. controls ++(0,-1) and ++(-0.5,0.5) .. ++(1,-1.5) .. controls ++(0.5,0.5) and ++(0,-1) .. ++(1,1.5);
        \node at (1,11) {$\phantom{1}$}; 
      \end{tikzpicture}
=
      \begin{tikzpicture}[anchorbase,xscale=.25,tinynodes,yscale=0.125]
      \draw[uncolored] (0,0) node[below] {$1$} .. controls ++(0,1) and ++(-0.5,-0.5) .. ++(1,1.5) .. controls ++(0.5,-0.5) and ++(0,1) .. ++(1,-1.5) node[below] {$1$}; 
      \draw[ccolored] (1,1.5) -- ++(0,1);
      \draw[uncolored] (0,4)  .. controls ++(0,-1) and ++(-0.5,0.5) .. ++(1,-1.5) .. controls ++(0.5,0.5) and ++(0,-1) .. ++(1,1.5);
	  \draw[uncolored] (4,0) node[below] {$1$}  -- ++(0,4);
	  \draw[uncolored] (4,8)  arc (180:0:1cm and 2cm) -- ++(0,-8) node[below] {$1$} ;
      \draw (0,4) -- ++(0,4) arc (180:0:1cm and 2cm);
      \draw[uncolored] (2,4)  .. controls ++(0,1) and ++(-0.5,-0.5) .. ++(1,1.5) .. controls ++(0.5,-0.5) and ++(0,1) .. ++(1,-1.5) ;
      \draw[ccolored] (3,5.5) -- ++(0,1);
      \draw[uncolored] (2,8)  .. controls ++(0,-1) and ++(-0.5,0.5) .. ++(1,-1.5) .. controls ++(0.5,0.5) and ++(0,-1) .. ++(1,1.5);
      \node at (1,11) {$\phantom{1}$};
      \end{tikzpicture}
\end{equation}
\end{enumerate}
\endgroup
\end{subequations}
\addtocounter{equation}{-1}
\end{definition}

\begin{remark}\label{remark:over-under-crossing}
Thanks to relation \eqref{eq:lollipop}, it is 
irrelevant whether we use overcrossings or 
undercrossings in \eqref{eq:lasso}. 
Moreover, one directly sees that the 
symmetries $\Psi$ and $\omega$ from \fullref{remark:web-symmetries} 
extend to $\WebDz$ (where we assume that $\Psi$ 
also flips $\zpar\leftrightarrow\zpar^{-1}$). Abusing notation, 
we denote these symmetries by the same symbols.
\end{remark}

\begin{remark}\label{remark:beware-monoidal}
Beware that a cup or a cap in a 
diagram representing a morphism in $\WebDz$ 
is only allowed if there are no strands on its right, 
cf.\ \fullref{figure:webs}. Here are some additional examples:
\begin{gather*}
\text{Allowed: }
\begin{tikzpicture}[anchorbase,scale=.25,tinynodes]
      \draw[uncolored] (-2,0) node[above] {$1$} to (-2,-2) node[below] {$1$};
      \draw[uncolored] (0,0) node[above] {$1$} to [out=270, in=180] (1,-1) to [out=0, in=270] (2,0) node[above] {$1$};
      \draw[densely dashed] (1,-1) to [out=315, in=180] (3,-1.5);
      \node at (0,1) {\phantom{a}};
      \node at (0,-4) {\phantom{a}};
\end{tikzpicture}
\qquad
\text{Not allowed: }
\begin{tikzpicture}[anchorbase,scale=.25,tinynodes]
      \draw[uncolored] (2,0) node[above] {$1$} to (2,-2) node[below] {$1$};
      \draw[uncolored] (0,0) node[above] {$1$} to [out=270, in=0] (-1,-1) to [out=180, in=270] (-2,0) node[above] {$1$};
      \draw[densely dashed] (-1,-1) to [out=315, in=180] (3,-1.5);
      \node at (0,1) {\phantom{a}};
      \node at (0,-4) {\phantom{a}};
      \draw[thick, myred] (-2.25,-2) to (3.25,0);
\end{tikzpicture}
\end{gather*}
In particular, there are no zig-zag-type relations:
\begin{gather*}
\text{Not allowed: }
\begin{tikzpicture}[anchorbase,scale=.25,tinynodes]
      \draw[uncolored] (0,0) node[below] {$1$} to (0,2) to [out=90, in=0] (-1,3) to [out=180, in=90] (-2,2) to [out=270, in=0] (-3,1) to [out=180, in=270] (-4,2) to (-4,4) node[above] {$1$};
      \draw[uncolored, densely dotted] (1,2) to (-5,2);
      \draw[densely dashed] (-1,3) to [out=45, in=180] (1,3.5);
      \draw[densely dashed] (-3,1) to [out=315, in=180] (1,.5);
      \draw[thick, myred] (.25,4) to (-4.25,0);
\end{tikzpicture}
\quad
\text{Allowed: }
\begin{tikzpicture}[anchorbase,scale=.25,tinynodes]
      \draw[uncolored] (0,0) node[below] {$1$} to (0,4) node[above] {$1$};
\end{tikzpicture}
\quad
\text{Not allowed: }
\begin{tikzpicture}[anchorbase,scale=.25,tinynodes]
      \draw[uncolored] (0,0) node[below] {$1$} to (0,2) to [out=90, in=180] (1,3) to [out=0, in=90] (2,2) to [out=270, in=180] (3,1) to [out=0, in=270] (4,2) to (4,4) node[above] {$1$};
      \draw[uncolored, densely dotted] (-1,2) to (5,2);
      \draw[densely dashed] (1,3) to [out=45, in=180] (5,3.5);
      \draw[densely dashed] (3,1) to [out=315, in=180] (5,.5);
      \draw[thick, myred] (-.25,0) to (4.25,4);
\end{tikzpicture}
\;\rightsquigarrow\;
\begin{tikzpicture}[anchorbase,scale=.25,tinynodes]
	  \draw[uncolored] (-5,0) node[below] {$1$} to (-5,2) to [out=90, in=0] (-6,3) to [out=180, in=90] (-7,2) to [out=270, in=0] (-8,1) to [out=180, in=270] (-9,2) to (-9,4) node[above] {$1$};
	  \node at (-3.75,2) {\text{\normalsize$=$}};
	  \draw[uncolored] (-2.5,0) node[below] {$1$} to (-2.5,4) node[above] {$1$};
	  \node at (-1.25,2) {\text{\normalsize$=$}};
      \draw[uncolored] (0,0) node[below] {$1$} to (0,2) to [out=90, in=180] (1,3) to [out=0, in=90] (2,2) to [out=270, in=180] (3,1) to [out=0, in=270] (4,2) to (4,4) node[above] {$1$};
      \draw[thick, myred] (-9.25,0) to (4.25,4);
\end{tikzpicture}
\end{gather*}
and also other types of isotopy-like relations do not hold. 
We will meet the representation theoretical interpretation of this left-right partitioning 
in \fullref{sec-reps}, see also \fullref{remark:not-an-intertwiner-so}.
\end{remark}

\subsubsection{Topological versions of the \texorpdfstring{$\typeBDC$}{BD,C}-web relations}

Next, we give some alternative, topologically 
more meaningful, relations to our defining relations 
from above. 

\begin{subequations}
\begingroup
\renewcommand{\theequation}{$\typeBDC$\alph{equation}}
\begin{lemma}
  \renewcommand{\qedsymbol}{}
  \label{lemma:bubble-equi}
The bubble removal \eqref{eq:bubble} is equivalent to
\begin{equation}\label{eq:bubble-equi}
      \begin{tikzpicture}[anchorbase,scale=.25, tinynodes]
      \draw[uncolored] (0,4) node[above] {$1$} .. controls ++(0,-3) and ++(0,-1.5) .. ++(2,-2);
      \draw[uncolored,cross line] (0,0) node[below] {$1$} .. controls ++(0,3) and ++(0,1.5) .. ++(2,2);
      \end{tikzpicture} 
=-\zpar^{-1}
      \begin{tikzpicture}[anchorbase,scale=.25, tinynodes]
      \draw[uncolored] (0,0) node[below] {$1$} -- ++(0,4) node[above] {$1$};
      \end{tikzpicture}
\end{equation}
\end{lemma}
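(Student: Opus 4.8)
The plan is to reduce the claimed equivalence to a single scalar identity by resolving the crossing in \eqref{eq:bubble-equi} via the braiding. First I would rewrite the left-hand side of \eqref{eq:bubble-equi} as an honest $\WebDz$-morphism $1\to 1$, namely as the curl obtained by creating a cup of \eqref{eq:cup-cap} on the right, crossing the incoming strand over one of its legs, and closing the result with a cap; in the language of \fullref{subsec-monoidal-action} this is the composite $(\id_1\otimes\text{cap})\circ(X\otimes\id_1)\circ(\id_1\otimes\text{cup})$, where $X$ is the thin overcrossing. Applying the braiding relation \eqref{eq:braiding} to $X$ then splits this curl into two terms: the \emph{identity} term and the \emph{merge-split} $2$-edge term.

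The identity term detaches the cup-cap as a free circle alongside a straight thin strand, so by the circle removal \eqref{eq:circle} it contributes $-\qpar^{-1}\qbracket{\zpar;0}$ times the identity strand. The merge-split term, on the other hand, threads the cup and cap through the $2$-edge produced by \eqref{eq:braiding}, and a short isotopy (using only the interchange law \eqref{eq:inter-law} and the trivalent structure of $\WebAz$) identifies it with precisely the bubble appearing on the left-hand side of \eqref{eq:bubble}. Consequently, using only the $\WebAz$-relations and \eqref{eq:circle}, the curl equals $-\qpar^{-1}\qbracket{\zpar;0}$ times the identity plus the bubble, with no further corrections. This reduces everything to the elementary computation in $\CQZ$:
\[
-\qpar^{-1}\qbracket{\zpar;0}+\qbracket{\zpar;-1}=\frac{-\qpar^{-1}(\zpar-\zpar^{-1})+(\zpar\qpar^{-1}-\zpar^{-1}\qpar)}{\qpar-\qpar^{-1}}=-\zpar^{-1}.
\]

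Given this unconditional expansion, the equivalence is immediate: the bubble equals $\qbracket{\zpar;-1}$ times the identity (relation \eqref{eq:bubble}) \emph{if and only if} the curl equals $-\zpar^{-1}$ times the identity (relation \eqref{eq:bubble-equi}). I expect the only delicate point to be the geometric bookkeeping in identifying the merge-split term with the bubble of \eqref{eq:bubble}: in particular, fixing the over/under convention so that the braiding contributes the scalar $-\qpar^{-1}$ rather than $-\qpar$ (the latter would produce $-\zpar$ instead of the desired $-\zpar^{-1}$), and checking that the cup and cap attach to the two ends of the $2$-edge exactly as in \eqref{eq:bubble} without spurious scalar factors. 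Everything else is the routine manipulation displayed above.
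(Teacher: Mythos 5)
Your proposal is correct and follows essentially the same route as the paper's own proof: expand the overcrossing in the curl via \eqref{eq:braiding}, detach and remove the circle from the identity term, recognize the merge-split term as exactly the bubble composite of \eqref{eq:bubble}, and conclude with the scalar identity $-\qpar^{-1}\qbracket{\zpar;0}+\qbracket{\zpar;-1}=-\zpar^{-1}$. Your explicit remark that this expansion is unconditional (so the two relations determine each other, giving genuine equivalence rather than one implication) is a welcome sharpening of what the paper leaves to ``the other implications follow similarly'', and your quantum-number bookkeeping, with $\qbracket{\zpar;-1}$ as the bubble's coefficient, is the correct one.
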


\begin{lemma}\renewcommand{\qedsymbol}{}\label{lemma:lasso-equi}
The lasso move \eqref{eq:lasso} is equivalent to
\begin{gather}\label{eq:lasso-equi}
     \begin{tikzpicture}[anchorbase,xscale=.25,yscale=.20,tinynodes]
     \draw[uncolored] (6,5.5) .. controls ++(0,-1.5) and ++(0,1.5) .. ++(-4,-4) -- ++(0,-1.5) node[below] {$1$};
     \draw[uncolored,cross line] (0,7) node[above] {$1$} -- ++(0,-1.5)  .. controls ++(0,-1.5) and ++(0,+1.5) .. ++(4,-4) arc (180:360:1cm) .. controls ++(0,1.5) and ++(0,-1.5) .. ++(-4,+4) -- ++(0,1.5) node[above] {$1$};
     \draw[uncolored,cross line] (0,0) node[below] {$1$} -- ++(0,1.5) .. controls ++(0,1.5) and ++(0,-1.5) .. ++(4,4) arc (180:0:1cm); 
     \end{tikzpicture} 
=
      \begin{tikzpicture}[anchorbase,scale=.25,tinynodes]
      \draw[uncolored] (0,0) node[below] {$1$} -- ++(0,.4)  arc (180:0:1cm)  -- ++(0,-.4) node[below] {$1$};
      \draw[uncolored] (0,4) node[above] {$1$} -- ++(0,-.4)   arc (180:360:1cm)  -- ++(0,.4) node[above] {$1$};
      \end{tikzpicture}
\end{gather}
\end{lemma}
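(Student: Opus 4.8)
The plan is to pass between the two relations by trading the two thickness-$2$ merge--split bumps that occur along the vertical strands in \eqref{eq:lasso} for ordinary thin crossings, using the braiding. Solving either identity in \eqref{eq:braiding} for the merge--split morphism expresses it as a single thin crossing plus a scalar multiple of the two parallel strands (the scalar being $\qpar^{-1}$ if one uses the overcrossing and $\qpar$ if one uses the undercrossing); by \fullref{remark:over-under-crossing} we are free to choose whichever crossing is convenient at each of the two bumps. First I would substitute this expansion into both thickness-$2$ segments on the left-hand side of \eqref{eq:lasso}, expanding it as a sum of four thin webs: the term in which \emph{both} bumps become crossings, two mixed terms, and the term in which both bumps are replaced by parallel identity strands.

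For the doubly-crossed term I would use the pitchfork relations \eqref{eq:pitchfork} together with the interchange/height moves of \eqref{eq:inter-law} to push the two newly created crossings into the position of the winding strand, so that, after matching over/under conventions once more via \fullref{remark:over-under-crossing}, this term becomes exactly the crossing diagram on the left of \eqref{eq:lasso-equi} (appearing with coefficient $1$). The three remaining terms are where $\zpar$ enters: replacing a bump by the identity strand severs the corresponding attachment of the lasso, so that severing both attachments turns the wound strand into the small cup--cap on the right of \eqref{eq:lasso-equi}, while severing exactly one leaves either a closed circle or a curl hanging off a single through-strand.

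I would then evaluate these severed configurations using circle removal \eqref{eq:circle} and the bubble removal in the convenient form \eqref{eq:bubble-equi} supplied by \fullref{lemma:bubble-equi}, all the while regrouping trivalent vertices by \eqref{eq:asso} and moving bumps across turnbacks only through the sanctioned sliding move \eqref{eq:sliding}, so that every intermediate web stays legal in the sense of \fullref{remark:beware-monoidal} (no cup or cap with a strand to its right). Collecting the scalar prefactors reduces the whole expansion of the left-hand side of \eqref{eq:lasso} to the crossing diagram of \eqref{eq:lasso-equi} plus a scalar multiple of the two parallel strands, where the scalar is assembled from the $\qpar^{\pm 1}$ produced by the braiding and from $\qbracket{\zpar;0}$ and $\qbracket{\zpar;-1}$ produced by \eqref{eq:circle} and \eqref{eq:bubble-equi}. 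Feeding this into \eqref{eq:lasso} and cancelling then yields precisely the assertion \eqref{eq:lasso-equi}.

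The main obstacle will be the scalar bookkeeping: I must check that the collected scalar simplifies to exactly $\qbracket{\zpar;-2}$, so that it cancels against the correction term of \eqref{eq:lasso} and leaves the cup--cap with coefficient $1$; this is ultimately the $\zpar$-quantum number identity relating $\qbracket{\zpar;-2}$, $\qbracket{\zpar;-1}$ and $\qbracket{\zpar;0}$ once the $\qpar^{\pm1}$ factors are absorbed. Since every step above---the braiding substitution, the topological rearrangements via \eqref{eq:pitchfork}, \eqref{eq:inter-law} and \fullref{lemma:ttwist}, and the circle and bubble evaluations---is an equality that may be read in either direction, the computation establishes not merely that \eqref{eq:lasso} implies \eqref{eq:lasso-equi} but that the two relations are equivalent modulo the remaining defining relations, which is what is claimed. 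A secondary point of care is that, because $\WebDz$ carries only a \emph{left} $\WebAz$-action, no isotopy may drag a turnback leftwards past a through-strand, so each such move must be realized through \eqref{eq:sliding} rather than by naive topological manipulation.
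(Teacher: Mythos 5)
Your overall strategy---trade the two merge--split bumps of \eqref{eq:lasso} for thin crossings via \eqref{eq:braiding}, identify the doubly-crossed term with the left-hand side of \eqref{eq:lasso-equi}, and absorb the remaining three terms into the scalar $\qbracket{\zpar;-2}$---is the computation the paper intends, just run in the opposite direction (the paper expands the crossings of the topological diagram into merge--splits and then invokes \eqref{eq:lasso}; the two organisations are equivalent), and your coefficient-$1$ claim for the doubly-crossed term is correct. However, your account of the other resolution terms contains a genuine error. Replacing \emph{both} bumps by the identity does not produce the cup--cap $\cupcap$ on the right of \eqref{eq:lasso-equi}: the identity resolution preserves the through-strand connectivity of the two vertical strands, so the double-identity term is the two parallel identity strands together with a \emph{closed} curve (a cup, a cap and two arcs) that still crosses the right-hand vertical strand twice. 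In fact no term of your expansion can yield $\cupcap$, because only a crossing resolution at a bump reroutes a boundary point of a vertical strand into the lasso; the correct target is that the two mixed terms and the double-identity term together equal $\qbracket{\zpar;-2}$ times the identity, the cup--cap entering only through the statement of \eqref{eq:lasso} itself. (Your later sentence, that the expansion reduces to the crossing diagram plus a scalar multiple of the two parallel strands, is the right statement, but it contradicts your earlier identification.)

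The related gap is that evaluating the double-identity term is not a direct application of \eqref{eq:circle} or \eqref{eq:bubble-equi}: the closed curve is linked with the right-hand vertical strand, and pulling it off is a Reidemeister~2 move involving a cup and a cap---exactly the kind of move the paper stresses is \emph{not} freely available in $\WebDz$, and which is essentially the content of \eqref{eq:lasso-equi} itself, so invoking it here would be circular. To finish you must expand one of the two remaining crossings of that closed curve against the vertical strand via \eqref{eq:braiding} a second time and recognise the resulting configurations as instances of \eqref{eq:bubble} (respectively \eqref{eq:bubble-equi}) and \eqref{eq:circle}; this extra layer of expansion, together with the analogous treatment of the two mixed (curl) terms, is where the bulk of the ``lengthy but easy'' computation lives and is precisely what your plan glosses over when it promises to conclude with circle and bubble removal alone. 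Once the three leftover terms are correctly reduced to multiples of the identity, the quantum-number identity you anticipate among $\qbracket{\zpar;0}$, $\qbracket{\zpar;-1}$ and $\qbracket{\zpar;-2}$ does close the argument, and since every step is reversible the equivalence follows as you say.
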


\begin{lemma}\renewcommand{\qedsymbol}{}\label{lemma:lolli-equi}
The lollipop relations \eqref{eq:lollipop} are equivalent to
\begin{gather}\label{eq:lolli-equi}
        \begin{tikzpicture}[anchorbase,scale=.25,tinynodes]
	    \draw[uncolored] (-1,0) node[above] {$1$} .. controls ++(0,-1) and ++(0,1) .. ++(2,-2)  arc (0:-180:1cm);
        \draw[uncolored, cross line] (-1,-2) .. controls ++(0,1) and ++(0,-1) .. ++(2,2) node[above] {$1$};
        \end{tikzpicture} 
=-\qpar^{-1}
        \begin{tikzpicture}[anchorbase,scale=.25,tinynodes]
	    \draw[uncolored] (-1,0) node[above] {$1$} to [out=270, in=180] (0,-1) to [out=0, in=270] (1,0) node[above] {$1$};
	    \node at (0,-2.3) {$\phantom{1}$}; 
        \end{tikzpicture}
\quad\text{and}\quad
        \begin{tikzpicture}[anchorbase,scale=.25,tinynodes]
        \draw[uncolored] (-1,2) .. controls ++(0,-1) and ++(0,1) .. ++(2,-2) node[below] {$1$};
	    \draw[uncolored, cross line] (-1,0) node[below] {$1$} .. controls ++(0,1) and ++(0,-1) .. ++(2,2)  arc (0:180:1cm);
        \end{tikzpicture} 
=-\qpar^{-1}
        \begin{tikzpicture}[anchorbase,scale=.25,tinynodes]
	    \draw[uncolored] (-1,0) node[below] {$1$} to [out=90, in=180] (0,1) to [out=0, in=90] (1,0) node[below] {$1$};
	    \node at (0,2.5) {$\phantom{1}$}; 
        \end{tikzpicture}
\end{gather}
\end{lemma}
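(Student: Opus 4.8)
The plan is to expand the crossing in \eqref{eq:lolli-equi} by means of the defining formula \eqref{eq:braiding} for the thin overcrossing and to identify the two resulting terms. Write $\text{cup}\colon\varnothing\to 1\otimes 1$ and $\text{cap}\colon 1\otimes 1\to\varnothing$ for the generators \eqref{eq:cup-cap}, and $\text{merge}\colon 1\otimes 1\to 2$, $\text{split}\colon 2\to 1\otimes 1$ for the thin merge and split of \eqref{eq:Aweb-gens}. The left-hand side of the first (left) relation in \eqref{eq:lolli-equi} is then $\beta\circ\text{cup}$, where $\beta$ is exactly the thin overcrossing on the left of \eqref{eq:braiding} (the strand running from bottom-left to top-right is the overstrand, which pins down $\beta$ and, in particular, the scalar $-\qpar^{-1}$). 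Substituting \eqref{eq:braiding} and using that the merge--split resolution term $\text{split}\circ\text{merge}$ precomposed with the cup equals $\text{split}\circ(\text{merge}\circ\text{cup})$, I obtain
\[
\beta\circ\text{cup}=-\qpar^{-1}\,\text{cup}+\text{split}\circ(\text{merge}\circ\text{cup}),
\]
in which $\text{merge}\circ\text{cup}\colon\varnothing\to 2$ is precisely the \emph{up-lollipop} appearing in \eqref{eq:lollipop}.

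From this identity both implications are short. For \eqref{eq:lollipop}$\Rightarrow$\eqref{eq:lolli-equi} I would invoke $\text{merge}\circ\text{cup}=0$, which kills the last summand and leaves $\beta\circ\text{cup}=-\qpar^{-1}\,\text{cup}$, the left relation of \eqref{eq:lolli-equi}. For the converse I would combine the assumption $\beta\circ\text{cup}=-\qpar^{-1}\,\text{cup}$ with the braiding identity above to get $\text{split}\circ(\text{merge}\circ\text{cup})=0$. This does not yet give the lollipop, because $\text{split}$ is not a monomorphism; the remedy is to post-compose with $\text{merge}$ and invoke the digon relation \eqref{eq:digon} with $a=b=1$, i.e.\ $\text{merge}\circ\text{split}=\qbracket{2}\,\id_2$. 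This yields $\qbracket{2}\,(\text{merge}\circ\text{cup})=\text{merge}\circ\text{split}\circ(\text{merge}\circ\text{cup})=0$, and since $\qbracket{2}=\qpar+\qpar^{-1}\neq 0$ over $\CQ$ we conclude $\text{merge}\circ\text{cup}=0$, which is the up-lollipop.

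It remains to treat the second (right) relation of \eqref{eq:lolli-equi} together with the second lollipop $\text{cap}\circ\text{split}$ of \eqref{eq:lollipop}. These are the images of the first ones under the vertical-mirror anti-involution $\omega$ of \fullref{remark:web-symmetries}, which extends to $\WebDz$ by \fullref{remark:over-under-crossing} and swaps $\text{cup}\leftrightarrow\text{cap}$ and $\text{merge}\leftrightarrow\text{split}$; applying $\omega$ to the equivalence already proved gives the cap version directly. (By \fullref{remark:over-under-crossing} the choice of over- versus undercrossing is immaterial in the presence of \eqref{eq:lollipop}, so there is no need to redo the computation for the undercrossing.) The only step that is not purely formal is this converse passage from $\text{split}\circ(\text{merge}\circ\text{cup})=0$ back to $\text{merge}\circ\text{cup}=0$; the crux there, and the main (though mild) obstacle, is the invertibility of the digon scalar $\qbracket{2}$ over $\CQ$.
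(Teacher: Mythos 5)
Your proof is correct and follows essentially the same route as the paper: expand the thin crossing via \eqref{eq:braiding}, observe that the resolution term is $\text{split}\circ(\text{merge}\circ\text{cup})$ so that one lollipop relation kills it, and transport the cap version through the anti-involution $\omega$. The paper only writes out the implication \eqref{eq:lollipop}$\Rightarrow$\eqref{eq:lolli-equi} (for the cap picture) and dismisses the converse with ``the other implications follow similarly''; your explicit converse via post-composing with the merge and invoking the digon relation $\text{merge}\circ\text{split}=\qbracket{2}\,\id_2$ with $\qbracket{2}\neq 0$ is exactly the right way to fill that in.
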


\begin{lemma}\renewcommand{\qedsymbol}{}\label{lemma:sliding-equi}
The merge-split sliding relations \eqref{eq:sliding} are equivalent to
\begin{gather}\label{eq:sliding-equi}
      \begin{tikzpicture}[anchorbase,xscale=.25,tinynodes,yscale=0.125]
      \draw[uncolored] (2,0) node[below] {$1$} .. controls ++(0,2) and ++(0,-2) .. ++(-2,4);
      \draw[uncolored,cross line] (0,0) node[below] {$1$} .. controls ++(0,2) and ++(0,-2) .. ++(2,4);
	  \draw[uncolored] (4,0) node[below] {$1$}  -- ++(0,4) .. controls ++(0,2) and  ++(0,-2) .. ++(-2,4);
	  \draw[uncolored,cross line] (2,4)  .. controls ++(0,2) and  ++(0,-2) .. ++(2,4) arc (180:0:1cm and 2cm) -- ++(0,-8) node[below] {$1$} ;
      \draw (0,4) -- ++(0,4) arc (180:0:1cm and 2cm); 
      \node at (1,11) {$\phantom{1}$};
      \end{tikzpicture}
=
      \begin{tikzpicture}[anchorbase,xscale=.25,tinynodes,yscale=0.125]
      \draw[uncolored] (6,0) node[below] {$1$} .. controls ++(0,2) and ++(0,-2) .. ++(-2,4);
      \draw[uncolored,cross line] (4,0) node[below] {$1$} .. controls ++(0,2) and ++(0,-2) .. ++(2,4);
	  \draw[uncolored] (4,4) .. controls ++(0,2) and  ++(0,-2) .. ++(-2,4);
	  \draw[uncolored,cross line] (2,0) node[below] {$1$} -- ++(0,4)  .. controls ++(0,2) and  ++(0,-2) .. ++(2,4) arc (180:0:1cm and 2cm) ;
      \draw (0,0) node[below] {$1$} -- ++(0,8) arc (180:0:1cm and 2cm);
      \draw (6,8) -- ++(0,-4);
      \node at (1,11) {$\phantom{1}$};
      \end{tikzpicture}
\quad\text{and}\quad
      \begin{tikzpicture}[anchorbase,xscale=-.25,tinynodes,yscale=-0.125]
      \draw[uncolored] (6,0) node[above] {$1$} .. controls ++(0,2) and ++(0,-2) .. ++(-2,4);
      \draw[uncolored,cross line] (4,0) node[above] {$1$} .. controls ++(0,2) and ++(0,-2) .. ++(2,4);
	  \draw[uncolored] (4,4) .. controls ++(0,2) and  ++(0,-2) .. ++(-2,4);
	  \draw[uncolored,cross line] (2,0) node[above] {$1$} -- ++(0,4)  .. controls ++(0,2) and  ++(0,-2) .. ++(2,4) arc (180:0:1cm and 2cm) ;
      \draw (0,0) node[above] {$1$} -- ++(0,8) arc (180:0:1cm and 2cm);
      \draw (6,8) -- ++(0,-4);
      \node at (1,11.25) {$\phantom{1}$};
      \end{tikzpicture}
=
      \begin{tikzpicture}[anchorbase,xscale=-.25,tinynodes,yscale=-0.125]
      \draw[uncolored] (2,0) node[above] {$1$} .. controls ++(0,2) and ++(0,-2) .. ++(-2,4);
      \draw[uncolored,cross line] (0,0) node[above] {$1$} .. controls ++(0,2) and ++(0,-2) .. ++(2,4);
	  \draw[uncolored] (4,0) node[above] {$1$}  -- ++(0,4) .. controls ++(0,2) and  ++(0,-2) .. ++(-2,4);
	  \draw[uncolored,cross line] (2,4)  .. controls ++(0,2) and  ++(0,-2) .. ++(2,4) arc (180:0:1cm and 2cm) -- ++(0,-8) node[above] {$1$} ;
      \draw (0,4) -- ++(0,4) arc (180:0:1cm and 2cm); 
      \node at (1,11.25) {$\phantom{1}$};
      \end{tikzpicture}
\end{gather}
\end{lemma}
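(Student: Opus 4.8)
The plan is to use the braiding expansion \eqref{eq:braiding} as a dictionary between the two formulations of the relation. Read from right to left, \eqref{eq:braiding} expresses the thin merge--split bubble appearing in \eqref{eq:sliding} (two $1$-strands merging into a $2$-strand and splitting back) as a crossing plus a scalar multiple of the pair of parallel strands: the merge--split equals the thin overcrossing plus $\qpar^{-1}$ times the identity, and equally the thin undercrossing plus $\qpar$ times the identity. Since the scalars $\qpar^{\pm1}$ are units, this is an invertible linear change of local data, so passing between a diagram containing a merge--split bubble and the corresponding diagram containing a crossing only modifies it by a multiple of the diagram in which the bubble is replaced by two straight strands.

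First I would substitute this expansion into every crossing on both sides of \eqref{eq:sliding-equi}. Expanding, each side becomes the corresponding merge--split diagram of \eqref{eq:sliding} together with correction terms in which one or both crossings have been replaced by the pair of parallel strands. The leading (all-bubble) terms are, by construction, exactly the two sides of \eqref{eq:sliding}. The correction terms are cup--cap diagrams with one or both bubbles deleted; using the pitchfork relations \eqref{eq:pitchfork} together with the trivalent twist \eqref{eq:ttwist} and (co)associativity \eqref{eq:asso}, the matching correction terms on the two sides are isotopic and hence already agree in $\WebDz$, while any term containing a $2$-labelled edge that is capped off vanishes by the lollipop relation \eqref{eq:lollipop}. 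Consequently the difference of the two sides of \eqref{eq:sliding} equals the difference of the two sides of \eqref{eq:sliding-equi}, so one relation holds if and only if the other does. By \fullref{remark:web-symmetries} it suffices to treat one choice of over/undercrossing, the other following by applying the involution $\Psi$.

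Throughout I would keep track of the left--right partitioning from \fullref{remark:beware-monoidal}: every manipulation must keep the cups and caps free of strands on their right, so the isotopies invoked above are only the height moves and pitchfork slides respecting this partition, not full planar isotopy. The main obstacle is exactly the bookkeeping in the previous step, namely checking that the surviving correction contributions on the two sides carry the same scalar and the same cup--cap shape, so that they cancel; concretely this reduces to verifying that sliding a straight strand, and sliding the internal $2$-labelled edge, past the cup--cap is governed by \eqref{eq:pitchfork} and \eqref{eq:ttwist} with matching powers of $\qpar$ on either side. Once this scalar match is confirmed the equivalence is immediate.
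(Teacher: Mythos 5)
Your proposal is correct and follows essentially the same route as the paper's own argument: expand the crossings via \eqref{eq:braiding}, identify the all-bubble terms with the two sides of \eqref{eq:sliding}, kill the terms where a merge--split bubble can be slid onto a cup or cap using \eqref{eq:lollipop}, and match the remaining correction terms by the allowed topological manipulations of the $\typeA$-web part (the paper summarizes this as ``four terms, two of which are equal, two of which are zero''). Your extra care about the left--right partitioning and the scalar bookkeeping is exactly the point the paper flags via the nested-cup shorthand \eqref{eq:shorthand}, so nothing essential is missing.
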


\endgroup
\end{subequations}
\addtocounter{equation}{-1}

\makeautorefname{lemma}{Lemmas} 

We give the proofs of 
\fullref{lemma:bubble-equi}, \ref{lemma:lasso-equi},
\ref{lemma:lolli-equi} and \ref{lemma:sliding-equi} 
after we have commented on the topological 
nature of the $\typeBDC$-web calculus.

\makeautorefname{lemma}{Lemma} 

Note that, by using the involution $\Psi$ and 
the anti-involution $\omega$, we obtain many 
more equivalent relations.

\subsubsection{Why \texorpdfstring{$\typeBDC$}{cup}-webs do not form a monoidal category}

The first thing to note is that the $\typeBDC$-web calculus is only 
partially topological: Some topological manipulations are allowed, e.g. \eqref{eq:lasso-equi}, but 
its similar looking counterparts do not necessarily hold. For example, we have
\begin{gather*}
	  \begin{tikzpicture}[anchorbase,xscale=.25,yscale=.20,tinynodes]
     \draw[uncolored] (0,7) node[above] {$1$} -- ++(0,-1.5)  .. controls ++(0,-1.5) and ++(0,+1.5) .. ++(4,-4) arc (180:360:1cm) .. controls ++(0,1.5) and ++(0,-1.5) .. ++(-4,+4) -- ++(0,1.5) node[above] {$1$};
     \draw[uncolored,cross line] (0,0) node[below] {$1$} -- ++(0,1.5) .. controls ++(0,1.5) and ++(0,-1.5) .. ++(4,4) arc (180:0:1cm); 
     \draw[uncolored,cross line] (6,5.5) .. controls ++(0,-1.5) and ++(0,1.5) .. ++(-4,-4) -- ++(0,-1.5) node[below] {$1$};
     \end{tikzpicture} 
\neq
      \begin{tikzpicture}[anchorbase,scale=.25,tinynodes]
      \draw[uncolored] (0,0) node[below] {$1$} -- ++(0,.4)  arc (180:0:1cm)  -- ++(0,-.4) node[below] {$1$};
      \draw[uncolored] (0,4) node[above] {$1$} -- ++(0,-.4)   arc (180:360:1cm)  -- ++(0,.4) node[above] {$1$};
      \end{tikzpicture}
\end{gather*}
Moreover, one is not allowed to use certain 
isotopies cf. \fullref{remark:beware-monoidal}.
In particular, there is no interchange law \eqref{eq:inter-law}; and 
\eqref{eq:bubble-equi} and \eqref{eq:lolli-equi} are different relations (``turning your head is forbidden'').

Furthermore, one may be tempted to define arbitrary cups and caps as in the following picture:
\begin{gather}\label{eq:cup-cap-extension}
    \begin{tikzpicture}[anchorbase,scale=.25, tinynodes]
	\draw[uncolored] (-1,2) node[above] {\raisebox{0.025cm}{$1$}} to [out=270, in=180] (0,1) to [out=0, in=270] (1,2) node[above] {\raisebox{0.025cm}{$1$}};
	\draw[colored] (-3,-2) node[below] {$b$} to (-3,2) node[above] {$b$};
	\draw[colored] (-5,-2) node[below] {$a$} to (-5,2) node[above] {$a$};
	\draw[colored] (3,-2) node[below] {$c$} to (3,2) node[above] {$c$};
	\draw[colored] (5,-2) node[below] {$d$} to (5,2) node[above] {$d$};
	\node at (-4,-1.5) {$\cdots$};
	\node at (4,-1.5) {$\cdots$};
	\node at (-4,1.5) {$\cdots$};
	\node at (4,1.5) {$\cdots$};
    \end{tikzpicture}
\,=\,
    \begin{tikzpicture}[anchorbase,scale=.25, tinynodes]
	\draw[uncolored] (7,-1) to [out=270, in=180] (8,-2) to [out=0, in=270] (9,-1);
	\draw[colored] (-3,-2) node[below] {$b$} to (-3,2) node[above] {$b$};
	\draw[colored] (-5,-2) node[below] {$a$} to (-5,2) node[above] {$a$};
	\draw[colored] (3,-2) node[below] {$c$} to (3,2) node[above] {$c$};
	\draw[colored] (5,-2) node[below] {$d$} to (5,2) node[above] {$d$};
	\draw[uncolored, cross line] (-1,2) node[above] {\raisebox{0.025cm}{$1$}} .. controls ++(0,-3) and ++(0,2) ..  (7,-1);
	\draw[uncolored, cross line] (1,2) node[above] {\raisebox{0.025cm}{$1$}} .. controls ++(0,-2) and ++(0,3) ..  (9,-1);
	\node at (-4,-1.5) {$\cdots$};
	\node at (4,-1.5) {$\cdots$};
	\node at (-4,1.5) {$\cdots$};
	\node at (4,1.5) {$\cdots$};
    \end{tikzpicture}
\end{gather}
However, this is dangerous since the diagram
\[    
	\begin{tikzpicture}[anchorbase,scale=.25, tinynodes]
	\draw[uncolored] (-1,2) node[above] {$1$} to [out=270, in=180] (0,1) to [out=0, in=270] (1,2) node[above] {$1$};
	\draw[uncolored] (3,2) node[above] {$1$} to [out=270, in=180] (4,1) to [out=0, in=270] (5,2) node[above] {$1$};
    \end{tikzpicture}
\]
would be ambiguous, as it could be any of the following two pictures:
\begin{gather}\label{eq:shorthand}    
	\begin{tikzpicture}[anchorbase,scale=.25, tinynodes]
	\draw[uncolored] (-1,2) node[above] {$1$} .. controls ++(0,-2) and ++(0,1.5) .. ++(4,-3) arc(180:360:1cm) .. controls ++(0,1.5)  and ++(0,-1.5) .. (1,2) node[above] {$1$};
	\draw[uncolored] (3,2) node[above] {$1$} to [out=270, in=180] (4,1) to [out=0, in=270] (5,2) node[above] {$1$};
    \end{tikzpicture}
\quad\text{or}\quad
     \begin{tikzpicture}[anchorbase,scale=.25, tinynodes]
	 \draw[uncolored] (3,2) node[above] {$1$} -- ++(0,-3) to [out=270, in=180] ++(1,-1) to [out=0, in=270] ++(1,1) -- ++(0,3) node[above] {$1$};
	 \draw[uncolored,cross line] (-1,2) node[above] {$1$} .. controls ++(0,-3) and ++(0,1.5) .. ++(8,-3) arc(180:360:1cm) .. controls ++(0,2.5)  and ++(0,-1.5) .. (1,2) node[above] {$1$};
     \end{tikzpicture}
\end{gather}
Unfortunately, these are not equal. 
(We note that, in the setting of categories 
with a monoidal action, the first diagram is the 
correct meaning, and we already used this before, 
namely 
in \eqref{eq:sliding-equi}, cf. \fullref{remark:beware-monoidal}.)

To summarize, 
one has the whole power of topological manipulations for 
the $\typeA$-web part, but for cups and caps one has to be extremely careful. 
For example, \eqref{eq:lasso-equi} and \eqref{eq:sliding-equi} 
are the only Reidemeister type $2$ moves involving cups 
and caps which hold.

All of these problems disappear if one de-quantizes, 
and the resulting $\typeBDC$-web category at $q=1$
is a genuine monoidal category. Hence, $\WebDz$ 
gives an example of a deformation of a monoidal category which is 
not monoidal anymore. 
This is related, as we shall see in 
\fullref{sec-reps}, to the well-understood fact that 
the quantization of the inclusion $\frakso_{n} \subseteq \frakgl_n$ 
cannot be realized as an inclusion of Hopf algebras, but only as the inclusion of a 
coideal subalgebra.

Actually, in the de-quantized case
the resulting web category
is 
not just monoidal, 
but also gets a topological flavor by defining 
\emph{thick cup and cap morphisms} via explosion,
cf.\ \fullref{remark:no-thick-things}, and cups and caps between 
$\typeA$ web strands as in \eqref{eq:cup-cap-extension}. The 
corresponding web categories will satisfy all reasonable kinds of isotopies.
This
is very 
much in the spirit 
of the original ``web categories'' introduced 
by Kuperberg \cite{Kup1}.

\subsection{Some useful lemmas}\label{subsec-lemmas-BD}

Until the end of this section we 
will work in $\WebDz$.

\makeautorefname{lemma}{Lemmas} 

\begin{proof}[Proof of \fullref{lemma:bubble-equi}, \ref{lemma:lasso-equi}, \ref{lemma:lolli-equi} 
and \ref{lemma:sliding-equi}]
This basically follows by expanding the crossings 
using \eqref{eq:braiding}. 
However, we give the necessary calculations for \fullref{lemma:bubble-equi}, \ref{lemma:lolli-equi} 
and \ref{lemma:sliding-equi} in full detail since they serve as a 
blueprint for all our calculations in the present and the next section.
(Verifying the equivalence between \eqref{eq:lasso} 
and \eqref{eq:lasso-equi}, which inspired the name lasso move, 
is lengthy but easy, and follows along the same lines.) 
The main idea is to use the fact that the $\typeA$-web calculus is 
topological, and then carefully arrange the diagrams to apply the 
defining relations from \fullref{definition:typeBDwebs}.

\makeautorefname{lemma}{Lemma}

\begin{description}[leftmargin=0pt,itemsep=1ex]
\item[\textbf{\eqref{eq:bubble} implies \eqref{eq:bubble-equi}}]
Here is the calculation:
\[
	  \begin{tikzpicture}[anchorbase,scale=.25, tinynodes]
	  \draw[uncolored] (0,6) node [above] {1} to (0,4) to [out=270, in=90] (2,0);
      \draw[uncolored, cross line] (0,-2) node [below] {1} to (0,0) to [out=90, in=270] (2,4);
      \draw[uncolored] (2,0) to [out=270, in=180] (3,-1) to [out=0, in=270] (4,0);
      \draw[uncolored] (2,4) to [out=90, in=180] (3,5) to [out=0, in=90] (4,4);
      \draw[uncolored] (4,0) to (4,4);
      \end{tikzpicture}  
\overset{\eqref{eq:braiding}}{=}
-\qpar^{-1}
	  \begin{tikzpicture}[anchorbase,scale=.25, tinynodes]
      \draw[uncolored] (0,6) node [above] {1} to (0,4) to [out=270, in=90] (0,0);
      \draw[uncolored] (0,-2) node [below] {1} to (0,0);
      \draw[uncolored] (2,0) to (2,4);
      \draw[uncolored] (2,0) to [out=270, in=180] (3,-1) to [out=0, in=270] (4,0);
      \draw[uncolored] (2,4) to [out=90, in=180] (3,5) to [out=0, in=90] (4,4);
      \draw[uncolored] (4,0) to (4,4);
      \draw[densely dashed] (1.75,0) rectangle (4.25,4);
      \end{tikzpicture} 
+
      \begin{tikzpicture}[anchorbase,scale=.25, tinynodes]
      \draw[uncolored] (0,-2) node [below] {1} to (0,0) to [out=90, in=225] (1,1.5);
      \draw[uncolored] (2,0) to [out=90, in=315] (1,1.5);
      \draw[ccolored] (1,1.5) to (1,2.5);
      \draw[uncolored] (0,6) node [above] {1} to (0,4) to [out=270, in=135] (1,2.5);
      \draw[uncolored] (2,4) to [out=270, in=45] (1,2.5);
      \draw[uncolored] (2,0) to [out=270, in=180] (3,-1) to [out=0, in=270] (4,0);
      \draw[uncolored] (2,4) to [out=90, in=180] (3,5) to [out=0, in=90] (4,4);
      \draw[uncolored] (4,0) to (4,4);
      \end{tikzpicture} 
=
-\qpar^{-1}
	  \begin{tikzpicture}[anchorbase,scale=.25, tinynodes]
      \draw[uncolored] (0,6) node [above] {1} to (0,4) to [out=270, in=90] (0,0);
      \draw[uncolored] (0,-2) node [below] {1} to (0,0);
      \draw[uncolored] (2,2) to [out=270, in=180] (3,1) to [out=0, in=270] (4,2);
      \draw[uncolored] (2,2) to [out=90, in=180] (3,3) to [out=0, in=90] (4,2);
      \end{tikzpicture} 
+
      \begin{tikzpicture}[anchorbase,scale=.25, tinynodes]
      \draw[uncolored] (0,-2) node [below] {1} to (0,0) to [out=90, in=225] (1,1.5);
      \draw[uncolored] (2,0) to [out=90, in=315] (1,1.5);
      \draw[ccolored] (1,1.5) to (1,2.5);
      \draw[uncolored] (0,6) node [above] {1} to (0,4) to [out=270, in=135] (1,2.5);
      \draw[uncolored] (2,4) to [out=270, in=45] (1,2.5);
      \draw[uncolored] (2,0) to [out=270, in=180] (3,-1) to [out=0, in=270] (4,0);
      \draw[uncolored] (2,4) to [out=90, in=180] (3,5) to [out=0, in=90] (4,4);
      \draw[uncolored] (4,0) to (4,4);
      \end{tikzpicture}
\overunder{\eqref{eq:circle}}{\eqref{eq:bubble}}{=}
\begin{aligned}
&-\qpar^{-1}\qbracket{\zpar;0}\\
& \qquad +\qbracket{\zpar;1}     
\end{aligned}
	  \begin{tikzpicture}[anchorbase,scale=.25, tinynodes]
	  \draw[uncolored] (0,6) node [above] {1} to (0,0);
      \draw[uncolored, cross line] (0,-2) node [below] {1} to (0,0);
      \end{tikzpicture}
=
-\zpar^{-1}
	  \begin{tikzpicture}[anchorbase,scale=.25, tinynodes]
	  \draw[uncolored] (0,6) node [above] {1} to (0,0);
      \draw[uncolored, cross line] (0,-2) node [below] {1} to (0,0);
      \end{tikzpicture} 
\]
Note hereby that we used a topological manipulation on an $\typeA$-web part.

\item[\textbf{\eqref{eq:lollipop} implies \eqref{eq:lolli-equi}}] Let us verify the 
right equation:
\[
      \begin{tikzpicture}[anchorbase,scale=.25, tinynodes]
	  \draw[uncolored] (0,4) to [out=270, in=90] (2,0) node [below] {1};
      \draw[uncolored, cross line] (0,0) node [below] {1} to [out=90, in=270] (2,4);
      \draw[uncolored] (0,4) to [out=90, in=180] (1,5) to [out=0, in=90] (2,4);
      \end{tikzpicture}
\overset{\eqref{eq:braiding}}{=}
-\qpar^{-1}
	  \begin{tikzpicture}[anchorbase,scale=.25, tinynodes]
	  \draw[uncolored] (0,4) to [out=270, in=90] (0,0) node [below] {1};
      \draw[uncolored] (2,0) node [below] {1} to [out=90, in=270] (2,4);
      \draw[uncolored] (0,4) to [out=90, in=180] (1,5) to [out=0, in=90] (2,4);
      \end{tikzpicture}
+
      \begin{tikzpicture}[anchorbase,scale=.25, tinynodes]
	  \draw[uncolored] (0,0) node [below] {1} to [out=90, in=225] (1,1.5);
      \draw[uncolored] (2,0) node [below] {1} to [out=90, in=315] (1,1.5);
      \draw[ccolored] (1,1.5) to (1,2.5);
      \draw[uncolored] (0,4) to [out=270, in=135] (1,2.5);
      \draw[uncolored] (2,4) to [out=270, in=45] (1,2.5);
      \draw[uncolored] (0,4) to [out=90, in=180] (1,5) to [out=0, in=90] (2,4);
      \end{tikzpicture}
\overset{\eqref{eq:lollipop}}{=}
-\qpar^{-1}
	  \begin{tikzpicture}[anchorbase,scale=.25, tinynodes]
	  \draw[uncolored] (0,4) to [out=270, in=90] (0,0) node [below] {1};
      \draw[uncolored] (2,0) node [below] {1} to [out=90, in=270] (2,4);
      \draw[uncolored] (0,4) to [out=90, in=180] (1,5) to [out=0, in=90] (2,4);
      \draw[densely dashed] (-.25,.25) rectangle (2.25,4);
      \end{tikzpicture}
=
-\qpar^{-1}
      \begin{tikzpicture}[anchorbase,scale=.25, tinynodes]
	  \draw[white] (0,4) to [out=270, in=90] (0,0) node [below] {1};
      \draw[white] (2,0) node [below] {1} to [out=90, in=270] (2,4);
      \draw[white] (0,4) to [out=90, in=180] (1,5) to [out=0, in=90] (2,4);
      \draw[uncolored] (0,0) node [below] {1} to [out=90, in=180] (1,1) to [out=0, in=90] (2,0) node [below] {1};
      \end{tikzpicture}
\]
using the same trick as before.

\item[\textbf{\eqref{eq:sliding} implies \eqref{eq:sliding-equi}}] This can be shown 
as above: expanding the expressions in \eqref{eq:sliding-equi} gives 
four terms, two of which are equal, two of which are zero. The main 
topological manipulation one needs is of the form
\[
      \begin{tikzpicture}[anchorbase,scale=.25, tinynodes]
	  \draw[uncolored] (0,0) to [out=90, in=225] (1,1.5);
      \draw[uncolored] (2,0) to [out=90, in=315] (1,1.5);
      \draw[ccolored] (1,1.5) to (1,2.5);
      \draw[uncolored] (0,4) node [above] {1} to [out=270, in=135] (1,2.5);
      \draw[uncolored] (2,4) node [above] {1} to [out=270, in=45] (1,2.5);
      \draw[uncolored] (4,4) node [above] {1} to [out=270, in=180] (5,3) to [out=0, in=270] (6,4) node [above] {1};
      \draw[uncolored] (4,-4) to [out=270, in=180] (5,-5) to [out=0, in=270] (6,-4);
      \draw[uncolored] (0,0) to [out=270, in=90] (4,-4);
      \draw[uncolored] (2,0) to [out=270, in=90] (6,-4);
      \draw[densely dashed] (-.5,3.75) to (2.5,3.75) to [out=270, in=90] (6.5,-4) to (3.5,-4) to [out=90, in=270] (-.5,0) to (-.5,3.75);
      \end{tikzpicture}
=
      \begin{tikzpicture}[anchorbase,scale=.25, tinynodes]
	  \draw[uncolored] (4,-4) to [out=90, in=225] (5,-2.5);
      \draw[uncolored] (6,-4) to [out=90, in=315] (5,-2.5);
      \draw[ccolored] (5,-1.5) to (5,-2.5);
      \draw[uncolored] (4,0) to [out=270, in=135] (5,-1.5);
      \draw[uncolored] (6,0) to [out=270, in=45] (5,-1.5);
      \draw[uncolored] (4,4) node [above] {1} to [out=270, in=180] (5,3) to [out=0, in=270] (6,4) node [above] {1};
      \draw[uncolored] (4,-4) to [out=270, in=180] (5,-5) to [out=0, in=270] (6,-4);
      \draw[uncolored] (0,4) node [above] {1} to [out=270, in=90] (4,0);
      \draw[uncolored] (2,4) node [above] {1} to [out=270, in=90] (6,0);
      \end{tikzpicture}
\]
where we recall that two cups next to each 
are actually a shorthand for the left diagram in \eqref{eq:shorthand}. 
To this we can then apply \eqref{eq:lollipop}.
\end{description}

The other implications follow similarly.
\end{proof}

Our next aim it to derive some diagrammatic relations 
which, as we will see later, correspond to 
relations in the quantum group $\quantumg(\frakso_{2k})$. 
In the proofs of the following lemmas, we will repeatedly use 
the defining relations of $\WebDz$,
as well as the 
topological and 
braided structure of $\WebAz$ 
(in particular, \eqref{eq:pitchfork} and \eqref{eq:ttwist}). At each step, we will indicate the 
most important relations that we use.

\begin{lemma}\label{lemma:typeD-EF-FE}
For all $a,b$ we have
\begin{equation}\label{eq:typeD-EF-FE}
 
\end{align*}
Thus, the statement follows from the 
thick square switch relations \eqref{eq:thick-square}.
\end{proof}

\section{The \texorpdfstring{$\typeCBD$}{dot}-web category}\label{sec-Cwebs}
In this section, which is structured exactly as the previous one,
we define another 
web category which will play a complimentary 
role to the $\typeBDC$-web category, as it 
describes exterior $\typeC$-webs 
and symmetric $\typeB\typeD$-webs. 
We call its morphisms \emph{$\typeCBD$-webs}.

\newpage

\subsection{The diagrammatic \texorpdfstring{$\typeCBD$}{dot}-web category}\label{subsec-diacat-C}

\subsubsection{\texorpdfstring{$\typeCBD$}{dot}-webs}

Again, we work over $\CQZ$, and
we define:

\begin{definition}\label{definition:typeCwebs}
The \emph{$\typeCBD$-web category} 
$\WebCz$ is the additive closure 
of the $\CQZ$-li\-near $\WebAz$-category 
generated by the object $\varnothing$ 
and by the 
\emph{start/end dot morphisms}
\begin{subequations}
\begingroup
\renewcommand{\theequation}{$\typeCBD$gen}
\begin{equation}
\label{eq:dot-morphisms}
   \begin{tikzpicture}[anchorbase,scale=.25, tinynodes]
	\draw[ccolored] (0,0) to (0,1.5) node[above] {$2$};
	\node[dotbullet] at (0,0) {};
	\node at (0,1.5) {$\phantom{1}$};
   \end{tikzpicture}
\colon \varnothing\to 2
\quad\text{and}\quad
   \begin{tikzpicture}[anchorbase,scale=.25, tinynodes]
	\draw[ccolored] (0,0) to (0,-1.5) node[below] {$2$};
	\node[dotbullet] at (0,0) {};
	\node at (0,1.5) {$\phantom{1}$};
   \end{tikzpicture}
\colon 2\to \varnothing,
\end{equation}
\endgroup
\end{subequations}
\addtocounter{equation}{-1}
modulo the following relations:
\smallskip
\begin{enumerate}[label=$\vartriangleright$]

\setlength\itemsep{.15cm}

\begin{subequations}
\begingroup
\renewcommand{\theequation}{$\typeCBD$\arabic{equation}}
\item The \emph{barbell removal}
\begin{equation}\label{eq:barbell-C}
     \begin{tikzpicture}[anchorbase,scale=.25, tinynodes]
     \draw[ccolored] (0,-1) to (0,0) to (0,1);
	 \node[dotbullet] at (0,-1) {};
	 \node[dotbullet] at (0,1) {}; 
     \end{tikzpicture}
\,=\qbracketC{\zpar;0}.
\end{equation}
  
\item The \emph{thin K removal}
\begin{equation}\label{eq:thin-k}
      \begin{tikzpicture}[anchorbase,scale=.25, tinynodes]
      \draw[uncolored] (0,-2) node[below] {$1$} to (0,-.5);
      \draw[colored] (0,-.5) to (0,0) to (0,.5);
      \draw[uncolored] (0,.5) to (0,2) node[above] {$1$};
      \draw[ccolored] (0,-.5) to (1.5,-1) to (1.5,-1.75);
      \draw[ccolored] (0,.5) to (1.5,1) to (1.5,1.75);
      \node[dotbullet] at (1.5,-1.75) {};
	  \node[dotbullet] at (1.5,1.75) {};
      \end{tikzpicture}
=\qbracketC{\zpar;-1}
      \begin{tikzpicture}[anchorbase,scale=.25, tinynodes]
      \draw[uncolored] (0,-2) node[below] {$1$} to (0,2) node[above] {$1$};
      \end{tikzpicture}
\end{equation}
  
\item The \emph{thick K opening}
\begin{equation}\label{eq:thick-k}
      \begin{tikzpicture}[anchorbase,scale=.25, tinynodes]
      \draw[ccolored] (0,-2) node[below] {$2$} to (0,-.5);
      \draw[colored] (0,-.5) to (0,0) to (0,.5);
      \draw[ccolored] (0,.5) to (0,2) node[above] {$2$};
      \draw[ccolored] (0,-.5) to (1.5,-1) to (1.5,-1.75);
      \draw[ccolored] (0,.5) to (1.5,1) to (1.5,1.75);
      \node[dotbullet] at (1.5,-1.75) {};
	  \node[dotbullet] at (1.5,1.75) {};
      \end{tikzpicture}
=
      \begin{tikzpicture}[anchorbase,scale=.25, tinynodes]
      \draw[ccolored] (0,-2) node[below] {$2$} to (0,-.75);
      \draw[ccolored] (0,.75) to (0,2) node[above] {$2$};
      \node[dotbullet] at (0,-.75) {};
	  \node[dotbullet] at (0,.75) {};
      \end{tikzpicture}
+\qbracketC{\zpar;-2}
      \begin{tikzpicture}[anchorbase,scale=.25, tinynodes]
      \draw[ccolored] (0,-2) node[below] {$2$} to (0,2) node[above] {$2$};
      \end{tikzpicture}
\end{equation}
\item The \emph{merge-split sliding relations}
\begin{gather}\label{eq:typeC-sliding}
\begin{tikzpicture}[anchorbase,xscale=-.25,tinynodes,yscale=-0.125]
      \draw[uncolored] (4,0) node[above] {$1$} .. controls ++(0,1) and ++(-0.5,-0.5) .. ++(1,1.5) .. controls ++(0.5,-0.5) and ++(0,1) .. ++(1,-1.5) node[above] {$1$}; 
      \draw[ccolored] (5,1.5) -- ++(0,1);
      \draw[uncolored] (4,4)  .. controls ++(0,-1) and ++(-0.5,0.5) .. ++(1,-1.5) .. controls ++(0.5,0.5) and ++(0,-1) .. ++(1,1.5);
      \draw[uncolored] (2,0) node[above] {$1$} -- ++(0,4);
      \draw[uncolored] (4,8) arc (180:0:1cm and 2cm) ;
      \draw (0,0) node[above] {$1$} -- ++(0,8) arc (180:0:1cm and 2cm);
      \draw (6,8) -- ++(0,-4);
      \node at (1,11.25) {$\phantom{1}$};
      \draw[uncolored] (2,4)  .. controls ++(0,1) and ++(-0.5,-0.5) .. ++(1,1.5) .. controls ++(0.5,-0.5) and ++(0,1) .. ++(1,-1.5) ;
      \draw[ccolored] (3,5.5) -- ++(0,1);
      \draw[uncolored] (2,8)  .. controls ++(0,-1) and ++(-0.5,0.5) .. ++(1,-1.5) .. controls ++(0.5,0.5) and ++(0,-1) .. ++(1,1.5);
      \end{tikzpicture}
=
      \begin{tikzpicture}[anchorbase,xscale=-.25,tinynodes,yscale=-0.125]
      \draw[uncolored] (0,0) node[above] {$1$} .. controls ++(0,1) and ++(-0.5,-0.5) .. ++(1,1.5) .. controls ++(0.5,-0.5) and ++(0,1) .. ++(1,-1.5) node[above] {$1$}; 
      \draw[ccolored] (1,1.5) -- ++(0,1);
      \draw[uncolored] (0,4)  .. controls ++(0,-1) and ++(-0.5,0.5) .. ++(1,-1.5) .. controls ++(0.5,0.5) and ++(0,-1) .. ++(1,1.5);
	  \draw[uncolored] (4,0) node[above] {$1$}  -- ++(0,4);
	  \draw[uncolored] (4,8)  arc (180:0:1cm and 2cm) -- ++(0,-8) node[above] {$1$} ;
      \draw (0,4) -- ++(0,4) arc (180:0:1cm and 2cm);
      \draw[uncolored] (2,4)  .. controls ++(0,1) and ++(-0.5,-0.5) .. ++(1,1.5) .. controls ++(0.5,-0.5) and ++(0,1) .. ++(1,-1.5) ;
      \draw[ccolored] (3,5.5) -- ++(0,1);
      \draw[uncolored] (2,8)  .. controls ++(0,-1) and ++(-0.5,0.5) .. ++(1,-1.5) .. controls ++(0.5,0.5) and ++(0,-1) .. ++(1,1.5);
      \node at (1,11.25) {$\phantom{1}$}; 
      \end{tikzpicture}
\quad\text{and}\quad
      \begin{tikzpicture}[anchorbase,xscale=.25,tinynodes,yscale=0.125]
      \draw[uncolored] (4,0) node[below] {$1$} .. controls ++(0,1) and ++(-0.5,-0.5) .. ++(1,1.5) .. controls ++(0.5,-0.5) and ++(0,1) .. ++(1,-1.5) node[below] {$1$}; 
      \draw[ccolored] (5,1.5) -- ++(0,1);
      \draw[uncolored] (4,4)  .. controls ++(0,-1) and ++(-0.5,0.5) .. ++(1,-1.5) .. controls ++(0.5,0.5) and ++(0,-1) .. ++(1,1.5);
      \draw[uncolored] (2,0) node[below] {$1$} -- ++(0,4);
      \draw[uncolored] (4,8) arc (180:0:1cm and 2cm) ;
      \draw (0,0) node[below] {$1$} -- ++(0,8) arc (180:0:1cm and 2cm);
      \draw (6,8) -- ++(0,-4);
      \node at (1,11.25) {$\phantom{1}$};
      \draw[uncolored] (2,4)  .. controls ++(0,1) and ++(-0.5,-0.5) .. ++(1,1.5) .. controls ++(0.5,-0.5) and ++(0,1) .. ++(1,-1.5) ;
      \draw[ccolored] (3,5.5) -- ++(0,1);
      \draw[uncolored] (2,8)  .. controls ++(0,-1) and ++(-0.5,0.5) .. ++(1,-1.5) .. controls ++(0.5,0.5) and ++(0,-1) .. ++(1,1.5);
        \node at (1,11) {$\phantom{1}$}; 
      \end{tikzpicture}
=
      \begin{tikzpicture}[anchorbase,xscale=.25,tinynodes,yscale=0.125]
      \draw[uncolored] (0,0) node[below] {$1$} .. controls ++(0,1) and ++(-0.5,-0.5) .. ++(1,1.5) .. controls ++(0.5,-0.5) and ++(0,1) .. ++(1,-1.5) node[below] {$1$}; 
      \draw[ccolored] (1,1.5) -- ++(0,1);
      \draw[uncolored] (0,4)  .. controls ++(0,-1) and ++(-0.5,0.5) .. ++(1,-1.5) .. controls ++(0.5,0.5) and ++(0,-1) .. ++(1,1.5);
	  \draw[uncolored] (4,0) node[below] {$1$}  -- ++(0,4);
	  \draw[uncolored] (4,8)  arc (180:0:1cm and 2cm) -- ++(0,-8) node[below] {$1$} ;
      \draw (0,4) -- ++(0,4) arc (180:0:1cm and 2cm);
      \draw[uncolored] (2,4)  .. controls ++(0,1) and ++(-0.5,-0.5) .. ++(1,1.5) .. controls ++(0.5,-0.5) and ++(0,1) .. ++(1,-1.5) ;
      \draw[ccolored] (3,5.5) -- ++(0,1);
      \draw[uncolored] (2,8)  .. controls ++(0,-1) and ++(-0.5,0.5) .. ++(1,-1.5) .. controls ++(0.5,0.5) and ++(0,-1) .. ++(1,1.5);
      \node at (1,11) {$\phantom{1}$};
      \end{tikzpicture}
\end{gather}
\endgroup
\end{subequations}
\addtocounter{equation}{-1}
where the 
\emph{cup} and \emph{cap morphisms} are defined as
\begin{equation}\label{eq:cup-cap-C}
    \begin{tikzpicture}[anchorbase,scale=.25, tinynodes]
	\draw[uncolored] (-1,3) node[above] {$1$} to [out=270, in=180] (0,2) to [out=0, in=270] (1,3) node[above] {$1$};
	\node at (0,.4) {$\phantom{.}$};
    \end{tikzpicture}
=
    \begin{tikzpicture}[anchorbase,scale=.25, tinynodes]
	\draw[uncolored] (-1,3) node[above] {$1$} to [out=270, in=135] (0,1.5) to [out=45, in=270] (1,3) node[above] {$1$};
	\draw[ccolored] (0,1.5) to (0,0);
	\node[dotbullet] at (0,0) {};
    \end{tikzpicture}
\colon \varnothing\to 1\otimes 1
\quad\text{and}\quad
    \begin{tikzpicture}[anchorbase,scale=.25, tinynodes]
	\draw[uncolored] (-1,-3) node[below] {$1$} to [out=90, in=180] (0,-2) to [out=0, in=90] (1,-3) node[below] {$1$};
	\node at (0,-.4) {$\phantom{.}$};
    \end{tikzpicture}
=
    \begin{tikzpicture}[anchorbase,scale=.25, tinynodes]
	\draw[uncolored] (-1,-3) node[below] {$1$} to [out=90, in=225] (0,-1.5) to [out=315, in=90] (1,-3) node[below] {$1$};
	\draw[ccolored] (0,-1.5) to (0,0);
	\node[dotbullet] at (0,0) {};
    \end{tikzpicture}
\colon 1\otimes 1\to \varnothing.
\end{equation}
\end{enumerate}
\end{definition}

\begin{remark}\label{remark:beware-monoidal-2}
As before for $\typeBDC$-webs, 
the dot morphisms are only allowed 
if there are no strands to their right, cf.\ \fullref{remark:beware-monoidal} 
(see also below). For example,
\[
\text{Allowed: }
\begin{tikzpicture}[anchorbase,scale=.25,tinynodes]
      \draw[uncolored] (-2,0) node[above] {$1$} to (-2,-3) node[below] {$1$};
      \draw[ccolored] (0,0) node[above] {$2$} to (0,-1.5);
      \node[dotbullet] at (0,-1.5) {};
      \draw[densely dashed] (0,-1.5) to [out=315, in=180] (1,-2);
      \node at (0,1) {\phantom{a}};
      \node at (0,-4) {\phantom{a}};
\end{tikzpicture}
\;\text{ or }\;
\begin{tikzpicture}[anchorbase,scale=.25,tinynodes]
      \draw[ccolored] (-2,0) node[below] {$2$} to (-2,3);
      \node[dotbullet] at (-2,3) {};
      \draw[ccolored] (0,0) node[below] {$2$} to (0,1);
      \node[dotbullet] at (0,1) {};
      \draw[uncolored, densely dotted] (-3,2) to (1,2);
      \draw[densely dashed] (-2,3) to [out=45, in=180] (1,3.5);
      \draw[densely dashed] (0,1) to [out=45, in=180] (1,1.5);
      \node at (0,-1) {\phantom{a}};
      \node at (0,4) {\phantom{a}};
\end{tikzpicture}
\qquad
\text{Not allowed: }
\begin{tikzpicture}[anchorbase,scale=.25,tinynodes]
      \draw[uncolored] (2,0) node[above] {$1$} to (2,-3) node[below] {$1$};
      \draw[ccolored] (0,0) node[above] {$2$} to (0,-1.5);
      \node[dotbullet] at (0,-1.5) {};
      \draw[densely dashed] (0,-1.5) to [out=315, in=180] (3,-2);
      \node at (0,1) {\phantom{a}};
      \node at (0,-4) {\phantom{a}};
      \draw[thick, myred] (-.25,-3) to (2.25,0);
\end{tikzpicture}
\;\text{ or }\;
\begin{tikzpicture}[anchorbase,scale=.25,tinynodes]
      \draw[ccolored] (2,0) node[below] {$2$} to (2,3);
      \node[dotbullet] at (2,3) {};
      \draw[ccolored] (0,0) node[below] {$2$} to (0,1);
      \node[dotbullet] at (0,1) {};
      \draw[uncolored, densely dotted] (3,2) to (-1,2);
      \draw[densely dashed] (2,3) to [out=45, in=180] (3,3.5);
      \draw[densely dashed] (0,1) to [out=45, in=180] (3,1.5);
      \node at (0,-1) {\phantom{a}};
      \node at (0,4) {\phantom{a}};
      \draw[thick, myred] (-.25,0) to (2.25,3);
\end{tikzpicture}
\]
In particular, we get the same restrictions on topological manipulations 
as for $\typeBDC$-webs, and again there will be a representation theoretical 
explanation of this in \fullref{sec-reps}, see also \fullref{remark:not-an-intertwiner-sp}.
Moreover, 
the category $\WebCz$ has the 
same (anti)-involutions as $\WebDz$ (cf.\ \fullref{remark:web-symmetries}), 
which we, abusing notation, denote also by 
$\Psi$ and $\omega$.
\end{remark}

\subsubsection{Topological versions of the \texorpdfstring{$\typeCBD$}{dot}-web relations}

For completeness, we give some topologically meaningful versions 
of the relations above.

\begin{subequations}
\begingroup
\renewcommand{\theequation}{$\typeCBD$\alph{equation}}

\begin{lemma}\label{lemma:barbell-equi-C}
The barbell removal \eqref{eq:barbell-C} is equivalent to
\begin{equation}\label{eq:circle-C}
     \begin{tikzpicture}[anchorbase,scale=.25, tinynodes]
     \draw[uncolored] (-1,2.5) arc (-180:180:1cm);
     \end{tikzpicture}
=\qbracket{\zpar;0}.
\end{equation}
\end{lemma}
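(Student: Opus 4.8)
The plan is to realize the thin circle as a composite of the cup and cap from \eqref{eq:cup-cap-C} and then collapse the resulting diagram down to a barbell, so that the two relations differ only by an invertible scalar.

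First I would stack a cup below a cap, closing off the two thin strands into a single loop; this expresses the left-hand side of the circle removal \eqref{eq:circle-C} as the composite obtained from \eqref{eq:cup-cap-C}. By the very definition of the cup and cap, this composite is a thickness-$2$ edge carrying a start dot at the bottom and an end dot at the top, but interrupted in the middle by a thin digon: the thick edge coming out of the cup splits into two thin strands, which then remerge into the thick edge of the cap. Next I would apply the digon removal \eqref{eq:digon} with $a=b=1$ to this thin digon. That collapses it to a straight thickness-$2$ edge at the cost of a factor $\qbin{2}{1}=\qbracket{2}$, so the circle equals $\qbracket{2}$ times the barbell.

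Invoking the barbell removal \eqref{eq:barbell-C} then rewrites the circle as the scalar $\qbracket{2}\cdot\qbracketC{\zpar;0}$, and a short quantum-number computation using $\qbracket{2}=\qpar+\qpar^{-1}$ together with the definitions in \eqref{eq:qnumbers-typeAD} shows that $\qbracket{2}\cdot\qbracketC{\zpar;0}=\qbracket{\zpar;0}$; this is exactly the right-hand side of \eqref{eq:circle-C}, so \eqref{eq:barbell-C} implies \eqref{eq:circle-C}. For the converse I would run the identical diagrammatic reduction to get circle $=\qbracket{2}\cdot(\text{barbell})$, and since $\qbracket{2}$ is a nonzero, hence invertible, scalar in $\CQ$, the circle removal forces the barbell to equal $\qbracket{\zpar;0}/\qbracket{2}=\qbracketC{\zpar;0}$, recovering \eqref{eq:barbell-C}.

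I do not expect a genuine obstacle here: the entire argument is one application of the digon removal plus the bookkeeping of thin versus thick quantum numbers. The only point that requires a little care is keeping the thickness-$2$ subscript straight in the scalar identity $\qbracket{2}\cdot\qbracketC{\zpar;0}=\qbracket{\zpar;0}$, which is precisely what matches the thick parameter on the barbell side with the thin parameter on the circle side.
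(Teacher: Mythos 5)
Your argument is correct and is exactly the intended one: since the cup and cap of $\WebCz$ are \emph{defined} in \eqref{eq:cup-cap-C} as split-after-start-dot and end-dot-after-merge, the circle is the end dot composed with a thin digon on a thickness-$2$ edge composed with the start dot, and the digon removal \eqref{eq:digon} gives circle $=\qbracket{2}\cdot$barbell, with $\qbracket{2}\qbracketC{\zpar;0}=\qbracket{\zpar;0}$ and invertibility of $\qbracket{2}$ yielding both directions. The paper subsumes this lemma into a batch proof and does not write this case out, but your computation is precisely the unwinding it intends.
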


\begin{lemma}\label{lemma:thin-k-equi}
The thin K removal \eqref{eq:thin-k} is equivalent to
\begin{equation}\label{eq:thin-k-equi}
      \begin{tikzpicture}[anchorbase,scale=.25, tinynodes]
      \draw[uncolored] (0,4) node[above] {$1$} .. controls ++(0,-3) and ++(0,-1.5) .. ++(2,-2);
      \draw[uncolored,cross line] (0,0) node[below] {$1$} .. controls ++(0,3) and ++(0,1.5) .. ++(2,2);
      \end{tikzpicture} 
=-\zpar^{-1}
      \begin{tikzpicture}[anchorbase,scale=.25, tinynodes]
      \draw[uncolored] (0,0) node[below] {$1$} -- ++(0,4) node[above] {$1$};
      \end{tikzpicture}
\end{equation}
\end{lemma}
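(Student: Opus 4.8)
The plan is to mirror the proof of \fullref{lemma:bubble-equi}, since the left-hand side of \eqref{eq:thin-k-equi} is drawn as exactly the same cup--cap--with--crossing diagram that appeared there; the only structural difference is that in $\WebCz$ the cups and caps are not primitive but are the composites of a split/merge with a start/end dot fixed in \eqref{eq:cup-cap-C}. So first I would substitute \eqref{eq:cup-cap-C} to rewrite the left-hand side of \eqref{eq:thin-k-equi} as an honest $\typeCBD$-web built from a split, a merge, a single thin crossing and two dots, keeping in mind (as in \fullref{remark:beware-monoidal-2}) that all cups and caps sit to the right of the through-strand, so no forbidden isotopy is invoked.

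Next I would expand the thin crossing using the braiding \eqref{eq:braiding}. This splits the diagram into two summands: the term in which the crossing is resolved into parallel identity strands (with coefficient $-\qpar^{-1}$), and the term in which it is resolved into a merge--split. In the first summand the cup and cap close up into a thin circle beside the untouched through-strand; written via the dot-presentation this circle is the split--merge digon $2\to 1\otimes 1\to 2$ capped by two dots, which by the digon relation \eqref{eq:digon} and the barbell removal \eqref{eq:barbell-C} (equivalently the circle removal \eqref{eq:circle-C} of \fullref{lemma:barbell-equi-C}) collapses as $\qbracket{2}\,\qbracketC{\zpar;0}=\qbracket{\zpar;0}$ times $\id_1$. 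In the second summand a short sequence of $\typeA$-web isotopies (associativity \eqref{eq:asso} together with the sliding relation \eqref{eq:typeC-sliding}) rearranges the merge--split and the two dots into precisely the ``K''-shaped configuration appearing on the left of the thin K removal \eqref{eq:thin-k}.

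At this point the two relations become interchangeable. The manipulation above uses only the ambient relations of $\WebCz$ (the $\typeA$-web relations, the braiding, and the barbell removal), so it rewrites the left-hand side of \eqref{eq:thin-k-equi} as $-\qpar^{-1}\qbracket{\zpar;0}\,\id_1$ plus the K-shaped configuration of \eqref{eq:thin-k}. Hence \eqref{eq:thin-k-equi} holds if and only if that configuration equals $\big(-\zpar^{-1}+\qpar^{-1}\qbracket{\zpar;0}\big)\id_1$, and a routine quantum-number calculation gives
\[
-\zpar^{-1}+\qpar^{-1}\qbracket{\zpar;0}=\qbracket{\zpar;-1},
\]
which is exactly the value forced by the thin K removal \eqref{eq:thin-k} (after converting the index-$2$ bracket $\qbracketC{\zpar;-1}$ into the index-$1$ bracket $\qbracket{\zpar;-1}$). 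Running the same chain backwards yields the reverse implication, and applying the involutions $\Psi$ and $\omega$ of \fullref{remark:beware-monoidal-2} supplies the remaining orientations for free.

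The main obstacle I expect is not the diagrammatics---these copy \fullref{lemma:bubble-equi} almost verbatim---but the scalar bookkeeping: the thin crossing and the thin circle naturally produce the index-$1$ brackets $\qbracket{\zpar;s}$, whereas the thin K removal \eqref{eq:thin-k} is phrased in the index-$2$ brackets $\qbracketC{\zpar;s}$, and the bridge between them is precisely the $\qbracket{2}$-factor coming from the $2\to 1\otimes 1\to 2$ digon created by the dot-presentation of the cups and caps. Tracking these $\qbracket{2}$-factors and the signs so that the net scalar collapses cleanly to $-\zpar^{-1}$ is the delicate point of the argument.
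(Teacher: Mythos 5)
Your overall strategy (substitute \eqref{eq:cup-cap-C}, expand the crossing with \eqref{eq:braiding}, evaluate the closed component via \eqref{eq:circle-C}, and match what remains against \eqref{eq:thin-k}) is the paper's strategy, and your treatment of the first summand, including the $\qbracket{2}\,\qbracketC{\zpar;0}=\qbracket{\zpar;0}$ bookkeeping for the circle, is correct. The gap is in the second summand. After resolving the crossing into a merge--split, the resulting diagram is a genuine square (four trivalent vertices, with the two dots attached to its right-hand corners); it is \emph{not} isotopic to the K-shape of \eqref{eq:thin-k}, and neither associativity \eqref{eq:asso} nor the sliding relation \eqref{eq:typeC-sliding} will turn one into the other. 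The move that is actually needed is the square switch \eqref{eq:square}, which is not an isotopy: it yields the K-shape \emph{plus} an extra term, namely the identity strand times a barbell, contributing $\qbracketC{\zpar;0}$ by \eqref{eq:barbell-C}.

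Your own scalar bookkeeping exposes the missing term. You need the K-shape to be worth $-\zpar^{-1}+\qpar^{-1}\qbracket{\zpar;0}=\qbracket{\zpar;-1}$, but \eqref{eq:thin-k} says it is worth $\qbracketC{\zpar;-1}$, and there is no ``conversion'' between these brackets: they are genuinely different elements of $\CQZ$. What is true is
\[
\qbracketC{\zpar;-1}+\qbracketC{\zpar;0}=\qbracket{\zpar;-1},
\]
so the discrepancy is exactly the barbell term you dropped. With the square switch inserted the computation reads
\[
-\qpar^{-1}\qbracket{\zpar;0}+\qbracketC{\zpar;-1}+\qbracketC{\zpar;0}=-\zpar^{-1},
\]
which is the paper's calculation \eqref{eq:lemma-calc}. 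The fix is local --- replace the claimed isotopy by an application of \eqref{eq:square} and carry the extra $\qbracketC{\zpar;0}$ --- but as written the argument does not close.
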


\begin{lemma}\label{lemma:thick-k-equi}
The thick K opening \eqref{eq:thick-k} is equivalent to
\begin{equation}\label{eq:thick-k-equi}
     \begin{tikzpicture}[anchorbase,xscale=.25,yscale=.20,tinynodes]
     \draw[uncolored] (6,5.5) .. controls ++(0,-1.5) and ++(0,1.5) .. ++(-4,-4) -- ++(0,-1.5) node[below] {$1$};
     \draw[uncolored,cross line] (0,7) node[above] {$1$} -- ++(0,-1.5)  .. controls ++(0,-1.5) and ++(0,+1.5) .. ++(4,-4) arc (180:360:1cm) .. controls ++(0,1.5) and ++(0,-1.5) .. ++(-4,+4) -- ++(0,1.5) node[above] {$1$};
     \draw[uncolored,cross line] (0,0) node[below] {$1$} -- ++(0,1.5) .. controls ++(0,1.5) and ++(0,-1.5) .. ++(4,4) arc (180:0:1cm); 
     \end{tikzpicture}  
=
     \begin{tikzpicture}[anchorbase,scale=.25,tinynodes]
     \draw[uncolored] (0,0) node[below] {$1$} -- ++(0,.4)  arc (180:0:1cm)  -- ++(0,-.4) node[below] {$1$};
     \draw[uncolored] (0,4) node[above] {$1$} -- ++(0,-.4)   arc (180:360:1cm)  -- ++(0,.4) node[above] {$1$};
     \end{tikzpicture}
\end{equation}
\end{lemma}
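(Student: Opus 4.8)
The plan is to mimic the blueprint computations used for the $\typeBDC$-web relations, and in particular the argument behind \fullref{lemma:lasso-equi}: expand every thin crossing by means of the braiding \eqref{eq:braiding} and reduce the resulting terms using the $\typeA$-web calculus together with the remaining defining relations of $\WebCz$. First I would unfold the two arcs appearing in \eqref{eq:thick-k-equi}: by definition \eqref{eq:cup-cap-C}, each cup (respectively cap) is a merge of two thin strands into a $2$-labelled edge terminating in a start (respectively end) dot \eqref{eq:dot-morphisms}. After this substitution both sides of \eqref{eq:thick-k-equi} are honest $\typeCBD$-webs built only from $\typeA$-web generators and the dot morphisms.

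Next I would resolve the two thin crossings on the left-hand side using \eqref{eq:braiding}. Each crossing expands as $-\qpar^{\pm 1}$ times the identity plus a merge-split through a $2$-edge, so the whole left-hand side becomes a sum of a handful of terms. In the terms coming from the identity summands the strands can be straightened topologically inside the $\typeA$-web part, while in the terms coming from the merge-split summands the newly created $2$-edge is brought into contact with the dotted $2$-edge produced when unfolding the cup and cap.

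The hard part will be to arrange, at each intermediate stage, all merges, splits and dots strictly to the right of the thin strands \emph{before} any relation is applied: because $\WebCz$ is not monoidal (cf.\ \fullref{remark:beware-monoidal-2}) one is not allowed to isotope a dot or a cap past a strand, and the correct way to slide a merge-split past a cup is exactly the manipulation already exploited in the proof of \fullref{lemma:sliding-equi}. Once everything is lined up, the relevant local pictures are precisely a thick K, a thin K and a barbell, so I would finish by applying \eqref{eq:thick-k}, \eqref{eq:thin-k} and \eqref{eq:barbell-C}, and then collect the scalars; the coefficients recombine into the single cup-cap on the right-hand side of \eqref{eq:thick-k-equi} after a short identity among the $\zpar$-quantum numbers $\qbracketC{\zpar;0}$, $\qbracketC{\zpar;-1}$ and $\qbracketC{\zpar;-2}$.

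Finally, since every step of this chain other than the single substitution of \eqref{eq:thick-k} is an equality that already holds from the other defining relations of $\WebCz$ and the (fully topological) $\typeA$-web calculus, the whole computation is reversible. Reading it in both directions then shows that, modulo the remaining relations, \eqref{eq:thick-k} and \eqref{eq:thick-k-equi} imply one another, which is the asserted equivalence. The analogous but shorter versions of this argument should also recover \fullref{lemma:barbell-equi-C} and \fullref{lemma:thin-k-equi} (there only one crossing, respectively none, needs to be expanded).
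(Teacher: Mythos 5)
Your plan matches the paper's own proof of this lemma, which consists precisely of the instruction to expand the thin crossings via \eqref{eq:braiding} and reduce using the remaining defining relations of $\WebCz$ together with the $\typeA$-web calculus (the paper explicitly notes that this particular case ``requires some time and patience'' and only works out the thin K analogue as a blueprint). The relations you single out for the final reduction --- \eqref{eq:thick-k}, \eqref{eq:thin-k} and \eqref{eq:barbell-C} --- and the reversibility argument giving the converse implication are exactly what is needed, so the approach is correct and essentially identical to the paper's.
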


\begin{lemma}\label{lemma:kink-removal-C}
The following relations hold:
\begin{gather}\label{eq:kink-C}
        \begin{tikzpicture}[anchorbase,scale=.25,tinynodes]
	    \draw[uncolored] (-1,0) node[above] {$1$} .. controls ++(0,-1) and ++(0,1) .. ++(2,-2)  arc (0:-180:1cm);
        \draw[uncolored, cross line] (-1,-2) .. controls ++(0,1) and ++(0,-1) .. ++(2,2) node[above] {$1$};
        \end{tikzpicture} 
=\qpar
        \begin{tikzpicture}[anchorbase,scale=.25,tinynodes]
	    \draw[uncolored] (-1,0) node[above] {$1$} to [out=270, in=180] (0,-1) to [out=0, in=270] (1,0) node[above] {$1$};
	    \node at (0,-2.2) {$\phantom{.}$}; 
        \end{tikzpicture}
\quad\text{and}\quad
        \begin{tikzpicture}[anchorbase,scale=.25,tinynodes]
        \draw[uncolored] (-1,2) .. controls ++(0,-1) and ++(0,1) .. ++(2,-2) node[below] {$1$};
	    \draw[uncolored, cross line] (-1,0) node[below] {$1$} .. controls ++(0,1) and ++(0,-1) .. ++(2,2)  arc (0:180:1cm);
        \end{tikzpicture} 
=\qpar
        \begin{tikzpicture}[anchorbase,scale=.25,tinynodes]
	    \draw[uncolored] (-1,0) node[below] {$1$} to [out=90, in=180] (0,1) to [out=0, in=90] (1,0) node[below] {$1$};
	    \node at (0,2.6) {$\phantom{.}$}; 
        \end{tikzpicture}
\end{gather}
\end{lemma}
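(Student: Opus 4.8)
The plan is to mirror the proof of the lollipop relations \eqref{eq:lolli-equi} in the $\typeBDC$-setting, but now exploiting that in $\WebCz$ the cup and cap are \emph{defined} in terms of the start/end dots via \eqref{eq:cup-cap-C}. First I would rewrite the cup appearing on the left-hand side of the first relation in \eqref{eq:kink-C} using its definition, so that it becomes a start dot $\varnothing\to 2$ followed by the split $s\colon 2\to 1\otimes 1$, with a thin crossing stacked on top. Reading from bottom to top, the whole morphism is then the crossing composed with $s$ composed with the start dot, all taking place at the far right (so the right-hand-edge restriction of \fullref{remark:beware-monoidal-2} causes no trouble).

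The key step is to expand the thin overcrossing using the braiding \eqref{eq:braiding}, which writes it as $-\qpar^{-1}$ times the identity plus the ``hourglass'' $s\circ m$, where $m\colon 1\otimes 1\to 2$ is the merge. Composing this expansion with $s$ and the start dot, the identity term simply reproduces the cup with coefficient $-\qpar^{-1}$, while the hourglass term produces a digon: the split $s$ is immediately followed by the merge $m$ on the thickness-$2$ edge. By digon removal \eqref{eq:digon} (a purely $\typeA$-web relation) this equals $\qbracket{2}\cdot\id_2$, so after collapsing we again obtain the cup, now with coefficient $\qbracket{2}$. Summing the two contributions yields the coefficient $\qbracket{2}-\qpar^{-1}=\qpar$, using $\qbracket{2}=\qpar+\qpar^{-1}$, which is exactly the claimed scalar and proves the first relation.

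The second relation is handled by the same computation with the end dot and the merge $m$ in place of the start dot and split $s$: one expands the overcrossing via \eqref{eq:braiding}, the hourglass term contributes a digon $m\circ s=\qbracket{2}\cdot\id_2$, and again $\qbracket{2}-\qpar^{-1}=\qpar$, so the crossed cap collapses to $\qpar$ times the plain cap.

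I expect the only genuine subtlety to be bookkeeping rather than any conceptual obstacle. One must use the \emph{overcrossing} expansion of \eqref{eq:braiding} in both cases (the undercrossing would produce $\qpar^{-1}$ instead of $\qpar$, so the two relations in \eqref{eq:kink-C} are \emph{not} vertical mirrors of one another and cannot simply be exchanged by the anti-involution $\omega$), and one must correctly recognize the split-then-merge as a digon so that \eqref{eq:digon} applies. It is worth emphasizing the contrast with the $\typeBDC$ case: there the lollipop relation \eqref{eq:lollipop} annihilated the extra term, whereas here the hourglass survives as a digon and shifts the scalar from $-\qpar^{-1}$ to $\qpar$. In particular, no defining relation of $\WebCz$ (barbell removal, $K$-removals, or sliding) enters the argument: the kink removal is forced purely by the $\typeA$-web braiding together with the definition \eqref{eq:cup-cap-C} of the cup and cap.
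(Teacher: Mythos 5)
Your proof is correct and follows exactly the route the paper intends (the paper only sketches this verification, saying the relations ``can be checked by expanding the crossings using \eqref{eq:braiding}''): you expand the overcrossing, rewrite the cup/cap via \eqref{eq:cup-cap-C}, collapse the resulting digon by \eqref{eq:digon}, and the scalar bookkeeping $\qbracket{2}-\qpar^{-1}=\qpar$ is right. Nothing further is needed.
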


\begin{lemma}\label{lemma:sliding-equi-C}
The merge-split sliding relations \eqref{eq:sliding} are equivalent to
\begin{gather}\label{eq:sliding-equi-C}
      \begin{tikzpicture}[anchorbase,xscale=.25,tinynodes,yscale=0.125]
      \draw[uncolored] (2,0) node[below] {$1$} .. controls ++(0,2) and ++(0,-2) .. ++(-2,4);
      \draw[uncolored,cross line] (0,0) node[below] {$1$} .. controls ++(0,2) and ++(0,-2) .. ++(2,4);
	  \draw[uncolored] (4,0) node[below] {$1$}  -- ++(0,4) .. controls ++(0,2) and  ++(0,-2) .. ++(-2,4);
	  \draw[uncolored,cross line] (2,4)  .. controls ++(0,2) and  ++(0,-2) .. ++(2,4) arc (180:0:1cm and 2cm) -- ++(0,-8) node[below] {$1$} ;
      \draw (0,4) -- ++(0,4) arc (180:0:1cm and 2cm); 
      \node at (1,11) {$\phantom{1}$};
      \end{tikzpicture}
=
      \begin{tikzpicture}[anchorbase,xscale=.25,tinynodes,yscale=0.125]
      \draw[uncolored] (6,0) node[below] {$1$} .. controls ++(0,2) and ++(0,-2) .. ++(-2,4);
      \draw[uncolored,cross line] (4,0) node[below] {$1$} .. controls ++(0,2) and ++(0,-2) .. ++(2,4);
	  \draw[uncolored] (4,4) .. controls ++(0,2) and  ++(0,-2) .. ++(-2,4);
	  \draw[uncolored,cross line] (2,0) node[below] {$1$} -- ++(0,4)  .. controls ++(0,2) and  ++(0,-2) .. ++(2,4) arc (180:0:1cm and 2cm) ;
      \draw (0,0) node[below] {$1$} -- ++(0,8) arc (180:0:1cm and 2cm);
      \draw (6,8) -- ++(0,-4);
      \node at (1,11) {$\phantom{1}$};
      \end{tikzpicture}
\quad\text{and}\quad
      \begin{tikzpicture}[anchorbase,xscale=-.25,tinynodes,yscale=-0.125]
      \draw[uncolored] (6,0) node[above] {$1$} .. controls ++(0,2) and ++(0,-2) .. ++(-2,4);
      \draw[uncolored,cross line] (4,0) node[above] {$1$} .. controls ++(0,2) and ++(0,-2) .. ++(2,4);
	  \draw[uncolored] (4,4) .. controls ++(0,2) and  ++(0,-2) .. ++(-2,4);
	  \draw[uncolored,cross line] (2,0) node[above] {$1$} -- ++(0,4)  .. controls ++(0,2) and  ++(0,-2) .. ++(2,4) arc (180:0:1cm and 2cm) ;
      \draw (0,0) node[above] {$1$} -- ++(0,8) arc (180:0:1cm and 2cm);
      \draw (6,8) -- ++(0,-4);
      \node at (1,11.25) {$\phantom{1}$};
      \end{tikzpicture}
=
      \begin{tikzpicture}[anchorbase,xscale=-.25,tinynodes,yscale=-0.125]
      \draw[uncolored] (2,0) node[above] {$1$} .. controls ++(0,2) and ++(0,-2) .. ++(-2,4);
      \draw[uncolored,cross line] (0,0) node[above] {$1$} .. controls ++(0,2) and ++(0,-2) .. ++(2,4);
	  \draw[uncolored] (4,0) node[above] {$1$}  -- ++(0,4) .. controls ++(0,2) and  ++(0,-2) .. ++(-2,4);
	  \draw[uncolored,cross line] (2,4)  .. controls ++(0,2) and  ++(0,-2) .. ++(2,4) arc (180:0:1cm and 2cm) -- ++(0,-8) node[above] {$1$} ;
      \draw (0,4) -- ++(0,4) arc (180:0:1cm and 2cm); 
      \node at (1,11.25) {$\phantom{1}$};
      \end{tikzpicture}
\end{gather}
\end{lemma}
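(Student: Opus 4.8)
The plan is to proceed exactly as in the proof of \fullref{lemma:sliding-equi}, since the merge-split units appearing in \eqref{eq:typeC-sliding} are pure $\typeA$-webs and thus behave identically in $\WebCz$ and $\WebDz$; only the defined cups and caps \eqref{eq:cup-cap-C} differ. I would first unfold the arcs in \eqref{eq:sliding-equi-C} into their defining form via \eqref{eq:cup-cap-C}, so that every cup and cap is replaced by a start/end dot together with a split/merge onto a thickness-$2$ edge. This turns both sides of the claimed equivalence into diagrams built only from $\typeA$-web generators, thin crossings, and dots, at which point the entire computation takes place in the braided category $\WebAz$ decorated by dots.

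The core step is to expand the two thin crossings on the left-hand side of \eqref{eq:sliding-equi-C} using the braiding rule \eqref{eq:braiding}. As in the $\typeBDC$ case, each crossing splits into an ``identity'' term (scaled by $-\qpar^{\pm 1}$) and a ``merge-split'' term, so that the product of the two expanded crossings yields four summands. Using only the topological freedom of the $\typeA$-web calculus together with associativity, two of these summands coincide after rearrangement and recombine into the crossing-free merge-split bubble sliding past the cup/cap that constitutes \eqref{eq:typeC-sliding}. The remaining two summands are the ones where a freshly created thickness-$2$ edge runs into a defined cup or cap; these are collapsed using the kink removal \eqref{eq:kink-C} (the $\typeCBD$-counterpart of the lollipop relation used in \fullref{lemma:sliding-equi}), after first reducing the split-then-merge on a pair of thin strands via the digon removal \eqref{eq:digon} and, where a closed thickness-$2$ component appears, via the barbell removal \eqref{eq:barbell-C}. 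Throughout, one must respect the constraint of \fullref{remark:beware-monoidal-2} that no strand may pass to the right of a dot, so only the legal $\typeA$-web isotopies are used.

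For the converse direction, I would observe that the braiding is invertible and every manipulation above is reversible: starting from \eqref{eq:typeC-sliding} and reinserting the two crossings through \eqref{eq:braiding} (and the derived kink relations) reconstructs \eqref{eq:sliding-equi-C}. Finally, the second equation in \eqref{eq:sliding-equi-C} is obtained from the first by applying the anti-involution $\omega$ of \fullref{remark:web-symmetries} (together, if convenient, with the involution $\Psi$), so it suffices to establish the first.

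I expect the main obstacle to be the scalar bookkeeping, which differs from the $\typeBDC$ situation: the $\typeCBD$ kink removal \eqref{eq:kink-C} carries the coefficient $+\qpar$ rather than the $-\qpar^{-1}$ of \eqref{eq:lolli-equi}, and the ``lollipop-type'' summands do not simply vanish but instead contribute multiples of dotted thickness-$2$ edges that must be canceled against one another using \eqref{eq:digon}, \eqref{eq:thin-k} and \eqref{eq:barbell-C}. Verifying that these leftover quantum-number coefficients cancel precisely, so that only the clean sliding relation survives, is the delicate part; everything else is routine diagrammatic rewriting of the kind already illustrated in \fullref{lemma:sliding-equi}.
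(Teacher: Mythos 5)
Your proposal is correct and follows the same route as the paper, whose entire proof of this lemma is the terse instruction to expand the crossings via \eqref{eq:braiding} and compare terms (with only the thin-K case worked out as an example); you correctly identify the one genuine difference from the $\typeBDC$ case, namely that the two ``mixed'' summands no longer vanish by a lollipop relation but instead reduce (essentially by \eqref{eq:digon} applied to $\cap\circ\extmergesplit=[2]\cap$ and its mirror) to terms that match between the two sides, while the two double merge-split summands reproduce the two sides of \eqref{eq:typeC-sliding}. The only cosmetic overreach is that \eqref{eq:barbell-C}, \eqref{eq:thin-k} and the kink removal \eqref{eq:kink-C} are not actually needed here --- digon removal and the legal $\typeA$-isotopies suffice --- but this does not affect correctness.
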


\endgroup
\end{subequations}
\addtocounter{equation}{-1}

\makeautorefname{lemma}{Lemmas} 

\begin{proof}[Proof of \fullref{lemma:barbell-equi-C}, \ref{lemma:thin-k-equi}, \ref{lemma:thick-k-equi}, \ref{lemma:kink-removal-C} 
and \ref{lemma:sliding-equi-C}]\renewcommand{\qedsymbol}{}
Again, the equations can be 
checked by expanding the crossings 
using \eqref{eq:braiding} (although it requires some time 
and patience to verify 
that \eqref{eq:thick-k-equi} is equivalent to \eqref{eq:thick-k}).
Let us check one as an example, 
showing that \eqref{eq:circle-C} and \eqref{eq:thin-k} 
imply \eqref{eq:thin-k-equi}:
\begin{gather}\label{eq:lemma-calc}
	  \begin{tikzpicture}[anchorbase,scale=.25, tinynodes]
      \draw[uncolored] (0,4) node[above] {$1$} .. controls ++(0,-3) and ++(0,-1.5) .. ++(2,-2);
      \draw[uncolored,cross line] (0,0) node[below] {$1$} .. controls ++(0,3) and ++(0,1.5) .. ++(2,2);
      \end{tikzpicture} 
=-\qpar^{-1}
      \begin{tikzpicture}[anchorbase,scale=.25, tinynodes]
      \draw[uncolored] (0,4) node[above] {$1$} to (0,0) node[below] {$1$};
      \draw[uncolored] (.75,2) arc (-180:180:1cm);
      \end{tikzpicture}
+
      \begin{tikzpicture}[anchorbase,scale=.25, tinynodes]
      \draw[uncolored] (0,4) node[above] {$1$} to (0,2.5);
      \draw[uncolored] (0,0) node[below] {$1$} to (0,1.5);
      \draw[ccolored] (0,2.5) to (0,1.5);
      \draw[uncolored] (2,3) to (2,1);
      \draw[uncolored] (0,2.5) to (2,3);
      \draw[uncolored] (0,1.5) to (2,1);
      \draw[ccolored] (2,3) to (2,3.75);
      \draw[ccolored] (2,1) to (2,.25);
      \node[dotbullet] at (2,3.75) {};
	  \node[dotbullet] at (2,.25) {};
      \end{tikzpicture}
\overset{\eqref{eq:square}}{=}
-\qpar^{-1}
      \begin{tikzpicture}[anchorbase,scale=.25, tinynodes]
      \draw[uncolored] (0,4) node[above] {$1$} to (0,0) node[below] {$1$};
      \draw[uncolored] (.75,2) arc (-180:180:1cm);
      \end{tikzpicture}
+
      \begin{tikzpicture}[anchorbase,scale=.25, tinynodes]
      \draw[uncolored] (0,4) node[above] {$1$} to (0,3);
      \draw[uncolored] (0,0) node[below] {$1$} to (0,1);
      \draw[colored] (2,2.5) to (2,1.5);
      \draw[uncolored] (0,3) to (2,2.5);
      \draw[uncolored] (0,1) to (2,1.5);
      \draw[ccolored] (2,2.5) to (2,3.75);
      \draw[ccolored] (2,1.5) to (2,.25);
      \node[dotbullet] at (2,3.75) {};
	  \node[dotbullet] at (2,.25) {};
      \end{tikzpicture}
+
      \begin{tikzpicture}[anchorbase,scale=.25, tinynodes]
      \draw[uncolored] (0,4) node[above] {$1$} to (0,0) node[below] {$1$};
      \draw[ccolored] (2,3.75) to (2,2) to (2,.25);
      \node[dotbullet] at (2,3.75) {};
	  \node[dotbullet] at (2,.25) {};
      \end{tikzpicture}
\overunder{\eqref{eq:circle-C}}{\eqref{eq:thin-k}}{=}\!\!\!
-\zpar^{-1}
      \begin{tikzpicture}[anchorbase,scale=.25, tinynodes]
      \draw[uncolored] (0,4) node[above] {$1$} to (0,0) node[below] {$1$};
    \end{tikzpicture}
\hspace*{2.2cm}\qedmake\hspace*{-2.2cm}
\end{gather}
\end{proof}

\makeautorefname{lemma}{Lemma} 

Again, by using $\Psi$ and 
$\omega$, we obtain many more equivalent relations.

\subsubsection{Why \texorpdfstring{$\typeCBD$}{dot}-webs do not form a monoidal category}

Again, as for
$\typeBDC$-webs,
the $\typeCBD$-web category is not monoidal.
As will become clear later, this is related to the fact that
the inclusion $\fraksp_{n} \hookrightarrow \frakgl_n$ can
only be quantized naturally as an inclusion of
a coideal subalgebra. However, de-quantization gives again a 
genuine monoidal, topologically flavored category of $\typeCBD$-webs.

\subsection{Some more useful lemmas}\label{subsec-lemmas-C}

Until the end of the section  we 
work in $\WebCz$, and
we derive some diagrammatic relations which,
as we will see later, correspond to  
relations in the quantum group $\quantumg(\fraksp_{2k})$.

The philosophy is again to 
``manipulate the $\typeA$-web part and to keep dots where they are''.

\begin{lemma}\label{lemma:typeC-EF-FE}
For all $a$ we have
\begin{equation}\label{eq:typeC-EF-FE}

\end{gather*}
Putting everything together, we get the claimed equality.
\end{proof}

\section{Representation theoretical background}\label{sec-reps}
In this section we fix our conventions for the quantum enveloping algebras
and recall the definition of the 
coideal subalgebras $\coideal(\frakso_n)$ and $\coideal(\fraksp_n)$.
We will also consider their vector representations, 
the associated exterior and symmetric powers, and construct some intertwiners.

\subsection{Quantum enveloping algebras}\label{subsec:conventionsqgroup}

Let $\frakg$ be a reductive Lie algebra with 
simple roots $(\alpha_i)_{i\in I}$, simple coroots $(h_i)_{i \in I}$
and weight lattice $\weightlattice$. 
Denote by $a_{ij} = \langle h_i, \alpha_j \rangle$ 
the entries of the Cartan matrix, and by $d_i\in\N$ 
the minimal values such that the matrix $(d_ia_{ij})_{i,j\in\I}$ 
is symmetric and positive definite, see 
also below.

Throughout, all indices are always 
from the evident sets, e.g.\ if we write 
$\sfE_i$, then we always assume that $i \in \I$.

\begin{definition}\label{definition-quantumgroup}
The \emph{quantum enveloping algebra} 
$\quantumg(\frakg)$ \emph{of} $\frakg$ is 
the associative, unital $\CQ$-algebra 
generated by $\quantumq^h$ for 
$h\in \weightlattice^*$, and by
$\sfE_i$, $\sfF_i$ for 
$i \in\I$, 
subject to:
\begin{gather}
    \quantumq^0=1, \quad
    \quantumq^h \quantumq^{h^{\prime}}  = \quantumq^{h+h^{\prime}}, \quad
    \quantumq^h \sfE_i =\qpar^{\langle h, \alpha_i \rangle} \sfE_i \quantumq^h, \quad 
    \quantumq^h \sfF_i=\qpar^{-\langle h,\alpha_i \rangle}\sfF_i\quantumq^h,
\label{eq:qh-rel}
\end{gather}
\begin{equation}
 \sfE_i\sfF_j - \sfF_j\sfE_i = \delta_{ij} \frac{\sfK^{\phantom{1}}_i-\sfK_i^{-1}}{\qpar^{\phantom{1}}_i-\qpar_i^{-1}},\label{eq:EF-rel}
\end{equation}
\begin{align}
{\textstyle\sum_{v=0}^{1-a_{ij}}} (-1)^v \qbin{1-a_{ij}}{v}_{i} \sfE_i^{1-a_{ij}-s} \sfE_{j}^{\phantom{1}} \sfE_i^v & = 0, \qquad \text{for } i \neq j, \label{eq:E-Serre}
\\ 
{\textstyle\sum_{v=0}^{1-a_{ij}}} (-1)^v \qbin{1-a_{ij}}{v}_{i} \sfF_i^{1-a_{ij}-s} \sfF_{j}^{\phantom{1}} \sfF_i^v & = 0, \qquad \text{for } i \neq j.\label{eq:F-Serre}
\end{align}
The latter two relations are the so-called \emph{Serre relations}.
Here, $\sfK_i = \quantumq^{d_i h_i}$
and the quantum binomials are as in \eqref{eq:qnumbers-typeAD}.
\end{definition}

\subsubsection{Root and weight conventions}
Before we can give our key examples of 
\fullref{definition-quantumgroup}, we fix some conventions 
which will be important for explicit computations.

Fix $m \in \Z_{\geq 1}$, the \emph{rank} 
(which usually will be denoted $k$ or $n$, depending 
on which side of Howe duality we are, cf. \fullref{sec-main}).
Let $\frakg=\fraksp_{2m}$ 
or $\frakg=\frakso_{2m}$, and we denote by 
$\roots$ and $\simpleroots$ the sets of \emph{roots}
and \emph{simple roots}, which we choose accordingly to \fullref{table:types}.
Here $\{\epsilon_i \mid i \in\I\}$ for $\I=\{1,\dots,m\}$ denotes a 
chosen basis of the dual of the \emph{Cartan} $\frakh$, 
which is orthonormal with respect to the \emph{Killing form} $(\cdot,\cdot)$.
Correspondingly, we have the \emph{weight lattice} $\weightlattice$ and \emph{dominant integral weights} $\weightlattice^+$.
We let also as usual $\{h_i \in \frakh \mid i \in\I\}$ 
be the basis of $\frakh$
determined by $\langle h_i , \lambda \rangle = 2\frac{(\alpha_i,\lambda)}{(\alpha_i,\alpha_i)}$ 
for $\lambda\in\weightlattice$.
Moreover, recall that the \emph{Cartan matrix} $A=(a_{ij})_{i,j\in\I}$ 
is defined via $a_{ij} = \langle h_i , \alpha_j \rangle$.
The sequence $(d_1,\dots,d_m)$ is chosen with $d_i\in\N$ 
 for $i=1,\dots,m$ minimal such that the matrix $(d_ia_{ij})_{i,j\in\I}$ 
is symmetric and positive definite.
(The Cartan datum can also be read off from the 
corresponding \emph{Dynkin diagram} $D$.)

We do not need to fix a Cartan datum for type $\typeB$, 
since in this paper we only encounter the type $\typeB$ Lie algebra 
$\frakso_n$ (for $n$ odd) in the coideal $\coideal(\frakso_n)$, 
and never in the quantum enveloping algebra $\quantumg(\frakso_n)$.

\begin{table} \tabulinesep=.35ex
\noindent\hspace{-0.1\textwidth}\begin{tabu} to 0.9\textwidth {c|X[c]|X[c]}
& $\fraksp_{2m}$ & $\frakso_{2m}$ 
\\ 
\hline $D$ & 
    \begin{tikzpicture}[baseline={([yshift=-0.5ex]0,0.01)},scale=.65,tinynodes]
	\draw [thick, red] (1.75,0.01) to (2.75,0.01);
 	\draw [thick, red] (3.25,0.01) to (3.45,0.01);
 	\draw [thick, red] (4.05,0.01) to (4.25,0.01);
 	\draw [thick] (4.75,0.11) to (5.75,0.11);
 	\draw [thick] (4.75,-0.09) to (5.75,-0.09);
 	\draw [thick] (5.35,0.26) to (5.15,0.01);
 	\draw [thick] (5.35,-0.24) to (5.15,0.01);
 	\draw [thick, red] (1.5,0) circle (0.25cm);
 	\draw [thick, red] (3,0) circle (0.25cm);
 	\draw [thick, red] (4.5,0) circle (0.25cm);
 	\draw [thick] (6,0) circle (0.25cm);
 	\node at (3.8,0) {$\cdots$};
 	\node at (1.5,0.5) {$\alpha_1$};
 	\node at (3,0.5) {$\alpha_2$};
 	\node at (4.5,0.5) {$\alpha_{m{-}1}$};
 	\node at (6,0.5) {$\alpha_{m}$};
	\end{tikzpicture}
&
    \begin{tikzpicture}[anchorbase,scale=.65,tinynodes]
	\draw [thick, red] (1.75,0.01) to (2.75,0.01);
 	\draw [thick, red] (3.25,0.01) to (3.45,0.01);
 	\draw [thick, red] (4.05,0.01) to (4.25,0.01);
 	\draw [thick, red] (4.75,0.01) to (5.75,0.01);
 	\draw [thick, red] (6.138,0.161) to (6.65,1.1);
 	\draw [thick] (6.144,-0.169) to (6.65,-1.1);
 	\draw [thick, red] (1.5,0) circle (0.25cm);
 	\draw [thick, red] (3,0) circle (0.25cm);
 	\draw [thick, red] (4.5,0) circle (0.25cm);
 	\draw [thick, red, fill=white] (6,0) circle (0.25cm);
 	\draw [thick, red] (6.75,1.3) circle (0.25cm);
 	\draw [thick] (6.75,-1.3) circle (0.25cm);
 	\node at (3.8,0) {$\cdots$};
 	\node at (1.5,0.5) {$\alpha_1$};
 	\node at (3,0.5) {$\alpha_2$};
 	\node at (4.5,0.5) {$\alpha_{m-3}$};
 	\node at (5.65,0.5) {$\alpha_{m-2}$};
 	\node at (5.95,1.3) {$\alpha_{m-1}$};
 	\node at (6.2,-1.3) {$\alpha_{m}$};
	\end{tikzpicture}
\\ \hline  
$A$ & 
$\left( 
\begin{tabular}{ c c c c c}
  $2$ & $-1$ & $0$ & \multicolumn{1}{c!{\vline}}{$\cdots$} & $0$ \\
  $-1$ & $2$ & $\ddots$ & \multicolumn{1}{c!{\vline}}{$\ddots$} & $\vdots$ \\
  $0$ & $\ddots$ & $\ddots$ & \multicolumn{1}{c!{\vline}}{$-1$} & $0$ \\
   $\vdots$ & $\ddots$ & $-1$ & \multicolumn{1}{c!{\vline}}{$2$} & $-2$ \\
   \cline{1-4}
   $0$ & $\cdots$ & $0$ & $-1$ & $2$
\end{tabular} 
\right)$
& 
$\left(
\begin{tabular}{ c c c c c}
  $2$ & $-1$ & $0$ & \multicolumn{1}{c!{\vline}}{$\cdots$} & $0$ \\
  $-1$ & $2$ & $\ddots$ & \multicolumn{1}{c!{\vline}}{$\ddots$} & $\vdots$ \\
  $0$ & $\ddots$ & $\ddots$ & \multicolumn{1}{c!{\vline}}{$-1$} & $-1$ \\
   $\vdots$ & $\ddots$ & $-1$ & \multicolumn{1}{c!{\vline}}{$2$} & $0$ \\
   \cline{1-4}
   $0$ & $\cdots$ & $-1$ & $0$ & $2$
\end{tabular}
\right)$
\\ \hline 
$\vec{d}$  & $(1,\dots,1,2)$ & $(1,\dots,1)$ 
\\ \hline $\simpleroots$  & $ \begin{aligned}
  \alpha_1 &= \epsilon_1 - \epsilon_{2}\\
   & \shortvdotswithin{=} \alpha_{m-1} & = \epsilon_{m-1} - \epsilon_m\\
  \alpha_m &= 2\epsilon_m %
\end{aligned}$ & $ \begin{aligned}
  \alpha_1 &= \epsilon_1 - \epsilon_{2}\\
   & \shortvdotswithin{=} \alpha_{k-1} & = \epsilon_{m-1} - \epsilon_m\\
  \alpha_m &= \epsilon_{m-1} + \epsilon_m %
\end{aligned}$ 
\\ \hline
$\roots$ &
$\{ \pm 2\epsilon_i, \pm \epsilon_i \pm \epsilon_j \mid i\neq j\in\I\}$ &
$\{ \pm \epsilon_i \pm \epsilon_j \mid i\neq j\in\I\}$
\\ \hline 
$\weightlattice$  &
 $\Z^m$ & $\Z^m \oplus \big(\tfrac{1}{2}+\Z\big)^m$ 
\\ \hline
$\weightlattice^+$  &
$\begin{multlined}\{(\lambda_1,\dots,\lambda_m) \in 
\weightlattice \mid \\ \lambda_1 \geq \dots \geq \lambda_{m} \geq 0\}\end{multlined}$ &
$\begin{multlined}\{(\lambda_1,\dots,\lambda_m) \in 
\weightlattice \mid \\ \lambda_1 \geq \dots \geq \lambda_{m-1} 
\geq \abs{\lambda_m}\}\end{multlined}$
\\
\end{tabu} \vspace{1ex}
\caption{Our conventions for types $\typeC_m$ and $\typeD_m$. 
Here we also specify the type $\typeA_{m-1}$ Cartan datum 
by considering the subgraphs of $D$ and the submatrix of $A$
as indicated (for both types).}
\label{table:types}
\end{table}

\begin{example}\label{example:Serre-relations}
Besides $\frakgl_n$, we will consider the cases $\frakg=\fraksp_{2k}$ 
and $\frakg=\frakso_{2k}$ 
with conventions fixed above.
The corresponding Serre relations for the $\sfE_i$'s are
\begin{gather}
\sfE_{k-1}^3\sfE_k^{\phantom{2}}
+
[3]\sfE_{k-1}^{\phantom{2}}\sfE_k^{\phantom{2}}\sfE_{k-1}^2
=
\sfE_k^{\phantom{2}}\sfE_{k-1}^3
+
[3]\sfE_{k-1}^2\sfE_k^{\phantom{2}}\sfE_{k-1}^{\phantom{2}},\label{eq:serre-sp-1}
\\
[2]_2\sfE_k^{\phantom{2}}\sfE_{k-1}^{\phantom{2}}\sfE_k^{\phantom{2}}
=
\sfE_{k-1}^{\phantom{2}}\sfE_k^2
+
\sfE_k^2\sfE_{k-1}^{\phantom{2}}\label{eq:serre-sp-2}
\end{gather}
in case $\frakg=\fraksp_{2k}$, and for $\frakg=\frakso_{2k}$ they are
\begin{gather}
\sfE_{k-1}^{\phantom{2}}\sfE_k^{\phantom{2}}
=
\sfE_k^{\phantom{2}}\sfE_{k-1}^{\phantom{2}},\label{eq:serre-so-1}
\\
[2]\sfE_{k-2}^{\phantom{2}}\sfE_{k}^{\phantom{2}}\sfE_{k-2}^{\phantom{2}}
=
\sfE_{k-2}^2\sfE_{k}^{\phantom{2}}
+
\sfE_{k}^{\phantom{2}}\sfE_{k-2}^2,\label{eq:serre-so-2}
\\
[2]\sfE_{k}^{\phantom{2}}\sfE_{k-2}^{\phantom{2}}\sfE_{k}^{\phantom{2}}
=
\sfE_{k-2}^{\phantom{2}}\sfE_k^2
+
\sfE_k^2\sfE_{k-2}^{\phantom{2}}.\label{eq:serre-so-3}
\end{gather}
Additionally, there are versions 
involving $\sfF_k$'s, and the type $\typeA$ Serre relations:
\begin{gather*}
[2]\sfE_{i}^{\phantom{2}}\sfE_{i-1}^{\phantom{2}}\sfE_{i}^{\phantom{2}}
=
\sfE_{i-1}^{\phantom{2}}\sfE_i^2
+
\sfE_i^2\sfE_{i-1}^{\phantom{2}},
\end{gather*}
where $i$ is not $k$.
\end{example}

As usual, we define 
the \emph{divided powers} 
\[
\sfE_i^{(s)} = \tfrac{1}{[s]_{i} !}\sfE_i^s
\quad\text{ and }\quad 
\sfF_i^{(s)} = \tfrac{1}{[s]_{i} !}\sfF_i^s, 
\qquad s\in\N.
\] 
One can then show that 
the
\emph{higher order Serre relations}
\begin{gather}\label{eq:higherSerre}
\begin{aligned} 
{\textstyle\sum_{u+v=t}} (-1)^v \qpar_i^{\varepsilon u(-a_{ij} s - t + 1)} \sfE_i^{(u)} \sfE_{j}^{(s)} \sfE_i^{(v)} & = 0, 
\qquad \text{for } i \neq j,\\
{\textstyle\sum_{u+v=t}} (-1)^v \qpar_i^{\varepsilon u(-a_{ij} s - t +1)} \sfF_i^{(u)} \sfF_{j}^{(s)} \sfF_i^{(v)} & = 0,
\qquad \text{for } i \neq j,
\end{aligned}
\end{gather}
hold for $\varepsilon = \pm 1$, for all $s,t \in \Z$ with $s \geq 1$ 
and $t > -a_{ij}$ 
(see e.g.\ \cite[Chapter 7]{Lus} 
and in particular Proposition 7.1.5 therein).

Moreover, recall that $\quantumg(\frakg)$ has the 
structure of a Hopf algebra.
We use the following 
conventions for the
comultiplication 
$\Delta \colon \quantumg(\frakg) \rightarrow \quantumg(\frakg) \otimes \quantumg(\frakg)$, 
the counit $\counit \colon  \quantumg(\frakg) \rightarrow \CQ$ and the 
antipode $S \colon \quantumg(\frakg) \rightarrow \quantumg(\frakg)$:
\begin{gather}\label{eq:conventionsHopf}
\begin{aligned}
\Delta(\quantumq^h) = \quantumq^h \otimes \quantumq^h,\;\,
  \Delta(\sfE_i) = \sfE_i \otimes \sfK_i &+ 1 \otimes \sfE_i,\;\,
  \Delta(\sfF_i) = \sfF_i \otimes 1 + \sfK_i^{-1} \otimes \sfF_i,\\
 \counit(\quantumq^h)  = 1,\quad &\counit(\sfE_i)  = 0,\quad  \counit(\sfF_i)  = 0,\\ 
S(\quantumq^h)  = \quantumq^{-h},\quad
S(\sfE_i)  &= - \sfE_i \sfK_i^{-1},\quad  S(\sfF_i)  = - \sfK_i \sfF_i.
\end{aligned}
\end{gather}

\subsubsection{The idempotented versions}

Next, following \cite[Chapter 23]{Lus}, we define:

\begin{definition}\label{definition:udot}
The \emph{idempotented quantum enveloping algebra} $\Udot(\frakg)$ is the 
additive closure 
of the $\CQ$-linear
category with:
\smallskip
\begin{enumerate}[label=$\vartriangleright$]

\setlength\itemsep{.15cm}

\item objects $\smallone_{\lambda}$ for 
$\lambda \in \weightlattice$, and
\item morphisms $\Hom_{\Udot(\frakg)}(\smallone_{\lambda},\smallone_{\mu}) = \quantumg(\frakg)/ I_{\lambda,\mu}$, where
\begin{gather*}
I_{\lambda,\mu} =
{\textstyle\sum_{h \in \weightlattice^{*}}}  \quantumg(\frakg)(\quantumq^h - \qpar^{\langle h, \lambda\rangle}) 
+ 
{\textstyle\sum_{h \in \weightlattice^{*}}} (\quantumq^h - \qpar^{\langle h, \mu \rangle})\quantumg(\frakg).
\end{gather*}
\end{enumerate}
\end{definition}

The reader unfamiliar with the idempotented version of $\quantumg(\frakg)$
in its categorical disguise is 
referred to \cite[\textsection{}4.1]{CKM},
whose type $\typeA$ treatment immediately generalizes to a general $\frakg$. 
Sometimes it is also convenient to regard 
$\Udot(\frakg)$ as an algebra, and we 
use both viewpoints interchangeably.

We denoted the morphism of $\Udot(\frakg)$ by 
$\dotX\smallone_{\lambda}=\smallone_{\mu}\dotX\smallone_{\lambda}
\in\Hom_{\Udot(\frakg)}(\smallone_{\lambda},\smallone_{\mu})
$ 
for $\dotX$ being some product 
of $\dotE_i$'s and $\dotF_i$'s, and appropriate $\lambda$ and $\mu$. 
In particular,
\[
\dotE_i\smallone_{\lambda}
\in\Hom_{\Udot(\frakg)}(\smallone_{\lambda},\smallone_{\lambda{+}\alpha_i})
\quad\text{and}\quad
\dotF_i\smallone_{\lambda}
\in\Hom_{\Udot(\frakg)}(\smallone_{\lambda},\smallone_{\lambda{-}\alpha_i}).
\]
(Note that we write $\sfE_i$
etc.\ for elements of 
$\quantumg(\frakg)$, and $\dotE_i\smallone_{\lambda}$
etc.\ for $\Udot(\frakg)$.)

\subsubsection{The quantum enveloping algebra \texorpdfstring{$\quantumg(\frakgl_n)$}{Uq(gl(n))}}

We denote by $\Repq{n}$ 
the braided monoidal
category of finite-dimensional representations of $\quantumg(\frakgl_n)$.
Let us recall some 
basic facts about some representations 
of $\quantumg(\frakgl_n)$.

We denote by $\trivmod=\CQ$ the trivial 
and by $\vecrep$ the (quantum analog of the) vector representation of 
$\quantumg(\frakgl_n)$. On the standard basis $v_1,\dots,v_n$ of $\vecrep$,
the action of the generators is explicitly given by
\begin{gather*}
\sfK^{\pm 1}_iv_j=\begin{cases}\qpar^{\pm 1}v_{j}, &\text{if }i=j,\\
\qpar^{\mp 1}v_{j}, &\text{if }i=j-1,\\
v_j, &\text{else,}\end{cases}\\
\sfE_iv_j=\begin{cases}v_{j-1}, &\text{if }i=j-1,\\0, &\text{else,}\end{cases}\qquad
\sfF_iv_j=\begin{cases}v_{j+1}, &\text{if }i=j,\\0, &\text{else.}\end{cases}
\end{gather*}

As usual, we define the 
\emph{($\qpar$-)exterior algebra} of $\vecrep$ as
\begin{gather*}
\exterior{\bullet}\vecrep = \mathrm{T} \vecrep / \langle\mathrm{S}_{\qpar}^2\vecrep\rangle,
\end{gather*}
where 
$\mathrm{T}\vecrep$ denotes the tensor algebra of $\vecrep$
and
$\mathrm{S}_{\qpar}^2\vecrep\subset\vecrep \otimes \vecrep$ is the 
$\CQ$-linear subspace spanned by
\begin{gather}\label{eq:sym-ideal}
v_i \otimes v_i, \text{ for all }i = 1,\dots,n,
\quad\text{and}\quad
\qpar^{-1}v_i \otimes v_j + v_j \otimes v_i, \text{ for all } i < j.
\end{gather}
Since $\mathrm{T} V$
is naturally graded and the ideal $\langle\mathrm{S}_{\qpar}^2\vecrep\rangle$ is 
homogeneous, $\exterior{\bullet}\vecrep$ is also 
graded and decomposes as a $\quantumg(\frakgl_n)$-module
as $\bigoplus_{a \in \N} \exterior{a}\vecrep$, 
with $\exterior{0}\vecrep\cong\trivmod$ 
and $\exterior{1}\vecrep\cong\vecrep$.
We call $\exterior{a}\vecrep$ the $a$th 
\emph{exterior power} (of $\vecrep$), and
we
write $v_{i_1}\wedge\cdots\wedge v_{i_a}$ for 
the image of $v_{i_1} \otimes \dotsb \otimes v_{i_a}$ 
in the quotient $\exterior{a}\vecrep$. 

Similarly, we define the \emph{($\qpar$-)symmetric algebra} as
\begin{gather*}
\symmetric{\bullet}\vecrep = \mathrm{T} \vecrep / \langle\mathrm{E}_{\qpar}^2\vecrep\rangle,
\end{gather*}
where 
$\mathrm{E}_{\qpar}^2\vecrep\subset\vecrep \otimes \vecrep$ is spanned by
\begin{gather}\label{eq:ext-ideal}
\qpar v_i \otimes v_j - v_j \otimes v_i, \text{ for all } i < j.
\end{gather}
As before, we have a $\quantumg(\frakgl_n)$-module
decomposition $\symmetric{\bullet}\vecrep=\bigoplus_{a \in \N} \symmetric{a}\vecrep$, 
with $\symmetric{0}\vecrep\cong\trivmod$ 
and $\symmetric{1}\vecrep\cong\vecrep$.
We call $\symmetric{a}\vecrep$ the $a$th 
\emph{symmetric power} (of $\vecrep$). 
We
write $v_{i_1}\cdots v_{i_a}$ for 
the corresponding element of $\symmetric{a}\vecrep$.

Clearly,  $\exterior{a}\vecrep$ and 
$\symmetric{a}\vecrep$ are $\CQ$-linearly spanned by elements 
of the form
\[
v_{i_1}\wedge\cdots\wedge v_{i_a},\quad i_1<\cdots<i_a,
\quad\text{and}\quad
v_{i_1} \cdots v_{i_a},\quad i_1\leq\cdots\leq i_a.
\]
Henceforth, we will always assume that the 
indices are increasing (strictly increasing in the exterior and weakly 
increasing in the symmetric case).

The multiplication of the tensor algebra $\mathrm{T}\vecrep$ 
is clearly $\quantumg(\frakgl_n)$-equivariant, 
and therefore induces $\quantumg(\frakgl_n)$-equivariant 
multiplications on $\exterior{\bullet}\vecrep$ and $\symmetric{\bullet}\vecrep$.
Moreover, both $\exterior{\bullet}\vecrep$ 
and $\symmetric{\bullet}\vecrep$ are coalgebras, 
with $\quantumg(\frakgl_n)$-equivariant comultiplications.
(This follows from Howe duality in type $\typeA$, 
see \cite[Lemma 3.1.2]{CKM} for $\exterior{\bullet}\vecrep$ and 
\cite[Lemma 2.21]{RT} for $\symmetric{\bullet}\vecrep$.)
Thus,
we can define $\quantumg(\frakgl_n)$-equivariant maps
\begin{gather*}
\begin{aligned}
\extmerge_{a,b}^{a{+}b}\colon
\exterior{a}\vecrep\otimes\exterior{b}\vecrep\to\exterior{a{+}b}\vecrep
\quad&\text{and}\quad
\symmerge_{a,b}^{a{+}b}\colon
\symmetric{a}\vecrep\otimes\symmetric{b}\vecrep\to\symmetric{a{+}b}\vecrep,\\
\extsplit_{a{+}b}^{a,b}\colon
\exterior{a{+}b}\vecrep\to\exterior{a}\vecrep\otimes\exterior{b}\vecrep
\quad&\text{and}\quad
\symsplit_{a{+}b}^{a,b}\colon
\symmetric{a{+}b}\vecrep\to\symmetric{a}\vecrep\otimes\symmetric{b}\vecrep,
\end{aligned}
\end{gather*}
to be the corresponding (co)multiplications.

\begin{remark}\label{remark:color-code}
In order to facilitate the distinction between the exterior and the symmetric power,
we use
the color code from \cite{TVW}, 
i.e.\ ``exterior=red'' and ``symmetric=green'' 
(with ``black=$\exterior{1}\vecrep=\vecrep=\symmetric{1}\vecrep$''). However, 
our web categories are ``red and green at the same time'' 
(cf.\ \fullref{figure:commute}), 
so we do not color their webs.
\end{remark}

\begin{example}\label{example:merge-split}
The base cases of the $\quantumg(\frakgl_n)$-intertwiners from 
above are the ones with $a=b=1$. In these cases we 
omit the sub- and superscripts and we have
\begin{gather*}
\extsplitt\colon
\exterior{2}\vecrep\to\vecrep\otimes\vecrep,
\quad v_i\wedge v_j\mapsto \qpar v_i\otimes v_j - v_j\otimes v_i,\\
\symsplitt\colon
\symmetric{2}\vecrep\to\vecrep\otimes\vecrep,
\quad v_iv_j\mapsto
\begin{cases}
\qpar^{-1}v_i\otimes v_j + v_j\otimes v_i, & \text{for }i<j,\\
\qbracket{2} v_i\otimes v_i, & \text{for }i=j.
\end{cases}
\end{gather*}
\end{example}

\subsection{The coideal subalgebra \texorpdfstring{$\coideal(\frakso_n)$}{corresponding to so(n)}}\label{subsec-coideal}

Next, we recall the definition of the coideal subalgebra 
$\coideal(\frakso_n)$ of $\quantumg(\frakgl_n)$, following \cite[Section 3]{KP}.

\begin{definition}\label{definition-typeD-coideal}
Let $\coideal(\frakso_n)$ be the 
$\CQ$-subalgebra 
of $\quantumg(\frakgl_n)$ generated by 
\begin{gather}\label{eq:gen-type-D}
\sfB_i = \sfF_i - \sfK_i^{-1}\sfE_i,\qquad\text{for }i=1,\dots,n.
\end{gather}
\end{definition}

\begin{remark}\label{remark:hopf-vs-coideal-D}
Despite the similar notation, 
$\quantumg(\frakso_n)$ and $\coideal(\frakso_n)$ 
are different algebras. 
In fact, the standard embedding $\U(\frakso_n)\hookrightarrow\U(\frakgl_n)$
does not lift to the quantum level as an embedding of 
$\quantumg(\frakso_n)$ into $\quantumg(\frakgl_n)$.
In contrast, $\coideal(\frakso_n)$  is, 
by definition, a subalgebra of $\quantumg(\frakgl_n)$. 
Both of them are, however, quantizations of the 
$\C$-algebra $\U(\frakso_n)$,
cf.\ \cite[Section 4, especially Theorem 4.8]{Le}.
\end{remark}

The algebra $\coideal(\frakso_n)$ is not a 
Hopf subalgebra of $\quantumg(\frakgl_n)$ (in particular, it is not closed 
under the comultiplication). Indeed, using \eqref{eq:conventionsHopf}, we get
\begin{equation}
\label{eq:coideal-condition}
\Delta(\sfB_i) = \sfB_i \otimes 1 + \sfK_i^{-1} \otimes \sfB_i \in \quantumg(\frakgl_n) \otimes \coideal(\frakso_n).
\end{equation}
However, \eqref{eq:coideal-condition} shows 
that $\coideal(\frakso_n)$ is a so-called
\emph{left coideal subalgebra}.

\subsubsection{The representation category of \texorpdfstring{$\coideal(\frakso_n)$}{corresponding to so(n)}}

We denote
the category of finite-dimensional representations 
of $\coideal(\frakso_n)$ by 
$\RepcoD{n}$.
Via restriction, we see that the objects and morphisms from 
$\Repq{n}$ are also in $\RepcoD{n}$. 
In particular, 
the $\quantumg(\frakgl_n)$-in\-tertwiners 
$\extmerge_{a,b}^{a{+}b}$, $\symmerge_{a,b}^{a{+}b}$, $\extsplit_{a{+}b}^{a,b}$ 
and $\symsplit_{a{+}b}^{a,b}$ 
are $\coideal(\frakso_n)$-equivariant as well.

Moreover, as recalled above, $\coideal(\frakso_n)$ 
is not 
closed under comultiplication. Hence,  
$\RepcoD{n}$
does not inherit the structure of a monoidal category from $\Repq{n}$. 
However, since 
$\coideal(\frakso_n)$ 
is a coideal subalgebra,
$\RepcoD{n}$
is a $\Repq{n}$-category 
in the sense of \fullref{def:action-of-mon-category}.
 
\subsubsection{Some intertwiners}

We define $\CQ$-linear maps
\begin{equation}\label{eq:int-typeD-1}
\begin{aligned}
    \cup &\colon \trivmod  \rightarrow \vecrep \otimes \vecrep, \quad
    1  \mapsto {\textstyle\sum_{i=1}^n}\; v_i \otimes v_i,\\   
     \cap &\colon \vecrep \otimes \vecrep  \rightarrow \trivmod,\quad
        v_i \otimes v_j   \mapsto 
        \begin{cases} 
          \qpar^{n+1-2i}, & \text{if } i = j,\\
          0,  & \text{else,} 
        \end{cases}
\end{aligned}
\end{equation}
\begin{equation}\label{eq:int-typeD-2}
\begin{aligned}
    \symdotdown &\colon \trivmod  \rightarrow \symmetric{2} \vecrep,  \quad
    1  \mapsto \tfrac{1}{\qbracket{2}}\left({\textstyle\sum_{i=1}^{n}}\; v_{i} v_{i}\right),\\
\symdotup &\colon \symmetric{2} \vecrep   \rightarrow \trivmod,    \quad
        v_i v_j  \mapsto 
        \begin{cases} 
          \qpar^{n+1-2i}, & i=j,\\
          0,  & \text{else.} 
        \end{cases}
\end{aligned}
\end{equation}

\begin{lemma}\label{lemma:int-typeD}
The $\CQ$-linear maps $\cup$, $\cap$, $\symdotdown$ and $\symdotup$ 
intertwine the $\coideal(\frakso_n)$-actions.
\end{lemma}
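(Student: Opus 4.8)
The plan is to reduce to the generators and then check each map by hand. Since $\coideal(\frakso_n)$ is generated by the $\sfB_i$ of \eqref{eq:gen-type-D} and all four maps are $\CQ$-linear, it suffices to show that each of $\cup$, $\cap$, $\symdotdown$ and $\symdotup$ commutes with the action of every generator $\sfB_i$. The ingredients I would record first are: the action of $\sfB_i=\sfF_i-\sfK_i^{-1}\sfE_i$ on $\vecrep$, whose only nonzero values are $\sfB_iv_i=v_{i+1}$ and $\sfB_iv_{i+1}=-\qpar^{-1}v_i$ (read off from the explicit $\sfE_i,\sfF_i,\sfK_i$-action, together with $\sfK_i^{-1}v_i=\qpar^{-1}v_i$ and $\sfK_i^{-1}v_{i+1}=\qpar v_{i+1}$); the fact that $\sfB_i$ acts as zero on $\trivmod$ (its counit vanishes); and that on a tensor product it acts through $\Delta(\sfB_i)=\sfB_i\otimes 1+\sfK_i^{-1}\otimes\sfB_i$, which is \eqref{eq:coideal-condition}.

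For $\cup$ I must check that $\sfB_i$ annihilates $\cup(1)=\sum_j v_j\otimes v_j$ (as $\sfB_i$ acts as $0$ on $\trivmod$). Only the summands $j\in\{i,i+1\}$ survive, and a short computation gives the cancellation
\begin{gather*}
\sfB_i(v_i\otimes v_i+v_{i+1}\otimes v_{i+1})
=\big(v_{i+1}\otimes v_i+\qpar^{-1}v_i\otimes v_{i+1}\big)
+\big(-\qpar^{-1}v_i\otimes v_{i+1}-v_{i+1}\otimes v_i\big)=0.
\end{gather*}
Dually, for $\cap$ I check $\cap\circ\sfB_i=0$ on $\vecrep\otimes\vecrep$; only $v_i\otimes v_{i+1}$ and $v_{i+1}\otimes v_i$ can produce a diagonal term, and inserting the weights $\qpar^{n+1-2i}$ makes the two contributions cancel, e.g.
\begin{gather*}
\cap\big(\sfB_i(v_i\otimes v_{i+1})\big)
=\qpar^{n-1-2i}-\qpar^{-2}\qpar^{n+1-2i}=0,
\end{gather*}
and symmetrically for $v_{i+1}\otimes v_i$. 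This establishes equivariance of the cup and cap.

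For the two green maps I would avoid a second direct computation and instead factor them through maps already known to be $\coideal(\frakso_n)$-equivariant. The multiplication $\symmerge_{1,1}^{2}$ and comultiplication $\symsplit_{2}^{1,1}$ of $\symmetric{\bullet}\vecrep$ are $\quantumg(\frakgl_n)$-equivariant, hence $\coideal(\frakso_n)$-equivariant by restriction. Comparing with the explicit formulas from \fullref{example:merge-split} yields
\begin{gather*}
\symdotdown=\tfrac{1}{\qbracket{2}}\,\symmerge_{1,1}^{2}\circ\cup,
\qquad
\symdotup=\tfrac{1}{\qbracket{2}}\,\cap\circ\symsplit_{2}^{1,1},
\end{gather*}
since $\symmerge_{1,1}^{2}(\sum_i v_i\otimes v_i)=\sum_i v_iv_i=\qbracket{2}\,\symdotdown(1)$ and $\cap\circ\symsplit_{2}^{1,1}(v_iv_j)=\qbracket{2}\,\symdotup(v_iv_j)$ on every basis vector. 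As composites of equivariant maps, $\symdotdown$ and $\symdotup$ are then automatically $\coideal(\frakso_n)$-equivariant.

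The computations are entirely routine; the only place demanding care is the coproduct twist by $\sfK_i^{-1}$ in the cup and cap verifications, which is precisely what produces the cancellations above, together with getting the scalar $\qbracket{2}^{-1}$ right in the two factorizations. No conceptual difficulty arises beyond this bookkeeping.
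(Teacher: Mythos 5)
Your proof is correct and follows essentially the same strategy as the paper: both exploit the $\quantumg(\frakgl_n)$-equivariance (hence $\coideal(\frakso_n)$-equivariance by restriction) of the degree-$(1,1)$ merge and split to reduce the four checks to two direct computations. The only difference is the direction of the factorization --- the paper verifies $\symdotdown$ and $\symdotup$ directly and obtains $\cup=\symsplitt\circ\symdotdown$ and $\cap=\symdotup\circ\symmerget$, whereas you verify $\cup$ and $\cap$ directly and recover the dotted maps as $\tfrac{1}{\qbracket{2}}\symmerge\circ\cup$ and $\tfrac{1}{\qbracket{2}}\cap\circ\symsplit$; both factorizations and all of your computations check out.
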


\begin{proof}
First we note that
\[
\cup=\symsplitt\circ\symdotdown
\quad\text{and}\quad
\cap=\symdotup\circ\symmerget.
\]
We already know that $\symsplitt$ and $\symmerget$ 
intertwine the action of $\quantumg(\frakgl_n)$. 
Thus, via restriction, they intertwine 
the action of $\coideal(\frakso_n)$ as well.
So it remains 
to show that $\symdotdown$ and $\symdotup$ 
intertwine the action of $\coideal(\frakso_n)$.
\begin{description}[leftmargin=0pt,itemsep=1ex]
\item[\textbf{The $\symdotdown$ case}]
One just has to show that 
$\sfB_j \big(\sum_{i=1}^n v_iv_i\big) = 0$ 
for all $j=1,\dots,n$, which follows via direct 
and straightforward computation.
\item[\textbf{The $\symdotup$ case}]
The computation boils down to checking that 
\begin{align*}
\symdotup (\sfB_i (v_i \otimes v_{i+1})) & = \symdotup (v_{i+1} \otimes v_{i+1} - \qpar^{-2} v_i \otimes v_i) = 0,
\end{align*}
and the claim follows.\qedhere
\end{description}
\end{proof}

\begin{remark}\label{remark:not-an-intertwiner-so}
Beware that $\notequiso$ is not $\coideal(\frakso_n)$-equivariant. 
To see this we note that
\[
\sfB_i(\notequiso(v_{j}))
\neq
\notequiso(\sfB_i(v_j)),
\]
which can be easily verified by observing that
\begin{align*}
\sfB_i({\textstyle\sum_{i=1}^n}\; v_i \otimes v_i \otimes v_j)
&=
\sfB_i({\textstyle\sum_{i=1}^n}\; v_i \otimes v_i) \otimes v_j
+
\sfK_i^{-1}({\textstyle\sum_{i=1}^n}\; v_i \otimes v_i)\otimes \sfB_i(v_j)
\\
&=
(\cdots + \qpar^{2} v_{i-1} \otimes v_{i-1} + \qpar^{-2} v_i \otimes v_i + \cdots)\otimes \sfB_i(v_j),
\end{align*}
which is not equal to $\notequiso(\sfB_i(v_j))$. Hereby we used 
that $\sfB_i({\textstyle\sum_{i=1}^n}\; v_i \otimes v_i)=0$ 
and \eqref{eq:coideal-condition}. However, using almost the same calculation, one can see 
that $\equiso$ is indeed $\coideal(\frakso_n)$-equivariant. 
This is the representation theoretical incarnation of the left-right partitioning of the 
$\typeBDC$-web calculus in \fullref{sec-Dwebs}, cf. \fullref{figure:webs}.
\end{remark}

\subsection{The coideal subalgebra \texorpdfstring{$\coideal(\fraksp_n)$}{corresponding to sp(n)}}\label{subsec-coideal-C}

Similarly to the orthogonal case, 
we define now the coideal subalgebra $\coideal(\fraksp_n)$, following \cite[Section 5]{KP}.

\begin{definition}\label{definition-typeC-coideal}
Let $\coideal(\fraksp_n)$ be the $\CQ$-subalgebra 
of $\quantumg(\frakgl_n)$ generated by
\begin{gather}\label{eq:gen-type-C}
\begin{aligned}
          \sfE_i,\sfF_i,\sfK_i^{\pm 1}, \qquad\qquad\qquad &\text{for }i=1,3,\dots,n-1,\\        
        \sfB_i= \sfF_i
        -\sfK_i^{-1}\mathrm{ad}(\sfE_{i-1}\sfE_{i+1})\centerdot\sfE_i,
         \;\; &\text{for }i=2,4,\dots,n,
\end{aligned}
\end{gather}
where $\mathrm{ad}(\sfX)\centerdot\sfY$ denotes the right adjoint action for $\sfX,\sfY\in\quantumg(\frakgl_n)$, 
cf.\ \cite[\S 4.18]{Ja1}, i.e.\ 
in Sweedler notation $\mathrm{ad}(\sfX)\centerdot\sfY=S(\sfX_{(2)})\sfY\sfX_{(1)}$.
\end{definition}

Explicitly, the adjoint action in \eqref{eq:gen-type-C} is
\begin{gather*}
\mathrm{ad}(\sfE_{i-1}\sfE_{i+1})\centerdot\sfE_i
=
\sfE_{i-1}\sfE_{i+1}\sfE_i
-\qpar^{-1}\sfE_{i-1}\sfE_i\sfE_{i+1}
-\qpar^{-1}\sfE_{i+1}\sfE_i\sfE_{i-1}
+\qpar^{-2}\sfE_i\sfE_{i-1}\sfE_{i+1}.
\end{gather*}

This expression is the one which we use below, e.g. in \fullref{lemma:int-typeC}.

\begin{remark}\label{remark:hopf-vs-coideal-C}
As before, $\coideal(\fraksp_{n})$ should not 
be confused with $\quantumg(\fraksp_{n})$, although 
they  both de-quantize to $\U(\fraksp_{n})$ (cf.\ \cite[Section 4]{Le}).
\end{remark}

One again checks that $\coideal(\fraksp_{n})$ is a left coideal subalgebra of 
$\quantumg(\frakgl_{n})$. However, we do not need the explicit formula for the comultiplication in this paper.

\subsubsection{The representation category of \texorpdfstring{$\coideal(\fraksp_n)$}{corresponding to sp(n)}}

We denote by 
$\RepcoC{n}$
the category of finite-di\-men\-sio\-nal representations 
of $\coideal(\fraksp_n)$.
Again, the category $\RepcoC{n}$
is a $\Repq{n}$-category since 
$\coideal(\fraksp_n)$ is a coideal subalgebra of $\quantumg(\frakgl_n)$, 
and, via restriction, the objects and morphisms from 
$\Repq{n}$ are also in $\RepcoC{n}$.

\subsubsection{Some more intertwiners}

We define $\CQ$-linear maps
\begin{equation}\label{eq:int-typeC-1}
\begin{aligned}
    \extdotdown &\colon \trivmod  \rightarrow \exterior{2} \vecrep,  \quad
    1  \mapsto {\textstyle\sum_{i=1}^{\nicefrac{n}{2}}}\; \qpar^{1-i} v_{2i-1} \wedge v_{2i},\\
\extdotup &\colon \exterior{2} \vecrep   \rightarrow \trivmod,    \quad
        v_i \wedge v_j  \mapsto 
        \begin{cases} 
          \qpar^{n-\nicefrac{1}{2}(3i+1)}, & \text{if }i\text{ is odd and } j = i+1,\\
          0,  & \text{else.} 
        \end{cases}
\end{aligned}
\end{equation}
\begin{gather}\label{eq:int-typeC-2}
\begin{aligned}
    \cup &\colon \trivmod  \rightarrow \vecrep \otimes \vecrep,  \quad
    1  \mapsto {\textstyle\sum_{i=1}^{\nicefrac{n}{2}}}\; \qpar^{1-i} 
    (\qpar v_{2i-1} \otimes v_{2i} - v_{2i} \otimes v_{2i-1}),\\
\cap &\colon \vecrep \otimes \vecrep   \rightarrow \trivmod,    \quad
        v_i \otimes v_j  \mapsto 
        \begin{cases} 
          \qpar^{n-\nicefrac{1}{2}(3i+1)}, & \text{if }i\text{ is odd and } j = i+1,\\
          -\qpar^{n-\nicefrac{1}{2}(3i)}, & \text{if }i\text{ is even and } j = i-1,\\
          0,  & \text{else.} 
        \end{cases}
\end{aligned}
\end{gather}

\begin{lemma}\label{lemma:int-typeC}
The $\CQ$-linear maps $\extdotdown$, $\extdotup$, $\cup$ and $\cap$
intertwine the $\coideal(\fraksp_n)$-actions.
\end{lemma}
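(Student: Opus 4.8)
The plan is to mirror the proof of \fullref{lemma:int-typeD}. First I would record that the cup and cap factor through the start/end dots exactly as their diagrammatic definition \eqref{eq:cup-cap-C} suggests, namely
\[
\cup = \extsplitt\circ\extdotdown
\quad\text{and}\quad
\cap = \extdotup\circ\extmerget.
\]
The first identity is checked by feeding $\extdotdown(1)=\sum_{i=1}^{\nicefrac{n}{2}}\qpar^{1-i}v_{2i-1}\wedge v_{2i}$ into the formula for $\extsplitt$ from \fullref{example:merge-split} and comparing with \eqref{eq:int-typeC-2}; the second follows similarly, after using the exterior relation \eqref{eq:sym-ideal} in the form $v_j\wedge v_i=-\qpar^{-1}v_i\wedge v_j$ (for $i<j$) to account for the sign and $\qpar$-shift appearing in the even-index values of $\cap$. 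Since $\extsplitt$ and $\extmerget$ are $\quantumg(\frakgl_n)$-equivariant, they are $\coideal(\fraksp_n)$-equivariant by restriction, so it remains only to prove that $\extdotdown$ and $\extdotup$ intertwine the $\coideal(\fraksp_n)$-action.

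As $\trivmod$ is the trivial module, this reduces to showing that each algebra generator $\sfX$ of $\coideal(\fraksp_n)$ from \eqref{eq:gen-type-C} acts on $\extdotdown(1)\in\exterior{2}\vecrep$ through its counit, and dually that $\extdotup(\sfX\cdot\xi)=\counit(\sfX)\extdotup(\xi)$ for all $\xi$. I would split the check into the two families of generators. For the odd-index generators $\sfE_i,\sfF_i,\sfK_i^{\pm 1}$ this is a quick local computation on $\exterior{2}\vecrep$ via $\Delta(\sfK_i)=\sfK_i\otimes\sfK_i$ and the formulas for $\Delta(\sfE_i),\Delta(\sfF_i)$ from \eqref{eq:conventionsHopf}: one finds that $\sfK_i$ fixes each summand $v_{2m-1}\wedge v_{2m}$ (the two factors $\qpar^{\pm 1}$ cancel), while $\sfE_i$ and $\sfF_i$ send the relevant summand to a vanishing term $v_{2m-1}\wedge v_{2m-1}$ or $v_{2m}\wedge v_{2m}$, so both annihilate $\extdotdown(1)$; the same basis analysis handles $\extdotup$.

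The main obstacle is the even-index generators $\sfB_i=\sfF_i-\sfK_i^{-1}\,\mathrm{ad}(\sfE_{i-1}\sfE_{i+1})\centerdot\sfE_i$, whose adjoint-action part expands into the four monomials in the $\sfE$'s displayed right after \eqref{eq:gen-type-C}. Evaluating $\sfB_i\cdot\extdotdown(1)$ amounts to applying a length-three product of raising operators, each acting on $\exterior{2}\vecrep$ as a skew derivation through $\Delta(\sfE_j)=\sfE_j\otimes\sfK_j+1\otimes\sfE_j$, together with the single $\sfF_i$ term. Only the summands of $\extdotdown(1)$ supported on the indices $\{i-1,i,i+1,i+2\}$ can contribute, so the computation localizes to a fixed even $i$ and becomes finite; the $\qpar$-powers in \eqref{eq:int-typeC-1} and \eqref{eq:int-typeC-2} are tuned precisely so that the $\sfF_i$ contribution is cancelled by the adjoint-action contribution. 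This step is the genuinely laborious one---essentially the symplectic analog of the $\mathrm{SO}$-computation in \fullref{lemma:int-typeD}, but with the more intricate generator $\sfB_i$---yet it is entirely mechanical once the local terms are isolated, and the dual check for $\extdotup$ runs along the same lines (cf.\ \fullref{remark:not-an-intertwiner-sp}).
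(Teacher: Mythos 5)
Your proposal is correct and follows essentially the same route as the paper: factor $\cup=\extsplitt\circ\extdotdown$ and $\cap=\extdotup\circ\extmerget$ to reduce to the dot morphisms, dispose of the odd-index generators by the immediate weight/annihilation check, and verify the even-index $\sfB_i$ by the localized computation in which the $\sfF_i$ term cancels against the surviving monomial of the adjoint-action part (with the dual case-by-case check for $\extdotup$). The paper simply carries out these computations explicitly, including the enumeration of cases $j=i+1$, $j=i+2$, $j>i+2$ for $\extdotup$, which you correctly identify as mechanical.
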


\begin{proof}
As in the proof of \fullref{lemma:int-typeD} we have
\[
\cup=\extsplitt\circ\extdotdown
\quad\text{and}\quad
\cap=\extdotup\circ\extmerget.
\]
Hence, as before, we only need to check that $\extdotdown$ 
and $\extdotup$ are $\coideal(\fraksp_n)$-equivariant.
\begin{description}[leftmargin=0pt,itemsep=1ex]
\item[\textbf{The $\extdotdown$ case}]
We need to show 
for $i$ odd that $\sfK_i^{\pm 1}$ acts 
on $\extdotdown (1)$ as the identity and $\sfE_i$, $\sfF_i$ 
as zero, and for $i$ even that $\sfB_i(\extdotdown(1))=0$. 
The former is clear, while the latter computation essentially boils down to 
\begin{gather*}
\sfB_i(v_{i-1} \wedge v_i + \qpar^{-1} v_{i+1} \wedge v_{i+2}) = \sfF_i(v_{i-1} \wedge v_i) - \qpar^{-1} \sfK_i^{-1}\sfE_{i-1}\sfE_{i+1}\sfE_i(v_{i+1} \wedge v_{i+2})\\ =  v_{i-1} \wedge v_{i+1} - \qpar^{-1} \qpar v_{i-1} \wedge v_{i+1} =0,
\end{gather*}
since $\sfE_{i-1}(v_{i-1} \wedge v_i)=v_{i-1} \wedge v_{i-1}=0$
and $\sfE_{i+1}(v_{i+1} \wedge v_{i+2})=v_{i+1} \wedge v_{i+1}=0$.
\item[\textbf{The $\extdotup$ case}]
We have to show that 
\[
\extdotup(\sfX(v_i \wedge v_j)) = \sfX(\extdotup (v_i \wedge v_j)),\quad\text{for 
all } \sfX \text{ as in \eqref{eq:gen-type-C}}.
\]
This is clear for $\sfX=\sfK_l^{\pm 1}$ with $l$ odd, 
so let us assume that $\sfX$ is either an $\sfE$, an $\sfF$ or a $\sfB$. Of course, 
we can also assume that $i<j$. Still, we have a few cases to check, where 
we only need to verify $\extdotup(\sfX(v_i \wedge v_j)) =0$, since 
the other side is always zero:
\smallskip
\begin{enumerate}[label=$\blacktriangleright$]

\setlength\itemsep{.15cm}

\item If $j>i+2$, then it is easily shown that
$\extdotup(\sfX(v_i \wedge v_j))= 0$.
Indeed, the only thing to observe hereby is
\[
\sfE_i\sfE_{i+1}\sfE_{i+2}(v_i \wedge v_{i+3})=v_i \wedge v_{i}=0,
\]
which shows that $\extdotup(\sfB_{i+1}(v_i \wedge v_{i+3}))= 0$ for $i$ odd.
\item If $j=i+1$ and $i$
is odd, then
$\sfE_i(v_i \wedge v_{i+1})=\sfF_i(v_i \wedge v_{i+1})=0$. Moreover,
\[
\extdotup(\sfB_{i-1}(v_i \wedge v_{i+1})) = \extdotup(-\qpar v_{i-2}\wedge v_i)=0\;\text{ and }\;
\extdotup(\sfB_{i+1}(v_i \wedge v_{i+1})) = \extdotup(v_i \wedge v_{i+2}) = 0.
\]
\item If $j=i+1$ and $i$ is even, then
clearly $\extdotup(\sfX(v_i \wedge v_{i+1}))=0$ for
$\sfX$ being either of 
$\sfE_{i-1},\sfE_{i+1},\sfF_{i-1},\sfF_{i+1}$. Moreover, one also directly sees 
that
$\sfB_i(v_i \wedge v_{i+1})=0$.
\item If $j=i+2$ and $i$ is odd, then clearly 
$\sfE_l(v_i \wedge v_{i+2})=0$ for all $l$ odd. 
We also see directly that $\extdotup(\sfF_{i+2}(v_i \wedge v_{i+2}))=0$ 
and $\sfB_{i+1}(v_i \wedge v_{i+2}) =0$.
Moreover, noting that $i+1$ is even, we get 
\[
\extdotup(\sfF_{i}(v_i \wedge v_{i+2}))=\extdotup(v_{i+1} \wedge v_{i+2})=0.
\]
\item Finally, if $j=i+2$ and $i$ is even, then 
$\sfF_l(v_i \wedge v_{i+2})=0$ for all $l$ odd. 
We also directly see that $\extdotup(\sfE_{i-1}(v_i \wedge v_{i+2}))=0$.
Further, because $i$ is even, we have
\[
\extdotup(\sfE_{i+1}(v_i \wedge v_{i+2}))=\extdotup(v_{i} \wedge v_{i+1})=0.
\] 
Moreover, noting that $i-1$ and $i+1$ are odd, we get
\begin{gather*}
\extdotup(\sfB_i(v_i \wedge v_{i+2}))=\extdotup(v_{i+1} \wedge v_{i+2} -\qpar^{-3} v_{i-1} \wedge v_i)\\ = \qpar^{n-\nicefrac{3i+4}{2}}-\qpar^{-3}\qpar^{n-\nicefrac{3i-2}{2}} = 0,
\end{gather*}
and $\extdotup(\sfB_{i+2}(v_i \wedge v_{i+2}))=0$ follows again 
because $\extdotup(v_i \wedge v_{i+1})=0$.\qedhere
\end{enumerate}
\end{description}
\end{proof}

\begin{remark}\label{remark:not-an-intertwiner-sp}
Similarly as in \fullref{remark:not-an-intertwiner-so} 
one can show that $\notequisp$ is not $\coideal(\fraksp_n)$-equivariant, 
but $\equisp$ is. Again, this is related to the left-right partitioning 
of the $\typeCBD$-web calculus in \fullref{sec-Cwebs}, cf. \fullref{figure:webs}.
\end{remark}

\subsection{An integral form}\label{subsec:int-form}

For the purpose of later specialization, we need a 
version of Lusztig's integral form for
$\coideal(\frakso_n)$ and $\coideal(\fraksp_n)$.
To this end, we 
let $\Aalg=\C[\qpar,\qpar^{-1},\tfrac{1}{[n]}]$. 
We denote by $\quantumgA(\frakg)$ 
the \emph{$\Aalg$-form of} $\quantumg(\frakg)$, which 
is the $\Aalg$-subalgebra generated by the 
$\sfE_i$'s, $\sfF_i$'s and $\quantumq^{h}$'s. 
Note that we clearly have
$\quantumgA(\frakg) \otimes_{\Aalg} \CQ = \quantumg(\frakg)$.

\begin{definition}\label{definition:int-form}
We let $\coidealA(\frakso_{n}) \subset \quantumgA(\frakgl_{n})$ be 
the \emph{$\Aalg$-form of} $\coideal(\frakso_n)$, which 
is defined to be the $\Aalg$-subalgebra generated by 
the $\sfB_i$'s from \eqref{eq:gen-type-D}. 
Similarly, we define the 
\emph{$\Aalg$-form of} $\coideal(\fraksp_n)$ 
using the $\sfB_i$'s from \eqref{eq:gen-type-C}.
\end{definition}

Again, we clearly have that
\begin{gather*}
\coidealA(\frakso_{n}) \otimes_{\Aalg} \CQ = \coideal(\frakso_{n})
\quad\text{and}\quad
\coidealA(\fraksp_{n}) \otimes_{\Aalg} \CQ = \coideal(\fraksp_{n}).
\end{gather*}

\section{Connecting webs and representation categories}\label{sec-main}
We are now going to define the functors from \fullref{figure:commute}.

\subsection{Actions on representations in types \texorpdfstring{$\typeB\typeC\typeD$}{BCD}}
\label{subsec:acti-repr-type}

We will now define actions of our diagrammatic web categories
on representations of $\coideal(\frakso_n)$ and $\coideal(\fraksp_n)$.

\subsubsection{The presentation functors for \texorpdfstring{$\quantumg(\frakgl_n)$}{Uq(gl(n))}}

First, we recall that in type $\typeA$
we can define functors
$\diaA\colon\WebA \rightarrow 
\Repq{n}$ and $\symdiaA\colon\WebA \rightarrow 
\Repq{n}$
(sending the object $a$ to 
$\exterior{a}\vecrep$ and  $\symmetric{a}\vecrep$, respectively)
using the $\quantumg(\frakgl_n)$-intertwiners 
$\smash{\extmerge_{a,b}^{a{+}b}}$, $\smash{\extsplit_{a{+}b}^{a,b}}$
and $\smash{\symmerge_{a,b}^{a{+}b}}$, $\smash{\symsplit_{a{+}b}^{a,b}}$
from \fullref{sec-reps}.
By \fullref{example:merge-split}, 
we get
\begin{gather}\label{eq:dia-typeA}
\begin{aligned}
\diaA
\left(
      \begin{tikzpicture}[anchorbase,scale=.25,tinynodes]
      \draw[uncolored] (0,0) node[below] {$1$} .. controls ++(0,1) and ++(-0.5,-0.5) .. ++(1,1.5) .. controls ++(0.5,-0.5) and ++(0,1) .. ++(1,-1.5) node[below] {$1$}; 
      \draw[ccolored] (1,1.5) to (1,2.5);
      \draw[uncolored] (0,4) node[above] {$1$} .. controls ++(0,-1) and ++(-0.5,0.5) .. ++(1,-1.5) .. controls ++(0.5,0.5) and ++(0,-1) .. ++(1,1.5) node[above] {$1$}; 
      \end{tikzpicture}
\right)
&=
\extmergesplit \colon v_i \otimes v_j  \mapsto 
  \begin{cases}
    \qpar v_i \otimes v_j - v_j \otimes v_i, & \text{if } i<j,\\
    \qpar^{-1} v_i \otimes v_j - v_j \otimes v_i, & \text{if } i>j,\\
    0, & \text{if } i=j,
  \end{cases}\\
\symdiaA
\left(
      \begin{tikzpicture}[anchorbase,scale=.25,tinynodes]
      \draw[uncolored] (0,0) node[below] {$1$} .. controls ++(0,1) and ++(-0.5,-0.5) .. ++(1,1.5) .. controls ++(0.5,-0.5) and ++(0,1) .. ++(1,-1.5) node[below] {$1$}; 
      \draw[ccolored] (1,1.5) to (1,2.5);
      \draw[uncolored] (0,4) node[above] {$1$} .. controls ++(0,-1) and ++(-0.5,0.5) .. ++(1,-1.5) .. controls ++(0.5,0.5) and ++(0,-1) .. ++(1,1.5) node[above] {$1$}; 
      \end{tikzpicture}
\right)
&=
\symmergesplit \colon v_i \otimes v_j  \mapsto 
  \begin{cases}
    \qpar^{-1} v_i \otimes v_j + v_j \otimes v_i, & \text{if } i<j,\\
    \qpar v_i \otimes v_j + v_j \otimes v_i, & \text{if } i>j,\\
    \qbracket{2} v_i \otimes v_i, & \text{if } i=j.
  \end{cases}
\end{aligned}
\end{gather} 
We will use \eqref{eq:dia-typeA} frequently below.

\begin{remark}\label{remark:CKM-functor}
Note that $\diaA$ is 
the functor from \cite[\S 3.2]{CKM}, 
while $\symdiaA$ is its cousin 
as in \cite[Definition 2.18]{RT} 
or \cite[Definition 3.17]{TVW}.
\end{remark}

One can check that both $\diaA$ and $\symdiaA$
are functors of braided monoidal categories 
(see e.g.\ \cite[Theorem 3.20]{TVW}) -- a fact that 
we use silently below.

\subsubsection{The presentation functors for \texorpdfstring{$\coideal(\frakso_n)$}{corresponding to so(n)}}

We now specialize $\zpar=\qpar^{n}\in\CQ$ 
in the exterior and $\zpar=-\qpar^{-n}\in\CQ$ in the symmetric case. 
(Note that in both cases $\qbracket{\zpar;a}$ specializes to $\qbracket{n+a}$ and  
$\qbracketC{\zpar;a}$ specializes to $\qbracketC{n+a}$.)

We define 
$\diaD\colon \WebD \rightarrow 
\RepcoD{n}$ on objects by  $a \mapsto \exterior{a}\vecrep$
and on the generating morphisms by the assignment
\begin{gather}\label{eq:dia-typeD}
    \begin{tikzpicture}[anchorbase,scale=.25, tinynodes]
	\draw[uncolored] (-1,3) node[above] {$1$} to [out=270, in=180] (0,2) to [out=0, in=270] (1,3) node[above] {$1$};
	\node at (0,0) {$\phantom{.}$};
    \end{tikzpicture} 
\mapsto 
\big( \cup \colon \trivmod \rightarrow \vecrep \otimes \vecrep)
\quad\text{and}\quad
    \begin{tikzpicture}[anchorbase,scale=.25, tinynodes]
	\draw[uncolored] (-1,-3) node[below] {$1$} to [out=90, in=180] (0,-2) to [out=0, in=90] (1,-3) node[below] {$1$};
	\node at (0,0) {$\phantom{.}$};
    \end{tikzpicture}
\mapsto 
\big( \cap \colon \vecrep \otimes \vecrep \rightarrow \trivmod),
\end{gather}
and to be $\diaA$ on the $\typeA$-web generators \eqref{eq:Aweb-gens}. 
Similarly, we define its symmetric counterpart $\symdiaD\colon \WebC \rightarrow 
\RepcoD{n}$ on objects by $a \mapsto \symmetric{a}\vecrep$
and on the generating morphisms by the assignment
\begin{gather}\label{eq:dia-typeD-sym}
    \begin{tikzpicture}[anchorbase,scale=.25, tinynodes]
	\draw[ccolored] (0,0) to (0,1.5) node[above] {$2$};
	\node[dotbullet] at (0,0) {};
	\node at (0,1.5) {$\phantom{1}$};
   \end{tikzpicture} 
\mapsto 
\big( \symdotdown \colon \trivmod \rightarrow \symmetric{2}\vecrep)
\quad\text{and}\quad
    \begin{tikzpicture}[anchorbase,scale=.25, tinynodes]
	\draw[ccolored] (0,0) to (0,-1.5) node[below] {$2$};
	\node[dotbullet] at (0,0) {};
	\node at (0,1.5) {$\phantom{1}$};
   \end{tikzpicture}
\mapsto 
\big( \symdotup \colon \symmetric{2}\vecrep \rightarrow \trivmod),
\end{gather}
and to be $\symdiaA$ on the $\typeA$-web generators \eqref{eq:Aweb-gens}. The 
$\coideal(\frakso_n)$-intertwiners 
in \eqref{eq:dia-typeD} and \eqref{eq:dia-typeD-sym} are defined
in \eqref{eq:int-typeD-1} and \eqref{eq:int-typeD-2}.

In order to prove that $\diaD$ and $\symdiaD$ are well-defined, 
we need to show that the defining relations of 
$\WebD$ are satisfied in the image.
For $\diaD$, we do this 
in detail in the following lemmas,
where
we denote by 
$\id_{a}=\id_{\exteriors{a}\!\vecrep}$ 
the identity morphisms (we write $\id=\id_{1}$ 
for short) and all 
indexes are from $\{1,\dots,n\}$.
Further, we abbreviate 
$v_{j_1\dotsm j_\ell} = v_{j_1} \otimes \dotsb \otimes v_{j_l}$.

\begin{lemma}[Circle removal]\label{lemma:circle}
We have $\cap \circ \cup = [n]\,\id_{0}$.
\end{lemma}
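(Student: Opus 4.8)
The plan is to compute the composite $\cap \circ \cup$ directly using the explicit formulas for the cup and cap intertwiners given in \eqref{eq:int-typeD-1}. Since $\cup \colon \trivmod \to \vecrep \otimes \vecrep$ sends $1 \mapsto \sum_{i=1}^n v_i \otimes v_i$, and $\cap \colon \vecrep \otimes \vecrep \to \trivmod$ acts diagonally with $\cap(v_i \otimes v_i) = \qpar^{n+1-2i}$, the composite $\cap \circ \cup$ is multiplication by the scalar
\[
{\textstyle\sum_{i=1}^n}\; \qpar^{n+1-2i}.
\]

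The main (and essentially only) step is to recognize this geometric-type sum as the quantum number $[n]$. Writing out the exponents $n+1-2i$ for $i = 1, \dots, n$, they run through $n-1, n-3, \dots, -(n-3), -(n-1)$, which is exactly the symmetric set of exponents appearing in the expansion of $[n] = \frac{\qpar^n - \qpar^{-n}}{\qpar - \qpar^{-1}} = \qpar^{n-1} + \qpar^{n-3} + \dots + \qpar^{-(n-1)}$. Hence $\sum_{i=1}^n \qpar^{n+1-2i} = [n]$, and so $\cap \circ \cup = [n]\,\id_0$, as claimed. Here $\id_0 = \id_{\exteriors{0}\vecrep}$ is the identity on $\exterior{0}\vecrep \cong \trivmod$, i.e.\ the identity scalar on the ground field.

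There is really no obstacle here: the statement is a direct unwinding of the definitions followed by a one-line identification of a sum of powers of $\qpar$ with the quantum number $[n]$ from \eqref{eq:qnumbers-typeAD}. The only thing worth checking carefully is the bookkeeping of the exponents (that the indexing $i = 1, \dots, n$ produces precisely the centered arithmetic progression with common difference $2$), which confirms that the Laurent polynomial matches $[n]$ exactly and that the specialization $\zpar = \qpar^n$ is consistent with the circle-removal relation \eqref{eq:circle}, under which $\qbracket{\zpar;0}$ specializes to $[n]$.
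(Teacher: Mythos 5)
Your proposal is correct and coincides with the paper's own proof: both apply the explicit formulas from \eqref{eq:int-typeD-1} to get $\cap\circ\cup(1)=\sum_{i=1}^n \qpar^{n+1-2i}$ and identify this sum of powers with $[n]$. Nothing further is needed.
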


\begin{proof}
By definition, $\cap \circ \cup (1) = \cap \big(\sum_{i=1}^n v_{ii}\big) = 
\sum_{i=1}^n \qpar^{n+1-2i} = [n] $.
\end{proof}

\begin{lemma}[Bubble removal]\label{lemma:bubble}
We have $(\id \otimes \cap) (\extmergesplit \otimes \id) (\id \otimes \cup) = [n-1]\,\id$.
\end{lemma}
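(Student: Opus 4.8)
The plan is to compute both sides of the claimed identity $(\id \otimes \cap) (\extmergesplit \otimes \id) (\id \otimes \cup) = [n-1]\,\id$ explicitly on the standard basis $v_1, \dots, v_n$ of $\vecrep$, reading the composite from bottom to top as dictated by \fullref{eq:dia-conventions}. This is a direct calculation analogous to the circle removal in \fullref{lemma:circle}, so it should be routine; the only care needed is tracking the $\qpar$-powers through the three maps.

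First I would apply $\id \otimes \cup$ to a basis vector $v_j$, obtaining $\sum_{i=1}^n v_j \otimes v_i \otimes v_i$ via \fullref{eq:int-typeD-1}. Next I would apply $\extmergesplit \otimes \id$ to the first two tensor factors, where $\extmergesplit = \diaA$ of the merge-split is given explicitly in \fullref{eq:dia-typeA}: it sends $v_j \otimes v_i$ to $\qpar v_j \otimes v_i - v_i \otimes v_j$ if $j < i$, to $\qpar^{-1} v_j \otimes v_i - v_i \otimes v_j$ if $j > i$, and to $0$ if $j = i$. Finally I would apply $\id \otimes \cap$, which by \fullref{eq:int-typeD-1} contracts the last two factors, sending $v_a \otimes v_b$ to $\qpar^{n+1-2a}\delta_{ab}$.

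The key step is then to collect the surviving terms. After the contraction by $\cap$ only the terms in which the second and third factors carry equal indices survive. The output will be a scalar multiple of $v_j$, and I would verify that this scalar equals $\sum_{i \neq j} (\text{sign-and-}\qpar\text{ factor}) \cdot \qpar^{n+1-2i}$, which after separating the cases $i < j$ and $i > j$ telescopes to $[n-1]$, independently of $j$. Concretely, the $i>j$ terms contribute $-\qpar^{-1}$ times a power and the $i<j$ terms contribute $-\qpar$ times a power (with the $i=j$ term vanishing), and the quantum-number arithmetic collapses to $\frac{\qpar^{n-1}-\qpar^{-(n-1)}}{\qpar - \qpar^{-1}}$.

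I do not expect a genuine obstacle here: the statement is a bookkeeping identity about $\qpar$-powers, entirely parallel to \fullref{lemma:circle}. The one place to be attentive is the sign and the asymmetry between the $i<j$ and $i>j$ branches of $\extmergesplit$, since an error in the $\qpar$ versus $\qpar^{-1}$ assignment would spoil the cancellation of the $j$-dependence; I would double-check that the final scalar is genuinely independent of $j$, confirming that the composite is indeed $[n-1]\,\id$ rather than a diagonal operator with unequal entries.
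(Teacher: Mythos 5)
Your strategy is exactly the paper's: evaluate the composite on each basis vector $v_j$ and track the $\qpar$-powers through \eqref{eq:int-typeD-1} and \eqref{eq:dia-typeA}. The setup in your second paragraph is correct, but the ``concretely'' sentence in your third paragraph contradicts it and would not yield $[n-1]$. After $(\extmergesplit\otimes\id)$ the summand indexed by $i$ is $\qpar^{\pm 1}\, v_j\otimes v_i\otimes v_i \;-\; v_i\otimes v_j\otimes v_i$, with $\qpar$ in the branch $j<i$ and $\qpar^{-1}$ in the branch $j>i$ (as you yourself state when quoting \eqref{eq:dia-typeA}). Applying $\id\otimes\cap$ annihilates the cross term $-\,v_i\otimes v_j\otimes v_i$ because its last two indices are $j\neq i$, and the surviving term contributes $+\,\qpar^{\pm1}\qpar^{n+1-2i}\,v_j$. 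Hence the branch $i>j$ carries the coefficient $+\qpar$ and the branch $i<j$ carries $+\qpar^{-1}$, giving $\sum_{i<j}\qpar^{n-2i}+\sum_{i>j}\qpar^{n+2-2i}=\sum_{k=1}^{n-1}\qpar^{n-2k}=[n-1]$, independently of $j$. Your stated coefficients ($-\qpar^{-1}$ for $i>j$ and $-\qpar$ for $i<j$) have both the sign and the $\qpar^{\pm1}$ assignment reversed; with them the exponents $n-2j+2$ and $n-2j$ are missing from the resulting geometric sum, so the outcome would depend on $j$ and would not equal $[n-1]$. In short, the plan is sound and identical to the paper's proof, but the one substantive computation you commit to is exactly the sign-and-power slip you flagged as the danger point, and it must be corrected as above for the argument to close.
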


\begin{proof}
We compute
\begin{align*}
&(\id \otimes \cap) (\extmergesplit \otimes \id) (\id \otimes \cup) (v_x)\\
&\quad  =   (\id \otimes \cap) (\extmergesplit \otimes \id) 
({\textstyle\sum_{i=1}^n} v_{xii})  
 =   (\id \otimes \cap) 
({\textstyle\sum_{i<x}} \qpar^{-1} v_{xii} + {\textstyle\sum_{i>x}} \qpar v_{xii}) \\
&\quad  =    {\textstyle\sum_{i<x}} \qpar^{-1} \qpar^{n+1-2i} v_x + {\textstyle\sum_{i>x}} \qpar \qpar^{n+1-2i} v_x 
 = {\textstyle\sum_{i=1}^{n-1}} \qpar^{n-2i} v_x = [n-1] v_x,
\end{align*}
which shows the statement.
\end{proof}

\begin{lemma}[Lasso move]\label{lemma:lasso}
We have
\[
(\id \otimes \id \otimes \cap)(\id \otimes \overcrossing \otimes \id)(\extmergesplit \otimes \extmergesplit)(\id \otimes \undercrossing \otimes \id)(\id \otimes \id \otimes \cup) - \cupcap = [n-2]\,\id \otimes \id.
\]
\end{lemma}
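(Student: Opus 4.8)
The plan is to verify the lasso move relation \eqref{eq:lasso} directly on the vector representation by computing the action of the composite morphism on a basis vector $v_x$ and subtracting off the $\cupcap$ term, using the explicit formulas from \fullref{sec-reps}. The defining relation we must check is that $\diaD$ sends \eqref{eq:lasso} to a valid identity. First I would trace through the composite from bottom to top: start with $(\id \otimes \id \otimes \cup)(v_x) = \sum_{i} v_x \otimes v_i \otimes v_i$ using \eqref{eq:int-typeD-1}, then apply the undercrossing $\undercrossing$ in the middle two strands (using the braiding formula \eqref{eq:braiding} as realized on $\vecrep \otimes \vecrep$ via $\diaA$), then the double merge-split $\extmergesplit \otimes \extmergesplit$ whose action is given by \eqref{eq:dia-typeA}, then the overcrossing $\overcrossing$, and finally $(\id \otimes \id \otimes \cap)$ using the explicit cap from \eqref{eq:int-typeD-1}.

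The key bookkeeping step is to organize the resulting sum according to the relative order of the summation index $i$ and the external index $x$, since the braiding, the merge-split, and the cap all case-split on $i<x$, $i=x$, or $i>x$. I would compute the coefficient of $v_x$ in the output, collecting the powers of $\qpar$ that arise from each crossing (factors like $-\qpar^{\pm 1}$ together with the cup/cap weights $\qpar^{n+1-2i}$). The $\cupcap$ term, which is $\diaD$ applied to a cup stacked on a cap, computes separately as $\cup \circ \cap$ composed appropriately and will contribute a term that cancels the ``diagonal'' $i=x$ contributions. What remains should collapse, after summing a geometric-type series in $\qpar^{-2i}$, to the scalar $[n-2]$ times $v_x$, matching $[n-2]\,\id \otimes \id$ on the relevant strands.

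The main obstacle I expect is the careful tracking of signs and $\qpar$-powers through the two crossings: because the braiding \eqref{eq:braiding} is a genuine sum of the identity term and a cup-cap (merge-split) term, expanding both the over- and undercrossings naively produces four terms, and one must see that the cross terms either vanish or reassemble correctly. The cleaner route, which I would prefer, is to first rewrite the left-hand side using the topological simplifications already available: the equivalent lasso relation \eqref{eq:lasso-equi} from \fullref{lemma:lasso-equi}, together with the bubble computation in \fullref{lemma:bubble} and the circle computation in \fullref{lemma:circle}, so that most of the crossing algebra is absorbed into previously established scalars. In either approach the final step is a routine but slightly delicate identity among quantum numbers, namely that the surviving sum $\sum_{i} \qpar^{\,\ast}$ telescopes to $[n-2]$; I would relegate this to a direct verification with the conventions \eqref{eq:qnumbers-typeAD}, noting only that it is the $\zpar=\qpar^n$ specialization of the coefficient $\qbracket{\zpar;-2}$ appearing on the right-hand side of \eqref{eq:lasso}.
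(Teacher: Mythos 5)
Your overall strategy --- evaluate the composite on basis vectors via $\diaD$, expand the two crossings using \eqref{eq:dia-typeA}, case-split on the relative order of the indices, and collapse the surviving geometric sum to $[n-2]$ --- is exactly the paper's. However, two concrete points in your setup would derail the computation as written.

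First, the arity is wrong. The lasso move is an endomorphism of $1\otimes 1$, so its image is an endomorphism of $\vecrep\otimes\vecrep$ and must be evaluated on $v_x\otimes v_y$; your starting point $(\id\otimes\id\otimes\cup)(v_x)=\sum_i v_x\otimes v_i\otimes v_i$ produces only three tensor factors, against which $\id\otimes\undercrossing\otimes\id$ and $\extmergesplit\otimes\extmergesplit$ do not even typecheck. Consequently your bookkeeping tracks only $i$ against a single external index $x$, whereas the computation genuinely requires the three cases $x<y$, $x>y$ and $x=y$ (each further split by the position of $i$ relative to both).

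Second, the role of $\cupcap$ is mischaracterized. Since $\cap(v_x\otimes v_y)=0$ for $x\neq y$, the morphism $\cupcap$ vanishes identically off the diagonal, so it cannot ``cancel the diagonal $i=x$ contributions'' in general. What actually happens is that for $x\neq y$ the big composite already equals $[n-2]\,v_x\otimes v_y$ on the nose, while for $x=y$ it produces an \emph{extra} summand $\sum_{i}\qpar^{\,n-2x+1}v_i\otimes v_i$, which one must recognize as precisely $\cupcap(v_x\otimes v_x)$. Identifying that extra term is the one non-routine observation in the proof and cannot be waved away. As for your proposed shortcut via \eqref{eq:lasso-equi}: that equivalence is a statement inside $\WebDz$ derived from the other defining relations, so invoking it here requires both the implication ``\eqref{eq:lasso-equi} together with the other relations implies \eqref{eq:lasso}'' and the prior verification of the images of those other relations, and one is still left with a direct computation of comparable complexity, since \eqref{eq:lasso-equi} contains three crossings. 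Your final remark that the target scalar is the $\zpar=\qpar^n$ specialization of $\qbracket{\zpar;-2}$ is correct.
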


\begin{proof}
We compute 
\begin{align*}
&    (\id \otimes \id \otimes \cap)(\id \otimes \overcrossing \otimes \id)(\extmergesplit \otimes \extmergesplit)(\id \otimes \undercrossing \otimes \id)(\id \otimes \id \otimes \cup) ( v_{xy}) \\
& \quad \textstyle = (\id \otimes \id \otimes \cap)(\id \otimes \overcrossing \otimes \id)(\extmergesplit \otimes \extmergesplit)(\id \otimes \undercrossing \otimes \id)(\sum_{i=1}^n v_{xyii})\\
& \quad \textstyle = (\id \otimes \id \otimes \cap)(\id \otimes \overcrossing \otimes \id)(\extmergesplit \otimes \extmergesplit)(- \sum_{i \neq b} v_{xiyi}).
\end{align*}
Now, if $x<y$, then we get
\begin{align*}
& \quad \textstyle = (\id \otimes \id \otimes \cap)(\id \otimes \overcrossing \otimes \id)(- \sum_{i<x} \qpar^{-2} v_{xiyi} -\qpar^{-1} v_{xiiy} -\qpar^{-1} v_{ixyi} + v_{ixiy})\\
& \quad \quad\textstyle + (\id \otimes \id \otimes \cap)(\id \otimes \overcrossing \otimes \id)(-\sum_{x<i<y} v_{xiyi} -\qpar^{-1} v_{xiiy} -\qpar v_{ixyi} +  v_{ixiy})\\
& \quad \quad \textstyle + (\id \otimes \id \otimes \cap)(\id \otimes \overcrossing \otimes \id)(-\sum_{i>y} \qpar^{2} v_{xiyi} -\qpar v_{xiiy} - v_{ixyi} + v_{ixiy}) \\
& \quad \textstyle = (\id \otimes \id \otimes \cap)(\sum_{i<y} \qpar^{-2} v_{xyii}  + \sum_{x<i<y} v_{xyii}+\sum_{i>y} \qpar^{2} v_{xyii}) \\
& \quad \textstyle = \sum_{i=1}^{n-2} \qpar^{n-2i-1} v_{xy}=[n-2] v_{xy}.
\end{align*}
Similarly, if $x>y$, then we get
\begin{align*}
& \quad \textstyle = (\id \otimes \id \otimes \cap)(\id \otimes \overcrossing \otimes \id)(-\sum_{i<y} \qpar^{-2} v_{xiyi} -\qpar^{-1} v_{xiiy} -\qpar^{-1} v_{ixyi} + v_{ixiy})\\
& \quad \quad\textstyle + (\id \otimes \id \otimes \cap)(\id \otimes \overcrossing \otimes \id)(-\sum_{y<i<x} v_{xiyi} -\qpar v_{xiiy} -\qpar^{-1} v_{ixyi} +  v_{ixiy})\\
& \quad \quad \textstyle + (\id \otimes \id \otimes \cap)(\id \otimes \overcrossing \otimes \id)(-\sum_{i>x} \qpar^{2} v_{xiyi} -\qpar v_{xiiy} - v_{ixyi} + v_{ixiy}) \\
& \quad \textstyle = (\id \otimes \id \otimes \cap)(\sum_{i<x} \qpar^{-2} v_{xyii}  + \sum_{y<i<x} v_{xyii}
+ \sum_{i>x} \qpar^{2} v_{xyii} ) \\
& \quad \textstyle = \sum_{i=1}^{n-2} \qpar^{n-2i-1} v_{xy}=[n-2] v_{xy}.
\end{align*}
So the statement is proved on $v_x \otimes v_y$ if $x \neq y$. 
Finally, if $x=y$, then we get
\begin{align*}
& \quad \textstyle = (\id \otimes \id \otimes \cap)(\id \otimes \overcrossing \otimes \id)(-\sum_{i<x} \qpar^{-2} v_{xixi} -\qpar^{-1} v_{xiix} -\qpar^{-1} v_{ixxi} + v_{ixix})\\
& \quad \quad \textstyle + (\id \otimes \id \otimes \cap)(\id \otimes \overcrossing \otimes \id)(-\sum_{i>x} \qpar^{2} v_{xixi} -\qpar v_{xiix} - v_{ixxi} + v_{ixix}) \\
& \quad \textstyle = (\id \otimes \id \otimes \cap)(\sum_{i<x} \qpar^{-2} v_{xxii} + v_{iixx} )  + (\id \otimes \id \otimes \cap)(\sum_{i>x} \qpar^{2} v_{xxii} + v_{iixx}) \\
& \quad \textstyle = \sum_{i<x}( \qpar^{n-2i-1} v_{xx} + \qpar^{n-2x+1} v_{ii} )+ \sum_{i>x}( \qpar^{n-2i+3} v_{xx} + \qpar^{n-2x + 1} v_{ii})\\
& \quad \textstyle = \sum_{i=1}^{n-2}( \qpar^{n-2i-1} v_{xx})  + \sum_{i=1}^{n} ( \qpar^{n-2x + 1} v_{xx}) =  [n-2] v_{xx}  + \sum_{i=1}^{n} ( \qpar^{n-2x + 1} v_{ii})\\
& \quad = ([n-2]\id + \cupcap)(v_{xx}),
\end{align*}
and we are done.
\end{proof}

\begin{lemma}[Lollipop relation]\label{lemma:lollipop}
We have $\extmergesplit \circ \cup = 0$ and $\cap \circ \extmergesplit = 0$.
\end{lemma}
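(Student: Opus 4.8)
The statement to prove is \fullref{lemma:lollipop}: that $\extmergesplit \circ \cup = 0$ and $\cap \circ \extmergesplit = 0$, where $\cup$, $\cap$ are the type $\typeD$ intertwiners from \eqref{eq:int-typeD-1} and $\extmergesplit$ is the merge-split composite whose action on $\vecrep \otimes \vecrep$ is given explicitly by \eqref{eq:dia-typeA}.

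\begin{proof}
The plan is to verify both identities by direct computation on the relevant basis elements, using only the explicit formulas already recorded in the excerpt. For the first identity, I would start from the definition $\cup(1) = \sum_{i=1}^n v_i \otimes v_i$ from \eqref{eq:int-typeD-1} and apply the map $\extmergesplit$ to each summand $v_i \otimes v_i$. The key observation is that \eqref{eq:dia-typeA} gives $\extmergesplit(v_i \otimes v_i) = 0$ for every $i$, since this is precisely the $i = j$ case of the formula. Hence $\extmergesplit \circ \cup (1) = \sum_{i=1}^n \extmergesplit(v_i \otimes v_i) = 0$, and the first claim follows immediately by linearity.

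For the second identity, I would argue dually. By \eqref{eq:dia-typeA}, the image of $\extmergesplit$ on a basis vector $v_i \otimes v_j$ is $\qpar^{\pm 1} v_i \otimes v_j - v_j \otimes v_i$ (with sign depending on whether $i < j$ or $i > j$), and $0$ when $i = j$. Applying $\cap$ from \eqref{eq:int-typeD-1}, which is supported only on the ``diagonal'' terms $v_\ell \otimes v_\ell$, one sees that $\cap$ annihilates every off-diagonal term $v_i \otimes v_j$ with $i \neq j$. Therefore $\cap(\extmergesplit(v_i \otimes v_j))$ involves only off-diagonal vectors for $i \neq j$ (and is trivially zero for $i = j$), so it vanishes on all basis elements. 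This gives $\cap \circ \extmergesplit = 0$.

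Both computations are completely routine once \eqref{eq:dia-typeA} and \eqref{eq:int-typeD-1} are in hand, and there is no substantial obstacle; the only point requiring minor care is bookkeeping the sign cases $i<j$ versus $i>j$ in the second identity, but since $\cap$ kills every off-diagonal term regardless of sign, the sign is irrelevant to the conclusion. The representation-theoretic meaning, as flagged in \fullref{remark:web-names}, is that $\exteriornoq{2}\vecrepnoq$ contains a trivial summand whose inclusion and projection are exactly $\cup$ and $\cap$, and the lollipop relations express that these factor trivially through the merge-split idempotent.
\end{proof}
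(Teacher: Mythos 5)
Your proof is correct and follows exactly the paper's own argument: both identities are checked on basis vectors using the explicit formulas \eqref{eq:dia-typeA} for $\extmergesplit$ and \eqref{eq:int-typeD-1} for $\cup$ and $\cap$, with $\extmergesplit$ killing the diagonal terms of $\cup(1)$ and $\cap$ killing the off-diagonal output of $\extmergesplit$. Nothing is missing.
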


\begin{proof} 
First, if $x<y$, then
$\textstyle (\cap \circ \extmergesplit) 
(v_{xy}) = \cap( \qpar v_{xy} - v_{yx}) = 0$ 
while, if $x > y$, then 
$\textstyle (\cap \circ \extmergesplit) (v_x \otimes v_y) = 
\cap( \qpar^{-1} v_{xy} - v_{yx}) = 0$.
Next,
$\textstyle   (\extmergesplit \circ \cup)(1) = 
\extmergesplit (\sum_{i=1}^n v_{ii}) = 0$.
\end{proof}

\begin{lemma}[Merge-split sliding relations]\label{lemma:merge-split-slide}
We have 
\begin{gather*}
(\cap \otimes  \cap)(\id \otimes \overcrossing \otimes \id)(\extmergesplit \otimes \id \otimes \id) = (\cap \otimes  \cap)(\id \otimes \overcrossing \otimes \id)( \id \otimes \id \otimes \extmergesplit),\\
(\extmergesplit \otimes \id \otimes \id) (\id \otimes \overcrossing \otimes \id) (\cup \otimes  \cup) = ( \id \otimes \id \otimes \extmergesplit)(\id \otimes \overcrossing \otimes \id)(\cup \otimes  \cup).
\end{gather*}
\end{lemma}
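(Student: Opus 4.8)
The plan is to verify both identities directly, as equalities of $\coideal(\frakso_n)$-intertwiners $\vecrep^{\otimes 4}\to\trivmod$ and $\trivmod\to\vecrep^{\otimes 4}$ respectively, by evaluating on the basis vectors $v_{abcd}=v_a\otimes v_b\otimes v_c\otimes v_d$. The only inputs needed are the explicit formula \eqref{eq:dia-typeA} for $\extmergesplit$, the expansion $\overcrossing=-\qpar^{-1}\id+\extmergesplit$ coming from \eqref{eq:braiding}, and the definition \eqref{eq:int-typeD-1} of $\cup$ and $\cap$. Since the two displayed relations are vertical mirrors of one another (they are exchanged by $\omega$, which swaps $\cup\leftrightarrow\cap$ and fixes $\extmergesplit$), it suffices to treat the first, cap-side relation; the second then follows by the identical computation with $\cup$ in place of $\cap$.

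For the first relation I would introduce the auxiliary map $E=(\cap\otimes\cap)(\id\otimes\overcrossing\otimes\id)\colon\vecrep^{\otimes 4}\to\trivmod$, so that the claim reads $E\circ(\extmergesplit\otimes\id\otimes\id)=E\circ(\id\otimes\id\otimes\extmergesplit)$. The first step is to compute $E$ on $v_{abcd}$: expanding the crossing and using $\cap(v_i\otimes v_j)=\delta_{ij}\qpar^{n+1-2i}$, one finds that $E(v_{abcd})$ splits into three contributions — a \emph{crossed-pairing} term supported on $v_{pqpq}$ with $p\neq q$, a term with coefficient $\qpar-\qpar^{-1}$ forcing $a=b$ and $c=d$, and a diagonal term forcing $a=b=c=d$. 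The key observation is that the last two are always invisible to both sides of the relation: they are supported on basis vectors whose first two (resp.\ all four) indices coincide, whereas precomposition with $\extmergesplit$ only ever feeds $E$ vectors in which the merged pair of indices is \emph{distinct}, because $\extmergesplit$ kills $v_i\otimes v_i$ by \eqref{eq:dia-typeA}. Hence both sides are governed by the single clean value $E(v_{pqpq})=-\qpar^{2n+2-2p-2q}$ for $p\neq q$, with $E$ vanishing on all other relevant basis vectors.

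With this reduction the proof closes by a short case analysis. Evaluating on $v_{abcd}$, the left-hand side equals $\sigma_{ab}\,E(v_{abcd})-E(v_{bacd})$ and the right-hand side $\sigma_{cd}\,E(v_{abcd})-E(v_{abdc})$, where $\sigma_{xy}=\qpar$ if $x<y$ and $\qpar^{-1}$ if $x>y$, both sides being zero whenever the merged pair coincides. Since the surviving term of $E$ is nonzero only when $(c,d)=(a,b)$ or $(c,d)=(b,a)$, one checks in each of these two cases that the prefactors $\sigma$ and the weight powers $\qpar^{n+1-2i}$ agree on the two sides, and that both sides vanish otherwise. I expect the only real friction here to be the bookkeeping of the scalars $\qpar^{\pm1}$ and $\qpar^{n+1-2i}$ across the index coincidences; there is no conceptual obstacle, because the two awkward terms of $E$ drop out for the structural reason above. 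Applying $\omega$ (equivalently, rerunning the computation with $\cup$ replacing $\cap$) then yields the second relation.
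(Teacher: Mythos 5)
Your proposal is correct and follows essentially the same route as the paper: there, too, one computes $E=(\cap\otimes\cap)(\id\otimes\overcrossing\otimes\id)$ on all basis vectors $v_{wxyz}$, obtaining exactly your three terms (supported on $w=x=y=z$, on $w=x<y=z$ with coefficient $\qpar-\qpar^{-1}$, and on the crossed case $w=y\neq x=z$), and then observes that after precomposing with either merge-split only the crossed term can survive, where the two sides visibly agree. The one divergence is the second relation, which the paper establishes by directly expanding $(\id\otimes\overcrossing\otimes\id)(\cup\otimes\cup)(1)$ and applying both merge-splits rather than by invoking $\omega$; your fallback of simply redoing that (short, slightly different) computation is what is actually required, since $\omega$ is a symmetry of the web category and cannot by itself transport identities between intertwiners while the well-definedness of the functor is still being proved.
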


\begin{proof}
First, we compute
\begin{gather*}
     (\cap \otimes  \cap)(\id \otimes \overcrossing \otimes \id) (v_{wxyz}) 
     =
      \begin{cases}
        -\qpar^{2(n+1-2w)-1}, & \text{if } w=x=y=z,\\
        \qpar^{2(n+1)-w-y}(\qpar-\qpar^{-1}), & \text{if } w=x<y=z,\\
        -\qpar^{2(n+1)-w-x}, & \text{if } w=y \neq x=z,\\
        0, & \text{else.}           
      \end{cases}
\end{gather*}
Now, it is easy to see that both 
$(\cap \otimes  \cap)(\id \otimes \overcrossing 
\otimes \id)(\extmergesplit \otimes \id \otimes \id)(v_{wxyz})$ 
and $(\cap \otimes  \cap)(\id \otimes \overcrossing 
\otimes \id)( \id \otimes \id \otimes \extmergesplit)(v_{wxyz})$ 
can only be non-zero if $w=y$ and 
$x=z$, and that 
they are equal in this case. This shows the first equation.

For the second equation, we compute
\begin{gather}\label{eq:lemma-merge-split-slide}
\begin{aligned}
         &(\id \otimes \overcrossing \otimes \id)  (\cup \otimes \cup) (1) 
        =(\id \otimes \overcrossing \otimes \id)(\textstyle \sum_{i,j=1}^n v_{iijj}) \\
        &=- \textstyle \sum_{i\neq j}  v_{ijij} + (\qpar-\qpar^{-1}) \sum_{i< j}  v_{iijj} -\qpar^{-1} \sum_{i=j} v_{iiii}.
\end{aligned}
\end{gather}
Next, applying 
both $\extmergesplit \otimes \id \otimes \id$ or 
$\id \otimes \id \otimes \extmergesplit$ to \eqref{eq:lemma-merge-split-slide} 
yields
\begin{gather*}
        \textstyle \sum_{i<j} \big(v_{jiij} -\qpar v_{ijij}\big) + \sum_{i>j} \big(v_{jiij} - \qpar^{-1} v_{ijij}\big),
\end{gather*}
which proves the lemma.
\end{proof}

The proof that \eqref{eq:dia-typeD-sym} is well-defined 
works very similarly. 
It follows basically by 
the above, by comparison of the topological 
version of the relations in $\WebD$ and $\WebCpara$, 
and by comparison of \eqref{eq:int-typeD-1} 
and \eqref{eq:int-typeD-2}. We omit the details for brevity.
Hence, we get:

\begin{proposition}\label{proposition:dia-typeD-well-defined}
The two functors $\diaD$ and $\symdiaD$ are well-defined. 
Moreover, we have commuting diagrams
\[
  \begin{tikzpicture}[anchorbase]
  \matrix (m) [matrix of math nodes, row sep=2em, column
  sep=4em, text height=1.8ex, text depth=0.25ex] {
\WebA  & \Repq{n}\\
\WebD  & \RepcoD{n} \\};
  \path[->, myred] (m-1-1) edge node[above] {$\diaA$}  (m-1-2);
  \path[->, myred] (m-2-1) edge node[below] {$\diaD$}  (m-2-2);
  \draw[] ($(m-2-1.north) + (-.1,0.175)$)
  arc (250:-70:0.2cm);
  \draw[thick, ->] ($(m-2-1.north) + (.1,0.225)$) to ($(m-2-1.north) + (.05,0.175)$);
  \draw[] ($(m-2-2.north) + (-.1,0.175)$)
  arc (250:-70:0.2cm);
  \draw[thick, ->] ($(m-2-2.north) + (.1,0.225)$) to ($(m-2-2.north) + (.05,0.175)$);
  \end{tikzpicture}
\quad\text{and}\quad
  \begin{tikzpicture}[anchorbase]
  \matrix (m) [matrix of math nodes, row sep=2em, column
  sep=4em, text height=1.8ex, text depth=0.25ex] {
\WebA  & \Repq{n}\\
\WebCpara  & \RepcoD{n}. \\};
  \path[->, mygreen] (m-1-1) edge node[above] {$\symdiaA$}  (m-1-2);
  \path[->, mygreen] (m-2-1) edge node[below] {$\symdiaD$}  (m-2-2);
  \draw[] ($(m-2-1.north) + (-.1,0.175)$)
  arc (250:-70:0.2cm);
  \draw[thick, ->] ($(m-2-1.north) + (.1,0.225)$) to ($(m-2-1.north) + (.05,0.175)$);
  \draw[] ($(m-2-2.north) + (-.1,0.175)$)
  arc (250:-70:0.2cm);
  \draw[thick, ->] ($(m-2-2.north) + (.1,0.225)$) to ($(m-2-2.north) + (.05,0.175)$);
  \end{tikzpicture}
\hspace*{1.25cm}
\raisebox{-.7cm}{\qedmake}
\hspace*{-1.25cm}
\]
\end{proposition}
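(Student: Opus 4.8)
The plan is to realize $\diaD$ as a functor of $\WebA$-module categories lying over the monoidal functor $\diaA$, in the sense of \fullref{def:action-of-mon-category}. After specializing $\zpar=\qpar^n$, the category $\WebD$ is the $\WebA$-module category presented by the cup and cap generators \eqref{eq:cup-cap} modulo the left $\WebA$-ideal spanned by the relations \eqref{eq:circle}--\eqref{eq:sliding}. By the universal property of such a presentation, and since $\diaD$ is declared to equal $\diaA$ on every $\typeA$-web generator, it suffices to check that: (i) the images $\cup$ and $\cap$ prescribed in \eqref{eq:dia-typeD} are genuinely $\coideal(\frakso_n)$-equivariant, so that they are morphisms in $\RepcoD{n}$; and (ii) each defining relation \eqref{eq:circle}--\eqref{eq:sliding} holds in $\RepcoD{n}$ after applying $\diaD$. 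Point (i) is exactly \fullref{lemma:int-typeD}. I would also record that under $\zpar=\qpar^n$ the scalars line up: $\qbracket{\zpar;0}\mapsto\qbracket{n}$, $\qbracket{\zpar;-1}\mapsto\qbracket{n-1}$ and $\qbracket{\zpar;-2}\mapsto\qbracket{n-2}$.

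For (ii) the relations inherited from $\WebA$ (associativity \eqref{eq:asso} and the square switch \eqref{eq:square}) require no new work: they live inside the $\WebA$-action and hold automatically because $\diaA$ is a well-defined monoidal functor. The genuinely new relations are precisely the five cup/cap relations, and each has already been verified in the image by the preceding lemmas: the circle removal by \fullref{lemma:circle} (which gives $\cap\circ\cup=\qbracket{n}\,\id_{0}$), the bubble removal by \fullref{lemma:bubble}, the lasso move by \fullref{lemma:lasso}, the lollipop relations by \fullref{lemma:lollipop} and the merge-split sliding relations by \fullref{lemma:merge-split-slide}. Collecting these together with the scalar matching above shows that $\diaD$ is well-defined.

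For $\symdiaD$ I would run the parallel argument, now specializing $\zpar=-\qpar^{-n}$ so that $\qbracketC{\zpar;a}$ becomes $\qbracketC{n+a}$, with $\WebCpara$ in place of $\WebD$ and the defining relations of \fullref{definition:typeCwebs}. Rather than recomputing from scratch, I would pass to their topological reformulations (\fullref{lemma:barbell-equi-C}, \ref{lemma:thin-k-equi}, \ref{lemma:thick-k-equi} and \ref{lemma:sliding-equi-C}) and use that the $\typeC$ cup and cap of \eqref{eq:cup-cap-C} are sent by $\symdiaD$ to the very intertwiners $\cup$ and $\cap$ of \eqref{eq:int-typeD-1}, because $\cup=\symsplitt\circ\symdotdown$ and $\cap=\symdotup\circ\symmerget$ by \fullref{lemma:int-typeD}. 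Under this identification the topological relations of $\WebCpara$ coincide with those of $\WebD$ already handled above, so the exterior computations transfer. This comparison of the two calculi, and of \eqref{eq:int-typeD-1} with \eqref{eq:int-typeD-2}, is the one step that still needs genuine (if routine) care and is the main obstacle remaining at this stage.

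Finally, the two commuting diagrams are essentially built into the construction. Both $\diaD$ and $\symdiaD$ are defined to restrict to $\diaA$, respectively $\symdiaA$, on the image of $\WebA$, and the curved self-maps on $\WebD$ and $\RepcoD{n}$ depict the left $\WebA$- and $\Repq{n}$-actions. Compatibility of a module functor with these actions is part of the module-functor data already verified in steps (i) and (ii), so both squares commute and no further argument is required.
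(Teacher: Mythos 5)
Your proposal is correct and follows essentially the same route as the paper: equivariance of the cup/cap images is \fullref{lemma:int-typeD}, the five new defining relations are exactly \fullref{lemma:circle}, \fullref{lemma:bubble}, \fullref{lemma:lasso}, \fullref{lemma:lollipop} and \fullref{lemma:merge-split-slide}, the $\typeA$-web relations come for free from $\diaA$, and the symmetric case is handled (as in the paper, which also omits the details) by comparing the topological forms of the relations and the intertwiners \eqref{eq:int-typeD-1} and \eqref{eq:int-typeD-2}. Your extra framing via the universal property of the presented $\WebA$-module category is a cleaner packaging of the same argument, not a different proof.
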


\subsubsection{The presentation functors for \texorpdfstring{$\coideal(\fraksp_n)$}{corresponding to sp(n)}}

Again, we specialize to $\zpar=\qpar^{n}\in\CQ$ 
in the exterior and to $\zpar=-\qpar^{-n}\in\CQ$ in the symmetric case.

We define 
$\diaC\colon \WebC \rightarrow 
\RepcoC{n}$ on generators by the assignment
\begin{gather}\label{eq:dia-typeC}
    \begin{tikzpicture}[anchorbase,scale=.25, tinynodes]
	\draw[ccolored] (0,0) to (0,-1.5) node[below] {$2$};
	\node[dotbullet] at (0,0) {};
	\node at (0,1.5) {$\phantom{1}$};
    \end{tikzpicture} 
\mapsto \big( \extdotup \colon \exterior{2}\vecrep  \rightarrow \trivmod)
\quad \text{and} \quad
    \begin{tikzpicture}[anchorbase,scale=.25, tinynodes]
	\draw[ccolored] (0,0) to (0,1.5) node[above] {$2$};
	\node[dotbullet] at (0,0) {};
	\node at (0,1.5) {$\phantom{1}$};
    \end{tikzpicture} 
\mapsto \big( \extdotdown \colon \trivmod \rightarrow \exterior{2} \vecrep),
\end{gather}
and, as before, to be $\diaA$ on $\typeA$-web generators. 
Analogously, we define its symmetric counterpart
$\symdiaC\colon \WebDpara \rightarrow 
\RepcoC{n}$ on generators via
\begin{gather}\label{eq:dia-typeC-sym}
     \begin{tikzpicture}[anchorbase,scale=.25, tinynodes]
	\draw[uncolored] (-1,3) node[above] {$1$} to [out=270, in=180] (0,2) to [out=0, in=270] (1,3) node[above] {$1$};
	\node at (0,0) {$\phantom{.}$};
    \end{tikzpicture}
\mapsto \big( \cup \colon \vecrep \otimes \vecrep  \rightarrow \trivmod)
\quad \text{and} \quad
    \begin{tikzpicture}[anchorbase,scale=.25, tinynodes]
	\draw[uncolored] (-1,-3) node[below] {$1$} to [out=90, in=180] (0,-2) to [out=0, in=90] (1,-3) node[below] {$1$};
	\node at (0,0) {$\phantom{.}$};
    \end{tikzpicture}
\mapsto \big( \cap \colon \trivmod \rightarrow \vecrep \otimes \vecrep),
\end{gather}
and, as before, to be $\symdiaA$ on $\typeA$-web generators.

Again, in order to prove that \eqref{eq:dia-typeC} is well-defined, 
we need to show that the defining relations of $\WebC$ 
are satisfied in the image. 
This boils down to prove the following 
lemmas, which can be verified, similarly as in type $\typeB\typeD$,
via involved and lengthy computations. 
In order to keep 
the number of (boring) computations in this paper 
in reasonable boundaries, we omit their proofs. 

\begin{lemma}[Barbell removal]\label{lemma:barbell-C}
We have $\extdotup \circ \extdotdown = [\tfrac{n}{2}]_2\id_0$.\qedmake
\end{lemma}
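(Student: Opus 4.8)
The plan is to prove this by a direct evaluation on the generator $1 \in \trivmod$, in exactly the style of the circle removal of \fullref{lemma:circle}. Recall that for $\diaC$ we work at the specialization $\zpar = \qpar^{n}$, and that $\extdotdown$ and $\extdotup$ are the $\coideal(\fraksp_n)$-intertwiners defined in \eqref{eq:int-typeC-1}. First I would evaluate
\[
\extdotdown(1) = \sum_{i=1}^{n/2} \qpar^{1-i}\, v_{2i-1}\wedge v_{2i} \;\in\; \exterior{2}\vecrep .
\]
Since $n$ is even by our standing convention, $n/2$ is a positive integer, so this sum is well-defined and finite.

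Next I would push this element through $\extdotup$ term by term. For each summand $v_{2i-1}\wedge v_{2i}$, the first index $2i-1$ is odd and the second index equals $(2i-1)+1$, so it lands in the non-zero case of the definition of $\extdotup$ in \eqref{eq:int-typeC-1} and is sent to $\qpar^{\,n-\frac{1}{2}(3(2i-1)+1)} = \qpar^{\,n-3i+1}$. Multiplying by the coefficient $\qpar^{1-i}$ gives the scalar $\qpar^{\,n+2-4i}$ for the $i$-th term, whence
\[
\extdotup\circ\extdotdown(1) = \sum_{i=1}^{n/2} \qpar^{\,n+2-4i}.
\]

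The final step is to recognize this Laurent polynomial as $[\tfrac{n}{2}]_2$. With $\qpar_2 = \qpar^2$ we have $[\tfrac{n}{2}]_2 = \frac{\qpar^{n}-\qpar^{-n}}{\qpar^{2}-\qpar^{-2}} = \qpar^{\,n-2} + \qpar^{\,n-6} + \dots + \qpar^{\,2-n}$, a sum of $n/2$ monomials whose exponents descend from $n-2$ to $2-n$ in steps of $4$; setting $i = 1,\dots,n/2$ shows this matches $\sum_{i=1}^{n/2}\qpar^{\,n+2-4i}$ term by term. Hence $\extdotup\circ\extdotdown = [\tfrac{n}{2}]_2\,\id_0$. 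This is precisely the image under $\diaC$ of the barbell relation \eqref{eq:barbell-C}, since $\qbracketC{\zpar;0}$ specializes at $\zpar = \qpar^{n}$ to $\frac{\qpar^{n}-\qpar^{-n}}{\qpar^{2}-\qpar^{-2}} = [\tfrac{n}{2}]_2$.

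There is no genuine obstacle here: the whole argument is a one-line evaluation followed by a quantum-integer identity. The only point requiring care is the arithmetic of exponents, namely that the contributions $\qpar^{1-i}$ from $\extdotdown$ and $\qpar^{\,n-3i+1}$ from $\extdotup$ combine into a clean arithmetic progression, and that this progression expands the $\qpar^{2}$-quantum integer $[\tfrac{n}{2}]_2$ (the subscript $2$ recording the factor $\qpar_i = \qpar^{2}$) rather than $[n]_2$; keeping this factor of two straight is the one thing worth double-checking.
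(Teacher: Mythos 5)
Your computation is correct: the exponents $\qpar^{1-i}$ and $\qpar^{n-3i+1}$ do combine to $\qpar^{n+2-4i}$, and the resulting sum over $i=1,\dots,\tfrac{n}{2}$ is exactly the expansion of $[\tfrac{n}{2}]_2=\tfrac{\qpar^{n}-\qpar^{-n}}{\qpar^{2}-\qpar^{-2}}$, which is also what $\qbracketC{\zpar;0}$ specializes to at $\zpar=\qpar^n$. The paper deliberately omits the proofs of these type $\typeC$ lemmas, indicating only that they follow by direct evaluation as in the $\typeB\typeD$ case; your argument is precisely that intended computation (the analogue of the circle removal, \fullref{lemma:circle}), so there is nothing to add.
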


\begin{lemma}[Thin K removal]\label{lemma:thin-k}
We have
$(\id\otimes \extdotup)\circ\extmergesplit\circ(\id \otimes \extdotdown) 
= [\tfrac{n}{2}-1]_2\id$.\qedmake
\end{lemma}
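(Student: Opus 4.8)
The plan is to recognise the composite $(\id\otimes \extdotup)\circ\extmergesplit\circ(\id \otimes \extdotdown)$ as an endomorphism of $\vecrep$ in $\RepcoC{n}$ and then to pin down the scalar it must be. Here $\extmergesplit$ denotes the merge--split endomorphism of $\vecrep\otimes\exterior{2}\vecrep$ factoring through $\exterior{3}\vecrep$, i.e.\ the multiplication $\vecrep\otimes\exterior{2}\vecrep\to\exterior{3}\vecrep$ followed by the comultiplication $\exterior{3}\vecrep\to\vecrep\otimes\exterior{2}\vecrep$. First I would note that each of the four pieces is a $\coideal(\fraksp_n)$-intertwiner: the two dots by \fullref{lemma:int-typeC}, and the merge and split because they are restrictions of $\quantumg(\frakgl_n)$-intertwiners. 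Hence the whole composite lies in $\End_{\coideal(\fraksp_n)}(\vecrep)$.

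Since $\vecrep$ is the quantisation of the (irreducible) defining representation of $\fraksp_n$, I would use that it is irreducible over $\coideal(\fraksp_n)$, so that $\End_{\coideal(\fraksp_n)}(\vecrep)=\CQ$ and the composite is automatically a scalar multiple of $\id$. It then suffices to compute a single matrix entry, namely the coefficient of $v_1$ in the image of $v_1$.

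For that entry I would compute as follows. Applying $\id\otimes\extdotdown$ sends $v_1\mapsto\sum_{i=1}^{n/2}\qpar^{1-i}\,v_1\otimes(v_{2i-1}\wedge v_{2i})$; multiplying into $\exterior{3}\vecrep$ kills the $i=1$ term (as $v_1\wedge v_1\wedge v_2=0$) and leaves $\sum_{i=2}^{n/2}\qpar^{1-i}\,v_1\wedge v_{2i-1}\wedge v_{2i}$. The only summand of the comultiplication of $v_1\wedge v_{2i-1}\wedge v_{2i}$ carrying a $v_1$ in the first tensor factor is $v_1\otimes(v_{2i-1}\wedge v_{2i})$, whose coefficient is $\qpar^{2}$ (this I would deduce from coassociativity together with the base case $\extsplitt$ in \fullref{example:merge-split}); the other summands pair a $v_a$ with $a\neq 1$ and are irrelevant for the $v_1$-coefficient, and anyway vanish under $\extdotup$. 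Since $\extdotup(v_{2i-1}\wedge v_{2i})=\qpar^{\,n-3i+1}$, the $v_1$-coefficient comes out as $\sum_{i=2}^{n/2}\qpar^{1-i}\cdot\qpar^{2}\cdot\qpar^{\,n-3i+1}=\sum_{i=2}^{n/2}\qpar^{\,n+4-4i}=\qbracketC{\tfrac{n}{2}-1}$, as required.

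The only genuine work is the last step: having the explicit comultiplication coefficients on $\exterior{3}\vecrep$ and then recognising the geometric series $\qpar^{n-4}+\qpar^{n-8}+\dots+\qpar^{-(n-4)}$ as $\qbracketC{\tfrac{n}{2}-1}$. If one prefers to avoid the irreducibility input, I would instead evaluate on an arbitrary basis vector $v_x$, exactly in the style of the type $\typeB\typeD$ computations (Lemmas \ref{lemma:circle}--\ref{lemma:merge-split-slide}); the price is a case analysis according to the parity of $x$ and the position of $x$ relative to the pairs $\{2i-1,2i\}$, which is precisely the "involved and lengthy" bookkeeping the surrounding text alludes to, and this is where I expect the main obstacle to lie.
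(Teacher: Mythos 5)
Your route is genuinely different from the paper's: the paper in fact \emph{omits} the proof of this lemma entirely, indicating that it is to be done by the same kind of direct evaluation on arbitrary basis vectors as in \fullref{lemma:circle}--\fullref{lemma:merge-split-slide}, whereas you replace almost all of that bookkeeping by a Schur-type argument. Your computation itself checks out: the $i=1$ term of $\extdotdown$ dies in $\exterior{3}\vecrep$; the coefficient of $v_1\otimes(v_{2i-1}\wedge v_{2i})$ in the comultiplication of $v_1\wedge v_{2i-1}\wedge v_{2i}$ is indeed $\qpar^2$ in the paper's normalization (though note this needs coassociativity \emph{together with} the digon relation \eqref{eq:digon} to fix the overall scale -- the base case $\extsplitt$ alone only determines ratios); the remaining summands are killed by $\extdotup$; and $\sum_{i=2}^{n/2}\qpar^{n+4-4i}=\qbracketC{\tfrac{n}{2}-1}$. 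The observation that the whole composite is $\coideal(\fraksp_n)$-equivariant is also correct and is exactly the ``left tensor factor'' phenomenon of \fullref{remark:not-an-intertwiner-sp}.

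The one genuine gap is the input that $\vecrep$ is irreducible over $\coideal(\fraksp_n)$ with $\End_{\coideal(\fraksp_n)}(\vecrep)=\CQ\,\id$. This is true, but it is not available anywhere in the paper at this point, and you must not extract it from the Howe dualities of \fullref{sec:howe-duality}: their proofs rest on the well-definedness of $\diaC$, i.e.\ on this very lemma, so that would be circular. You would either have to prove it directly (e.g.\ check that the generators \eqref{eq:gen-type-C} connect all the $v_j$, or run the $\Aalg$-form and specialization-at-$\qpar=1$ argument from the proof of \fullref{theorem:typeC-howe} to bound $\dim_{\CQ}\End$ above by its classical value $1$ -- bare irreducibility plus Schur is not quite enough, since $\CQ$ is not algebraically closed and a priori only yields a division algebra), or else fall back on your own plan B of evaluating on every basis vector $v_x$, which is precisely the paper's intended, omitted proof.
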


\begin{lemma}[Thick K opening]\label{lemma:thick-k}
We have
$(\id_{2} \otimes \extdotup)\circ\extmergesplit\circ(\id_{2} \otimes \extdotdown) 
= \extdotdowndotup + [\tfrac{n}{2}-2]_2\id_{2}$.\qedmake
\end{lemma}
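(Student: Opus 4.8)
The plan is to verify the asserted identity directly as a morphism in $\RepcoC{n}$, using that a $\CQ$-linear map out of $\exterior{2}\vecrep$ is determined by its values on the standard basis $\{v_a\wedge v_b : a<b\}$. Since $\diaC$ restricts to the monoidal functor $\diaA$ on $\typeA$-webs (cf.\ the discussion around \eqref{eq:dia-typeA} and \fullref{remark:CKM-functor}), the well-definedness of the type-$\typeA$ part is automatic, so only the interaction of the thick merge-split $\extmergesplit$ with the two dots $\extdotdown,\extdotup$ has to be checked. Recalling that we specialize $\zpar=\qpar^{n}$, so that $\qbracketC{\zpar;-2}$ becomes $[\tfrac{n}{2}-2]_2$, and recalling the explicit intertwiners from \eqref{eq:int-typeC-1}, I would first record the target: the right-hand side $\extdotdowndotup+[\tfrac{n}{2}-2]_2\,\id_2$ sends $v_a\wedge v_b$ to $\qpar^{\,n-\frac{3a+1}{2}}\,\extdotdown(1)+[\tfrac{n}{2}-2]_2\,(v_a\wedge v_b)$ when $a$ is odd and $b=a+1$, and to $[\tfrac{n}{2}-2]_2\,(v_a\wedge v_b)$ otherwise. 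The goal is thus to show the left-hand side reproduces exactly this rank-one-plus-scalar behaviour, with the rank-one part supported on the adjacent odd-even pairs $(2j-1,2j)$.

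The key step is to evaluate $\extmergesplit\colon\exterior{2}\vecrep\otimes\exterior{2}\vecrep\to\exterior{4}\vecrep\to\exterior{2}\vecrep\otimes\exterior{2}\vecrep$. Rather than expand the quantum wedge (co)multiplication by hand, I would invoke \emph{explosion}: the type-$\typeA$ thick square switch \eqref{eq:thick-square} (valid in $\Repq{n}$, hence after restriction in $\RepcoC{n}$) rewrites $\extmergesplit$ as a finite sum of ladder terms in which the two $2$-labelled strands are joined by a rung of thickness $e\in\{0,1,2\}$, weighted by the type-$\typeA$ coefficients $\qbin{\cdot}{e}$. Capping the right strand below with $\extdotdown$ and above with $\extdotup$ then turns each summand into a configuration whose right-hand part is already understood: the terms with $e=0,1$ leave the left $2$-strand as $\id_2$ and close the right part into a bare barbell, respectively a thin-$\sfK$ shape, which are evaluated by barbell removal \fullref{lemma:barbell-C} and thin-$\sfK$ removal \fullref{lemma:thin-k} to the scalars $[\tfrac{n}{2}]_2$ and $[\tfrac{n}{2}-1]_2$; the $e=2$ term routes the whole strand through the rung and, after the digon relation \eqref{eq:digon} and \eqref{eq:barbell-C}, collapses to the rank-one map $\extdotdowndotup$. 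Collecting the weighted contributions reduces the whole statement to a scalar identity among the specialized quantum numbers $[\tfrac{n}{2}+s]_2$.

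The main obstacle I expect is precisely this $\qpar$-power bookkeeping: one must keep the ``new part'' (the dots, i.e.\ the $\extdotdowndotup$ summand) rigorously separated from the type-$\typeA$ part, and then check that the coefficients $\qbin{\cdot}{e}$ together with $[\tfrac{n}{2}]_2$ and $[\tfrac{n}{2}-1]_2$ combine to the claimed $[\tfrac{n}{2}-2]_2$. This is a routine but lengthy manipulation of specialized quantum numbers, entirely analogous to the closing ``tedious calculation with quantum numbers'' in the proofs of \fullref{lemma:typeD-EF-FE} and \fullref{lemma:typeC-EF-FE}. As an independent cross-check I would rerun the computation on the basis, writing $\cup=\extsplitt\circ\extdotdown$ and $\cap=\extdotup\circ\extmerget$ and using the explicit thin formulas \eqref{eq:dia-typeA}; this re-derives the coefficients and confirms that the rank-one term is supported exactly on the pairs $(2j-1,2j)$ occurring in $\extdotdown$ and $\extdotup$.
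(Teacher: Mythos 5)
The paper itself gives no proof of this lemma: just above the statement the authors say these four lemmas ``can be verified, similarly as in type $\typeB\typeD$, via involved and lengthy computations'' and omit them, so the intended argument is the direct basis computation modelled on \fullref{lemma:circle}--\fullref{lemma:merge-split-slide}. Your fallback --- evaluating both sides on the basis $v_a\wedge v_b$ of $\exterior{2}\vecrep$ using \eqref{eq:int-typeC-1} and \eqref{eq:dia-typeA} --- is exactly that route, and your description of the right-hand side and of the specialization $\qbracketC{\zpar;-2}\rightsquigarrow[\tfrac{n}{2}-2]_2$ is correct.

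Your primary route, however, has a genuine gap. Applying the thick square switch \eqref{eq:thick-square} to $\extmergesplit$ on $\exterior{2}\vecrep\otimes\exterior{2}\vecrep$ means taking $a=b=2$ and $c=d=2$, so the weight is $\qbin{\pm(a-b+c-d)}{e}=\qbin{0}{e}$, which vanishes for every $e>0$. The ``decomposition into rungs of thickness $0,1,2$'' therefore degenerates to the single surviving $e=0$ summand, which is just the same merge--split routed through the other strand (i.e.\ associativity): there are no barbell or thin K summands to evaluate, no quantum-number identity left to check, and the term you describe as carrying the full-thickness rung is the entire left-hand side rather than something that ``collapses to $\extdotdowndotup$''. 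This is not an accident: \eqref{eq:thick-k} is an independent defining relation of $\WebCz$, and in the proof of \fullref{lemma:typeC-EF-FE} --- whose $a=2$ case is precisely the web-category version of the present statement --- the authors apply \eqref{eq:thick-square} first and then must invoke \eqref{eq:thick-k} to dispose of the one term the square switch cannot reduce; if the thick K opening followed from \fullref{lemma:barbell-C}, \fullref{lemma:thin-k} and the type-$\typeA$ relations alone, it would be a redundant axiom. (Even granting a nontrivial weight, your $e=1$ term would be a thin rung joining two thickness-$2$ strands, which is not the configuration of \fullref{lemma:thin-k}.) So you must carry out the explicit computation: either evaluate on all $v_a\wedge v_b$ directly, or first show that the space of $\coideal(\fraksp_n)$-equivariant endomorphisms of $\exterior{2}\vecrep$ is spanned by $\id_2$ and $\extdotdowndotup$ and then determine the two coefficients by evaluating on $\extdotdown(1)$ and on a single vector such as $v_1\wedge v_3$.
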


\begin{lemma}[Merge-split sliding relations]\label{lemma:merge-split-slide-C}
We have 
\begin{gather*}
(\cap \otimes  \cap)(\id \otimes \extmergesplit \otimes \id)(\extmergesplit \otimes \id \otimes \id) = (\cap \otimes  \cap)(\id \otimes \extmergesplit \otimes \id)( \id \otimes \id \otimes \extmergesplit),\\
(\extmergesplit \otimes \id \otimes \id) (\id \otimes \extmergesplit \otimes \id) (\cup \otimes  \cup) = ( \id \otimes \id \otimes \extmergesplit)(\id \otimes \extmergesplit \otimes \id)(\cup \otimes  \cup).
\hspace*{1.35cm}
\qedmake
\hspace*{-1.35cm}
\end{gather*}
\end{lemma}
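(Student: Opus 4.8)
The plan is to verify both displayed equalities directly, as identities of $\CQ$-linear maps, following the blueprint of the type $\typeB\typeD$ computation in \fullref{lemma:merge-split-slide}. Both sides of the first equation are maps $\vecrep\otimes\vecrep\otimes\vecrep\otimes\vecrep\to\trivmod$ sharing the common outer factor $K:=(\cap\otimes\cap)\circ(\id\otimes\extmergesplit\otimes\id)$, where $\cap$ is the symplectic cap from \eqref{eq:int-typeC-2} and $\extmergesplit$ is the exterior merge-split whose action is given in \eqref{eq:dia-typeA}. Thus the left- and right-hand sides become $K\circ(\extmergesplit\otimes\id\otimes\id)$ and $K\circ(\id\otimes\id\otimes\extmergesplit)$. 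First I would compute $K(v_w\otimes v_x\otimes v_y\otimes v_z)$ on the standard basis. The decisive simplification is that $\cap$ is supported only on the dual pairs $(v_{2\ell-1},v_{2\ell})$ and $(v_{2\ell},v_{2\ell-1})$, so $K$ vanishes off a short list of index patterns; I would tabulate the surviving cases together with their $\qpar$-power coefficients, exactly as the table for $(\cap\otimes\cap)(\id\otimes\overcrossing\otimes\id)$ is produced in the proof of \fullref{lemma:merge-split-slide}.

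Next I would precompose $K$ with $\extmergesplit\otimes\id\otimes\id$ and with $\id\otimes\id\otimes\extmergesplit$ and compare the two composites. As in type $\typeB\typeD$, each is nonzero only when the four indices combine into two dual pairs in a compatible fashion, and the verification reduces to checking that the accumulated scalars coincide case by case on every basis vector $v_w\otimes v_x\otimes v_y\otimes v_z$. For the second equation I would either repeat the computation starting from $1\in\trivmod$, expanding $(\id\otimes\extmergesplit\otimes\id)(\cup\otimes\cup)(1)$ first and then applying the two outer merge-splits, or, more economically, observe that the second equation is the image of the first under the anti-involution $\omega$ of \fullref{remark:web-symmetries} (vertical mirror), which exchanges $\cap$ with $\cup$, fixes the top-bottom-symmetric diagram $\extmergesplit$, and reverses the order of composition. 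One could alternatively sidestep the second computation entirely by invoking that $\extmergesplit$, $\cup$ and $\cap$ are $\coideal(\fraksp_n)$-intertwiners (\fullref{lemma:int-typeC}) together with the topological sliding relation \eqref{eq:typeC-sliding}, in the same way that \eqref{eq:sliding-equi} is used on the type $\typeB\typeD$ side.

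The main obstacle is purely the bookkeeping of the asymmetric, parity-dependent scalars. The symplectic cap and cup carry coefficients $\qpar^{n-\nicefrac{1}{2}(3i+1)}$ and $-\qpar^{n-\nicefrac{1}{2}(3i)}$ (depending on whether the pairing index $i$ is odd or even), while the exterior $\extmergesplit$ produces the signs $\qpar^{\pm 1}v_i\otimes v_j - v_j\otimes v_i$ according to whether $i<j$ or $i>j$. The delicate point is to confirm that these signs combine with the even/odd cap coefficients to give the same half-integer $\qpar$-exponent on both sides of each equation. This is precisely the kind of \emph{tedious but straightforward computation with quantum numbers} already flagged for \fullref{lemma:typeC-EF-FE}, and no genuinely new idea beyond the type $\typeB\typeD$ argument is needed.
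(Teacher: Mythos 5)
Your primary route---direct verification on the standard basis of $\vecrep^{\otimes 4}$, mirroring the computation in \fullref{lemma:merge-split-slide}---is exactly the approach the paper has in mind: the paper explicitly omits the proofs of \fullref{lemma:barbell-C}--\ref{lemma:merge-split-slide-C}, describing them as lengthy computations to be done ``similarly as in type $\typeB\typeD$''. Two cautions about your proposed shortcuts, though. First, the $\omega$-argument is not literally a transposition exchanging $\cap$ and $\cup$: the transpose of the symplectic $\cap$ from \eqref{eq:int-typeC-2} in the standard basis sends $1$ to $\sum_{\ell}\qpar^{n-3\ell}(\qpar\, v_{2\ell-1}\otimes v_{2\ell}-v_{2\ell}\otimes v_{2\ell-1})$, which differs from $\cup(1)$ by the $\ell$-dependent factor $\qpar^{n-1-2\ell}$. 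The deduction of the second identity from the first can still be salvaged---conjugate by a diagonal operator that is constant on each pair $\{v_{2\ell-1},v_{2\ell}\}$, which commutes with $\extmergesplit$ because $\extmergesplit$ preserves the span of $\{v_i\otimes v_j,\,v_j\otimes v_i\}$, and use that $\extmergesplit$ is a symmetric matrix in the standard basis---but this needs to be said explicitly. Second, your last alternative (invoking \fullref{lemma:int-typeC} together with the diagrammatic relation \eqref{eq:typeC-sliding}) is circular: the present lemma is precisely what is needed to show that \eqref{eq:typeC-sliding} holds in the image of $\diaC$, i.e.\ that the functor is well defined, and the intertwining property alone does not force the relation. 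With those two asides either repaired or dropped, the proposal is sound.
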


Again, the proof that \eqref{eq:dia-typeC-sym} is well-defined 
goes similarly, and we immediately obtain:

\begin{proposition}\label{proposition:dia-typeC-well-defined}
The two functors $\diaC$ and $\symdiaC$ are  well-defined. 
Moreover, we have commuting diagrams
\[
  \begin{tikzpicture}[anchorbase]
  \matrix (m) [matrix of math nodes, row sep=2em, column
  sep=4em, text height=1.8ex, text depth=0.25ex] {
\WebA  & \Repq{n}\\
\WebC  & \RepcoC{n}. \\};
  \path[->, myred] (m-1-1) edge node[above] {$\diaA$}  (m-1-2);
  \path[->, myred] (m-2-1) edge node[below] {$\diaC$}  (m-2-2);
  \draw[] ($(m-2-1.north) + (-.1,0.175)$)
  arc (250:-70:0.2cm);
  \draw[thick, ->] ($(m-2-1.north) + (.1,0.225)$) to ($(m-2-1.north) + (.05,0.175)$);
  \draw[] ($(m-2-2.north) + (-.1,0.175)$)
  arc (250:-70:0.2cm);
  \draw[thick, ->] ($(m-2-2.north) + (.1,0.225)$) to ($(m-2-2.north) + (.05,0.175)$);
  \end{tikzpicture}
\quad\text{and}\quad
  \begin{tikzpicture}[anchorbase]
  \matrix (m) [matrix of math nodes, row sep=2em, column
  sep=4em, text height=1.8ex, text depth=0.25ex] {
\WebA  & \Repq{n}\\
\WebDpara  & \RepcoC{n}. \\};
  \path[->, mygreen] (m-1-1) edge node[above] {$\symdiaA$}  (m-1-2);
  \path[->, mygreen] (m-2-1) edge node[below] {$\symdiaC$}  (m-2-2);
  \draw[] ($(m-2-1.north) + (-.1,0.175)$)
  arc (250:-70:0.2cm);
  \draw[thick, ->] ($(m-2-1.north) + (.1,0.225)$) to ($(m-2-1.north) + (.05,0.175)$);
  \draw[] ($(m-2-2.north) + (-.1,0.175)$)
  arc (250:-70:0.2cm);
  \draw[thick, ->] ($(m-2-2.north) + (.1,0.225)$) to ($(m-2-2.north) + (.05,0.175)$);
  \end{tikzpicture}
\hspace*{.9cm}
\raisebox{-.7cm}{\qedmake}
\hspace*{-.9cm}
\]
\end{proposition}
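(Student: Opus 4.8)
The plan is to mirror the structure of the proof of \fullref{proposition:dia-typeD-well-defined}, establishing well-definedness of $\diaC$ and $\symdiaC$ by verifying that each defining relation of $\WebC$ (respectively $\WebDpara$) holds after applying the functor. For $\diaC$ the relations to check are exactly those listed in \fullref{definition:typeCwebs}: the barbell removal \eqref{eq:barbell-C}, the thin K removal \eqref{eq:thin-k}, the thick K opening \eqref{eq:thick-k}, and the merge-split sliding relations \eqref{eq:typeC-sliding}. These correspond precisely to \fullref{lemma:barbell-C}, \ref{lemma:thin-k}, \ref{lemma:thick-k} and \ref{lemma:merge-split-slide-C}, whose statements are already recorded in the excerpt. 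Thus the first thing I would do is simply invoke these four lemmas: under the specialization $\zpar = \qpar^n$, the relation coefficients $\qbracketC{\zpar;a}$ specialize to $\qbracketC{n+a}$, and the lemmas assert exactly that the images of the generators \eqref{eq:dot-morphisms} under $\diaC$ satisfy the corresponding identities with these specialized scalars. Since the $\typeA$-web generators are sent via $\diaA$, which is already known to be a well-defined functor of braided monoidal categories (as used silently in \fullref{subsec:acti-repr-type}), the $\typeA$-part of the relations is automatic.

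Next I would address the compatibility of the action: because $\WebC$ is defined as a $\WebAz$-category (in the sense of \fullref{def:action-of-mon-category}), well-definedness of $\diaC$ requires not only that the generating relations hold, but that the assignment respects the left $\WebAz$-module structure, i.e.\ that the intertwiners $\extdotup$, $\extdotdown$ are $\coideal(\fraksp_n)$-equivariant so that they can be horizontally juxtaposed with $\typeA$-web morphisms in the image category $\RepcoC{n}$. This equivariance is exactly the content of \fullref{lemma:int-typeC}, which I would cite directly. The point to emphasize is that $\RepcoC{n}$ is a $\Repq{n}$-category (noted in the subsection on the representation category of $\coideal(\fraksp_n)$), and the functor $\diaC$ is a functor of $\WebAz$-categories over the corresponding monoidal functor $\diaA\colon\WebA\to\Repq{n}$; the restriction that dots carry no strands to their right (\fullref{remark:beware-monoidal-2}) matches the left-coideal structure, as explained in \fullref{remark:not-an-intertwiner-sp}.

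For the symmetric counterpart $\symdiaC\colon\WebDpara\to\RepcoC{n}$, I would argue exactly as the text does for $\symdiaD$: the relations of $\WebDpara$ are the cup-cap relations of \fullref{definition:typeBDwebs} with the parameter specialized to $\zpar=-\qpar^{-n}$, and one verifies them either by a direct computation analogous to \fullref{lemma:circle}--\ref{lemma:merge-split-slide}, or by comparison of the topological versions of the relations together with comparison of the intertwiner formulas \eqref{eq:int-typeC-1} and \eqref{eq:int-typeC-2}. Since $\cup=\extsplitt\circ\extdotdown$ and $\cap=\extdotup\circ\extmerget$ by \fullref{lemma:int-typeC}, the well-definedness of $\symdiaC$ reduces to that of $\diaC$ together with the already-established functoriality of $\symdiaA$. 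The two commuting squares in the statement then hold essentially by construction: $\diaC$ and $\symdiaC$ are \emph{defined} to agree with $\diaA$ and $\symdiaA$ on the $\typeA$-web generators, and the forgetful action of $\Repq{n}$ on $\RepcoC{n}$ is compatible with the $\WebAz$-action on $\WebC$ and $\WebDpara$; one checks commutativity on generators, which is immediate.

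I expect the only genuine content to lie in the verification lemmas \fullref{lemma:barbell-C}, \ref{lemma:thin-k}, \ref{lemma:thick-k} and \ref{lemma:merge-split-slide-C}, and the paper explicitly chooses to omit those computations as ``involved and lengthy.'' Among these, the main obstacle is the thick K opening \fullref{lemma:thick-k}, where one must reproduce the extra term $\extdotdowndotup$ (the composite of a start dot with an end dot) alongside the scalar $[\tfrac{n}{2}-2]_2\id_2$; this is the representation-theoretic shadow of the fact that $\exteriornoq{2}\vecrepnoq$ is not indecomposable but contains a trivial summand, as noted in \fullref{remark:web-names}. Given that all four lemmas are assumed, the proposition itself is then little more than bookkeeping: assemble the lemmas to conclude both functors respect all defining relations and the $\WebAz$-module structure, hence are well-defined, and read off the two commuting diagrams from the definitions. \qedmake
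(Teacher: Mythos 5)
Your proposal is correct and follows essentially the same route as the paper: the paper likewise reduces well-definedness of $\diaC$ to the four relation-checking lemmas (barbell removal, thin and thick K, merge-split sliding), whose computational proofs it omits, relies on \fullref{lemma:int-typeC} for the generators landing in $\coideal(\fraksp_n)$-intertwiners, handles $\symdiaC$ by the analogous comparison of relations and intertwiner formulas, and reads off the commuting squares from the definitions. The only minor difference is that you make the compatibility with the left $\WebAz$-module structure explicit, which the paper leaves implicit.
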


\subsection{The ladder functor in types \texorpdfstring{$\typeB\typeC\typeD$}{BCD}}
\label{subsec:action-on-web}

We now define the \emph{ladder functors} $\ladD$ and $\ladC$, which 
relate our web categories to the quantum groups 
$\quantumg(\frakso_{2k})$ and $\quantumg(\fraksp_{2k})$. 
We stress that the definition of the ladder functors 
do not depend on whether we are in the exterior or the symmetric case.

\subsubsection{The ladder functor for \texorpdfstring{$\typeBDC$}{cup}-webs}

Let $\overline \lambda = \lambda + \tfrac{n}{2}$.
We define
the \emph{ladder functor}
$\ladD \colon \Udot(\frakso_{2k}) \rightarrow \WebD$ via
\begin{gather}\label{eq:lad-functor-D}
\begin{aligned}
\smallone_{\lambda} & \textstyle \longmapsto (\overline\lambda_1 =\lambda_1 + \frac{n}{2} , \dots ,\overline\lambda_k=\lambda_k + \frac{n}{2}),
\\
\dotE_i \smallone_{\lambda} &  \longmapsto 
    \begin{tikzpicture}[anchorbase,xscale=.45,yscale=.25,tinynodes]
	\draw[colored] (0,0) node[below] {$\overline\lambda_1$} -- ++(0,4) node[above] {$\overline\lambda_1$}; 
    \node[below=0.8ex] at (1.5,.5) {$\dots$};
    \node[below=0.8ex] at (6.5,.5) {$\dots$};
    \node at (1.5,2) {$\dots$};
    \node at (6.5,2) {$\dots$};
    \node[above=0.8ex] at (1.5,3.95) {$\dots$};
    \node[above=0.8ex] at (6.5,3.95) {$\dots$};
	\draw[colored] (3,0) node[below] {$\overline\lambda_i$} --  ++(0,4) node[above] {$\overline\lambda_i{+}1$}; 
	\draw[colored] (5,0) node[below] {$\overline\lambda_{i{+}1}$} --  ++(0,4) node[above] {$\overline\lambda_{i{+}1}{-}1$};  
	\draw[colored] (8,0) node[below] {$\overline\lambda_k$} -- ++(0,4) node[above] {$\overline\lambda_k$}; 
    \draw[uncolored] (5,1.5) -- (3,2.5);
    \end{tikzpicture} 
,\quad \text{for all } i=1,\dots,k-1,
\\
\dotF_i \smallone_{\lambda} &  \longmapsto 
    \begin{tikzpicture}[anchorbase,xscale=.45,yscale=.25,tinynodes]
	\draw[colored] (0,0) node[below] {$\overline\lambda_1$} -- ++(0,4) node[above] {$\overline\lambda_1$}; 
    \node[below=0.8ex] at (1.5,.5) {$\dots$};
    \node[below=0.8ex] at (6.5,.5) {$\dots$};
    \node at (1.5,2) {$\dots$};
    \node at (6.5,2) {$\dots$};
    \node[above=0.8ex] at (1.5,3.95) {$\dots$};
    \node[above=0.8ex] at (6.5,3.95) {$\dots$};
	\draw[colored] (3,0) node[below] {$\overline\lambda_i$} --  ++(0,4) node[above] {$\overline\lambda_i{-}1$}; 
	\draw[colored] (5,0) node[below] {$\overline\lambda_{i{+}1}$} --  ++(0,4) node[above] {$\overline\lambda_{i{+}1}{+}1$};  
	\draw[colored] (8,0) node[below] {$\overline\lambda_k$} -- ++(0,4) node[above] {$\overline\lambda_k$}; 
    \draw[uncolored] (5,2.5) -- (3,1.5);
    \end{tikzpicture} 
,\quad \text{for all } i=1,\dots,k-1,
\\
\dotE_k \smallone_{\lambda} &  \longmapsto 
    \begin{tikzpicture}[anchorbase,xscale=.45,yscale=.25,tinynodes]
	\draw[colored] (0,0) node[below] {$\overline\lambda_1$} -- ++(0,4) node[above] {$\overline\lambda_1$}; 
    \node[below=0.8ex] at (1.5,.5) {$\dots$};
    \node at (1.5,2) {$\dots$};
    \node[above=0.8ex] at (1.5,3.95) {$\dots$};
	\draw[colored] (3,0) node[below] {$\overline\lambda_{k{-}2}$} --  ++(0,4) node[above] {$\overline\lambda_{k{-}2}$}; 
	\draw[colored] (5,0) node[below] {$\overline\lambda_{k{-}1}$} --  ++(0,4) node[above] {$\overline\lambda_{k{-}1}{+}1$};  
	\draw[colored] (7,0) node[below] {$\overline\lambda_k$} -- ++(0,4) node[above] {$\overline\lambda_k{+}1$}; 
    \begin{scope}[xshift=5cm]
    \draw[uncolored] (0,3) -- ++(1,-1);      \draw[uncolored, cross line] (1,2) .. controls ++(1,-1) and ++ (0,-1.5) ..  ++(3,-1);
    \draw[uncolored] (4,1) .. controls ++(0,0.8) and ++(1,-1) .. ++(-2,2)  ;
    \end{scope}
    \end{tikzpicture}
\\
\dotF_k \smallone_{\lambda} &  \longmapsto 
    \begin{tikzpicture}[anchorbase,xscale=.45,yscale=.25,tinynodes]
	\draw[colored] (0,0) node[below] {$\overline\lambda_1$} -- ++(0,4) node[above] {$\overline\lambda_1$}; 
    \node[below=0.8ex] at (1.5,.5) {$\dots$};
    \node at (1.5,2) {$\dots$};
    \node[above=0.8ex] at (1.5,3.95) {$\dots$};
	\draw[colored] (3,0) node[below] {$\overline\lambda_{k{-}2}$} --  ++(0,4) node[above] {$\overline\lambda_{k{-}2}$}; 
	\draw[colored] (5,0) node[below] {$\overline\lambda_{k{-}1}$} --  ++(0,4) node[above] {$\overline\lambda_{k{-}1}{-}1$};  
	\draw[colored] (7,0) node[below] {$\overline\lambda_k$} -- ++(0,4) node[above] {$\overline\lambda_k{-}1$}; 
    \begin{scope}[xshift=5cm]
    \draw[uncolored] (0,1) -- ++(1,1);      \draw[uncolored, cross line] (1,2) .. controls ++(1,1) and ++ (0,1.5) ..  ++(3,1); 
    \draw[uncolored] (4,3) .. controls ++(0,-0.8) and ++(1,1) .. ++(-2,-2)  ;
    \end{scope}
    \end{tikzpicture}
\end{aligned}
\end{gather}
Here, we 
silently assume that $\lambda$, as 
an object of $\WebD$, is the 
zero object if $\lambda \notin \Z^k_{\geq 0}$.

\begin{lemma}\label{lemma:dia-functor-well-defined-D}
The ladder functor $\ladD$ is well-defined.
\end{lemma}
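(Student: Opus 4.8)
The plan is to verify that $\ladD$ respects the defining relations of $\Udot(\frakso_{2k})$, namely the $\quantumq^h$-weight relations \eqref{eq:qh-rel}, the commutator relations \eqref{eq:EF-rel}, and the Serre relations \eqref{eq:E-Serre}--\eqref{eq:F-Serre} (in both $\sfE$ and $\sfF$ forms, together with their higher-order divided-power counterparts \eqref{eq:higherSerre}). Since $\Udot(\frakso_{2k})$ is the idempotented algebra, these must be checked as identities of morphisms between the objects $\smallone_\lambda \mapsto (\overline\lambda_1,\dots,\overline\lambda_k)$ in $\WebD$, for each weight $\lambda$. The weight relations are immediate: the assignment on objects is a bijective affine shift $\overline\lambda = \lambda + \tfrac n2$, and the generators $\dotE_i,\dotF_i$ shift the labels by $\pm\alpha_i$ exactly as prescribed by the type $\typeD$ simple roots in \fullref{table:types}; one reads off that the target objects match. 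So the content is entirely in the $\sfE\sfF$- and Serre-type relations.

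First I would treat the generators $\dotE_i,\dotF_i$ with $i=1,\dots,k-1$ together with $\dotE_k,\dotF_k$, splitting the relations by how many of the involved indices equal $k$. For indices all strictly less than $k$, the images are pure $\typeA$-webs (single crossing strands in the ladder picture), so the $\sfE\sfF$-commutator \eqref{eq:EF-rel} and the type $\typeA$ Serre relations reduce to the $\typeA$-web relations already established for $\WebA$ in \fullref{sec-Awebs}; these hold because $\ladA$ is well-defined, and $\WebD$ contains $\WebA$ via its $\WebAz$-action. The genuinely new verifications involve the index $k$. The relation $\dotE_k\dotF_k - \dotF_k\dotE_k = \tfrac{\sfK_k-\sfK_k^{-1}}{\qpar_k-\qpar_k^{-1}}$ is exactly the computation carried out diagrammatically in \fullref{lemma:typeD-EF-FE}: the two compositions produce the cup-cap-through-crossing webs on the left and right of \eqref{eq:typeD-EF-FE}, and the coefficient $\qbracket{\zpar;-a-b}$ specializes (under $\zpar=\qpar^n$, giving $\qbracket{\zpar;s}\mapsto\qbracket{n+s}$) to precisely the value $[\overline\lambda_{k-1}+\overline\lambda_k]=[\lambda_{k-1}+\lambda_k+n]$ expected from $\tfrac{\sfK_k-\sfK_k^{-1}}{\qpar-\qpar^{-1}}$ acting on weight $\lambda$.

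Next I would dispatch the mixed commutators $\dotE_k\dotF_i=\dotF_i\dotE_k$ for $i\ne k,k-1$ (non-adjacent, trivially commuting by the $\typeA$-topology and the left-right partitioning) and the adjacent-but-distinct commutations; these are encoded by \fullref{lemma-typeD-EF} and \fullref{lemma-typeD-EE-2}, which establish that the relevant crossing-slides through the cup/cap hold. The Serre relations at the branch node of the type $\typeD$ Dynkin diagram --- namely $\sfE_{k-1}\sfE_k=\sfE_k\sfE_{k-1}$ in \eqref{eq:serre-so-1}, and the length-three relations \eqref{eq:serre-so-2}--\eqref{eq:serre-so-3} involving $\sfE_{k-2}$ and $\sfE_k$ --- are exactly the diagrammatic Serre relations proved in \fullref{lemma:typeD-serre-relation-2} and \fullref{lemma:typeD-serre-relation}. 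Thus each defining relation of $\Udot(\frakso_{2k})$ maps under $\ladD$ to an identity already verified in $\WebDz$ (and hence in its specialization $\WebD$).

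The main obstacle is bookkeeping rather than conceptual: one must carefully match the abstract Serre relations of \fullref{example:Serre-relations} with the correct choices of labels $a,b,c$ and the correct ladder strands, and confirm that the specialization $\zpar=\qpar^n$ turns every $\zpar$-dependent coefficient into the quantum number dictated by the weight $\lambda$. In particular, the $\sfF$-versions of all relations require invoking the anti-involution $\omega$ from \fullref{remark:web-symmetries} (extended to $\WebDz$ per \fullref{remark:over-under-crossing}) to transport the $\sfE$-side identities to the $\sfF$-side, and the higher divided-power Serre relations \eqref{eq:higherSerre} follow from the thick versions of \fullref{remark:web-Serre}, which hold since we work over $\CQ$. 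Once these correspondences are laid out, well-definedness of $\ladD$ follows immediately, so the proof is essentially a dictionary between the lemmas of \fullref{sec-Dwebs} and the presentation of $\Udot(\frakso_{2k})$.
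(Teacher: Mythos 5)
Your proposal follows the paper's proof essentially verbatim: check the weight bookkeeping on objects, reduce all relations not involving the index $k$ to the type $\typeA$ web calculus of \cite{CKM}, and match each relation involving $\dotE_k,\dotF_k$ to the corresponding lemma of \fullref{sec-Dwebs} (\fullref{lemma:typeD-EF-FE} for the commutator, \fullref{lemma-typeD-EF} and \fullref{lemma-typeD-EE-2} for the commutations with $\dotF_{k-1}$ resp.\ $\dotE_{k-1}$, and \fullref{lemma:typeD-serre-relation-2}, \fullref{lemma:typeD-serre-relation} for \eqref{eq:serre-so-2}, \eqref{eq:serre-so-3}), transporting everything to the $\dotF$-side via $\omega$. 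This is exactly the dictionary the paper sets up, so the argument is correct.

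One arithmetic slip is worth correcting, since it touches the very reason for the shift $\overline\lambda=\lambda+\tfrac{n}{2}$: under $\zpar=\qpar^n$ the coefficient $\qbracket{\zpar;-a-b}$ of \eqref{eq:typeD-EF-FE} with $a=\overline\lambda_{k-1}$, $b=\overline\lambda_k$ becomes $\qbracket{n-\overline\lambda_{k-1}-\overline\lambda_k}=\qbracket{-(\lambda_{k-1}+\lambda_k)}=-\qbracket{\lambda_{k-1}+\lambda_k}$, not $\qbracket{\overline\lambda_{k-1}+\overline\lambda_k}$; and the value dictated by $\tfrac{\sfK_k-\sfK_k^{-1}}{\qpar-\qpar^{-1}}$ on weight $\lambda$ is $\pm\qbracket{\langle h_k,\lambda\rangle}=\pm\qbracket{\lambda_{k-1}+\lambda_k}$ (with $\alpha_k=\epsilon_{k-1}+\epsilon_k$), not $\qbracket{\lambda_{k-1}+\lambda_k+n}$. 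The point of the shift is precisely that the $n$ produced by the specialization cancels; the two quantities you should be comparing do agree (up to the sign fixed by the order of composition), so the conclusion stands, but the identity as you wrote it is false.
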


\begin{proof}
We need to check that the 
relations of $\Udot(\frakso_{2k})$ are 
satisfied in the image. 
\begin{description}[leftmargin=0pt,itemsep=1ex]
\item[\textbf{Assignment of the generators}]
Recall that
\[
\dotE_i\smallone_{\lambda}
\in\Hom_{\Udot(\frakso_{2k})}(\smallone_{\lambda},\smallone_{\lambda{+}\alpha_i})
\quad\text{and}\quad
\dotF_i\smallone_{\lambda}
\in\Hom_{\Udot(\frakso_{2k})}(\smallone_{\lambda},\smallone_{\lambda{-}\alpha_i}),
\]
where $\alpha_i$ 
are the simple roots. By our conventions 
for types $\typeA$ and $\typeD$ 
(cf.\ at the beginning of \fullref{subsec:conventionsqgroup}),
we see 
that \eqref{eq:lad-functor-D} 
lands in the correct morphisms spaces.
\item[\textbf{The $\Udot(\frakgl_{2k})$ relations}]
The relations involving 
only $\dotE_i$'s and $\dotF_i $'s with 
$i \neq k-1$ are clearly satisfied by the web calculus in 
type $\typeA$, i.e.\ by \cite[Proposition 5.2.1]{CKM}. 
\item[\textbf{The $\Udot(\frakso_{2k})$ relations}]
We just have to check case by case that the 
defining relations of $\Udot(\frakso_{2k})$ which involve 
$\dotE_k$'s and $\dotF_k$'s hold in the 
web calculus 
(for this purpose, recall the anti-involution $\omega$ 
from \fullref{remark:over-under-crossing}):
\smallskip
\begin{enumerate}[label=$\blacktriangleright$]

\setlength\itemsep{.15cm}

\item The commutator relation \eqref{eq:EF-rel} between $\dotE_k$ and $\dotF_k$ 
holds in $\WebD$ by \fullref{lemma:typeD-EF-FE}.
\item The 
images of $\dotF_{k-1}$ and $\dotE_k$ commute thanks to 
\fullref{lemma-typeD-EF}. Applying $\omega$ 
shows that the images of $\dotE_{k-1}$ and $\dotF_k$ commute as well.
\item The Serre relation \eqref{eq:serre-so-1}
holds in $\WebD$ by \fullref{lemma-typeD-EE-2}. 
The $\dotF$ version of it 
holds by applying $\omega$.
\item The Serre relation \eqref{eq:serre-so-2} 
holds in $\WebD$ by \fullref{lemma:typeD-serre-relation-2}. 
The versions involving $\dotF$'s 
hold by applying $\omega$.
\item The Serre relation \eqref{eq:serre-so-3} 
holds in $\WebD$ by \fullref{lemma:typeD-serre-relation}. 
Again, the versions involving $\dotF$'s 
hold by applying $\omega$.
\end{enumerate}
\end{description}
Note here that the quantum numbers work 
out thanks to the shift by $\tfrac{n}{2}$ in \eqref{eq:lad-functor-D}.
All other relations, e.g.\ far-commutativity, are clearly satisfied.
\end{proof}

\subsubsection{The ladder functor for \texorpdfstring{$\typeCBD$}{C,BD}-webs}

Using the same notation as above, 
we define the \emph{ladder functor} 
$\ladC \colon \Udot(\fraksp_{2k}) \rightarrow \WebC$ via
\begin{gather}\label{eq:lad-functor-C}
\begin{aligned}
\smallone_{\lambda} & \textstyle \longmapsto (\overline\lambda_1 =\lambda_1 + \frac{n}{2} , \dots ,\overline\lambda_k=\lambda_k + \frac{n}{2}),
\\
\dotE_i \smallone_{\lambda} &  \longmapsto 
    \begin{tikzpicture}[anchorbase,xscale=.45,yscale=.25,tinynodes]
	\draw[colored] (0,0) node[below] {$\overline\lambda_1$} -- ++(0,4) node[above] {$\overline\lambda_1$}; 
    \node[below=0.8ex] at (1.5,.5) {$\dots$};
    \node[below=0.8ex] at (6.5,.5) {$\dots$};
    \node at (1.5,2) {$\dots$};
    \node at (6.5,2) {$\dots$};
    \node[above=0.8ex] at (1.5,3.95) {$\dots$};
    \node[above=0.8ex] at (6.5,3.95) {$\dots$};
	\draw[colored] (3,0) node[below] {$\overline\lambda_i$} --  ++(0,4) node[above] {$\overline\lambda_i{+}1$}; 
	\draw[colored] (5,0) node[below] {$\overline\lambda_{i{+}1}$} --  ++(0,4) node[above] {$\overline\lambda_{i{+}1}{-}1$};  
	\draw[colored] (8,0) node[below] {$\overline\lambda_k$} -- ++(0,4) node[above] {$\overline\lambda_k$}; 
    \draw[uncolored] (5,1.5) -- (3,2.5);
    \end{tikzpicture} 
,\quad \text{for all } i=1,\dots,k-1,
\\
\dotF_i \smallone_{\lambda} &  \longmapsto 
    \begin{tikzpicture}[anchorbase,xscale=.45,yscale=.25,tinynodes]
	\draw[colored] (0,0) node[below] {$\overline\lambda_1$} -- ++(0,4) node[above] {$\overline\lambda_1$}; 
    \node[below=0.8ex] at (1.5,.5) {$\dots$};
    \node[below=0.8ex] at (6.5,.5) {$\dots$};
    \node at (1.5,2) {$\dots$};
    \node at (6.5,2) {$\dots$};
    \node[above=0.8ex] at (1.5,3.95) {$\dots$};
    \node[above=0.8ex] at (6.5,3.95) {$\dots$};
	\draw[colored] (3,0) node[below] {$\overline\lambda_i$} --  ++(0,4) node[above] {$\overline\lambda_i{-}1$}; 
	\draw[colored] (5,0) node[below] {$\overline\lambda_{i{+}1}$} --  ++(0,4) node[above] {$\overline\lambda_{i{+}1}{+}1$};  
	\draw[colored] (8,0) node[below] {$\overline\lambda_k$} -- ++(0,4) node[above] {$\overline\lambda_k$}; 
    \draw[uncolored] (5,2.5) -- (3,1.5);
    \end{tikzpicture} 
,\quad \text{for all } i=1,\dots,k-1,
\\
\dotE_k \smallone_{\lambda} &  \longmapsto 
    \begin{tikzpicture}[anchorbase,xscale=.45,yscale=.25,tinynodes]
	\draw[colored] (0,0) node[below] {$\overline\lambda_1$} -- ++(0,4) node[above] {$\overline\lambda_1$}; 
    \node[below=0.8ex] at (1.5,.5) {$\dots$};
    \node at (1.5,2) {$\dots$};
    \node[above=0.8ex] at (1.5,3.95) {$\dots$};
	\draw[colored] (3,0) node[below] {$\overline\lambda_{k{-}2}$} --  ++(0,4) node[above] {$\overline\lambda_{k{-}2}$}; 
	\draw[colored] (5,0) node[below] {$\overline\lambda_{k{-}1}$} --  ++(0,4) node[above] {$\overline\lambda_{k{-}1}$};  
	\draw[colored] (7,0) node[below] {$\overline\lambda_k$} -- ++(0,4) node[above] {$\overline\lambda_k{+}2$}; 
    \draw[ccolored] (7,2) to (8,1) to (8,0);
    \node[dotbullet] at (8,0) {};
    \end{tikzpicture}
\\
\dotF_k \smallone_{\lambda} &  \longmapsto 
    \begin{tikzpicture}[anchorbase,xscale=.45,yscale=.25,tinynodes]
	\draw[colored] (0,0) node[below] {$\overline\lambda_1$} -- ++(0,4) node[above] {$\overline\lambda_1$}; 
    \node[below=0.8ex] at (1.5,.5) {$\dots$};
    \node at (1.5,2) {$\dots$};
    \node[above=0.8ex] at (1.5,3.95) {$\dots$};
	\draw[colored] (3,0) node[below] {$\overline\lambda_{k{-}2}$} --  ++(0,4) node[above] {$\overline\lambda_{k{-}2}$}; 
	\draw[colored] (5,0) node[below] {$\overline\lambda_{k{-}1}$} --  ++(0,4) node[above] {$\overline\lambda_{k{-}1}$};  
	\draw[colored] (7,0) node[below] {$\overline\lambda_k$} -- ++(0,4) node[above] {$\overline\lambda_k{-}2$}; 
    \draw[ccolored] (7,2) to (8,3) to (8,4);
    \node[dotbullet] at (8,4) {};
    \end{tikzpicture}
\end{aligned}
\end{gather}
Again, we 
assume that $\lambda$, as 
an object of $\WebC$, is the 
zero object if $\lambda \notin \Z^k_{\geq 0}$.

\begin{lemma}\label{lemma:dia-functor-well-defined-C}
The ladder functor $\ladC$ is well-defined.
\end{lemma}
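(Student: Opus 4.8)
The plan is to mirror the structure of the proof of \fullref{lemma:dia-functor-well-defined-D}, verifying that the defining relations of $\Udot(\fraksp_{2k})$ are satisfied in the image under $\ladC$. The verification splits naturally into three parts, which I would carry out in the following order. First I would check that the generators land in the correct morphism spaces: this is a bookkeeping step using the type $\typeC$ Cartan datum from \fullref{table:types}, where the long simple root is $\alpha_k = 2\epsilon_k$, so that $\dotE_k$ shifts $\overline\lambda_k$ by $2$ (matching the double strand created by the start/end dot in \eqref{eq:lad-functor-C}) while the short roots $\alpha_i = \epsilon_i - \epsilon_{i+1}$ for $i<k$ give the usual type $\typeA$ ladder shifts. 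Second, the relations involving only $\dotE_i,\dotF_i$ with $i \neq k$ hold automatically by the type $\typeA$ web calculus, i.e.\ by \cite[Proposition 5.2.1]{CKM}.

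The substantive part is the third step: checking the relations that involve $\dotE_k$ or $\dotF_k$. Here I would match each relation against the lemmas proved in \fullref{sec-Cwebs}, exactly paralleling the type $\typeD$ argument. The commutator relation \eqref{eq:EF-rel} between $\dotE_k$ and $\dotF_k$ is \fullref{lemma:typeC-EF-FE}; one must verify that the coefficient $\qbracketC{\zpar;-a}$ appearing there specializes correctly under $\zpar = \qpar^n$ (in the exterior case) to produce the required $\frac{\sfK_k - \sfK_k^{-1}}{\qpar_k - \qpar_k^{-1}}$ with $\qpar_k = \qpar^2$, which is precisely where the shift by $\tfrac{n}{2}$ in \eqref{eq:lad-functor-C} is needed. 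The commutation of $\dotF_{k-1}$ with $\dotE_k$ is \fullref{lemma-typeC-EF} (with the $\dotE_{k-1}$--$\dotF_k$ version following by applying $\omega$). The two type $\typeC$ Serre relations \eqref{eq:serre-sp-1} and \eqref{eq:serre-sp-2}, which are the higher-order relations for the long root, should correspond to \fullref{lemma-typeC-Serre-2} and \fullref{lemma-typeC-Serre} respectively; I would confirm that the scalar $\qbracketC{2}$ and the $[3]$-coefficients in these web lemmas match the quantum integers $[2]_2$ and $[3]$ in the algebra Serre relations after the divided-power normalization.

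The main obstacle I anticipate is the careful tracking of the quantum scalars through the specialization $\zpar \mapsto \qpar^n$ and the bookkeeping of the $\qpar_k = \qpar^2$ conventions in the long-root direction. Unlike type $\typeD$, where all roots are short and $d_i = 1$, type $\typeC$ has $d_k = 2$, so the quantum numbers $[s]_k$ entering the $\Udot(\fraksp_{2k})$ relations are the \emph{doubled} quantum numbers $\qbracketC{s}$, and one must confirm that the web lemmas deliver exactly these rather than the ordinary $\qbracket{s}$. The identity $\qbracket{3}-1 = \qbracketC{2}$ noted in the proof of \fullref{lemma:some-typeC-lemma} is a sign that these matchings are delicate but do work out. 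I would therefore conclude, as in type $\typeD$, by remarking that all remaining relations (far-commutativity and the $\quantumq^h$-relations, which are encoded in the object labels) are clearly satisfied, and that the scalars align precisely because of the shift by $\tfrac{n}{2}$.

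\begin{proof}
We must verify that the defining relations of $\Udot(\fraksp_{2k})$ hold in the image under $\ladC$.
\begin{description}[leftmargin=0pt,itemsep=1ex]
\item[\textbf{Assignment of the generators}]
By our conventions for type $\typeC$ (cf.\ \fullref{table:types}), the long simple root is $\alpha_k=2\epsilon_k$, so that \eqref{eq:lad-functor-C} shifts $\overline\lambda_k$ by $\pm 2$, while the short roots $\alpha_i = \epsilon_i-\epsilon_{i+1}$ give the type $\typeA$ shifts. Hence \eqref{eq:lad-functor-C} lands in the correct morphism spaces.
\item[\textbf{The $\Udot(\frakgl_{2k})$ relations}]
The relations involving only $\dotE_i$'s and $\dotF_i$'s with $i \neq k$ are satisfied by the type $\typeA$ web calculus, i.e.\ by \cite[Proposition 5.2.1]{CKM}.
\item[\textbf{The $\Udot(\fraksp_{2k})$ relations}]
We check the relations involving $\dotE_k$'s and $\dotF_k$'s (recall $\omega$ from \fullref{remark:beware-monoidal-2}):
\smallskip
\begin{enumerate}[label=$\blacktriangleright$]

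\setlength\itemsep{.15cm}

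\item The commutator relation \eqref{eq:EF-rel} between $\dotE_k$ and $\dotF_k$ holds in $\WebC$ by \fullref{lemma:typeC-EF-FE}.
\item The images of $\dotF_{k-1}$ and $\dotE_k$ commute by \fullref{lemma-typeC-EF}. Applying $\omega$ shows that the images of $\dotE_{k-1}$ and $\dotF_k$ commute as well.
\item The Serre relation \eqref{eq:serre-sp-1} holds in $\WebC$ by \fullref{lemma-typeC-Serre-2}. The $\dotF$ version holds by applying $\omega$.
\item The Serre relation \eqref{eq:serre-sp-2} holds in $\WebC$ by \fullref{lemma-typeC-Serre}. Again, the versions involving $\dotF$'s hold by applying $\omega$.
\end{enumerate}
\end{description}
The quantum numbers work out thanks to the shift by $\tfrac{n}{2}$ in \eqref{eq:lad-functor-C}, noting that $\qpar_k=\qpar^2$ so that $[s]_k=\qbracketC{s}$. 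All other relations, e.g.\ far-commutativity, are clearly satisfied.
\end{proof}
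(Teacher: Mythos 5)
Your proposal follows exactly the same route as the paper: check the weight bookkeeping, delegate the $i\neq k$ relations to the type $\typeA$ calculus, and match each relation involving $\dotE_k,\dotF_k$ against the lemmas of \fullref{sec-Cwebs}, with the $\dotF$-versions obtained via $\omega$. There is, however, one concrete error: you have swapped the two Serre-relation citations. The relation \eqref{eq:serre-sp-1}, with its $[3]$-coefficients and three copies of $\sfE_{k-1}$, is the web identity \eqref{eq:typeC-Serre} of \fullref{lemma-typeC-Serre} (three thin rungs, one dot, coefficient $\qbracket{3}$), while \eqref{eq:serre-sp-2}, with coefficient $[2]_2=\qbracketC{2}$ and two copies of $\sfE_k$, is the web identity \eqref{eq:typeC-Serre-2} of \fullref{lemma-typeC-Serre-2} (one rung, two dots, coefficient $\qbracketC{2}$) --- the opposite of what you wrote. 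The mismatch is even visible in your own preamble, where you correctly observe that the $\qbracketC{2}$ and $[3]$ coefficients must match $[2]_2$ and $[3]$ respectively, yet then pair the $\qbracketC{2}$-lemma with the $[3]$-relation. Once the two citations are interchanged, the proof agrees with the paper's.
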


\begin{proof}
The proof is, mutatis mutandis, as the proof 
of \fullref{lemma:dia-functor-well-defined-D}.
In particular:
\smallskip
\begin{enumerate}[label=$\blacktriangleright$]

\setlength\itemsep{.15cm}

\item The $\dotE_k$-$\dotF_k$ commutator relation 
holds in $\WebC$ by \fullref{lemma:typeC-EF-FE}.
\item The images of $\dotF_{k-1}$ 
and $\dotE_k$ commute by \fullref{lemma-typeC-EF}.
That the images of $\dotE_{k-1}$ 
and $\dotF_k$ commute follows  by applying $\omega$.
\item The Serre relation \eqref{eq:serre-sp-1}
holds in $\WebC$ by \fullref{lemma-typeC-Serre}. 
As before, the versions involving $\dotF$'s 
follow then applying $\omega$.
\item The Serre relation \eqref{eq:serre-sp-2}
holds in $\WebC$ by \fullref{lemma-typeC-Serre-2}. 
As usual, the versions involving $\dotF$'s 
follow then applying $\omega$.\qedhere
\end{enumerate}
\end{proof}

\subsection{The Howe functors}

Note that  we never used that $\zpar$ was specialized 
to $\qpar^n$ in the definition of the ladder 
functors, and we actually get ladder 
functors $\Udot(\frakso_{2k}) \rightarrow \WebDwithpara{z}$ 
and $\Udot(\fraksp_{2k}) \rightarrow \WebCwithpara{z}$ for 
any $z \in \CQ$. In particular, we also get ladder 
functors $\Udot(\frakso_{2k}) \rightarrow \WebDpara$ and 
$\Udot(\fraksp_{2k}) \rightarrow \WebCpara$, which, by slight abuse 
of notation, we still denote by $\ladD$ and $\ladC$.

Composing the presentation and the 
ladder functors, we finally obtain the \emph{Howe functors}:
\begin{gather}\label{eq:Howe-functors}
\begin{aligned}
\howeD\colon&
\Udot(\frakso_{2k})
\xrightarrow{\ladD}
\WebD
\xrightarrow{\diaD}
\RepcoD{n},
\\
\symhoweC\colon&
\Udot(\frakso_{2k})
\xrightarrow{\ladD}
\WebDpara
\xrightarrow{\symdiaC}
\RepcoC{n},
\\
\symhoweD\colon&
\Udot(\fraksp_{2k})
\xrightarrow{\ladC}
\WebCpara
\xrightarrow{\symdiaD}
\RepcoD{n},
\\
\howeC\colon&
\Udot(\fraksp_{2k})
\xrightarrow{\ladC}
\WebC
\xrightarrow{\diaC}
\RepcoC{n}.
\end{aligned}
\end{gather}

\section{Main results}\label{sec:howe-duality}
We are finally ready to state and prove our main results.

\subsection{Quantizing Howe dualities in types \texorpdfstring{$\typeB\typeC\typeD$}{BCD}}\label{subsec:typeD-howe}

\subsubsection{A brief reminder on (quantum) highest weight theory}

The finite-dimensional representation theory
of $\quantumg(\frakg)$ at generic $\qpar$
is fairly well-understood. In particular, all such representations are
semisimple, and, if we restrict to so-called \emph{type $1$ representations}
(where $\quantumq^h$ acts by powers of $\qpar$, 
cf.\ \cite[Section 5.2]{Ja1}), then the simple
modules are in bijection with dominant integral 
weights $\lambda\in\weightlattice^+$.
We denote by $\simple{\frakg}{\lambda}$ the corresponding simple
$\quantumg(\frakg)$-module.

The situation for the coideal subalgebras, on the contrary, is more 
difficult and less understood.
For $\coideal(\frakso_n)$ and $\coideal(\fraksp_n)$, 
in particular, one cannot consider weights and weight 
representations, since there is 
no natural analog of a Cartan subalgebra  
(although see \cite{Let} for some progress in this direction).
Still, we will encounter some of their representations through Howe duality.

Before we can start, we need some more notation.
Let $\partition$ be the set of partitions (or Young diagrams). 
Given a partition $\lambda=(\lambda_1,\dots,\lambda_s)\in\partition$ (with $\lambda_s \neq 0$), we write $\ell(\lambda)=s$ for its length, and we denote by 
$\lambda^{\mathrm{T}}=(\lambda_1^{\mathrm{T}},\dots,\lambda_t^{\mathrm{T}})\in\partition$
its transpose.
For the rest, we keep the notation from \fullref{sec-main}.

We start with the sympletic case since 
it is easier to state (cf.\ \fullref{remark:sovso}).

\subsubsection{Skew quantum Howe duality for the pair \texorpdfstring{$(\coideal(\fraksp_n),\quantumg(\fraksp_{2k}))$}{spn-sp2k}}

\begin{theorem}\label{theorem:typeC-howe}
There are commuting actions
\begin{equation}\label{eq:Howe-action-sp}
\coideal(\fraksp_n) 
\acts  
\underbrace{\exterior{\bullet} \vecrep \otimes 
\dots \otimes \exterior{\bullet} \vecrep}_{k \text{ times}}
\actsreverse
\quantumg(\fraksp_{2k})
\end{equation}
generating each other's centralizer. 
Hence,
the $\coideal(\fraksp_n)$--$\quantumg(\fraksp_{2k})$-bimodule \eqref{eq:Howe-action-sp} 
is multi\-pli\-city-free. The $\quantumg(\fraksp_{2k})$-modules 
appearing in its decomposition are
\begin{equation}
  \label{eq:bimodule-decomp-C}
  \simple{\fraksp_{2k}}{
    {\textstyle\sum_{j=1}^k} (\lambda^{\mathrm{T}}_j - \tfrac{n}{2})\varepsilon_j}, \quad \text{for } \lambda\in\partition \text{ with }
\ell(\lambda^{\mathrm{T}})\leq k,\ell(\lambda)\leq \tfrac{n}{2}.
\end{equation}
\end{theorem}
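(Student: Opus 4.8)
The plan is to use the Howe functor $\howeC = \diaC \circ \ladC$ built in the previous section to transport the (well-understood) representation theory of $\quantumg(\fraksp_{2k})$ onto the tensor product, and then to run a double centralizer argument anchored by a de-quantization comparison. Set $M = \bigoplus \exterior{a_1}\vecrep \otimes \dots \otimes \exterior{a_k}\vecrep$. I would declare the right $\quantumg(\fraksp_{2k})$-action to be the one coming from $\howeC$, so that the weight-$\lambda$ space of $M$ is the summand with $a_i = \overline\lambda_i = \lambda_i + \tfrac{n}{2}$, and the left $\coideal(\fraksp_n)$-action to be the restriction of the $\quantumg(\frakgl_n)$-action along $\coideal(\fraksp_n) \subset \quantumg(\frakgl_n)$. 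Commutativity is then automatic: the presentation functor $\diaC$ takes values in $\RepcoC{n}$ by \fullref{proposition:dia-typeC-well-defined}, so every element of the image of $\quantumg(\fraksp_{2k})$ is, by construction, a $\coideal(\fraksp_n)$-intertwiner.

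Next I would prove that $\quantumg(\fraksp_{2k})$ generates the centralizer $\End_{\coideal(\fraksp_n)}(M)$, that is, that $\howeC$ is full on each finite piece. Since $\howeC = \diaC \circ \ladC$, this splits into fullness of $\ladC \colon \Udot(\fraksp_{2k}) \to \WebC$ and of $\diaC \colon \WebC \to \RepcoC{n}$. For $\ladC$ the argument is combinatorial: using explosion together with the relations of $\WebC$ and the identities already collected in Section~4 (\fullref{lemma:typeC-EF-FE} through \fullref{lemma-typeC-Serre-2}), every $\typeCBD$-web can be reduced to a linear combination of ladder diagrams, i.e. images of monomials in the $\dotE_i\smallone_\lambda$, $\dotF_i\smallone_\lambda$, the start/end dots being precisely $\ladC(\dotE_k)$, $\ladC(\dotF_k)$. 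Fullness of $\diaC$ is the genuinely hard input: every $\coideal(\fraksp_n)$-intertwiner between tensor products of exterior powers must be realized by a web. I would deduce this by de-quantization, comparing with the classical fact that the Brauer category surjects onto the symplectic invariants (\fullref{remark:sovso}) and using genericity of $\qpar$ and semisimplicity so that the quantum $\Hom$-spaces have the same dimension as their classical counterparts.

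Granting fullness, the image $B$ of $\quantumg(\fraksp_{2k})$ equals $\End_{\coideal(\fraksp_n)}(M)$; and since finite-dimensional type-$1$ $\quantumg(\fraksp_{2k})$-modules are semisimple at generic $\qpar$, the algebra $B$ is semisimple. The double centralizer theorem applied to the semisimple $B$ then gives the reverse inclusion, namely that the image of $\coideal(\fraksp_n)$ is exactly $\End_B(M)$, and produces a bimodule decomposition $M \cong \bigoplus_\lambda U_\lambda \otimes V_\lambda$ with the $V_\lambda$ the distinct simple $\quantumg(\fraksp_{2k})$-constituents. To pin down the labels and confirm genuine multiplicity-freeness I would de-quantize to the classical skew Howe duality of \cite{Ho1,Ho}: classically the bimodule is multiplicity-free with $\fraksp_{2k}$-constituents indexed by partitions $\lambda$ satisfying $\ell(\lambda^{\mathrm{T}}) \leq k$ and $\ell(\lambda) \leq \tfrac{n}{2}$, carrying highest weights $\sum_j(\lambda^{\mathrm{T}}_j - \tfrac{n}{2})\varepsilon_j$. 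Because the weight-space dimensions $\prod_i \binom{n}{\overline\lambda_i}$ and the full characters agree with the quantum ones, the quantum decomposition has the same shape, yielding \eqref{eq:bimodule-decomp-C}; here the shift $\overline\lambda_i = \lambda_i + \tfrac{n}{2}$ from \eqref{eq:lad-functor-C} and the transpose forced by the exterior (skew) convention account for the precise form of the weights.

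The step I expect to be the main obstacle is the fullness of $\diaC$, which carries essentially all of the representation-theoretic content: one must ensure that the web calculus misses no $\coideal(\fraksp_n)$-intertwiner. The cleanest route is the de-quantization argument, whose delicate point is guaranteeing that the quantum endomorphism algebras do not drop in dimension relative to the classical symplectic invariants; this is exactly where genericity of $\qpar$ and semisimplicity enter, and where the explicit $\qpar$-scalars appearing in the Section~4 lemmas must be matched against the $\quantumg(\fraksp_{2k})$-relations. Once fullness is in hand, identifying the dominant weights and verifying the length constraints $\ell(\lambda^{\mathrm{T}})\leq k$, $\ell(\lambda)\leq\tfrac{n}{2}$ is a bookkeeping matter.
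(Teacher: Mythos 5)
Your skeleton matches the paper's in outline --- the commuting actions come from $\howeC$ exactly as in the paper, and the identification of the $\quantumg(\fraksp_{2k})$-constituents by comparing weights and characters with the classical decomposition of $\exteriornoq{\bullet}(\C^n\otimes\C^{k})$ is also how the paper proceeds --- but your treatment of the double centralizer step has genuine gaps. You route everything through fullness of $\diaC$ and of $\ladC$ separately; fullness of $\ladC$ (reduction of arbitrary $\typeCBD$-webs to ladders) is a substantial combinatorial claim that you assert without proof, and it is not needed for this theorem. More seriously, your justification of fullness of $\diaC$ --- ``genericity of $\qpar$ and semisimplicity so that the quantum $\Hom$-spaces have the same dimension as their classical counterparts'' --- assumes what has to be proved: $\RepcoC{n}$ is not a priori semisimple and the dimensions of $\Hom_{\coideal(\fraksp_n)}$-spaces are not a priori flat in $\qpar$ (the paper explicitly flags the representation theory of the coideals as poorly understood). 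The missing device is the free $\Aalg$-form: one must let $\coidealA(\fraksp_n)$ act on the free lattice $\repA{\mathrm{M}_{\qpar}}$, check stability, identify the image of the specialization at $\qpar=1$ with the image of $\U(\fraksp_n)$, and use that linear independence over $\Aalg/(\qpar-1)$ forces linear independence over $\CQ$, so the image cannot shrink (equivalently, the commutant cannot grow) at the generic point. Note also that you state the worry in the wrong direction: the danger is that $\End_{\coideal(\fraksp_n)}(\mathrm{M}_{\qpar})$ could be \emph{larger} than classically, not smaller, since it always contains the image of $\quantumg(\fraksp_{2k})$, which already has the full classical dimension by the character comparison. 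Finally, the classical input you cite is off target --- the Brauer-category surjectivity of \fullref{remark:sovso} only covers $\vecrep^{\otimes k}$, whereas one needs the full classical skew Howe duality for $(\fraksp_n,\fraksp_{2k})$ on the whole exterior algebra.

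Granting the lattice argument, your order of operations (prove $\quantumg(\fraksp_{2k})\twoheadrightarrow\End_{\coideal(\fraksp_n)}(\mathrm{M}_{\qpar})$ first, then invoke the double centralizer theorem) would be a workable mirror image of the paper's proof, which instead fixes the $\quantumg(\fraksp_{2k})$-module decomposition by characters and then proves directly that $\coideal(\fraksp_n)$ surjects onto $\End_{\quantumg(\fraksp_{2k})}(\mathrm{M}_{\qpar})$ by specializing the image of the coideal. Either direction closes the argument, but only once the integral form is actually in place; as written, that is the step your proposal leaves open.
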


\begin{proof}
We denote the space in the middle
of \eqref{eq:Howe-action-sp} 
by $\mathrm{M}_{\qpar}$. All $\lambda$'s appearing below will always satisfy 
the conditions in \eqref{eq:bimodule-decomp-C}.

By construction, $\mathrm{M}_{\qpar}$ is acted on by $\coideal(\fraksp_n)$ 
via restriction of the action by $\quantumg(\frakgl_n)$.
Using $\howeC$ from \eqref{eq:Howe-functors}, we see that 
there is a commuting action 
of $\quantumg(\fraksp_{2k})$. 
(In fact, we get an action of $\Udot(\fraksp_{2k})$ 
which then gives an action of $\quantumg(\fraksp_{2k})$ since 
$\mathrm{M}_{\qpar}$ is finite-dimensional, cf.\ \cite[Section 23.1.4]{Lus}.)

Next, we want to use the 
analogous result in the non-quantized 
setting (see \cite{Ho} and \cite[Corollary 5.33]{CW}, but 
beware that the roles of $k$ and $n$ 
are swapped in \cite{CW}). 
It states that there is an action of $\U(\fraksp_{2k})$ on 
$\mathrm{M}=\exteriornoq{\bullet}(\C^n \otimes \C^{k})$ 
commuting with the natural action 
of $\U(\fraksp_n)$
and that these two actions generate 
each others centralizer. Moreover, \cite[Corollary 5.33]{CW} 
gives the bimodule decomposition of $\mathrm{M}$,
similar to \eqref{eq:bimodule-decomp-C}.

Now, we can easily compare the action 
of $\quantumg(\fraksp_{2k})$ on $\mathrm{M}_{\qpar}$ and the 
action of $\U(\fraksp_{2k})$ on 
$\mathrm{M}$, and see that the weights 
and their multiplicities are the same. Hence, we can deduce that 
the decomposition of $\mathrm{M}_{\qpar}$ as a $\quantumg(\fraksp_{2k})$-module 
is the quantum analog of the one in \cite[Corollary 5.33]{CW}. 
It follows that the $\coideal(\fraksp_n)$--$\quantumg(\fraksp_{2k})$-bimodule 
$\mathrm{M}_{\qpar}$ decomposes as
\begin{equation*}
\mathrm{M}_{\qpar} \cong 
{\textstyle\bigoplus_{\lambda}}\,
\simpleco{\fraksp_n}{\lambda}
\otimes
\simple{\fraksp_{2k}}{\textstyle\sum_{j=1}^k 
(\lambda^{\mathrm{T}}_j - \tfrac{n}{2})\varepsilon_j}
,
\end{equation*}
with $\lambda$ as in \eqref{eq:bimodule-decomp-C} 
and where the
$\simpleco{\fraksp_n}{\lambda}$'s denote just some $\coideal(\fraksp_n)$-modules 
(which are indexed by the $\lambda$'s).
 
We want to show that all appearing 
$\simpleco{\fraksp_n}{\lambda}$ are irreducible, 
or, equivalently, that the action gives a surjection
\begin{equation}\label{eq:action-surjection}
\coideal(\fraksp_n) 
\twoheadrightarrow
\End_{\quantumg(\fraksp_{2k})}(\mathrm{M}_{\qpar}) 
\cong 
\End_{\CQ}\big(
{\textstyle\bigoplus_{\lambda}}\,
\simpleco{\fraksp_n}{\lambda}\big).
\end{equation}
To this end, consider the integral version $\repA{\mathrm{M}_{\qpar}}$ 
of the representation $\mathrm{M}_{\qpar}$, defined as the $\Aalg$-span 
of tensor products of wedges 
of the standard basis vectors $v_i$ inside $\mathrm{M}_{\qpar}$. 
Note that 
$\repA{\mathrm{M}_{\qpar}}$ is a free $\Aalg$-module, and this 
will be important for what follows. 

It can be easily checked 
that $\repA{\mathrm{M}_{\qpar}}$ is stable under the 
actions of 
$\coidealA(\fraksp_n)$ and $\quantumgA(\fraksp_{2k})$. 
Moreover, setting $\qpar=1$, we can identify 
$\repA{\mathrm{M}_{\qpar}} \otimes_\Aalg \Aalg/(\qpar-1)$ with 
$\mathrm{M}$, and it is then clear 
that the action of $\coidealA(\fraksp_n)\otimes_\Aalg \Aalg/(\qpar-1)$ 
matches the natural action of $\U(\fraksp_n)$, i.e.
\begin{equation*}
\begin{tikzpicture}[baseline=(current bounding box.center),yscale=0.6]
  \matrix (m) [matrix of math nodes, row sep=1em, column
  sep=4em, text height=1.5ex, text depth=0.25ex] {
\coidealA(\fraksp_n)\otimes_\Aalg \Aalg/(\qpar-1) & \End_{\Aalg/(q-1)}(
\repA{\mathrm{M}_{\qpar}} \otimes_\Aalg \Aalg/(\qpar-1))\\
     \U(\fraksp_n) &  \End_\C(\mathrm{M}). \\};
  \path[->] (m-1-1) edge  (m-1-2);
  \path[->] (m-2-1) edge  (m-2-2);
  \path[draw,double,double distance=0.4ex] (m-1-2) -- (m-2-2);
\end{tikzpicture}
\end{equation*}
(One could actually show that 
$\coidealA(\fraksp_n) \otimes_\Aalg \Aalg/(q-1)$ 
and $\U(\fraksp_n)$ are isomorphic, but 
since we do not need it, we avoid this 
additional complication.) In particular, the images of 
these two actions agree, and their dimensions are both equal to
\[
{\textstyle\sum_{\lambda}} \dim_{\C} \simplenoq{\fraksp_n}{\lambda}
=d=
{\textstyle\sum_{\lambda}} \dim_{\CQ} \simpleco{\fraksp_n}{\lambda}.
\] 
It follows 
that the dimension of the image for generic $\qpar$ cannot be 
strictly smaller, and in particular the dimension of the 
image of \eqref{eq:action-surjection} has to be greater or equal than $d$. 
Hence, the map in \eqref{eq:action-surjection} is surjective, and we are 
done.
\end{proof}

\subsubsection{Symmetric quantum Howe duality for the pair \texorpdfstring{$(\coideal(\fraksp_n),\Udot(\frakso_{2k}))$}{spn-so2k}}

\begin{theorem}\label{theorem:typeC-howe-sym}
There are commuting actions
\begin{equation}\label{eq:Howe-action-sp-sym}
\coideal(\fraksp_n) 
\acts  
\underbrace{\symmetric{\bullet} \vecrep \otimes 
\dots \otimes \symmetric{\bullet} \vecrep}_{k \text{ times}}
\actsreverse 
\Udot(\frakso_{2k})
\end{equation}
generating each other's centralizer. 
Hence, 
the $\coideal(\fraksp_n)$--$\Udot(\frakso_{2k})$-bimodule \eqref{eq:Howe-action-sp-sym}
is multi\-pli\-city-free. The $\Udot(\frakso_{2k})$-modules
appearing in its decomposition are
\begin{equation}
  \label{eq:bimodule-decomp-C-sym}
  \simple{\frakso_{2k}}{
    {\textstyle\sum_{j=1}^k} (\lambda_j + \tfrac{n}{2})\varepsilon_j}, \quad \text{for } \lambda\in\partition \text{ with }
  \ell(\lambda)\leq \min \{\tfrac{n}{2},k\}.
\end{equation}
\end{theorem}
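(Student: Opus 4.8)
The plan is to run the proof of \fullref{theorem:typeC-howe} essentially verbatim, making three substitutions: the exterior Howe functor $\howeC$ is replaced by the symmetric one $\symhoweC\colon\Udot(\frakso_{2k})\to\RepcoC{n}$ from \eqref{eq:Howe-functors}, the right-hand algebra stays idempotented (we do \emph{not} pass from $\Udot(\frakso_{2k})$ to $\quantumg(\frakso_{2k})$), and the classical exterior duality for $(\fraksp_n,\fraksp_{2k})$ is replaced by the classical symmetric Howe duality for the pair $(\fraksp_n,\frakso_{2k})$ (see \cite{Ho} and \cite{CW}). Write $\mathrm{M}_{\qpar}$ for the middle term of \eqref{eq:Howe-action-sp-sym}. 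By construction $\coideal(\fraksp_n)$ acts on $\mathrm{M}_{\qpar}$ by restriction of the $\quantumg(\frakgl_n)$-action, and since $\symhoweC$ lands in $\RepcoC{n}$ the functor equips $\mathrm{M}_{\qpar}$ with a commuting $\Udot(\frakso_{2k})$-action whose image consists of $\coideal(\fraksp_n)$-intertwiners. The novelty is that $\mathrm{M}_{\qpar}$ is infinite-dimensional; what replaces finite-dimensionality is the grading by total degree $d=\sum_i a_i$, whose graded pieces $\mathrm{M}_{\qpar}^{(d)}$ are finite-dimensional and $\coideal(\fraksp_n)$-stable (the generators of $\coideal(\fraksp_n)$ preserve each $\symmetric{a_1}\vecrep\otimes\dots\otimes\symmetric{a_k}\vecrep$).

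First I would pin down the $\Udot(\frakso_{2k})$-module structure. Comparing, weight space by weight space inside each finite-dimensional graded piece, the $\Udot(\frakso_{2k})$-action on $\mathrm{M}_{\qpar}$ with the classical $\U(\frakso_{2k})$-action on $\mathrm{M}=\symmetricnoq{\bullet}(\C^n\otimes\C^k)$ shows that the decomposition of $\mathrm{M}_{\qpar}$ is the $\qpar$-analog of the classical one, so that
\begin{equation*}
\mathrm{M}_{\qpar}\cong{\textstyle\bigoplus_{\lambda}}\,\simpleco{\fraksp_n}{\lambda}\otimes\simple{\frakso_{2k}}{{\textstyle\sum_{j=1}^k}(\lambda_j+\tfrac{n}{2})\varepsilon_j},
\end{equation*}
where $\lambda$ runs over the partitions in \eqref{eq:bimodule-decomp-C-sym}, the $\simple{\frakso_{2k}}{\cdots}$ are the (infinite-dimensional) simple highest-weight modules attached to the $\tfrac{n}{2}$-shifted weights, and the $\simpleco{\fraksp_n}{\lambda}$ are a priori merely finite-dimensional $\coideal(\fraksp_n)$-modules. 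As the highest weights are pairwise distinct, Schur's lemma gives $\End_{\Udot(\frakso_{2k})}(\mathrm{M}_{\qpar})\cong\bigoplus_{\lambda}\End_{\CQ}(\simpleco{\fraksp_n}{\lambda})$, a block-diagonal algebra all of whose blocks are degree-preserving.

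It then remains to show that $\coideal(\fraksp_n)$ surjects onto this centralizer, which forces each $\simpleco{\fraksp_n}{\lambda}$ to be simple and the family to be pairwise non-isomorphic, hence multiplicity-freeness on both sides. As before I would pass to the integral form $\repA{\mathrm{M}_{\qpar}}$, here the $\Aalg$-span of the symmetric monomials $v_{i_1}\cdots v_{i_a}$ in the tensor factors; it is a free $\Aalg$-module, stable under the $\Aalg$-forms of $\coideal(\fraksp_n)$ and $\Udot(\frakso_{2k})$, and specializes at $\qpar=1$ to $\mathrm{M}$ with $\coidealA(\fraksp_n)\otimes_{\Aalg}\Aalg/(\qpar-1)$ acting as $\U(\fraksp_n)$. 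Since $\coideal(\fraksp_n)$ preserves total degree, I would run the semicontinuity-of-rank argument one graded piece $\mathrm{M}_{\qpar}^{(d)}$ at a time: the dimension of the image of $\coideal(\fraksp_n)$ in $\End_{\CQ}(\mathrm{M}_{\qpar}^{(d)})$ is at least that of the classical image in $\End_{\C}(\mathrm{M}^{(d)})$, while the block form above bounds it above by $\sum_{\lambda}(\dim\simpleco{\fraksp_n}{\lambda})^2$, summed over the $\lambda$ occurring in degree $d$. Classical symmetric Howe duality identifies the lower bound with the \emph{same} number, so the two coincide, every relevant block is full, and letting $d$ vary exhausts all $\lambda$, giving the desired surjection.

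The main obstacle I anticipate is exactly this infinite-dimensionality: there is no ambient finite-dimensional space to invoke, so both the double-centralizer bookkeeping and the deformation argument must be organized along the total-degree grading, using that $\coideal(\fraksp_n)$ is degree-preserving whereas $\dotE_k,\dotF_k$ shift the degree by $\pm 2$ under \eqref{eq:lad-functor-D}. A secondary point requiring care is to quote the classical result in the precise form producing the labels ${\textstyle\sum_{j=1}^k}(\lambda_j+\tfrac{n}{2})\varepsilon_j$ with $\ell(\lambda)\le\min\{\tfrac{n}{2},k\}$ — the transpose of the exterior pattern — including the $\tfrac{n}{2}$-shift that renders the $\frakso_{2k}$-modules infinite-dimensional and matches the shift built into the ladder functor.
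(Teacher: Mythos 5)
Your proposal matches the paper's proof, which is literally just ``run the argument of the skew case with $\symhoweC$ in place of $\howeC$ and \cite[Corollary 5.32]{CW} in place of the classical skew duality, staying with $\Udot(\frakso_{2k})$ because the space is infinite-dimensional.'' Your organization of the double-centralizer and specialization-at-$\qpar=1$ arguments along the total-degree grading is exactly the (unwritten) fix for the infinite-dimensionality issue that the paper only flags in its parenthetical remark, so this is the same route, carried out in more detail.
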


\begin{proof}
The proof is similar to the proof of \fullref{theorem:typeD-howe}, but 
using the functor $\symhoweC$ and the 
non-quantized Howe duality from \cite[Corollary 5.32]{CW}. 
(Note hereby that we cannot easily pass from 
$\Udot(\frakso_{2k})$ to $\quantumg(\frakso_{2k})$ 
since the $\CQ$-vector space 
in \eqref{eq:Howe-action-sp-sym} 
is infinite-dimensional.)
\end{proof}

\subsubsection{Skew quantum Howe duality for the pair \texorpdfstring{$(\coideal(\frakso_n),\quantumg(\frakso_{2k}))$}{son-so2k}}

\begin{theorem}\label{theorem:typeD-howe}
There are commuting actions
\begin{equation}\label{eq:Howe-action-so}
\coideal(\frakso_n)
\acts  
\underbrace{\exterior{\bullet} \vecrep \otimes 
\dots \otimes \exterior{\bullet} \vecrep}_{k \text{ times}}
\actsreverse 
\quantumg(\frakso_{2k}).
\end{equation}
In case $n$ is odd they generate each other's
centralizer.
In any case,
the $\quantumg(\frakso_{2k})$-mo\-dules appearing 
in the decomposition of \eqref{eq:Howe-action-so} are
\begin{equation}\label{eq:bimodule-decomp-D}
  \simple{\frakso_{2k}}{
    {\textstyle\sum_{j=1}^k} (\lambda^{\mathrm{T}}_j - \tfrac{n}{2})\varepsilon_j}, \quad \text{for } \lambda\in\partition \text{ with }
\ell(\lambda^{\mathrm{T}})\leq k,\lambda^{\mathrm{T}}_1+\lambda^{\mathrm{T}}_2\leq n. 
\end{equation}
\end{theorem}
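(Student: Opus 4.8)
The plan is to mimic the proof of \fullref{theorem:typeC-howe} as closely as possible, which is why the authors deferred the orthogonal case to the end: the skeleton is identical, but the delicate point is that fullness only holds for $n$ odd. First I would denote by $\mathrm{M}_{\qpar}$ the space $\exterior{\bullet}\vecrep\otimes\cdots\otimes\exterior{\bullet}\vecrep$ in the middle of \eqref{eq:Howe-action-so}. The $\coideal(\frakso_n)$-action is simply the restriction of the $\quantumg(\frakgl_n)$-action, so it is automatic. The commuting $\quantumg(\frakso_{2k})$-action is produced by the Howe functor $\howeD$ from \eqref{eq:Howe-functors}, which exists because $\diaD$ is well-defined by \fullref{proposition:dia-typeD-well-defined} and $\ladD$ is well-defined by \fullref{lemma:dia-functor-well-defined-D}; as in the symplectic case, this a priori gives an action of the idempotented $\Udot(\frakso_{2k})$, which descends to $\quantumg(\frakso_{2k})$ since $\mathrm{M}_{\qpar}$ is finite-dimensional.

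Next I would identify the weight-space data by comparison with the classical skew Howe duality for the pair $(\frako_n,\frakso_{2k})$ or $(\frakso_n,\frakso_{2k})$ acting on $\exteriornoq{\bullet}(\C^n\otimes\C^k)$, citing \cite{Ho} and the textbook reference \cite{CW} used for \fullref{theorem:typeC-howe}. Comparing the action of $\quantumg(\frakso_{2k})$ on $\mathrm{M}_{\qpar}$ with that of $\U(\frakso_{2k})$ on the classical $\mathrm{M}=\exteriornoq{\bullet}(\C^n\otimes\C^k)$, the weights and their multiplicities agree, so the $\quantumg(\frakso_{2k})$-module decomposition of $\mathrm{M}_{\qpar}$ is the $\qpar$-analog of the classical one. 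This yields a bimodule decomposition
\[
\mathrm{M}_{\qpar}\cong{\textstyle\bigoplus_{\lambda}}\,\simpleco{\frakso_n}{\lambda}\otimes\simple{\frakso_{2k}}{\textstyle\sum_{j=1}^k(\lambda^{\mathrm{T}}_j-\frac{n}{2})\varepsilon_j},
\]
with $\lambda$ ranging over the partitions in \eqref{eq:bimodule-decomp-D} and the $\simpleco{\frakso_n}{\lambda}$'s being a priori just some $\coideal(\frakso_n)$-modules. This establishes the weight claim \eqref{eq:bimodule-decomp-D} for all $n$, which is the part of the theorem asserted unconditionally.

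For the centralizer/multiplicity-freeness statement, restricted to $n$ odd, I would run the integral-form dimension-counting argument verbatim from \fullref{theorem:typeC-howe}. That is: form the $\Aalg$-lattice $\repA{\mathrm{M}_{\qpar}}$ spanned by tensor products of wedges of standard basis vectors, check it is a free $\Aalg$-module stable under $\coidealA(\frakso_n)$ and $\quantumgA(\frakso_{2k})$, specialize $\qpar=1$ to identify $\repA{\mathrm{M}_{\qpar}}\otimes_{\Aalg}\Aalg/(\qpar-1)$ with $\mathrm{M}$ and the $\coidealA(\frakso_n)$-action with the classical $\U(\frakso_n)$-action, and conclude that the image of $\coideal(\frakso_n)$ in $\End_{\quantumg(\frakso_{2k})}(\mathrm{M}_{\qpar})$ has dimension $\geq\sum_\lambda\dim_{\C}\simplenoq{\frakso_n}{\lambda}=\sum_\lambda\dim_{\CQ}\simpleco{\frakso_n}{\lambda}$, forcing surjectivity and hence irreducibility of each $\simpleco{\frakso_n}{\lambda}$. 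The step I expect to be the genuine obstacle is exactly why this argument fails for $n$ even: as \fullref{remark:sovso} explains, when $n$ is even the Brauer-type generating set is one generator short of describing $\End_{\mathrm{SO}_n}$, so the classical statement one is comparing against only holds for $\mathrm{O}_n$, not $\mathrm{SO}_n$ (unless $n\geq 2k+1$). I would therefore phrase the $n$-odd hypothesis as precisely the condition $\End_{\mathrm{O}_n}=\End_{\mathrm{SO}_n}$ that makes the classical input applicable, and remark that for $n$ even the decomposition \eqref{eq:bimodule-decomp-D} still holds as a statement about weights and multiplicities, but the $\simpleco{\frakso_n}{\lambda}$'s need not be irreducible and the centralizer property requires the extra intertwiner discussed in \fullref{remark:sovso}.
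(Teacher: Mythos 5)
Your proposal is correct and takes essentially the same route as the paper: the paper's own proof is literally ``mutatis mutandis as in the proof of \fullref{theorem:typeC-howe}, using $\howeD$ and the classical duality from Cheng--Wang,'' and your write-up is precisely that adaptation (restriction for the left action, $\howeD$ for the right action, weight comparison with the classical bimodule decomposition, and the integral-form dimension count for surjectivity when $n$ is odd). The only cosmetic difference is the justification of the $n$-odd hypothesis: the paper invokes $\mathrm{O}(n)\cong\mathrm{SO}(n)\times\Z/2\Z$ so that the extra generator acts by a scalar and the classical $\mathrm{O}(n)$-statement descends to $\mathrm{SO}(n)$, while you phrase it via the coincidence of endomorphism algebras from \fullref{remark:sovso} --- the same underlying fact.
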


\begin{proof}
Mutatis mutandis as in the proof of \fullref{theorem:typeC-howe}, but 
using the functor $\howeD$ and the 
non-quantized Howe duality from \cite[Corollary 5.41]{CW}. 
Note that one has 
$\mathrm{O}(n) \cong \mathrm{SO}(n) \times \Z/2\Z$ 
in type $\typeB$. 
As explained in \cite[above Proposition 5.35]{CW} 
or \cite[\S 5.1.3]{LZ1}, 
the extra generator in $\mathrm{O}(n)- \mathrm{SO}(n)$ 
acts trivially on the 
de-quantized analog of \eqref{eq:Howe-action-so}. 
It follows that \cite[Corollary 5.41]{CW} works in this 
case for $\mathrm{SO}(n)$ instead of 
$\mathrm{O}(n)$, and hence also for $\frakso_n$,
cf.\ also \fullref{remark:sovso}.
\end{proof}

\subsubsection{Symmetric quantum Howe duality for the pair \texorpdfstring{$(\coideal(\frakso_n),\Udot(\fraksp_{2k}))$}{son-sp2k}}

\begin{theorem}\label{theorem:typeD-howe-sym}
There are commuting actions
\begin{equation}\label{eq:Howe-action-so-sym}
\coideal(\frakso_n)
\acts  
\underbrace{\symmetric{\bullet} \vecrep \otimes 
\dots \otimes \symmetric{\bullet} \vecrep}_{k \text{ times}}
\actsreverse 
\Udot(\fraksp_{2k}).
\end{equation}
In case $n$ is odd they generate each other's 
centralizer.
In any case,
the $\Udot(\fraksp_{2k})$-mo\-dules appearing 
in the decomposition of \eqref{eq:Howe-action-so-sym} are
\begin{equation}\label{eq:bimodule-decomp-D-sym}
\simple{\fraksp_{2k}}{
  {\textstyle\sum_{j=1}^k} (\lambda_j + \tfrac{n}{2})\varepsilon_j},\quad
\text{for } \lambda\in\partition \text{ with }
\ell(\lambda)\leq k, \lambda_1^{\mathrm{T}}+\lambda_2^{\mathrm{T}} \leq n.
\end{equation}
\end{theorem}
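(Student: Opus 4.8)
The plan is to run the argument of \fullref{theorem:typeD-howe} in the symmetric setting, reusing from \fullref{theorem:typeC-howe-sym} the care needed because the ambient space is infinite-dimensional; indeed this theorem is at once the symmetric analogue of \fullref{theorem:typeD-howe} and the orthogonal-coideal analogue of \fullref{theorem:typeC-howe-sym}. Write $\mathrm{M}_{\qpar}$ for the space in the middle of \eqref{eq:Howe-action-so-sym}. By construction it carries the restriction of the $\quantumg(\frakgl_n)$-action, hence a $\coideal(\frakso_n)$-action; and via the Howe functor $\symhoweD=\symdiaD\circ\ladC$ from \eqref{eq:Howe-functors} it carries a commuting action of $\Udot(\fraksp_{2k})$. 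Commutativity is automatic: the ladder functor $\ladC$ (\fullref{lemma:dia-functor-well-defined-C}) realizes the generators of $\Udot(\fraksp_{2k})$ as $\typeCBD$-webs, and $\symdiaD$ sends these to $\coideal(\frakso_n)$-equivariant maps by \fullref{proposition:dia-typeD-well-defined} (built from the intertwiners of \fullref{lemma:int-typeD}). As in \fullref{theorem:typeC-howe-sym} I would keep $\Udot(\fraksp_{2k})$ rather than passing to $\quantumg(\fraksp_{2k})$, since $\mathrm{M}_{\qpar}$ is infinite-dimensional.

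Next I would identify the decomposition by comparison with the classical symmetric $(\frakso_n,\fraksp_{2k})$-Howe duality of \cite{CW} (the symmetric companion of the statement used in \fullref{theorem:typeD-howe}). The combinatorial definition of $\ladC$, with its shift $\overline\lambda=\lambda+\tfrac{n}{2}$, makes the $\Udot(\fraksp_{2k})$-weights of $\mathrm{M}_{\qpar}$ coincide with the $\U(\fraksp_{2k})$-weights of $\mathrm{M}=\symmetricnoq{\bullet}(\C^n\otimes\C^k)$, together with their multiplicities. Hence the $\Udot(\fraksp_{2k})$-module decomposition of $\mathrm{M}_{\qpar}$ is the quantum analogue of the classical one, yielding the modules in \eqref{eq:bimodule-decomp-D-sym} and a bimodule decomposition
\[
\mathrm{M}_{\qpar} \cong {\textstyle\bigoplus_{\lambda}}\, \simpleco{\frakso_n}{\lambda} \otimes \simple{\fraksp_{2k}}{\textstyle\sum_{j=1}^k (\lambda_j + \tfrac{n}{2})\varepsilon_j},
\]
where the $\simpleco{\frakso_n}{\lambda}$ are a priori just some $\coideal(\frakso_n)$-modules.

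It then remains, for $n$ odd, to show that $\coideal(\frakso_n)$ surjects onto $\End_{\Udot(\fraksp_{2k})}(\mathrm{M}_{\qpar})$, so that each $\simpleco{\frakso_n}{\lambda}$ is irreducible. I would argue exactly as in \fullref{theorem:typeC-howe}, via the integral form: let $\repA{\mathrm{M}_{\qpar}}$ be the $\Aalg$-span of the tensor products of symmetric-power basis monomials; it is free over $\Aalg$ and stable under $\coidealA(\frakso_n)$ and the integral form of $\Udot(\fraksp_{2k})$, and setting $\qpar=1$ identifies its $\coidealA(\frakso_n)$-action with the classical $\U(\frakso_n)$-action on $\mathrm{M}$, which is full by classical Howe duality. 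A dimension count, comparing ${\textstyle\sum_\lambda}\dim_\C\simplenoq{\frakso_n}{\lambda}$ with ${\textstyle\sum_\lambda}\dim_{\CQ}\simpleco{\frakso_n}{\lambda}$ and using lower-semicontinuity of the image dimension, then forces surjectivity at generic $\qpar$. The only adaptation to the infinite-dimensional setting is to run this comparison inside each $\Udot(\fraksp_{2k})$-weight space (equivalently, each total symmetric-degree piece), which is finite-dimensional and preserved by both actions; the full statement follows by assembling these finite pieces. For $n$ even one uses, as in \fullref{theorem:typeD-howe} and \fullref{remark:sovso}, that the extra generator of $\mathrm{O}(n)$ beyond $\mathrm{SO}(n)$ acts trivially on the de-quantized space, so the module decomposition is still the stated one, but the double-centralizer property becomes the weaker statement there since the missing Brauer generator prevents fullness.

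The main obstacle I anticipate is not any single computation -- all the web-theoretic input (well-definedness of $\ladC$ and $\symdiaD$, the intertwiner lemmas) is already in hand -- but the bookkeeping needed to push the finite-dimensional double-centralizer and integral-form arguments through in the infinite-dimensional symmetric setting, that is, verifying that restriction to weight spaces is compatible with both the $\Aalg$-form and the specialization $\qpar\to 1$. Beyond that, the remaining subtlety is precisely the $n$ even case, where, exactly as for \fullref{theorem:typeD-howe}, one recovers only the module decomposition and not the full mutual centralizer property.
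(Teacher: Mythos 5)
Your proposal is correct and follows the paper's approach exactly: the paper's own proof is a one-line ``mutatis mutandis'' reduction to \fullref{theorem:typeC-howe}, using the functor $\symhoweD$ and the classical symmetric duality of Cheng--Wang, together with the infinite-dimensionality caveat (hence $\Udot(\fraksp_{2k})$ rather than $\quantumg(\fraksp_{2k})$) and the $\mathrm{O}(n)$-versus-$\mathrm{SO}(n)$ caveat that you spell out in full. One small correction to your last paragraph: the observation that the extra generator of $\mathrm{O}(n)$ acts trivially on the de-quantized space is what makes the double centralizer property go through for $n$ \emph{odd} (type $\typeB$, where $\mathrm{O}(n)\cong\mathrm{SO}(n)\times\Z/2\Z$); for $n$ even this fails, which is exactly why only the module decomposition on the $\Udot(\fraksp_{2k})$-side --- determined by the weight comparison alone --- survives there, and not mutual centralization, cf.\ \fullref{remark:sovso}.
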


\begin{proof}
Mutatis mutandis as in the proof of \fullref{theorem:typeC-howe}, but 
using the functor $\symhoweD$ and the 
non-quantized Howe duality from \cite[Corollary 5.40]{CW}. \makeautorefname{theorem}{Theorems} 
(Keeping the same remarks as in the proofs of 
\fullref{theorem:typeC-howe-sym} and \ref{theorem:typeD-howe} in mind.)\makeautorefname{theorem}{Theorem}
\end{proof} 

\subsubsection{Some concluding remarks}

\begin{remark}\label{remark:the-weird-type-D}\makeautorefname{theorem}{Theorems}
We stress again that \fullref{theorem:typeD-howe} 
and \ref{theorem:typeD-howe-sym} can be strengthened
to include the double centralizer property for type $\typeD$ as well,
cf.\ \fullref{remark:sovso}.
\end{remark}

\makeautorefname{theorem}{Theorem}

\begin{remark}\label{remark:super-stuff}
In the spirit of \cite{TVW}, one could use the 
Howe dualities involving the 
orthosymplectic Lie superalgebra $\frakosp$, as 
in \cite{Ho1}, \cite{CZ} or \cite{CW},
to give a unified treatment of the exterior 
and the symmetric story. Since quantization in our setup is already 
quite involved, we decided to not pursue this further.
\end{remark}

\begin{remark}\label{remark:categorification}
One feature 
of web categories is that they are 
``amenable to categorification''. 
For example, one can use \emph{foams} in the sense of \cite{Kh},
see e.g.\ \cite{Bl}, \cite{LQR}, \cite{EST1} and \cite{EST2} for categorifiying webs.
Or category $\mathcal{O}$ as e.g.\ in \cite{Sa} or \cite{Sa2}. 
Categorifications of Howe dualities involving coideal subalgebras (of different kinds)
have already been obtained in \cite{ES} 
(which also connects to foams, cf.\ \cite{ETW}), and there are good reasons to hope 
that our story categorifies as well.
\end{remark}

\subsection{Relation of the web categories to the (quantum) Brauer algebra}\label{subsec-brauer}

\makeautorefname{theorem}{Theorems}

In groundbreaking work, Brauer \cite{Br} introduced 
the so-called \emph{Brauer algebra},
which arose naturally in his study
of the centralizer
of the action of the orthogonal group $\mathrm{O}(n)$
and of the symplectic group $\mathrm{Sp}(n)$
acting on the $k$-fold tensor product 
$\vecrepnoq^{\otimes k}$ of their 
vector representations.
Comparing this to the de-quantized 
versions of 
\fullref{theorem:typeC-howe}, \ref{theorem:typeC-howe-sym}, \ref{theorem:typeD-howe}
and \ref{theorem:typeD-howe-sym}
suggests that there should be a 
connection to our web categories. 
We make this more precise in the following.

\makeautorefname{theorem}{Theorem}

\subsubsection{Various quantizations of the Brauer algebra}

The first quantization of the Brauer algebra, 
called \emph{BMW algebra}, was introduced 
by Birman-Wenzl \cite{BW} and Murakami \cite{Mu}. 
The BMW algebra plays the role of Brauer's algebra 
with respect to the actions of $\quantumg(\frakso_n)$ 
and $\quantumg(\fraksp_n)$ on their quantum tensor spaces. 
However, since we are looking at the centralizers of actions of
$\coideal(\frakso_n)$ and $\coideal(\fraksp_n)$, and not of
$\quantumg(\frakso_n)$ and $\quantumg(\fraksp_n)$,
the BMW algebra does not fit into our picture.

In contrast, Molev \cite{Mo} defined a new 
quantization $\qbrauer{k}{\qpar}{\zpar}$ of the Brauer algebra, 
called \emph{quantum or $\qpar$-Brauer algebra}. 
This $\CQZ$-algebra 
is related by a version of $\qpar$-Schur-Weyl duality 
to $\coideal(\frakso_n)$ and $\coideal(\fraksp_n)$. 
Thus, $\qbrauer{k}{\qpar}{\zpar}$ is the natural candidate 
to be connected to our web categories.

\subsubsection{A quantized Brauer category}

First, let us quickly recall the situation in type $\typeA$:

\begin{definition}\label{definition:hecke}
The \emph{Hecke category} $\heckecat$ is the
additive closure of the (strict)
monoidal, $\CQ$-linear category generated
by one object 
$1$ and by one morphism $T \colon 1 \otimes 1 \rightarrow 1 \otimes 1$ 
modulo the relations
\begin{gather*}
    T^2 = (\qpar-\qpar^{-1})T + \id_{1 \otimes 1},\\
    (T \otimes \id_1) (\id_1 \otimes T) (T \otimes \id_1) = (\id_1 \otimes T) (T \otimes \id_1) (\id_1 \otimes T).
\end{gather*}
(The second relation is known as the \emph{braid relation}.)
\end{definition}

We depict the generator $T$ 
by an overcrossing, cf.\ \eqref{eq:braiding}.
Then, by sending $T$ in the evident way to the braiding of $\WebA$, 
we get a functor 
\[
\brauerA \colon \heckecat \rightarrow \WebA,
\quad
T\mapsto 
\begin{tikzpicture}[anchorbase,scale=.25,tinynodes]
	\draw[uncolored] (2,0) node[below] {$1$} .. controls ++(0,2) and  ++(0,-2) .. ++(-2,4) node[above] {$1$}; 
	\draw[uncolored,cross line] (0,0) node[below] {$1$} .. controls ++(0,2) and  ++(0,-2) .. ++(2,4) node[above] {$1$}; 
    \end{tikzpicture}
\]
which is fully faithful, 
see e.g.\ \cite[Proposition 5.9]{QS} or \cite[Lemma 2.25]{TVW}.
Note, in particular, that crossings span $\End_{\WebA}(1^{\otimes k})$.

Our next goal is to extend this to 
types $\typeB\typeC\typeD$.

\begin{definition}\label{definition:brauer}
The \emph{quantum or $\qpar$-Bauer category} 
$\qbrauercat{\qpar}{\zpar}$ is the 
additive closure of the $\CQZ$-linear
$\heckecat$-category generated by 
$\varnothing$ and by the cup and cap morphisms
(depicted as in \eqref{eq:cup-cap}) modulo 
the relations \eqref{eq:circle}, \eqref{eq:bubble-equi}, \eqref{eq:lasso-equi}, \eqref{eq:lolli-equi} 
and \eqref{eq:sliding-equi}.
\end{definition}

\makeautorefname{definition}{Definitions}

Recall that the 
relations \eqref{eq:bubble-equi}, \eqref{eq:lasso-equi}, \eqref{eq:lolli-equi} 
and \eqref{eq:sliding-equi} are 
the topological analogs of 
the relations in 
\fullref{definition:typeBDwebs} and \ref{definition:typeCwebs} 
(for $\typeCBD$-webs with slightly different parameters), 
and that \eqref{eq:circle} is equivalent 
to \eqref{eq:barbell-C} in case of $\typeCBD$-webs. 
Hence, the functor $\brauerA$ extends to two functors
\[
\brauerD\colon\qbrauercat{\qpar}{\zpar}\to\WebDz
\quad\text{and}\quad
\brauerC\colon\qbrauercat{-\qpar^{-1}}{\zpar}\to\WebCz.
\]

\makeautorefname{definition}{Definition}

\subsubsection{Connection with the quantum Brauer algebra}
\label{sec:conn-with-molevs}

Let us now denote by $\qbrauer{k}{\qpar}{\zpar}$ the $\qpar$-Brauer algebra
as defined by Molev in \cite[Definition 2.3]{Mo}. 
Precisely, the $\qpar$-Brauer algebra is a $\CQZ$-algebra with generators 
$T_i$ for $i=1,\dots,k-1$ and additionally $e_{k-1}$.
(Note that Molev uses the notation $\sigma_i$ instead of $T_i$.)

\begin{lemma}\label{proposition:brauer}
The assignment
\begin{equation*}
      T_i  \mapsto
      \begin{tikzpicture}[baseline=0.42cm,scale=.25,tinynodes]
        \draw[uncolored] (2,0) node[below] {$1$} .. controls ++(0,2) and  ++(0,-2) .. ++(-2,4) node[above] {$1$}; 
	  \draw[uncolored,cross line] (0,0) node[below] {$1$} .. controls ++(0,2) and  ++(0,-2) .. ++(2,4) node[above] {$1$}; 
	  \draw[uncolored] (-5,0) node[below] {$1$} -- ++(0,4) node[above] {$1$};
	  \draw[uncolored] (-2,0) node[below] {$1$} -- ++(0,4) node[above] {$1$};
	  \draw[uncolored] (4,0) node[below] {$1$} -- ++(0,4) node[above] {$1$};
	  \draw[uncolored] (7,0) node[below] {$1$} -- ++(0,4) node[above] {$1$};
      \path (-3.5,0) node[below] {$\cdots$} -- node {$\cdots$} ++(0,4) node[above] {$\cdots$};
      \path (5.5,0) node[below] {$\cdots$} -- node {$\cdots$} ++(0,4) node[above] {$\cdots$};
      \draw[decorate,decoration={brace,mirror},yshift=-1.4cm] (-5,0) -- node[below=0.3ex] {$i-1$} ++(3,0);
        \draw[decorate,decoration={brace,mirror},yshift=-1.4cm] (4,0) -- node[below=0.3ex] {$k{-}i{-}1$} ++(3,0);
      \end{tikzpicture}
\quad\text{and}\quad 
e_{k-1} \mapsto
      \begin{tikzpicture}[baseline=0.42cm,scale=.25,tinynodes]
      \draw[uncolored] (0,0) node[below] {$1$} -- ++(0,.4)  arc (180:0:1cm)  -- ++(0,-.4) node[below] {$1$};
      \draw[uncolored] (0,4) node[above] {$1$} -- ++(0,-.4)   arc (180:360:1cm)  -- ++(0,.4) node[above] {$1$};
	  \draw[uncolored] (-5,0) node[below] {$1$} -- ++(0,4) node[above] {$1$};
	  \draw[uncolored] (-2,0) node[below] {$1$} -- ++(0,4) node[above] {$1$};
      \path (-3.5,0) node[below] {$\cdots$} -- node {$\cdots$} ++(0,4) node[above] {$\cdots$};
      \draw[decorate,decoration={brace,mirror},yshift=-1.4cm] (-5,0) -- node[below=0.3ex] {$k{-}2$} ++(3,0);
      \end{tikzpicture} 
\end{equation*}
defines an algebra homomorphism 
$\psi_k \colon \qbrauer{k}{-\qpar^{-1}}{-\zpar^{-1}}
\rightarrow \End_{\qbrauercat{\qpar}{\zpar}}(1^{\otimes k})$.
\end{lemma}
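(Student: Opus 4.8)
The plan is to check that the images under $\psi_k$ of Molev's generators $T_1,\dots,T_{k-1}$ and $e_{k-1}$ satisfy all the defining relations of the $\qpar$-Brauer algebra $\qbrauer{k}{-\qpar^{-1}}{-\zpar^{-1}}$ of \cite[Definition 2.3]{Mo}; since $\psi_k$ is prescribed on generators, this is precisely what is needed for it to be a well-defined algebra homomorphism. I would first recall Molev's presentation and sort its relations into three groups: (a) those involving only the braid generators $T_i$; (b) the single relation $e_{k-1}^2 = \delta\,e_{k-1}$; and (c) the mixed relations tying $e_{k-1}$ to its neighbours $T_{k-2},T_{k-1}$, together with the far-commutativity relations. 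The observation driving the argument is that, by \fullref{definition:brauer}, the category $\qbrauercat{\qpar}{\zpar}$ is built as an $\heckecat$-category modulo exactly the topological relations \eqref{eq:circle}, \eqref{eq:bubble-equi}, \eqref{eq:lasso-equi}, \eqref{eq:lolli-equi} and \eqref{eq:sliding-equi}, so each of Molev's relations ought to map onto one of these or onto a relation already built into $\heckecat$.

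For group (a), each $T_i$ is sent to an overcrossing on strands $i,i+1$, that is, to the image of the generator of $\heckecat$ under the evident inclusion into $\End_{\qbrauercat{\qpar}{\zpar}}(1^{\otimes k})$. The braid relation and the far-commutativity $T_iT_j=T_jT_i$ for $|i-j|>1$ then hold because they hold in $\heckecat$ (\fullref{definition:hecke}) and the $\heckecat$-action is monoidal. The only delicate point is the quadratic relation: Molev's generator with parameter $-\qpar^{-1}$ satisfies $(T_i+\qpar^{-1})(T_i-\qpar)=0$, i.e.\ $T_i^2=(\qpar-\qpar^{-1})T_i+\id_{1\otimes 1}$, which is exactly the Hecke relation of \fullref{definition:hecke}. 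This is the reason for substituting $-\qpar^{-1}$ for Molev's quantum parameter: it converts his eigenvalue convention into the one realized by the crossing in \eqref{eq:braiding}.

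For groups (b) and (c) I would match relations one at a time. The relation $e_{k-1}^2=\delta\,e_{k-1}$ follows from the circle removal \eqref{eq:circle}, since stacking two cup–caps produces a free loop; thus $\delta$ equals the loop value $\qbracket{\zpar;0}$. The remaining relations---those involving $e_{k-1}$ together with at most one adjacent crossing $T_{k-2}$ or $T_{k-1}$, and those moving $e_{k-1}$ past a braiding on distant strands---are matched respectively to the lollipop/kink relation \eqref{eq:lolli-equi}, the bubble relation \eqref{eq:bubble-equi}, the Reidemeister-II type lasso relation \eqref{eq:lasso-equi}, and the sliding relations \eqref{eq:sliding-equi}; finally $e_{k-1}$ commutes with $T_i$ for $i\le k-3$ by monoidality. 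As all of these hold in $\qbrauercat{\qpar}{\zpar}$ by \fullref{definition:brauer}, the assignment respects the presentation.

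The main obstacle will be the scalar bookkeeping: Molev's relations carry explicit coefficients in his parameters, and one must verify that under the substitution sending them to $-\qpar^{-1}$ and $-\zpar^{-1}$ these coefficients become exactly the scalars ($-\qpar^{-1}$, $-\zpar^{-1}$, $\qbracket{\zpar;0}$, and the like) appearing in \eqref{eq:braiding} and in the topological relations above. The signs built into the two substituted parameters are engineered precisely for this matching, and since there are only finitely many relations the verification, though tedious, is routine once the dictionary between Molev's generators and the web pictures has been fixed.
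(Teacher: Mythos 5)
Your proposal is correct and follows essentially the same route as the paper: one checks Molev's defining relations on the images of the generators, with the Hecke-type and far-commutativity relations holding for free via the $\heckecat$-action, the parameter substitution $(-\qpar^{-1},-\zpar^{-1})$ arranged exactly so that the quadratic and loop-value relations match \eqref{eq:braiding} and \eqref{eq:circle}, and the remaining mixed relations reduced to the topological relations of \fullref{definition:brauer}. The only nuance is that the paper does not claim a one-to-one dictionary for every relation: it singles out the last relation of \cite[Definition 2.3]{Mo} as the one genuinely non-immediate case, requiring a lengthy computation that combines \eqref{eq:lasso-equi} and \eqref{eq:sliding-equi} repeatedly rather than invoking a single defining relation.
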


\begin{proof}
This is immediate up to the last relation in \cite[Definition 2.3]{Mo}.
Verifying the last relation in \cite[Definition 2.3]{Mo} is a lengthy, 
but straightforward computation, which can 
be done by using \eqref{eq:lasso-equi} 
and \eqref{eq:sliding-equi} repeatedly.
\end{proof}

In particular, the composite 
$\diaD \circ \brauerD \circ \psi_k$ defines an action 
of the $\qpar$-Brauer algebra which commutes 
which the natural action of $\coideal(\frakso_n)$:
\begin{equation}\label{eq:so-brauer-rep}
\coideal(\frakso_n)
\acts
\underbrace{\vecrep \otimes 
\dots \otimes \vecrep}_{k \text{ times}}
\actsreverse
\qbrauer{k}{-\qpar}{-\qpar^{-n}}.
\end{equation}
Up to scaling conventions, 
this is the action defined in \cite[Theorem 4.2]{Mo}. Similarly, 
the composite $\diaC \circ \brauerC \circ \psi_k$ provides 
commuting actions
\begin{equation}\label{eq:sp-brauer-rep}
\coideal(\fraksp_n)
\acts
\underbrace{\vecrep \otimes 
\dots \otimes \vecrep}_{k \text{ times}}
\actsreverse 
\qbrauer{k}{-\qpar}{\qpar^{n}}.
\end{equation}
(Clearly, we could have also chosen $\symdiaD$ and 
$\symdiaC$ instead of $\diaD$ and 
$\diaC$.)

We show now that Molev's $\qpar$-Brauer algebra can be identified
with the endomorphism algebra of $1^{\otimes k}$ in our $\qpar$-Brauer category:

\begin{proposition}\label{proposition:brauer-into-webs}
The map $\psi_k$ is an algebra 
isomorphism, and the functors $\brauerD$ 
and $\brauerC$ are fully faithful.
\end{proposition}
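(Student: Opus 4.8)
The plan is to prove the two assertions together by establishing that $\psi_k$ is injective and that $\brauerD$, $\brauerC$ are faithful, since surjectivity of $\psi_k$ is already built into \fullref{definition:brauer} (the category $\qbrauercat{\qpar}{\zpar}$ is generated over $\heckecat$ by cups and caps, and by \fullref{lemma:proposition:brauer} the $T_i$ and $e_{k-1}$ hit all of $\End(1^{\otimes k})$ via the identification of $\heckecat$-morphisms with crossings). First I would record the dimension count: Molev's $\qpar$-Brauer algebra $\qbrauer{k}{-\qpar^{-1}}{-\zpar^{-1}}$ is free over $\CQZ$ of rank $(2k-1)!!$, the same as the classical Brauer algebra, so it suffices to show that the composite $\diaD \circ \brauerD \circ \psi_k$ (or $\diaC \circ \brauerC \circ \psi_k$) has image of rank at least $(2k-1)!!$ for suitable $n$.

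The key steps, in order, are as follows. First I would invoke the commuting actions \eqref{eq:so-brauer-rep}, namely
\[
\coideal(\frakso_n)
\acts
\underbrace{\vecrep \otimes \dots \otimes \vecrep}_{k \text{ times}}
\actsreverse
\qbrauer{k}{-\qpar}{-\qpar^{-n}},
\]
which we already produced as $\diaD \circ \brauerD \circ \psi_k$. Second, I would pass to the integral form and specialize $\qpar=1$ exactly as in the proof of \fullref{theorem:typeC-howe}: the endomorphism algebra of $\vecrepnoq^{\otimes k}$ generated by the classical Brauer algebra has dimension $(2k-1)!!$ whenever $n$ is large relative to $k$ (this is Brauer's original double-centralizer result for $\mathrm{O}(n)$, cf.\ \fullref{remark:sovso}). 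Third, by the standard semicontinuity argument (the image dimension at generic $\qpar$ cannot be strictly smaller than the specialized dimension, since the relevant module is free over $\Aalg$), the image of $\qbrauer{k}{-\qpar}{-\qpar^{-n}}$ in $\End_{\coideal(\frakso_n)}(\vecrep^{\otimes k})$ has rank at least $(2k-1)!!$ for $n \gg k$. Since this image factors through $\End_{\qbrauercat{\qpar}{\zpar}}(1^{\otimes k})$ via $\psi_k$ followed by $\diaD \circ \brauerD$, the map $\psi_k$ must be injective, hence an isomorphism.

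Finally, for full faithfulness of $\brauerD$ and $\brauerC$: fullness on the generating objects is automatic from the definitions, so I would reduce faithfulness to injectivity on each $\Hom$-space $\Hom_{\qbrauercat{\qpar}{\zpar}}(1^{\otimes a}, 1^{\otimes b})$. Here I would use the $\heckecat$-module structure and the left $\WebA$-action to reduce an arbitrary morphism to a composite of an element of $\End(1^{\otimes k})$ (for $k=\max(a,b)$ padded appropriately) with cups and caps, so that faithfulness follows from the isomorphism $\psi_k$ together with the nondegeneracy of the pairing witnessed by the commuting representation-theoretic action. Concretely, if $\brauerD(f)=0$ then $\diaD(\brauerD(f))=0$, and the faithfulness of the composite action for $n \gg k$ forces $f=0$ in the $\qpar$-Brauer algebra, hence in $\qbrauercat{\qpar}{\zpar}$.

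The main obstacle I expect is the bookkeeping needed to compare scaling conventions and parameter substitutions: Molev's algebra $\qbrauer{k}{-\qpar^{-1}}{-\zpar^{-1}}$ enters with the twisted parameters $-\qpar^{-1}$ and $-\zpar^{-1}$, and matching these against the specializations $\zpar = \pm\qpar^{\pm n}$ used in $\diaD$ and $\diaC$ (see the definitions preceding \eqref{eq:dia-typeD}) requires care so that the circle, bubble, and lasso evaluations land on the correct classical limits. The genuinely delicate point is ensuring that the free-module specialization argument applies verbatim across this parameter twist, i.e.\ that freeness over $\Aalg$ and the identification at $\qpar=1$ survive the sign conventions; but since all relevant modules are explicitly spanned by tensor products of basis vectors, this is a matter of checking that the integral spans are preserved, which is routine once the substitution dictionary is fixed.
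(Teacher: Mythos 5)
Your treatment of the injectivity of $\psi_k$, and of the faithfulness of $\brauerD$ and $\brauerC$, is sound and essentially the paper's route: the paper simply cites the faithfulness of the representations \eqref{eq:so-brauer-rep} and \eqref{eq:sp-brauer-rep} for $n\gg k$ (via \cite[Theorem 3.8]{We}, which itself reduces to the classical Brauer algebra), whereas you re-run the integral-form/specialization argument from the proof of \fullref{theorem:typeC-howe} together with the rank count $\dim\qbrauer{k}{\qpar}{\zpar}=(2k-1)!!$. That is legitimate (and somewhat more self-contained), provided you do cite the freeness of Molev's algebra of that rank as an external input.

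The genuine gap is in the other half of the statement: surjectivity of $\psi_k$ and fullness of $\brauerD$, $\brauerC$ are \emph{not} ``built into the definition'' or ``automatic''. \fullref{proposition:brauer} only establishes that $\psi_k$ is a well-defined homomorphism; its image is the subalgebra generated by the crossings and the single cup--cap pair $e_{k-1}$ at the last two positions. A general element of $\End_{\qbrauercat{\qpar}{\zpar}}(1^{\otimes k})$ is an arbitrary composite of Hecke morphisms with morphisms of the form $\id\otimes\cup$ and $\id\otimes\cap$; such a composite can, for instance, first raise the number of strands via $\id_{1^{\otimes k}}\otimes\cup$ and only later lower it again, and it is not a priori a word in the $T_i$ and $e_{k-1}$. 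The paper supplies exactly the missing step: an induction on the number of crossings showing that every such endomorphism reduces, using the relations \eqref{eq:lasso-equi}, \eqref{eq:sliding-equi}, etc., to the normal form $w_{\mathrm{top}}e^{(l)}w_{\mathrm{bot}}$ with $w_{\mathrm{top}},w_{\mathrm{bot}}\in\End_{\heckecat}(1^{\otimes k})$. Likewise, fullness of $\brauerD$ and $\brauerC$ is not automatic: a morphism $1^{\otimes a}\to 1^{\otimes b}$ in $\WebDz$ or $\WebCz$ may factor through objects with thick labels (already a merge followed by a split does so), so one needs the full faithfulness of $\brauerA\colon\heckecat\to\WebA$ (crossings span the thin $\Hom$-spaces of $\WebA$) combined with the same normal-form reduction to see that every such web is a linear combination of Brauer-type diagrams. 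Without these rewriting arguments the surjectivity/fullness half of the proposition remains unproven.
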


\begin{proof}
\begin{description}[leftmargin=0pt,itemsep=1ex]

\item[\textbf{Surjectivity of $\psi_k$}]
Because crossings span the space $\End_{\WebA}(1^{\otimes k})$,
it is 
enough to show that 
$\End_{\qbrauercat{\qpar}{\zpar}}(1^{\otimes k})$ is spanned by 
diagrams of the form $w_{\mathrm{top}} e^{(l)} w_{\mathrm{bot}}$, with 
$w_{\mathrm{bot}},w_{\mathrm{top}} \in \End_{\heckecat}(1^{\otimes k})$ and 
diagrams  
\begin{equation*}
e^{(l)} =
      \begin{tikzpicture}[baseline=0.42cm,scale=.25,tinynodes]
      \draw[uncolored] (0,0) node[below] {$1$} -- ++(0,.4)  arc (180:0:1cm)  -- ++(0,-.4) node[below] {$1$};
      \draw[uncolored] (0,4) node[above] {$1$} -- ++(0,-.4)   arc (180:360:1cm)  -- ++(0,.4) node[above] {$1$};
      \draw[uncolored] (5,0) node[below] {$1$} -- ++(0,.4)  arc (180:0:1cm)  -- ++(0,-.4) node[below] {$1$};
      \draw[uncolored] (5,4) node[above] {$1$} -- ++(0,-.4)   arc (180:360:1cm)  -- ++(0,.4) node[above] {$1$};
      \path (3.5,0) node[below] {$\cdots$} -- node {$\cdots$} ++(0,4) node[above] {$\cdots$};
	  \draw[uncolored] (-5,0) node[below] {$1$} -- ++(0,4) node[above] {$1$};
	  \draw[uncolored] (-2,0) node[below] {$1$} -- ++(0,4) node[above] {$1$};
      \path (-3.5,0) node[below] {$\cdots$} -- node {$\cdots$} ++(0,4) node[above] {$\cdots$};
      \draw[decorate,decoration={brace,mirror},yshift=-1.4cm] (0,0) -- node[below=0.3ex] {$l$ caps} ++(7,0);
      \end{tikzpicture} 
\end{equation*}
This can be easily seen by induction on the number of 
crossings of some fixed diagram.

\item[\textbf{Injectivity of $\psi_k$}]

This follows because the 
representations in \eqref{eq:so-brauer-rep} and \eqref{eq:sp-brauer-rep} are 
faithful for $n\gg k$ (the precise bound is 
irrelevant for us). Indeed, the proof that they are faithful for $n\gg k$ 
follows, as in the proof 
of \cite[Theorem 3.8]{We}, by the same results in the non-quantized 
setting (see e.g.\ \cite[Theorem 3.17]{AST}, but the statement therein
can already be found implicitly in the 
work of Brauer \cite{Br}).

\item[\textbf{Fully faithfulness of $\brauerD$ and $\brauerC$}]
Very similar arguments as for the proof of bijectivity of $\psi_k$ imply that the 
functors $\brauerD$ and $\brauerC$ are fully faithful.\qedhere

\end{description}
\end{proof}

\begin{remark}\label{remark:brauer-into-webs}
Because of \fullref{proposition:brauer-into-webs}, our web categories can be seen as (vast) 
generalizations of the (quantum) Brauer calculus.
\end{remark}

\providecommand{\bysame}{\leavevmode\hbox to3em{\hrulefill}\thinspace}
\providecommand{\MR}{\relax\ifhmode\unskip\space\fi MR }
\providecommand{\MRhref}[2]{%
  \href{http://www.ams.org/mathscinet-getitem?mr=#1}{#2}
}
\providecommand{\href}[2]{#2}

\end{document}